\newcommand{\ie}{{\em i.e. }}
\newcommand{\z}{\textrm{-}}
\newcommand{\tto}{\longrightarrow}
\newcommand{\uHom}{\underline{\textrm{Hom}}}
\newcommand{\NN}{\mathbb{N}}
\newcommand{\ZZ}{\mathbb{Z}}
\renewcommand{\AA}{\mathbb{A}}
\newcommand{\GG}{\mathbb{G}}
\newcommand{\colim}{\mathrm{co}\!\lim}
\newcommand{\deux}{{\underline{2}}}
\newtheorem{thm}{Theorem}
\newtheorem{prop}[thm]{Proposition}
\newtheorem{cor}[thm]{Corollary}
\newtheorem{lemma}[thm]{Lemma}
\newtheorem{sublemma}[thm]{Sub-lemma}
\theoremstyle{definition}
\newtheorem{defi}[thm]{Definition}
\begin{document}
\author{Mathieu Anel}
\address{{\sc cirget, uq\`am}, Case postale 8888, Succursale centre-ville\\ Montr\'eal, Qu\'ebec, Canada, H3C 3P8 }
\ead{anel.matthieu@courrier.uqam.ca}
\ead[url]{http://thales.math.uqam.ca/\~{}anelm}
\title{Grothendieck topologies from unique factorisation systems}
\begin{abstract}
This work presents a way to associate a Grothendieck site structure to a (locally presentable) category endowed with a unique factorisation system of its arrows. In particular this recovers the Zariski and Etale topologies and others related to Voevodsky's cd-structures. As unique factorisation systems are also frequent outside algebraic geometry, the same construction applies to some new contexts, where it is related with known structures defined otherwise. The paper details situations in algebraic geometry and sketches only the other examples.
\end{abstract} 
\begin{keyword}
Factorisation systems \sep Grothendieck topologies \sep Spectra

\MSC 13B30\sep 13B40\sep 13H99\sep 14A15\sep 14B25\sep 14F20\sep 18F10\sep 54B35
\end{keyword}
\maketitle

\setcounter{tocdepth}{2}
\tableofcontents

\newpage
\section{Introduction}

This work is about how certain Grothendieck topologies and a theory of spectra can be generated from unique factorisation systems. A particular case of our construction will be the Zariski and Etale topologies and associated spectra of algebraic geometry, and others close to Voevodsky's cd-structures. As unique factorisation systems are also frequent outside algebraic geometry, the same construction (slightly generalized) applies to some new contexts, where it is often related with known structures defined otherwise. The paper details situations in algebraic geometry and sketches only the other examples.
Some of the results are well known but the systematic presentation using unique factorisation system is new.

\paragraph*{Topological interpretation of lifting diagrams}

In a category $\cal C$, a lifting diagram is a commutative diagram as follows
$$\xymatrix{
P\ar[d]_u\ar[r]&U\ar[d]^f\\
N\ar[r]\ar@{-->}[ur]^{\ell}&X.
}$$
The arrow $\ell$, when it exists, is called a {\em lift of $u$ through $f$}.
The diagram is called a {\em lifting diagram} if a lift exist, and a {\em unique lifting diagram} if the lift exists and is unique.
In this last case, $u$ (resp. $f$) is said left (resp. right) orthogonal to $f$ (resp. $u$).
A {\em lifting system} is defined as two classes of maps $\cal A,B\subset C$ such that each map of $\cal A$ is left orthogonal to any of $\cal B$ and such that $\cal A$ and $\cal B$ are saturated for this relation (\S\ref{liftsys}). $\cal A$ and $\cal B$ are called respectively the left and the right classes.

\medskip
We propose the following topological interpretation of a unique lifting diagram: all objects are to be thought as spaces, the composite map $P\to X$ is a point of $X$ (in the generalized sense of 'family of points'), the map $u:P\to N$ is a neighbourhood (or an infinitesimal thickening) of $P$, the map $N\to X$ say that this neighbourhood is "in" $X$, the map $f:U\to X$ is a open immersion $X$, and the map $P\to U$ says that $U$ contains the point $P$.
The unique lifting property then reads:
{\em in a space $X$, any open $U$ containing a point $P$ contains every neighbourhood $N$ of $P$ contained in $X$},
which is exactly the fundamental intuition behind the classical definition of open subsets of topological spaces. We propose here an approach of topology based on this remark.

\paragraph*{Factorisation systems}

A {\em factorisation system} on a category $\cal C$ is the data of two classes of maps $\cal A,B\subset\cal C$ and a factorisation $X\to \phi(u)\to Y$ of any map $u:X\to Y\in{\cal C}$ such that $X\to \phi(u)\in{\cal A}$ and $\phi(u)\to Y\in{\cal B}$. The factorisation is said {\em unique} if $\phi(u)$ is unique up to a unique isomorphism (\S\ref{factosys}). The two classes $\cal (A,B)$ of a unique factorisation system define always a unique lifting system and the converse is true if $\cal C$ is locally presentable and $\cal (A,B)$ is of small generation (proposition \ref{factolim}). The unique lifting systems that will appear in this paper will all be associated to unique factorisation systems.

The topological interpretation of a lifting system is even better if is associated to a factorisation system: 
if $P\to G\to X$ is the factorisation of $p:P\to X$, still thinking $P$ as a point and $X$ as a space, one can interpret $G\to X$ as being the germ at the point and $P\to G$ as the embedding of the point in its germ.
Such a construction of germs does not exist in Topology (where germs need to be pro-objects) but it is well known in algebraic geometry and we are going to explain that it is associated with a factorisation system.

Indeed, the Zariski topology has the particularity that Zariski open embeddings between affine schemes are all in the right class of a unique factorisation system $(Cons^o,Loc^o)$ on $CRings^o$ (\S\ref{zariski}). $Loc$ is the class of localisations of rings and $Cons$ the class of conservative maps of rings: a map $u:A\to B$ is conservative if $u(a)$ invertible implies $a$ invertible, an example is the map $A\to k$ from a local ring to its residue field.
Any map of rings $u:A\to B$ factors in a localisation followed by a conservative map $A\to A[S^{-1}]\to B$ where $S=u^{-1}(B^\times)$.
In particular this factorisation applied to a map $u:A\to k$ where $k$ is a residue field of $A$ gives $A\to A_p\to k$ where $A_p$ is the local ring of $A$ at the kernel of $p$ of $u$.
Geometrically, $A\to k$ corresponds to a point $p$ of $X=Spec_{Zar}(A)$, $N=Spec_{Zar}(A_p)\to X$ is the germ of $X$ at $p$ and $P=Spec_{Zar}(k)\to N$ is the embedding of a point into some neighbourhood, which is coherent with our above  interpretation. 
Also, if $U\to X$ is a Zariski open subset of $X$ containing $P$, this data define a lifting square as before and the existence of the lift $N\to U$ is a consequence of $N$ being the limit of all $U\to X$ containing $P$.

\medskip
With the previous considerations in mind, it is tempting to look at a unique factorisation system $\cal (A,B)$, the following way: the right class $\cal B$ would be formed of open embeddings and the left class $\cal A$ of infinitesimal neighbourhoods.
But the example of Zariski topology, show us also that only finitely presented  localisations of rings are to be thought as open embeddings, so a general map in the right class should rather be thought as a "pro"-open embedding.
Also, it is possible to see using a topological intuition, that a map lifting uniquely the neighbourhood of some point, once given a lift of the point is not in general an open embedding but rather a local homeomorphism (an etale map). The  other example of the Etale topology (\S\ref{etale}) makes it very clear.

So finally, we are going to propose an interpretation of the class $\cal B$ of a unique lifting system as a class of "pro"-etale maps.
A unique lifting systems is then though as a theory of pro-etale maps and a tool to develop abstract analogs of Zariski and Etale topologies.

\medskip
We list here the four unique factorisation systems on the category $CRings$ of commutative rings that we are going to study in the sequel .
\begin{center}
\begin{tabular}{c|c|c}
Name & Left class & Right class\\
\hline
Zariski &localisations & conservative maps\\
Etale & ind-etale maps & henselian maps\\
Domain & surjections & monomorphisms\\
Finite & ind-finite maps & integrally closed maps
\end{tabular}
\end{center}
A factorisation system $\cal (A,B)$ on $\cal C$ defines another one $({\cal A}^o,{\cal B}^o)$ on ${\cal C}^o$ and we will in fact have more interests on the opposite systems of the previous four. To each of them will be associated a Grothendieck topology on the opposite category $CRings^o$ of commutative rings, the third one being Voevodsky's plain lower cd-topology in \cite{voevodsky} restricted to affine schemes.

\paragraph*{Results}
From a factorisation system in the opposite of a locally presentable category, we built a general scheme associating to it:
\begin{itemize}
\item a notion of {\em etale map} (\S\ref{etalemap}),
\item a notion of {\em points} of an object (\S\ref{points}),
\item a notion of {\em local objects} (\S\ref{localobject}),
\item a Grothendieck topology (called the {\em factorisation topology}) which covering families are etale families surjective on points (\S\ref{pointcovers}),
\item two {\em toposes} functorialy associated to any object $X$ and called the {\em small and big spectra} of $X$, the big one being always a retraction of the big one (\S\ref{spectra}),
\item and a {\em structural sheaf} on the small spectra of $X$ whose stalks are the "local forms" of $X$, \ie pro-etale local objects over $X$ (\S\ref{structuresheaf})
\end{itemize}
Then the main result of the paper is theorem \ref{theorem} allowing one to compute the categories of global points of the spectra using the local objects. We refer to it as the moduli interpretation of the spectra, but we won't study fully the moduli aspects of our spectra in this paper, such a study would require a much more topossic approach than we have chosen here and will be the subject of another paper \cite{anel1}.

In the case of the four systems on $CRings^o$ these notions give:
\begin{center}
{\scriptsize
\begin{tabular}{p{2cm}|p{2cm}|p{2cm}|p{2cm}|p{2cm}}
&Zariski &Etale & Domain &Finite\\
\hline
Etale maps& Zariski open maps & etale maps & Zariski closed embeddings & finite maps\\
Points&nilpotent extension of fields & nilpotent extension of separably closed fields & fields & algebraically closed fields\\
Local objects&local rings & strict henselian local rings & integral domains & strict integrally closed domains (\S\ref{strictintegrallycloseddomain})\\
Small spectrum of $A$ & usual Zariski spectrum (topos classifying all localisations of $A$) &usual Etale spectrum (topos classifying all strict henselisation of $A$) & a topos classifying all quotients domains of $A$ & a topos classifying strict integral closure of quotient domains of $A$\\
Big spectrum of $A$& usual big Zariski topos classifying local $A$-algebras & usual big Etale topos classifying strict henselian local $A$-algebras & a topos classifying $A$-algebras that are integral domains & a topos classifying $A$-algebras that are strict integrally closed domains.\\
\end{tabular}
}
\end{center}

\paragraph*{Nisnevich contexts}

Nisnevich topology on schemes is defined by etale covering families satisfying a lifting property for maps from spectra of fields. Such a lifting property cannot in general be obtained by a single etale map and this does not distinguish a class of maps that could be part of a factorisation system. For this reason Nisnevich topology is not a factorisation topology, but it defines an interesting operation on such that we called {\em Nisnevich forcing}. It consists to force a class of objects to be local objects (\S\ref{forcing}) by selecting only the covering families of the factorisation topology that lift maps from the objects of the forcing class. Applied to the Etale topology and the class of fields, this gives the usual Nisnevich topology. But an interesting other case is to apply this, still with the class of field as forcing class, to the Finite topology (\S\ref{nisfinite}) the resulting topology is then the lower cd-structure of Voevodsky in \cite{voevodsky} restricted to affine schemes.

The data of a factorisation system and a Nisnevich forcing class is called a {\em Nisnevich context} (def. \ref{niscontext}) and the construction of our spectra (\S\ref{spectra}) as well as theorem \ref{theorem} are defined directly in such a context. The previous table can then be completed by the following one:
\begin{center}
\begin{tabular}{c|p{3cm}|p{3cm}}
&Nisnevich & Nisnevich Finite \\
\hline
Local objects& henselian local rings & integrally closed domains\\
Small spectrum of $A$ & topos classifying ind-etale henselian local $A$-algebra & topos classifying ind-finite integraly closed $A$-algebra \\
Big spectrum of $A$& topos classifying  henselian local $A$-algebra & topos classifying integraly closed $A$-algebra \\
\end{tabular}
\end{center}

\paragraph*{Duality} There seems to be a kind of duality between the Zariski/Etale settings and Domain/Finite settings, \S\ref{duality} regroups some naive elements around this idea.

\paragraph*{Other examples}

Many examples of unique factorisation system exists outside of algebraic geometry and we think a generalisation of our approach can be interesting. We try to motivate this idea by sketching some examples in \S\ref{other}.

The first example is in the setting of To\"en and Vaqui\'e algebraic geometry under $spec(\ZZ)$ \cite{fun}, but it will be developped fully in another work \cite{anelvaquie}.
The next two examples deal with the $(Epi,Mono)$ factorisation systems that always exist in a topos or an abelian category, the notion of point corresponds to irreducible objects and the associated spectra are essentielly discrete spaces.
Another example study the factorisation systems on the category of small categories given by initial (resp. final) functors and discrete left (resp. right) fibrations. The associated spectra of a category $C$ are respectively the toposes of covariant and of contravariant functors. Moreover this example share a duality of the same flavour of that of etale and finite maps.
We study also a factorisation system on the category of simplicial sets left generated by inclusion of faces of simplices, points and local objects are vertices and a more interesting situation is obtained forcing all simplices to be local object. For this topology, the small spectrum of a simplicial set is related to the cellular dual of the usual geometric realisation (where vertices correspond to open cells).

\paragraph*{Plan of the paper}

Section \ref{facto} consists in some recollections and lemmas about lifting and factorisation systems, the main result is theorem \ref{factolim} describing the construction of a unique factorisation from a lifting system of small generation. It will be used in \S\ref{etale} to construct a factorisation system related to the Etale topology. All this can be skipped on first reading.

Section \ref{topo} is the core of the article. It develops the topological interpretations and constructions associated to a factorisation system. It uses the strong hypothesis that the category ${\cal C}^o$ is the opposite of a finitely presentable category, this restriction is motivated by the example of commutative rings and more generally by categories of algebras over a Lawvere theory (it will be applied to monoids in \cite{anelvaquie}).
The notion of a Nisnevich context and the associated small and big spectra are defined in \S\ref{forcing} and \S\ref{spectra}.
The theorem of computation of their points is in \S\ref{moduliinterpretation} and their expected structure is proven in \S\ref{spectra} and \S\ref{structuresheaf}.

Section \ref{examples} develops the examples: six in algebraic geometry and others outside.

Finally, an appendix compares our work to some other on the subject of spectra.

\section*{Motivations and acknowledgments}

The origin of this work was to understand why Zariski and Etale topologies where coming with both notions of small and big topos and a class of distinguished maps playing the role of "open embeddings", the classical theory of Grothendieck (pre)topologies being insufficient to explain this extra structure.
It is Andr\'e Joyal that suggested to me that Zariski topology should be related to the $(Loc,Cons)$ factorisation system on commutative rings. Although he won't be satisfied with the way I've chosen to present the ideas here, this paper have been influenced by numerous conversations with him. 
It is after a conversation with Georges Maltsiniotis, that I had the idea for the notion points, I am particularly grateful to him for listening the first one my raw ideas and for his remarks.
I learn first the excellent philosophy of thinking the spectrum of an object $X$ as the moduli spaces of some "local forms" of $X$ from Joseph Tapia, although all this is not fully described here this have been influential, i'm grateful to him for our conversations on the subject.
I'm also grateful to Jonathan Pridham for pointing out to me that the orthogonal class of henselian maps should be that of ind-etale ones and to Fr\'ed\'eric D\'eglise for a remark on finite maps.

Most of this study has been worked out during the excellent 2007-2008 program on Homotopy Theory and Higher Categories in Barcelona's {\sc crm}, I'm very grateful to the organizers for inviting me and funding me all year. It has been written while I was staying at Montreal's {\sc cirget} that I thank also for invitation and funding.

\paragraph*{Notations}

For an object $X$ of a category $\cal C$ the category of objects of $\cal C$ under $X$ is noted $X/{\cal C}$ and that of objects over $X$ ${\cal C}/X$.
For a category $\cal C$ its category of arrows is noted ${\cal C}^{\underline{2}}$.
given two maps $A\to B$ and $A\to X$ in a category $\cal C$, their pushout is written $X\to X\cup_AB$.
$CRings$ is the category of commutative rings. $\cal S$ will denote the topos of sets.

%%%%%%%%%%%%%%%%%%%%%%%%%%%%%%%%%%%%%%%%%%%%%%%%%%%%%%%%%%%%%%%%%%%%%%%%%%%%%%%%%%%%%%%%%%%%%%%%%%%%%%%%%%%%%%%%%%%%%%%%%%%%%%%%%%%%%%%

\section{Lifting properties and factorisation systems}\label{facto}

We recall the notion of lifting and factorisation systems from \cite{bousfield,joyal}.

\subsection{Lifting systems}\label{liftsys}

In a commutative diagram square
$$\xymatrix{
P\ar[d]_u\ar[r]&U\ar[d]^f\\
N\ar[r]\ar@{-->}[ur]^\ell&X.
}$$
the map $u$ is said to have the {\em unique left lifting property} with respect to $f$ and 
the map $f$ is said to have the {\em unique right lifting property} with respect to $u$ if it exist a unique diagonal arrow $\ell$ making the two obvious triangles commutative. The arrow $\ell$ will be called the lift or the lifting.

\medskip

Let ${\cal B}$ a class of maps of ${\cal C}$, a map $u:X\to Y\in {\cal C}$ is said to be {\em left (resp. right) orthogonal} to ${\cal B}$ iff it has the unique left (resp. right) lifting property with respect to all maps of ${\cal B}$.
The class of maps left (resp. right) orthogonal to ${\cal B}$ is noted $^\bot\!{\cal B}$ (resp. ${\cal B}^\bot$).
%${\cal C}^\bot=^\bot{\cal C}$ is the class of isomorphism in $\cal Cl$.
If $\cal A\subset B$ then ${\cal B}^\bot\subset {\cal A}^\bot$ and $^\bot\!{\cal B}\subset ^\bot\!{\cal A}$.

\begin{defi}
The data of two classes $\cal A,B$ of maps of $\cal C$ such that ${\cal A}= ^\bot\!{\cal B}$ and ${\cal B}= {\cal A}^\bot$ is called a {\em unique lifting system} on $\cal C$. Such a system is noted $\cal (A,B)$.
\end{defi}

\medskip
For a class $G$ of maps of $\cal C$ we define ${\cal B}=G^\bot $ and ${\cal A}=^\bot\!{\cal B}$.
\begin{lemma}\label{leftgen}\label{setgen}
The previous classes $\cal A$ and $\cal B$ form a unique lifting system.
\end{lemma}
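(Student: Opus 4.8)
The plan is to reduce the statement to two completely formal facts about the ``Galois connection'' $\mathcal{S}\mapsto {}^\bot\mathcal{S}$, $\mathcal{S}\mapsto\mathcal{S}^\bot$ on classes of maps of $\cal C$. The first is the antitone property already recorded just above the lemma: if $\mathcal{A}\subset\mathcal{B}$ then $\mathcal{B}^\bot\subset\mathcal{A}^\bot$ and ${}^\bot\mathcal{B}\subset {}^\bot\mathcal{A}$. The second is a pair of ``unit'' inclusions valid for \emph{every} class $\mathcal{S}$ of maps, namely $\mathcal{S}\subset{}^\bot(\mathcal{S}^\bot)$ and $\mathcal{S}\subset({}^\bot\mathcal{S})^\bot$. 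These last two are proven by simply unwinding the definition of orthogonality: if $f\in\mathcal{S}$, then by definition every map left orthogonal to all of $\mathcal{S}$ is in particular (uniquely) left orthogonal to $f$, which says exactly that $f$ is right orthogonal to all of ${}^\bot\mathcal{S}$, i.e. $f\in({}^\bot\mathcal{S})^\bot$; the inclusion $\mathcal{S}\subset{}^\bot(\mathcal{S}^\bot)$ is the mirror-image argument.

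Granting these, the proof is short. The equality $\mathcal{A}={}^\bot\mathcal{B}$ is the very definition of $\mathcal{A}$, so the only thing to check is $\mathcal{B}=\mathcal{A}^\bot$, and I would do the two inclusions in turn. For $\mathcal{B}\subset\mathcal{A}^\bot$, apply the second unit inclusion with $\mathcal{S}=\mathcal{B}$: this gives $\mathcal{B}\subset({}^\bot\mathcal{B})^\bot$, and ${}^\bot\mathcal{B}=\mathcal{A}$ by definition, so $\mathcal{B}\subset\mathcal{A}^\bot$. For $\mathcal{A}^\bot\subset\mathcal{B}$, apply the first unit inclusion with $\mathcal{S}=G$: this gives $G\subset{}^\bot(G^\bot)$; but $G^\bot=\mathcal{B}$ and ${}^\bot\mathcal{B}=\mathcal{A}$, so $G\subset\mathcal{A}$; now the antitone property turns this into $\mathcal{A}^\bot\subset G^\bot=\mathcal{B}$. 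Combining, $\mathcal{B}=\mathcal{A}^\bot$ and the pair $\cal (A,B)$ is a unique lifting system.

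I do not expect a genuine obstacle here: the statement is purely formal, the uniqueness of the lifts plays no role beyond being carried along verbatim in the definitions, and no smallness or presentability hypothesis on $\cal C$ is needed. The only place requiring a little care is the bookkeeping of which side of the orthogonality relation one is on when verifying the unit inclusions, so I would state those two inclusions explicitly (perhaps as a one-line remark) before assembling the argument above.
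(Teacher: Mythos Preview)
Your proof is correct and follows the same approach as the paper: both reduce to checking ${\cal B}={\cal A}^\bot$ via the inclusion $G\subset{\cal A}$ (whence ${\cal A}^\bot\subset G^\bot={\cal B}$ by antitonicity) together with the unit inclusion ${\cal B}\subset({}^\bot{\cal B})^\bot={\cal A}^\bot$. The paper's proof is simply a terser version of yours, omitting the explicit Galois-connection framing.
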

\begin{proof}
We must show ${\cal B}={\cal A}^\bot$. By construction $G\subset {\cal A}$ so ${\cal A}^\bot\subset G^\bot={\cal B}$, and the inverse inclusion is a consequence of ${\cal A}=^\bot\!{\cal B}$.
\end{proof}

Such a factorisation system will be qualified as {\em left generated by the set $G$}. There is a dual notion of right generation.

\medskip

A class $\cal B$ of maps in a category $\cal C$ has the {\em left cancellation property} if for any $X\overset{u}{\tto} Y\overset{v}{\tto} Z$ such that $vu$ and $v$ are in $\cal B$, so is $u$. When $\cal B$ is stable by composition, this is equivalent to say that for any object $X$, ${\cal B}/X$ is a full subcategory of ${\cal C}/X$. The dual notion is called right cancellation.

Here are some properties of the classes of a lifting system.
\begin{prop}\label{proplift}
\begin{enumerate}
\item ${\cal A}$ and ${\cal B}$ are stable by composition. 
\item ${\cal A}\cap {\cal B}$ is the class of isomorphisms of ${\cal C}$.
\item ${\cal B}$ is stable by pullback and has the left cancellation property. In particular any section or retraction of a map in ${\cal B}$ is in ${\cal B}$. (The dual statement holds for $\cal A$.)
\item In the category of arrows of $\cal C$, any limit of maps in ${\cal B}$ is in ${\cal B}$. (The dual statement holds for $\cal A$.)
\item (Codiagonal property) the class $\cal A$ contains the codiagonals of its morphisms (see proof). (The dual statement holds for $\cal B$.)
\end{enumerate}
\end{prop}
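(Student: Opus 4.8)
The plan is to prove each statement by the usual diagram-chasing arguments, exploiting the defining property ${\cal A}={}^\bot\!{\cal B}$ and ${\cal B}={\cal A}^\bot$, and the fact that lifts against members of the opposite class are \emph{unique}. I will treat the $\cal B$-statements and let the $\cal A$-statements follow by duality (passing to ${\cal C}^o$ swaps the two classes).

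\emph{(1) Stability by composition.} Given $f\colon U\to X$ and $g\colon X\to Y$ in $\cal B$ and any $u\in{\cal A}$ with a square against $gf$, I would first lift $u$ against $g$ (after composing the bottom arrow with $g$), then lift the resulting square against $f$; the composite is a lift against $gf$. Uniqueness: two lifts $\ell,\ell'$ against $gf$ become, after postcomposition with $f$, two lifts against $g$, hence equal; then $\ell,\ell'$ are both lifts of the \emph{same} square against $f$, hence equal. \emph{(2)} If $f\in{\cal A}\cap{\cal B}$, then $f$ has the unique right lifting property against itself: the squares with top $\mathrm{id}$ and bottom $\mathrm{id}$, and with top $f$ = bottom $f$, produce a two-sided inverse (one checks the two composites are lifts of trivial squares, hence equal to identities by uniqueness). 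Conversely every isomorphism lies in both classes since a square with an iso on one side has an obvious unique lift. \emph{(3) Pullback stability of $\cal B$.} Form the pullback $f'\colon U'\to X'$ of $f\in{\cal B}$ along $X'\to X$; given $u\in{\cal A}$ and a square against $f'$, compose with the pullback projection to get a square against $f$, lift it, and use the universal property of the pullback to factor the lift through $U'$; uniqueness follows because the projection $U'\to U$ is a monomorphism on the relevant hom-set, or more simply because any two lifts against $f'$ composed to $U$ give two lifts against $f$. For left cancellation: if $vu,v\in{\cal B}$ and $a\in{\cal A}$ has a square against $u$, build a square against $vu$ by postcomposing the bottom with $v$, lift it, and observe the lift also solves the original square against $u$ — here one must check the lower triangle, which holds because $v$ is a monomorphism relative to $\cal A$ (uniqueness of lifts against $v$ applied to the two maps $N\to Y$ that become equal after composing with nothing — I would spell this out carefully). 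Sections and retractions of a map in $\cal B$ are then obtained by realizing them as pullbacks, resp. by the cancellation property.

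\emph{(4) Limits in the arrow category.} If $\{f_i\colon U_i\to X_i\}$ is a diagram in ${\cal C}^{\underline 2}$ with all $f_i\in{\cal B}$ and limit $f\colon U\to X$, then for $u\in{\cal A}$ a square against $f$ projects to a compatible family of squares against each $f_i$; lifting each and using uniqueness to check compatibility, the family of lifts assembles (by the universal property of the limit, computed componentwise in ${\cal C}^{\underline 2}$) into a lift against $f$; uniqueness is again componentwise. \emph{(5) Codiagonal property.} For $u\colon X\to Y$ in $\cal A$, the codiagonal is the map $\nabla_u\colon Y\cup_X Y\to Y$ induced by $\mathrm{id}_Y$ on both copies; I claim $\nabla_u\in{\cal A}$. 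One shows this by exhibiting $\nabla_u$ as a retract of a pushout of $u$ — indeed $Y\cup_X Y\to Y$ sits in a diagram making it a retract of $u\colon X\to Y$ pushed out along the two inclusions — and then invoking that $\cal A$ is stable by pushout (the dual of pullback-stability in (3)) and by retracts (dual of the corresponding statement in (3)); alternatively, and perhaps more cleanly, one checks directly that $\nabla_u$ is left orthogonal to $\cal B$ by a diagram chase that uses the \emph{uniqueness} of lifts of $u$ twice to glue two lifts over $Y\cup_X Y$.

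I expect the main obstacle to be item (3), specifically the left cancellation property: the argument that the lower triangle of the candidate lift commutes is exactly the point where one must use that maps in $\cal B$ are \emph{mono with respect to $\cal A$} (i.e. postcomposition with an element of $\cal B$ is injective on the relevant lifts), and getting that bookkeeping right — rather than any deep idea — is where the care is needed. The codiagonal statement (5) is the other place where a little thought is required, but once (3) is in hand together with its dual for $\cal A$, realizing $\nabla_u$ as a retract of a pushout of $u$ makes it routine.
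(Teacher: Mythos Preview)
Your proposal is correct and follows essentially the same route as the paper: items (1) and (2) are routine (the paper leaves them to the reader), your left-cancellation argument in (3) is exactly the paper's (compose the bottom with $v$, lift against $vu$, then use uniqueness of lifts against $v$ to verify the lower triangle), and your treatment of (4) matches the paper's componentwise-lift-plus-uniqueness argument. For (5) the paper argues just as you do: the inclusion $Y\to Y\cup_X Y$ is a pushout of $u$ hence in $\cal A$, and since its composite with $\nabla_u$ is the identity, right cancellation for $\cal A$ (the dual of (3)) gives $\nabla_u\in{\cal A}$ --- your phrasing ``retract of a pushout of $u$'' together with the appeal to the dual of (3) amounts to the same thing, and your alternative direct diagram chase would also work.
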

\begin{proof}
The first and second properties are left to the reader.

\textit{3.} Stability by composition and pullback are easy. We are going to prove that for a map $a:X\to Y\in \cal C$, the class $a^\bot$ has the left cancellation property.
Let $u:Z\to T$ and $v:T\to U\in\cal B$ such that $vu\in \cal B$, for any square
$$\xymatrix{
X\ar[d]_a\ar[r]&Z\ar[d]^u\\
Y\ar@{-->}[ru]^\ell\ar[r]_q&T
}$$
we are looking for a lift $\ell$. Composing at the bottom by $v$ gives
$$\xymatrix{
X\ar[d]_a\ar[r]&Z\ar[d]_u\ar@/^1pc/[dd]^{vu}\\
Y\ar@{=}[d]\ar[r]&T\ar[d]_v\\
Y\ar@{-->}[ruu]^s\ar[r]&U
}$$
and a lift $s$ of $a$ through $vu$. We need to show that this is the good one, \ie that $us=q$.
This can be seen in
$$\xymatrix{
X\ar[d]_a\ar[r]&Z\ar[d]_u\ar[r]^u&T\ar[d]^v\\
Y\ar@{-->}[ru]^s\ar[r]_q&T\ar[r]\ar@{=}[ru]&U
}$$
as $us$ and $q$ give two lifts of $a$ through $v$. The conclusion follows as classes having the cancellation property are stable by intersection.

\medskip
\textit{4.}, let $I$ be the interval category $\{0\to 1\}$, ${\cal C}^I$ is the arrow category of $\cal C$, $\cal B$ is a subclass of the class of objects of ${\cal C}^I$. If $D:{\cal D}\to {\cal C}$ is a diagram of arrows all in $\cal B$, then, if the limit of this diagram exists, it is in $\cal B$. Indeed, let $Z_d\to T_d\in\cal B$ be the value of the diagram $D$ at $d$ and $Z\to T$ be the limit of $D$, the existence of a lift $\ell$ for a square
$$\xymatrix{
X\ar[d]_{a\in{\cal A}}\ar[r]&Z\ar[d]\\
Y\ar@{-->}[ru]^\ell\ar[r]&T
}$$
is equivalent to the existence of lift for all
$$\xymatrix{
X\ar[d]_{a\in{\cal A}}\ar[r]&Z_d\ar[d]\\
Y\ar@{-->}[ru]^{\ell_d}\ar[r]&T_d
}$$
such that for $\delta:d\to d'\in \cal D$
$$\xymatrix{
X\ar[d]_{a\in{\cal A}}\ar[r]&Z_d\ar[d]\ar[r]^{\zeta_\delta}&Z_{d'}\ar[d]\\
Y\ar@{-->}[ru]^{\ell_d}\ar@{-->}[rru]_{\ell_{d'}}\ar[r]&T_d\ar[r]&T_{d'}
}$$
${\zeta_\delta}\circ {\ell_d}=\ell_{d'}$, but this is a consequence of the unicity of the lift.

\medskip
\textit{5.} The codiagonal of a morphism $A\to B$ is the map $B\cup_AB\to B$. It is a retract of the inclusion $B\to B\cup_AB$ which is a pushout of $A\to B$ so it is in $\cal A$ is $A\to B$ is. Then the cancellation property for $\cal A$ ensures $B\cup_AB\to B\in \cal A$ too.
\end{proof}

The following lemma gives an interesting equivalence between the right cancellation property and having codiagonals.
\begin{lemma}\label{diagocancel}
A subcategory $G$ of $\cal C$ stable by cobase change satisfies the right cancellation iff it contains the codiagonals of all its morphisms.
\end{lemma}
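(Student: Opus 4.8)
The statement is an equivalence, so the plan is to prove the two implications separately: \emph{right cancellation $\Rightarrow$ codiagonals} is immediate, while \emph{codiagonals $\Rightarrow$ right cancellation} needs one small construction.

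For the first direction, I would suppose $G$ has the right cancellation property and take $f:A\to B$ in $G$. In the pushout square of $f$ along itself, the inclusion $i_0:B\to B\cup_AB$ is a cobase change of $f$, hence lies in $G$, and the codiagonal $\nabla_f:B\cup_AB\to B$ satisfies $\nabla_f\circ i_0=\mathrm{id}_B\in G$ (as $G$ is a subcategory). Applying right cancellation to $B\overset{i_0}{\tto}B\cup_AB\overset{\nabla_f}{\tto}B$ then yields $\nabla_f\in G$.

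For the converse, assume $G$ contains the codiagonals of all its morphisms and take $X\overset{u}{\tto}Y\overset{v}{\tto}Z$ with $u,vu\in G$; the goal is $v\in G$. First I would form the pushout $P=Y\cup_XZ$ of $u$ along $vu$, with structure maps $j:Y\to P$ and $k:Z\to P$; since $G$ is stable by cobase change, $j$ (a cobase change of $vu$) and $k$ (a cobase change of $u$) lie in $G$. As $ju=k(vu)$, the pair $(v,\mathrm{id}_Z)$ induces a map $w:P\to Z$ with $wj=v$ and $wk=\mathrm{id}_Z$, so that $v=wj$ and it suffices to prove $w\in G$. The crucial point is that $w$ is itself a cobase change of the codiagonal of $u$: writing $\beta:Y\cup_XY\to P$ for the morphism with $\beta i_0=j$ and $\beta i_1=kv$ (well defined since $ju=kvu$), I claim the commutative square
$$\xymatrix{
Y\cup_XY\ar[r]^{\nabla_u}\ar[d]_\beta&Y\ar[d]^v\\
P\ar[r]_w&Z
}$$
is a pushout: commutativity is checked on $i_0$ and $i_1$, and for the universal property one verifies that any cocone $(g:Y\to W,h:P\to W)$ with $g\nabla_u=h\beta$ factors uniquely through $(v,w)$ via the map $hk:Z\to W$. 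Granting this, $w$ is the cobase change of $\nabla_u$ along $\beta$; since $u\in G$ forces $\nabla_u\in G$ by hypothesis and $G$ is stable by cobase change, we get $w\in G$, and therefore $v=wj\in G$.

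I expect the only real obstacle to be locating this construction. The obvious attempts — trying to express $v$, or $w$, directly as a composite of cobase changes of $u$ and $vu$ — run in circles, since one keeps needing to know $v\in G$ in order to conclude that some cobase change of $v$ lies in $G$. What breaks the circularity is using the codiagonal of $u$ (rather than of $vu$), which is already in $G$ by hypothesis and which sits in a pushout square whose edge opposite to $\nabla_u$ is exactly the map $w$ one needs. Once that square is written down, the remaining verifications are routine diagram chases, so I anticipate no further difficulty.
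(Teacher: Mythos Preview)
Your proof is correct and follows essentially the same route as the paper's. Both directions agree: for the forward implication you use $i_0\in G$ and $\nabla_f\circ i_0=\mathrm{id}$ together with right cancellation, exactly as the paper does; for the converse you form the pushout $P=Y\cup_XZ$, factor $v=w\circ j$ with $j\in G$ a cobase change of $vu$, and identify $w$ as a cobase change of $\nabla_u$ via the same pushout square the paper writes down (your $\beta$ is the paper's $v\sqcup_{\mathrm{id}}$ up to swapping the two copies of $Y$). Your version is in fact cleaner---the paper's diagram contains a couple of typos in the labels---and you spell out the verification that the square is a pushout, which the paper leaves implicit.
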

\begin{proof}
For $u:A\to B\in G$, $i_1:B\to B\sqcup_AB$ is in $G$ as cobase change of $u$ along itself.
If $G$ has right cancellation, $\delta_u:B\sqcup_AB\to B$ is in $G$ as $\delta_u\circ i_1=id_B$.
For $u:A\to B$ and $v:B\to C$ such that $u,vu\in G$, we want to prove that $v\in G$.
The square
$$\xymatrix{
B\sqcup_AB\ar[d]_{v\sqcup_Aid_B}\ar[r]^-{\delta_v}&B\ar[d]^u\\
C\sqcup_AB\ar[r]^-w&C
}$$
is a pushout. If $G$ is stable by codiagonals $w\in G$. Then as $i_1:C\to C\sqcup_AB$ is in $G$ as a pushout of $vu$, so is $u=w\circ i_1$.
\end{proof}

\medskip
To finish we mention that there is an obvious notion of a general (non unique) lifting system. The following result says that unicity of the lift is a property of a non unique lifting system.

\begin{lemma}\label{uniquelift}
A general lifting system $\cal (A,B)$ is unique iff the $\cal A$ is stable by codiagonals iff the $\cal B$ is stable by diagonals.
\end{lemma}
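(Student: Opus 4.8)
The plan is to prove the two equivalences together, through four implications, all resting on a single observation. In a lifting square with left leg $u\colon A\to B\in{\cal A}$ and right leg $f\colon X\to Y\in{\cal B}$ and bottom arrow $b\colon B\to Y$, a \emph{pair} of lifts $\ell_1,\ell_2\colon B\to X$ carries exactly the same information as one arrow $h\colon B\cup_AB\to X$ sitting over the codiagonal $\delta_u\colon B\cup_AB\to B$ and $f$: since $\ell_1u=\ell_2u$, the two lifts glue along $A$, and the universal property of the pushout yields $h$ with $hi_1=\ell_1$, $hi_2=\ell_2$, where $i_1,i_2\colon B\to B\cup_AB$ are the inclusions; moreover $fh=b\,\delta_u$, as one checks on $i_1,i_2$. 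Dually, the same pair is the same datum as one arrow $B\to X\times_YX$ sitting over the diagonal $\Delta_f\colon X\to X\times_YX$ and $u$. So the whole statement is the translation of ``uniqueness of lifts'' into ``a lifting property against the $\delta_u$ (resp.\ the $\Delta_f$)''.

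\emph{Stability implies uniqueness.} Assume ${\cal A}$ is stable under codiagonals, and let $\ell_1,\ell_2$ be two lifts of such a square. Assemble them into $h\colon B\cup_AB\to X$ as above; then $(h,\delta_u,f,b)$ is a commutative square whose left leg $\delta_u$ lies in ${\cal A}$, so it admits a lift $k\colon B\to X$ with $k\delta_u=h$ and $fk=b$. Composing $k\delta_u=h$ with $i_1$ and $i_2$, and using $\delta_u i_1=\delta_u i_2=\mathrm{id}_B$, gives $k=\ell_1$ and $k=\ell_2$, hence $\ell_1=\ell_2$. The statement ``${\cal B}$ stable under diagonals implies uniqueness'' is the exact dual: one assembles the two lifts into an arrow $B\to X\times_YX$, lifts the resulting square with legs $u$ and $\Delta_f$, and reads off the two projections of $X\times_YX$.

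\emph{Uniqueness implies stability.} Assume every lift in the system is unique. To see $\delta_u\in{\cal A}={}^\bot\!{\cal B}$ for $u\in{\cal A}$, take an arbitrary square over $\delta_u$ and some $f\in{\cal B}$, with top arrow $g\colon B\cup_AB\to X$, and set $\ell_1=gi_1$, $\ell_2=gi_2$. Since $i_1u=i_2u$ (both are the canonical map $A\to B\cup_AB$), the arrows $\ell_1,\ell_2$ agree after precomposition by $u$, so they are both lifts of one and the same square over $u$ and $f$; by uniqueness $\ell_1=\ell_2$, and the universal property of $B\cup_AB$ then shows that their common value is a lift of the original square. Hence $\delta_u$ is left orthogonal to all of ${\cal B}$, i.e.\ $\delta_u\in{}^\bot\!{\cal B}={\cal A}$. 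Dually, for $f\in{\cal B}$ one splits a square over $\Delta_f$ and some $u\in{\cal A}$ into its two components via the projections of $X\times_YX$, invokes uniqueness, and concludes with the universal property of the pullback that $\Delta_f\in{\cal A}^\bot={\cal B}$. It is precisely in this direction that saturation of the classes (${\cal A}={}^\bot\!{\cal B}$ and ${\cal B}={\cal A}^\bot$) is used.

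I do not expect a genuine obstacle: the argument is short and its entire content is the dictionary recalled in the first paragraph. The only point demanding care is bookkeeping — keeping straight which arrow is produced by a universal property of the pushout or the pullback and which by the lifting hypothesis, and verifying at each step that the produced arrow makes all the relevant triangles commute. One should also note, as is implicit in \S\ref{liftsys}, that ${\cal C}$ is tacitly assumed to have the pushouts $B\cup_AB$ and the pullbacks $X\times_YX$ needed to form the codiagonals and diagonals.
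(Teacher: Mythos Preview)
Your argument is correct and rests on the same idea as the paper's proof: two lifts of a square with left leg $u$ coincide precisely when the induced square with left leg $\delta_u$ admits a lift. The paper records only this key observation (and only for the codiagonal side), whereas you spell out all four implications and the dual case explicitly; the content is the same.
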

\begin{proof}
We are going to work only with the condition on $\cal A$.
Suppose we have a square
$$\xymatrix{
A\ar[r]\ar[d]&C\ar[d]\\
B\ar[r]\ar@<.6ex>@{-->}[ru]^{\ell_1}\ar@<-.6ex>@{-->}[ru]_{\ell_2}&D
}$$
with two lifts. These two lifts agree iff the following square have a lift:
$$\xymatrix{
B\cup_AB\ar[r]\ar[d]&C\ar[d]\\
B\ar[r]^{(\ell_1,\ell_2)}\ar@{-->}[ru]&D.
}$$
\end{proof}

\subsection{Factorisation systems}\label{factosys}

\begin{defi}
A {\em unique factorisation system} on a category $\cal C$ is the data of two classes $\cal A,B$ of maps in $\cal C$ such that any arrow $u:X\to Y$ admits a factorisation
$$\xymatrix{
&\phi(u)\ar[rd]^{\beta(u)}\\
X\ar[rr]^u\ar[ur]^{\alpha(u)}&&Y
}$$
with $\alpha(u)\in \cal A$ and $\beta(u)\in \cal B$, which is unique up to unique isomorphism, \ie for two such factorisations
$X\to \phi(u)\to Y$ and $X\to \varphi(u)\to Y$ there exists a unique isomorphism $\phi(u)\to\varphi(u)$ making the two obvious triangles commuting.
\end{defi}

For short, such a factorisation system will be noted $\cal C=(A,B)$. It is obvious that $({\cal B}^o,{\cal A}^o)$ is another factorisation system on ${\cal C}^o$.

\medskip
The definition of a unique factorisation system has many consequences toward the following lemma.
\begin{lemma}\label{fundamental}
In $\cal C=(A,B)$, any commuting square
$$\xymatrix{
X\ar[r]\ar[d]_{\alpha(u)}&Z\ar[d]^b\\
\phi(u)\ar[r]_{\beta(u)}\ar@{-->}[ur]^\ell&Y
}$$
where $X\to \phi(u)\to Y$ is a factorisation of some $u:X\to Y$ and $b\in\cal B$, admits a unique lifting $\ell$.
\end{lemma}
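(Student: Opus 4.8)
The plan is to derive both the existence and the uniqueness of $\ell$ from the uniqueness-up-to-unique-isomorphism clause in the definition of a factorisation system, applied to suitable composites; no lifting property enters as input. Write $f\colon X\to Z$ for the top arrow of the square, so that $bf=\beta(u)\,\alpha(u)=u$. For existence I would factor $f$ itself, $f=\beta(f)\,\alpha(f)$ with $X\xrightarrow{\alpha(f)}\phi(f)\xrightarrow{\beta(f)}Z$, $\alpha(f)\in{\cal A}$, $\beta(f)\in{\cal B}$. Since ${\cal B}$ is stable under composition, $b\,\beta(f)\colon\phi(f)\to Y$ lies in ${\cal B}$, so $X\xrightarrow{\alpha(f)}\phi(f)\xrightarrow{b\,\beta(f)}Y$ is an $({\cal A},{\cal B})$-factorisation of $u$. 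Comparing it with the given factorisation $X\xrightarrow{\alpha(u)}\phi(u)\xrightarrow{\beta(u)}Y$ yields a unique isomorphism $\theta\colon\phi(u)\to\phi(f)$ with $\theta\,\alpha(u)=\alpha(f)$ and $b\,\beta(f)\,\theta=\beta(u)$; then $\ell:=\beta(f)\,\theta$ satisfies $\ell\,\alpha(u)=\beta(f)\,\alpha(f)=f$ and $b\,\ell=b\,\beta(f)\,\theta=\beta(u)$, so it solves the square.

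For uniqueness I would first observe that any solution $\ell$ automatically lies in ${\cal B}$: factoring $\ell=\beta(\ell)\,\alpha(\ell)$ with $\phi(u)\xrightarrow{\alpha(\ell)}\phi(\ell)\xrightarrow{\beta(\ell)}Z$, the identity $b\,\beta(\ell)\,\alpha(\ell)=b\,\ell=\beta(u)$ together with $b\,\beta(\ell)\in{\cal B}$ exhibits $\bigl(\alpha(\ell),\,b\,\beta(\ell)\bigr)$ as an $({\cal A},{\cal B})$-factorisation of $\beta(u)$; comparing it with the trivial factorisation $\bigl(\mathrm{id}_{\phi(u)},\,\beta(u)\bigr)$ of $\beta(u)\in{\cal B}$ forces $\alpha(\ell)$ to be an isomorphism, so $\ell=\beta(\ell)\,\alpha(\ell)\in{\cal B}$. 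Now take two solutions $\ell_1,\ell_2$. Since $\ell_i\in{\cal B}$ and $\ell_i\,\alpha(u)=f$, the diagrams $X\xrightarrow{\alpha(u)}\phi(u)\xrightarrow{\ell_i}Z$ are $({\cal A},{\cal B})$-factorisations of $f$ sharing the middle object $\phi(u)$ and the left leg $\alpha(u)$; comparing them yields a unique isomorphism $\nu\colon\phi(u)\to\phi(u)$ with $\nu\,\alpha(u)=\alpha(u)$ and $\ell_2\,\nu=\ell_1$. Post-composing the last relation with $b$ gives $\beta(u)\,\nu=b\,\ell_2\,\nu=b\,\ell_1=\beta(u)$. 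Thus $\nu$ is an automorphism of $\phi(u)$ commuting with both legs $\alpha(u)$ and $\beta(u)$ of the factorisation of $u$, hence a self-comparison of that factorisation; as $\mathrm{id}_{\phi(u)}$ is one too, the uniqueness clause forces $\nu=\mathrm{id}$, whence $\ell_1=\ell_2\,\nu=\ell_2$.

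The existence half is routine once one thinks of factoring $f$; the point requiring care is the uniqueness, and inside it the two small moves: that a solution is forced into ${\cal B}$ by comparison with the trivial factorisation of $\beta(u)$, and that the comparison isomorphism of the two induced factorisations of $f$ becomes, after testing against $b$, a self-automorphism of the factorisation of $u$ and therefore the identity. The only background facts used beyond the definition are that ${\cal A}$ and ${\cal B}$ contain the isomorphisms and that ${\cal B}$ is closed under composition.
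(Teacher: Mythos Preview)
Your proof is correct and follows exactly the route the paper sketches: factor $f\colon X\to Z$ and compare with the given factorisation of $u$ via the uniqueness clause. You supply considerably more detail than the paper's one-line proof, in particular a careful uniqueness argument (showing any lift lies in ${\cal B}$ and then comparing the two induced factorisations of $f$, and finally of $u$) that the paper leaves entirely implicit.
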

\begin{proof}
This follows by considering a factorisation of $X\to Z$ and using the uniqueness of the decomposition of $u$.
\end{proof}

The dual lemma considering a square
$$\xymatrix{
X\ar[r]\ar[d]_a&\phi(u)\ar[d]\\
Z\ar[r]\ar@{-->}[ur]^\ell&Y
}$$
with $a\in \cal A$ is also true.

\begin{cor}
The classes $\cal A$ and $\cal B$ of a unique factorisation system define a unique lifting system.
\end{cor}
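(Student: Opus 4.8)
The plan is to show that the classes $\cal A$ and $\cal B$ of a unique factorisation system satisfy $\cal A = {}^\bot\!{\cal B}$ and $\cal B = {\cal A}^\bot$, which is exactly the definition of a unique lifting system. There are four inclusions to establish.

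First I would prove $\cal A \subset {}^\bot\!{\cal B}$, i.e. every $a\in\cal A$ has the unique left lifting property against every $b\in\cal B$. Given a square with $a$ on the left and $b$ on the right, I would take a factorisation $X\to \phi\to Z$ of the top composite $X\to Z$; since $a\in\cal A$ and the composite $\phi\to Z\to Y$ lies in $\cal B$ (as $\cal B$ is stable by composition, proposition \ref{proplift}.1 — or more directly, I can instead factor the composite $X\to Y$ and invoke Lemma \ref{fundamental} after identifying the two factorisations of $u$ via uniqueness). Concretely: the original square exhibits $X\to Y$ as factoring through $a$ followed by $N\to Y$; factoring $N\to Y$ as $N\to\phi\to Y$ gives a factorisation $X\to\phi\to Y$ of $u$ with left part in $\cal A$ (composition) and right part in $\cal B$. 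By uniqueness this agrees with $X\to\phi(u)\to Y$, and then Lemma \ref{fundamental} supplies the unique lift. Dually, using the dual of Lemma \ref{fundamental}, one gets $\cal B \subset {\cal A}^\bot$.

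Next I would prove the saturation, i.e. the reverse inclusions $^\bot\!{\cal B}\subset\cal A$ and ${\cal A}^\bot\subset\cal B$. For the first: let $u:X\to Y$ be left orthogonal to all of $\cal B$, and write its factorisation $X\xrightarrow{\alpha(u)}\phi(u)\xrightarrow{\beta(u)}Y$. Apply the left lifting property of $u$ against $\beta(u)\in\cal B$ to the square with top $\alpha(u)$ and bottom $u$; this yields $\ell:Y\to\phi(u)$ with $\ell u = \alpha(u)$ and $\beta(u)\ell = \mathrm{id}_Y$. So $\beta(u)$ is a split epi in the arrow-theoretic sense; then $\ell\beta(u)$ and $\mathrm{id}_{\phi(u)}$ are both lifts in a suitable square (using that $\beta(u)$ is a retraction of $\ell$ and the uniqueness built into Lemma \ref{fundamental}), forcing $\beta(u)$ to be an isomorphism. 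Hence $u\cong\alpha(u)\in\cal A$. The inclusion ${\cal A}^\bot\subset\cal B$ is obtained dually.

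The main obstacle is the retract argument showing $\beta(u)$ is an isomorphism once it admits a section $\ell$ with $\ell u=\alpha(u)$: one must check that $\ell$ is a two-sided inverse, i.e. that $\ell\beta(u)=\mathrm{id}_{\phi(u)}$, and this is where the uniqueness in the factorisation system is essential — the two maps $\ell\beta(u)$ and $\mathrm{id}_{\phi(u)}$ both make $X\to\phi(u)\to Y$ factor as an $\cal A$-map followed by a $\cal B$-map in the same way, so the uniqueness clause (or equivalently Lemma \ref{fundamental} applied with $Z=\phi(u)$, $b=\beta(u)$) identifies them. Everything else is formal bookkeeping with composition stability (proposition \ref{proplift}.1) and the two versions of Lemma \ref{fundamental}.
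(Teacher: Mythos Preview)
Your argument is correct and, in its second half, more complete than the paper's. For the orthogonality ${\cal A}\subset{}^\bot{\cal B}$ the paper takes the cleanest route: factor the \emph{diagonal} of the square and apply Lemma~\ref{fundamental} together with its dual to obtain the lift as a composite through $\phi(u)$. Your two sketches (factor the top map, or factor the bottom map and then identify with $\phi(u)$ via uniqueness of factorisations) reach the same intermediate object with a little more bookkeeping, but the content is identical.

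Where you genuinely differ is that you go on to prove the saturation inclusions ${}^\bot{\cal B}\subset{\cal A}$ and ${\cal A}^\bot\subset{\cal B}$ via the standard retract argument. The paper's proof stops at orthogonality, even though its own definition of a unique lifting system requires the equalities ${\cal A}={}^\bot{\cal B}$ and ${\cal B}={\cal A}^\bot$. Your retract argument is correct: the crucial step, that the section $\ell$ of $\beta(u)$ is two-sided, is exactly an instance of the uniqueness in Lemma~\ref{fundamental} with $Z=\phi(u)$ and $b=\beta(u)$, since both $\ell\beta(u)$ and $\mathrm{id}_{\phi(u)}$ are lifts there. One small caveat: you invoke Proposition~\ref{proplift}.1 (closure under composition), which in the paper is formulated for lifting systems rather than factorisation systems; however, closure of $\cal A$ and $\cal B$ under composition follows directly from the uniqueness of factorisations, so there is no actual circularity.
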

\begin{proof}
Given a commuting square
$$\xymatrix{
X\ar[r]\ar[d]_a&Z\ar[d]^b\\
Y\ar[r]&T
}$$
with $a\in\cal A$ and $b\in\cal B$, the result follows by considering a factorisation of the diagonal $X\to T$ and by the above lemma and its dual.
\end{proof}
 
We are going to see in \S\ref{liftfacto} that the converse is true if $\cal C$ is nice enough.

\begin{prop}
This factorisation is functorial in the sense that, for any commuting square
$$\xymatrix{
X\ar[d]\ar[r]^u&Y\ar[d]\\
X'\ar[r]_v&Y'
}$$
and any choice of factorisation of $u$ and $v$ is associated a unique map $\varphi(u)\to \varphi(v)$ such that the following diagram commutes:
$$\xymatrix{
X\ar[r]^{\alpha(u)}\ar[d]&\varphi(u)\ar[r]^{\beta(u)}\ar[d]&Y\ar[d]\\
X'\ar[r]_{\alpha(v)}&\varphi(v)\ar[r]_{\beta(v)}&Y'.
}$$
\end{prop}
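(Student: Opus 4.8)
The plan is to build the comparison map $\varphi(u)\to\varphi(v)$ directly from the universal lifting property established in Lemma \ref{fundamental}. First I would form the outer rectangle: starting from the given commuting square, compose to get a diagram
$$\xymatrix{
X\ar[d]\ar[r]^{\alpha(u)}&\varphi(u)\ar[r]^{\beta(u)}&Y\ar[d]\\
X'\ar[r]_{\alpha(v)}&\varphi(v)\ar[r]_{\beta(v)}&Y'
}$$
and observe that the composite $X\to X'\to\varphi(v)$ lies in $\cal A$ (it factors through $\alpha(u)$, but more simply, I only need that $X\to\varphi(u)$ is in $\cal A$ and $\varphi(v)\to Y'$ is in $\cal B$). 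Concretely, I would look at the square with left edge $\alpha(u):X\to\varphi(u)\in\cal A$, top edge the composite $X\to X'\xrightarrow{\alpha(v)}\varphi(v)$, right edge $\beta(v):\varphi(v)\to Y'\in\cal B$, and bottom edge the composite $\varphi(u)\xrightarrow{\beta(u)}Y\to Y'$. This square commutes by hypothesis, so by Lemma \ref{fundamental} (or rather its dual, with the left map in $\cal A$ — either formulation applies since both a lifting for a factorisation against a $\cal B$-map and the dual are available) there is a unique diagonal filler $\varphi(u)\to\varphi(v)$, and uniqueness of this filler is exactly the uniqueness assertion in the statement. The two triangles commuting are precisely the two triangles of the lifting diagram, which give $\varphi(u)\to\varphi(v)$ compatible with the $\alpha$'s on one side and the $\beta$'s on the other.

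Second, for the compatibility square on the left ($X'\to\varphi(v)$ equals $X'\to X\to\varphi(u)\to\varphi(v)$... careful: the maps go $X\to X'$, so I want $\alpha(v)\circ(X\to X') = (\varphi(u)\to\varphi(v))\circ\alpha(u)$) and on the right ($\beta(v)$ composed with the comparison map equals $(Y\to Y')\circ\beta(u)$): both of these are built into the lifting diagram of Lemma \ref{fundamental} as the commutativity of its upper and lower triangles, so nothing extra is needed. The whole argument is really just one application of the fundamental lemma.

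The only genuine subtlety — and the step I'd flag as the main point to get right — is the \emph{uniqueness} clause: one must check that any map $\varphi(u)\to\varphi(v)$ making the full $2\times 3$ diagram commute is in particular a diagonal filler of the single square isolated above, and then invoke the uniqueness half of Lemma \ref{fundamental}. This is immediate but worth stating, since a priori "the unique map compatible with everything" is a stronger-looking condition than "the unique lift of one square". I would also remark in passing that the construction is a posteriori independent of the chosen factorisations of $u$ and $v$ up to the unique isomorphisms relating any two factorisations, which is what makes the assignment well defined; but since the proposition fixes the factorisations as data, this need not be belaboured. In short: apply Lemma \ref{fundamental} to the square with legs $\alpha(u)\in\cal A$ and $\beta(v)\in\cal B$, read off the two triangles as the two compatibility squares, and read off uniqueness of the filler as the claimed uniqueness.
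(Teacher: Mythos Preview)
Your proof is correct and matches the paper's argument essentially verbatim: both isolate the square with $\alpha(u)\in{\cal A}$ on one side and $\beta(v)\in{\cal B}$ on the other, then invoke the unique lifting property to produce the diagonal $\varphi(u)\to\varphi(v)$. One small point of precision: the square you write down has $\alpha(u)$ and $\beta(v)$ coming from \emph{different} factorisations, so Lemma~\ref{fundamental} as literally stated (which pairs $\alpha(u)$ with $\beta(u)$ against an arbitrary $b\in{\cal B}$) does not apply on the nose; the clean reference is the Corollary immediately following it, namely that $({\cal A},{\cal B})$ is a unique lifting system --- which is exactly what the paper cites.
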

\begin{proof}
From $$\xymatrix{
X\ar[r]^{\alpha(u)}\ar[d]&\varphi(u)\ar[r]^{\beta(u)}&Y\ar[d]\\
X'\ar[r]_{\alpha(v)}&\varphi(v)\ar[r]_{\beta(v)}&Y'.
}$$
one can extract the square
$$\xymatrix{
X\ar[r]^{\alpha(u)}\ar[d]&\varphi(u)\ar[d]\\
\varphi(v)\ar[r]_{\beta(v)}&Y'.
}$$
Then, the wanted map exists by the previous corollary.
\end{proof}

\begin{lemma}\label{relativefacto}
If $\cal C= (A,B)$ is a unique factorisation system, then for any $X\in {\cal C}$, the category $X\setminus {\cal C}$ can be equipped with a unique factorisation system $X\setminus {\cal C}=({\cal A}_X,{\cal B}_X)$ where the factorisation of a map $X\to Y\to Z$ is essentially that of $Y\to Z$ in $\cal C$.
\end{lemma}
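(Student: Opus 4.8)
The plan is to transport the factorisation system along the forgetful functor $U:X\setminus{\cal C}\to{\cal C}$, which is faithful and reflects isomorphisms (a map of $X\setminus{\cal C}$ is invertible iff its underlying map of ${\cal C}$ is). I would define ${\cal A}_X$ (resp. ${\cal B}_X$) to be the class of maps of $X\setminus{\cal C}$ whose image under $U$ lies in ${\cal A}$ (resp. ${\cal B}$). Faithfulness of $U$ will then be all that is needed to upgrade the existence-and-uniqueness statement in ${\cal C}$ to one in $X\setminus{\cal C}$.

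First I would check existence of factorisations. An arrow of $X\setminus{\cal C}$ from $(x:X\to Y)$ to $(x':X\to Z)$ is a map $f:Y\to Z$ of ${\cal C}$ with $fx=x'$. Take the factorisation $Y\xrightarrow{\alpha(f)}\phi(f)\xrightarrow{\beta(f)}Z$ in ${\cal C}$ and equip $\phi(f)$ with the structure map $\alpha(f)\,x:X\to\phi(f)$. Then $\alpha(f)$ becomes a morphism $(X\to Y)\to(X\to\phi(f))$ of $X\setminus{\cal C}$, tautologically compatible with the structure maps, and $\beta(f)$ becomes a morphism $(X\to\phi(f))\to(X\to Z)$ since $\beta(f)\,\alpha(f)\,x=fx=x'$. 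This gives the required factorisation, which is "essentially that of $Y\to Z$", with first part in ${\cal A}_X$ and second part in ${\cal B}_X$.

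Next, uniqueness up to unique isomorphism. Given two factorisations in $X\setminus{\cal C}$ of the same $f$, their underlying data in ${\cal C}$ are two factorisations of $f:Y\to Z$, so there is a unique isomorphism $g$ of ${\cal C}$ between the middle objects commuting with the two $\alpha$-parts and the two $\beta$-parts. Since each middle object carries the structure map ($\alpha$-part)$\circ\,x$ and $g$ commutes with the $\alpha$-parts, $g$ automatically commutes with the structure maps from $X$; hence $g$ is a morphism of $X\setminus{\cal C}$, it is an isomorphism there (its inverse in ${\cal C}$ is under $X$ for the same reason), and it is the unique such because $U$ is faithful.

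The main (and only mild) point is precisely this last observation, that the comparison isomorphism supplied by the factorisation system in ${\cal C}$ is forced to be a morphism under $X$; everything else is a formal unravelling of the coslice structure maps, so I expect no serious obstacle. If one wishes in addition to see that $({\cal A}_X,{\cal B}_X)$ is the orthogonality system $({}^\bot{\cal B}_X,{\cal A}_X{}^\bot)$ computed inside $X\setminus{\cal C}$, the same bookkeeping applies to a lifting square: the unique lift in ${\cal C}$ is forced to commute with the structure maps from $X$ because the left-hand map of the square, being under $X$, already does; combined with the corollary that a unique factorisation system is a unique lifting system, this gives the orthogonality statement for free.
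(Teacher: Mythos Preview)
Your proof is correct. The paper states this lemma without proof (treating it as routine), and your argument is exactly the expected one: pull back the classes along the forgetful functor $U:X\setminus{\cal C}\to{\cal C}$, factor underlying arrows in ${\cal C}$, and observe that the structure map on any middle object of a factorisation in $X\setminus{\cal C}$ is forced to be $(\alpha\textrm{-part})\circ x$, so the unique comparison isomorphism from ${\cal C}$ is automatically under $X$.
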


By duality, the categories ${\cal C}/X$ also inherit a unique factorisation system.

\subsection{From lifts to factorisations}\label{liftfacto}

We are going to study how to construct a unique factorisation system from a unique lifting system.
For left generated non unique lifting systems, the small object argument is used to construct a non unique factorisation system. This construction works also for unique lifting system and the resulting factorisation can be proved to be unique as a consequence of the stability of the right class by diagonals (lemma \ref{uniquelift}). 
But as unique lifting systems are somehow simpler than non unique ones, one can expect a simpler description of the factorisation in this case. 

In the case were $\cal C$ is a locally presentable category, we provide here such a construction, noticeable for not using the axiom of choice (not as the small object argument). This will be used in \ref{etale}.

\medskip

Let ${\cal C}$ be a cocomplete $\lambda$-accessible category (\ie a locally presentable category) and ${\cal C}^\lambda$ the full subcategory of $\lambda$-presentable objects. ${\cal C}^\lambda$ is stable by $\lambda$-small colimits and ${\cal C}=Ind_\lambda({\cal C}^\lambda)$.
Let $G$ be some set of maps in $\cal C$ and $\cal (A,B)$ the lifting system left generated by $G$.
The existence of left generators authorizes a simple enough description of the class $\cal B$ and we would like to have also some description of the class $\cal A$ from $G$.

We define $G'$ as the smallest class of $({\cal C}^\lambda)^\deux$ containing $G$ and stable by composition, $\lambda$-small colimits, pushouts along maps of ${\cal C}^\lambda$ and right cancellation. As it is stable by composition, $G'$ can be seen as a subcategory of ${\cal C}^\lambda$.
We define also $\overline{G}$ as the class of maps of $\cal C$ that are pushouts of maps in $G'$.
$\overline{G}$ is called the class of {\em strict $\lambda$-presentable maps of the class $\cal A$}
(the class of {\em $\lambda$-presentable maps of the class $\cal A$} is ${\cal A}\cap {\cal C}^\lambda$).

\begin{lemma}\label{gbar}
$\overline{G}$ is the smallest subclass of ${\cal C}^\deux$ containinig $G$ stable by composition, $\lambda$-small colimits 
composition, pushouts along maps of ${\cal C}$ and right cancellation. 
\end{lemma}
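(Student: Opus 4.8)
The statement asserts that $\overline{G}$, defined as pushouts of maps in $G'$, coincides with the smallest subclass $\cal{H}$ of ${\cal C}^{\underline{2}}$ containing $G$ and stable under composition, $\lambda$-small colimits, pushouts along arbitrary maps of ${\cal C}$, and right cancellation. The plan is to prove the two inclusions $\overline{G} \subseteq {\cal H}$ and ${\cal H} \subseteq \overline{G}$.

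The easy inclusion is $\overline{G} \subseteq {\cal H}$. Indeed, $G \subseteq G'$ by definition, and $G' \subseteq {\cal H}$ because $G'$ was built from $G$ using operations (composition, $\lambda$-small colimits, pushouts along maps of ${\cal C}^\lambda$, right cancellation) that are all among — in fact weaker than — the closure operations defining ${\cal H}$ (pushouts along maps of ${\cal C}^\lambda$ being a special case of pushouts along maps of ${\cal C}$). Since ${\cal H}$ is closed under pushouts along arbitrary maps of ${\cal C}$, every pushout of a map in $G'$ lies in ${\cal H}$, hence $\overline{G} \subseteq {\cal H}$.

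The substantive inclusion is ${\cal H} \subseteq \overline{G}$: by minimality of ${\cal H}$ it suffices to check that $\overline{G}$ contains $G$ (clear, since any map is a pushout of itself along an identity) and is stable under the four operations. Stability under pushouts along maps of ${\cal C}$ is immediate from the definition of $\overline{G}$ together with the pasting/composition of pushout squares: a pushout of a pushout of a map in $G'$ is again a pushout of that map in $G'$. The remaining three — composition, $\lambda$-small colimits, right cancellation — are where the real work lies, and the key technical device will be the description of an arbitrary object of ${\cal C} = Ind_\lambda({\cal C}^\lambda)$ as a $\lambda$-filtered colimit of $\lambda$-presentable objects. Given maps $f_i : A_i \to B_i$ in $\overline{G}$, each fits in a pushout square over some map $g_i : A_i' \to B_i'$ in $G'$ with $A_i', B_i' \in {\cal C}^\lambda$; the combinatorial data of how these squares are glued (for a composition, for a $\lambda$-small diagram, or for a right-cancellation triangle) involves only $\lambda$-small colimits, so one can take a $\lambda$-filtered colimit presentation in which at each stage the relevant diagram of $\lambda$-presentable pieces lives in $({\cal C}^\lambda)^{\underline{2}}$ and the gluing is performed inside ${\cal C}^\lambda$ — which is stable under $\lambda$-small colimits. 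Since $G'$ is already closed under composition, $\lambda$-small colimits, pushouts along maps of ${\cal C}^\lambda$, and right cancellation, the glued map at each finite stage lies in $G'$; passing to the $\lambda$-filtered colimit then exhibits the composite (resp. colimit, resp. cancelled map) as a pushout of a map in $G'$, i.e.\ as a member of $\overline{G}$.

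The main obstacle, and the step requiring care, is the right cancellation case: given $u : A \to B$ and $v : B \to C$ with $u, vu \in \overline{G}$, one must produce a presentation in which both $u$ and $vu$ are simultaneously approximated by pushouts of maps in $G'$ over a \emph{coherent} system of $\lambda$-presentable approximations of $A, B, C$ — so that the cancellation can be carried out stage by stage inside $G'$ (using that $G'$ has right cancellation) before taking the colimit. Arranging the two given pushout presentations to be compatible over a common $\lambda$-filtered diagram, and controlling the map $v$ at the $\lambda$-presentable level, is the delicate bookkeeping; it works because all the constraints are expressible by $\lambda$-small diagrams and ${\cal C}^\lambda \hookrightarrow {\cal C}$ preserves and reflects them. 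Once this is set up, the conclusion that $\overline{G}$ satisfies all four closure properties is formal, and minimality of ${\cal H}$ finishes the proof.
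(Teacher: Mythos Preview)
Your overall structure is correct: show $\overline{G}\subseteq{\cal H}$ (easy) and ${\cal H}\subseteq\overline{G}$ by verifying that $\overline{G}$ satisfies the four closure properties. Stability under pushouts is indeed immediate. However, your argument for the remaining three properties has a genuine gap.

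You propose to ``take a $\lambda$-filtered colimit presentation in which at each stage the relevant diagram of $\lambda$-presentable pieces lives in $({\cal C}^\lambda)^{\underline{2}}$'' and then claim that ``passing to the $\lambda$-filtered colimit \dots\ exhibits the composite \dots\ as a pushout of a map in $G'$''. But a filtered colimit of pushout squares with varying $G'$-map yields a pushout of a \emph{filtered colimit} of maps in $G'$, and $G'$ is only closed under $\lambda$-small colimits, not $\lambda$-filtered ones. So the result is not visibly a pushout of a single map in $G'$, which is exactly what membership in $\overline{G}$ demands. Your diagnosis in the last paragraph --- that the templates must be made \emph{coherent} --- is right, but ``stage by stage inside $G'$ along a filtered system'' is not the mechanism that achieves it.

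The paper's approach avoids this by proving a replacement sub-lemma: given a pushout square exhibiting $X\to Y$ as the pushout of some $A\to B\in G'$ along $A\to X$, and given any map $C\to Y$ with $C\in{\cal C}^\lambda$, one can \emph{enlarge} the template to a map $A'\to B'\in G'$ (a pushout of $A\to B$ along some $A\to A'$ with $A'\in{\cal C}^\lambda$) so that $C\to Y$ factors through $B'$. This lets one arrange, for instance in the composition case $X_0\to X_1\to X_2$, that the attaching map $A_1\to X_1$ of the second pushout factors through the image $B_0\to X_1$ of the first; then $A_0\to B_0\to B_0\cup_{A_1}B_1$ is a \emph{single} map in $G'$ whose pushout along $A_0\to X_0$ is $X_0\to X_2$. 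No filtered colimit is taken in the end --- the filtered structure of ${\cal C}=Ind_\lambda({\cal C}^\lambda)$ is used only inside the sub-lemma to produce the enlarged template. The right-cancellation and $\lambda$-small-colimit cases use the same sub-lemma, iterated finitely many times to make all the templates coherent. This enlargement step is the missing idea in your sketch.
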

\begin{proof}
Let $G''$ be the the smallest subcategory of $\cal C$ containinig $G$ stable by composition, pushouts and right cancellation. By construction $\overline{G}\subset G''$ so it is enough to prove that $\overline{G}$ is stable by composition, $\lambda$-small colimits and right cancellation.

\begin{sublemma}\label{replacement}
Given a pushout square
$$\xymatrix{
A\ar[r]\ar[d]&X\ar[d]\\
B\ar[r]&Y
}$$
where $A\to B\in {\cal C}^\lambda$ and a map $C\to Y$, with $C\in{\cal C}^\lambda$, there exists another
pushout square
$$\xymatrix{
A'\ar[r]\ar[d]&X\ar[d]\\
B'\ar[r]&Y
}$$
where $A'\to B'\in {\cal C}^\lambda$ such that $C\to Y$ factors through $B'\to Y$.
\end{sublemma}
\begin{proof}
Define a category where objects are factorisations $A\to A'\to X$ where $A\to A'\in {\cal C}^\lambda$, 
and maps from $A\to A'\to X$ to $A\to A''\to X$ are maps $A'\to A''$ making the two obvious triangles to commute. 
This category is that of $\lambda$-presentable objects in the accessible category of all factorisations of $A\to X$, thus $X$ is the filtered colimit of the $A'$. Then $Y$ can be presented as the filtered colimit of the $A'\cup_AB$ and the map $C\to Y$ has to factor through one of the $A'\cup_AB$ for some $A'$.
\end{proof}

\noindent {\sl Composition.}
Consider a triangle $X_0\to X_1\to X_2$ where the two $X_i\to X_{i+1}$ for $i=0,1$ are in $\overline{G}$, we want to show that $X_0\to X_2$ is in $\overline{G}$.
$X_i\to X_{i+1}$ is the pushout of some map $A_i\to B_i$ in $G'$ and in general $B_0\not=A_1$ but sub-lemma \ref{replacement} ensures that we can find another $A_0\to B_0$ such that there exist a map $A_1\to B_0$.
Then all we have to do is replace $A_1\to B_1$ by $B_0\to B_0\cup_{A_0}B_1$.

\noindent {\sl Right cancellation.}
A preliminary remark: for some maps $X\to Y_i$ in $\overline{G}$ written as pushouts of $a_i:A_i\to B_i$ along some maps $A_i\to X$, it is possible to replace the $a_i$ by $a'_i:\sqcup_i A_i\to B_i\sqcup (\sqcup_{j\not=i}A_j)$, which are maps with a same source. Those maps are still in $G'$ as pushouts of $a_i$ along $A_i\to \sqcup_j A_j$.
Now consider a triangle $X\to Y_1\to Y_2$ where the two $X\to Y_i$ are in $\overline{G}$, we want to show that $Y_1\to Y_2$ is in $\overline{G}$.
The $X\to Y_i$ can be written as pushouts of maps $A\to B_i$ in $G'$ along some map $A\to X$. 
In general there is no map $B_1\to B_2$ in $G'$ whose pushout along $B_1\to Y_1$ is $Y_1\to Y_2$ but, by sub-lemma \ref{replacement}, it is possible to find some other $A\to B_i$ such that this map exists.
We now have two maps $A\to B_2$ that need not to commute but they are equalized by $B_2\to Y_2$ so, by sub-lemma \ref{replacement} again, we can assume that they do.
Finally, we have written $X\to Y_1\to Y_2$ as the pushout of a triangle $A\to B_1\to B_2$. By right cancellation $B_1\to B_2$ is in $G'$ so $Y_1\to Y_2$ is in $\overline{G}$.

\noindent {\sl $\lambda$-small colimits.}
Let $u_i:X_i\to Y_i$ be a diagram in $\overline{G}\subset{\cal C}^\deux$ indexed by a $\lambda$-small category $I$.
For each $i\in I$, there exists a map $a_i:A_i\to B_i\in G'$ such that $u_i$ is the pushout of $a_i$ along some $A_i\to X_i$. We are going to prove that all these $a_i$ can be chosen to form a $I$ diagram in $G'$. It requires a few steps: first, replace $A_i$ by $A'_i=\sqcup_{j\to i} A_j$ and $a_i$ by its pushout $a'_i:A'_i\to B'_i$ along $A_i\to A'_i$; there is a canonical map $A'_i\to X$ and $u_i$ is a pushout of $a'_i$ along it. For any $u:j\to i$, this create a map $A'_j\to A'_i$ and to obtain a map $B'_j\to B'_i$ we need to apply sub-lemma \ref{replacement}. Let $A^u_i\to B^u_i$ be the replacement of $a'_i$ obtained this way, to supress the dependance on $u$ we put them all together by taking their colimit along $I/i$. 
Let denote $a''_i:A''_i\to B''_i$ this $I$-diagram, as $G'$ is stable by $\lambda$-small colimits, all $a''_i$ are in $G'$ and so is their colimit.
Finally, the colimit of $X_i\to Y_i$ is in $\overline{G}$ as the pushout of the colimit of $A''_i\to B''_i$.
\end{proof}

The idea is now the following: for a lifting system $\cal (A,B)$, suppose we have a factorisation of a morphism $u:X\to Y$ in $X\overset{\alpha}{\tto}X'\overset{\beta}{\tto}Y$ with $\alpha\in\cal A$ and $\beta\in\cal B$, then for any square
$$\xymatrix{
X\ar[r]^\alpha\ar[d]_{\in \cal A}&X'\ar[d]^\beta\\
U\ar[r]\ar@{-->}[ru]^s&Y
}$$
there exists a unique $s$. This suggests to build $X\to X'$ as a colimit of all $X\to U$. 

For this colimit to exist, we are going to consider only those $X\to U$ in $\overline{G}$. Define $G_u$ to be the category whose objects are compositions $X\to U\to Y$, where $X\to U$ is in $\overline{G}$, and whose morphisms are diagrams
$$\xymatrix{
X\ar[r]\ar@{=}[d]&U_1\ar[d]\ar[r]&Y\ar@{=}[d]\\
X\ar[r]&U_2\ar[r]&Y
}$$
(by cancellation the map $U_1\to U_2$ is still in $\overline{G}$). 

\begin{lemma}
$G_u$ is a small with all $\lambda$-small colimits; so in particular it is $\lambda$-filtered.
\end{lemma}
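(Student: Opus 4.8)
The plan is to prove the three assertions in turn: smallness, existence of all $\lambda$-small colimits, and — formally from the second — $\lambda$-filteredness. For \emph{smallness}, I would unwind the definition of $\overline G$. An object of $G_u$ is a factorisation $X\to U\to Y$ of $u$ with $X\to U\in\overline G$, and by construction such a map is a pushout of some $a\colon A\to B\in G'$ along a map $A\to X$, so $U$ is determined up to isomorphism by the pair $(a, A\to X)$. Now ${\cal C}^\lambda$ is essentially small, hence so are $({\cal C}^\lambda)^\deux$ and the subclass $G'\subset({\cal C}^\lambda)^\deux$, while $A\to X$ ranges over a set; and once $U=X\cup_AB$ is fixed, a map $U\to Y$ with composite $u$ is the same datum as a map $B\to Y$ compatible with $u$ over $A$, again ranging over a set. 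Thus the objects of $G_u$ form an essentially small class, and since its hom-sets are subsets of hom-sets of ${\cal C}$, the category $G_u$ is small.

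For \emph{$\lambda$-small colimits}, let $c\colon I\to G_u$ be a diagram over a $\lambda$-small $I$, with $c(i)$ the factorisation $X\overset{w_i}{\tto}U_i\to Y$. I would first compute the colimit of $c$ in the cocomplete category of all factorisations of $u$ — equivalently in $X\setminus{\cal C}$, forgetting the map to $Y$ — and then check that it lands in the full subcategory $G_u$. Writing $P:=\colim_I(\textrm{const }X)$ and $\widetilde U:=\colim_I U_i$ (both in ${\cal C}$), $\nabla\colon P\to X$ for the fold map and $r\colon P\to\widetilde U$ for the canonical comparison induced by the $w_i$, a direct check of universal properties gives $\colim^{X\setminus{\cal C}}_I c=X\cup_P\widetilde U$; hence the structural leg $X\to\colim^{X\setminus{\cal C}}_I c$ is the pushout of $r$ along $\nabla$, and it is enough to see $r\in\overline G$. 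But $r$ is exactly the colimit, computed pointwise in the arrow category ${\cal C}^\deux$, of the diagram $i\mapsto(X\overset{w_i}{\tto}U_i)$ (morphisms $(\mathrm{id}_X, c(\varphi))$), all of whose values lie in $\overline G$; by lemma \ref{gbar} the class $\overline G$ is stable under $\lambda$-small colimits in ${\cal C}^\deux$, so $r\in\overline G$, and then $X\to X\cup_P\widetilde U\in\overline G$ by stability of $\overline G$ under pushout along maps of ${\cal C}$. Therefore this colimit is an object of $G_u$, and since $G_u$ is full in the category of factorisations of $u$ it is also the colimit there. (For $I$ empty the colimit is the identity factorisation $X\overset{\mathrm{id}}{\tto}X\overset{u}{\tto}Y$, which lies in $G_u$ since $\mathrm{id}_X$ is a pushout of the initial object of $({\cal C}^\lambda)^\deux$.)

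Finally, a category possessing all $\lambda$-small colimits is \emph{$\lambda$-filtered}: every $\lambda$-small diagram in it has a cocone, namely the colimiting one, and it is nonempty since it has an initial object; this gives the last clause. The step I expect to be the main obstacle is the identification $\colim^{X\setminus{\cal C}}_I c=X\cup_P\widetilde U$: colimits in $G_u$ are colimits of arrows with a \emph{fixed} source, which are not literally colimits in ${\cal C}^\deux$, and this formula is precisely what re-expresses the leg $X\to\colim_I c$ as a pushout of the genuine arrow-category colimit $r$, thereby bringing lemma \ref{gbar} to bear. The remaining points — the counting in the smallness argument and the passage from $\lambda$-small colimits to $\lambda$-filteredness — are routine.
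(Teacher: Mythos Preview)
Your proof is correct and follows the paper's approach: smallness from the parametrisation of $\overline{G}$ by pushouts of maps in the small class $G'$, and $\lambda$-small colimits by appeal to lemma~\ref{gbar}. Your argument is in fact more careful than the paper's one-line invocation of that lemma; in particular, your pushout formula $\colim^{X\setminus{\cal C}}_I c = X\cup_P\widetilde U$ makes explicit the passage from $\lambda$-small colimits in ${\cal C}^\deux$ (what lemma~\ref{gbar} actually controls) to colimits of arrows with \emph{fixed} source $X$, a bridge the paper leaves implicit.
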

\begin{proof}
$G_u$ is small as all maps $X\to U\in \overline{G}$ are pushouts of maps in $G'$ that is small.
Let $X\to U_i\to Y$ be a diagram indexed by a $\lambda$-small category $I$.
By lemma \ref{gbar}, the colimit $X\to U$ of the $X\to U_i$ exists in $X/\overline{G}$ and $X\to U\to Y$ is a colimit  in $G_u$ for the $X\to U_i\to Y$. 
\end{proof}

\begin{thm}\label{factolim}
Let $\cal C$ be a cocomplete $\lambda$-accessible category and $G$ a set of maps between $\lambda$-presentable objects, left generating a unique lifting system $\cal (A,B)$.
For a map $u:X\to Y$ there exists a factorisation $u:X\to \colim_{G_u}U\to Y$, where $G_u$ is as before,
such that $X\to \colim_{G_u}U$ is in $\cal A$ and $\colim_{G_u}U\to Y$ is in $\cal B$.

This defines a unique factorisation system on $\cal C$ whose left class is $\cal A$ and right class is $\cal B$,
moreover maps in $\cal A$ are $\lambda$-filtered colimits (taken in ${\cal C}^\deux$) of strict $\lambda$-presentable maps (${\cal A}=Ind_\lambda\z\overline{G}$).
\end{thm}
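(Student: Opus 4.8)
The plan is to verify, in order: (1) the proposed factorisation $X \to \colim_{G_u} U \to Y$ exists and has the correct left/right membership; (2) it defines a unique factorisation system; (3) the resulting left class is exactly $\cal A$ and the right class is exactly $\cal B$; and (4) the description ${\cal A} = Ind_\lambda\z\overline{G}$ holds. Write $X' := \colim_{G_u} U$, a colimit which exists and is $\lambda$-filtered by the preceding lemma, and let $\alpha : X \to X'$ and $\beta : X' \to Y$ be the induced maps.

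First I would check $\alpha \in {\cal A}$. Since each structure map $X \to U$ in the diagram $G_u$ lies in $\overline{G} \subset {\cal A}$ (a map in $G$ is in $\cal A$ by definition of left generation, and $\overline{G}$ is built from $G$ by operations — composition, pushout, $\lambda$-small colimits, right cancellation — under which $\cal A$ is closed by Proposition \ref{proplift} and its dual), the map $\alpha$ is a $\lambda$-filtered colimit in $X/{\cal C}$ of maps in $\cal A$, hence a $\lambda$-filtered colimit in ${\cal C}^\deux$ of strict $\lambda$-presentable maps; it lies in $\cal A$ because $\cal A$ is closed under colimits in the arrow category (dual of Proposition \ref{proplift}(4)). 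This simultaneously proves the final clause ${\cal A} = Ind_\lambda\z\overline{G}$, once one also knows every map of $\cal A$ arises this way — which will follow from steps (2)–(3): any $a \in {\cal A}$ equals its own $\alpha$-part up to the canonical iso coming from uniqueness of the factorisation, and that $\alpha$-part is visibly in $Ind_\lambda\z\overline{G}$. Next, $\beta \in {\cal B} = G^\bot$: given a lifting problem against some $g : A \to B$ in $G$ with $A, B$ $\lambda$-presentable, the square $A \to X'$, $B \to Y$ factors — since $A$ is $\lambda$-presentable and $X'$ is $\lambda$-filtered — through some object $X \to U \to Y$ of $G_u$, giving $A \to U$; then $g$ composed with $U \to Y$ can be pushed out to enlarge $U$ to $U \cup_A B$, which is again in $\overline{G}$ over $X$ and hence is another object of $G_u$. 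The leg $U \cup_A B \to X'$ of the colimit cocone, precomposed with $B \to U \cup_A B$, supplies the lift; uniqueness follows because $\cal B$ is stable under diagonals (Lemma \ref{uniquelift}), or directly since any two lifts agree after mapping to $X'$ by the colimit universal property applied to the codiagonal object.

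For step (2), uniqueness of the factorisation: given any other factorisation $X \xrightarrow{\alpha'} X'' \xrightarrow{\beta'} Y$ with $\alpha' \in {\cal A}$, $\beta' \in {\cal B}$, the square with $\alpha'$ on the left and $\beta$ on the right has a unique lift $X'' \to X'$ by the orthogonality $({\cal A},{\cal B})$ already established (our $\alpha \in {\cal A}$ factors through this lift appropriately), and symmetrically a unique lift $X' \to X''$; the two composites $X' \to X'$ and $X'' \to X''$ are lifts of the respective squares against themselves, hence equal the identities by uniqueness, so the lifts are mutually inverse isomorphisms, and they are unique as solutions to the relevant lifting problems. Functoriality/compatibility with the given classes then gives step (3): the left class of the factorisation system consists of those $u$ for which $\beta(u)$ is an isomorphism; such $u$ is a retract of $\alpha(u) \in {\cal A}$, hence in $\cal A$ (Proposition \ref{proplift}(3), dual), and conversely if $u \in {\cal A}$ then in its canonical factorisation $\beta(u)$ lies in ${\cal A} \cap {\cal B}$, the isomorphisms (Proposition \ref{proplift}(2)) — so the left class is exactly $\cal A$, and dually the right class is exactly $\cal B$.

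The main obstacle I anticipate is step (1), specifically confirming that $\beta : X' \to Y$ genuinely lies in $G^\bot$: the colimit $X'$ is assembled only from maps $X \to U$ in $\overline{G}$, so one must be careful that the "enlargement" trick (pushing out a lifting problem $A \to B$ against $U \to Y$ to form $U \cup_A B$) stays inside $\overline{G}$ over $X$ and inside the small category $G_u$ — this is exactly what Lemma \ref{gbar} and the presentability of $A$, $B$ are for, but threading the $\lambda$-filteredness argument (factoring $A \to X'$ through a stage, then factoring the two resulting maps $A \to U$ through a common later stage so the pushout is well-defined) requires care. A secondary subtlety is making the closure properties of $\cal A$ and of $\overline{G}$ line up precisely enough to conclude ${\cal A} = Ind_\lambda\z\overline{G}$ rather than merely an inclusion; the key point is that every $a \in {\cal A}$ is the $\alpha$-component of its own factorisation, which is manifestly a $\lambda$-filtered colimit of strict $\lambda$-presentable maps by construction.
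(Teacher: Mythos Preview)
Your proof is correct and follows the paper's approach closely: $\alpha \in {\cal A}$ as a filtered colimit in ${\cal C}^{\underline 2}$ of maps in $\overline{G}$, and $\beta \in {\cal B}$ via the enlargement $U \mapsto U \cup_A B$ inside $G_u$, with the remaining identifications deduced from uniqueness of the factorisation. One small note: your first justification for uniqueness of the lift against $g \in G$ (that ``$\cal B$ is stable under diagonals'') is circular since $\beta$ is not yet known to lie in $\cal B$, but your second (pushing the codiagonal $B \cup_A B \to B \in G'$ into $G_u$ and using the colimit leg) is the correct one---and is in fact what the paper's terse proof also leaves implicit.
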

\begin{proof}
Let first remark that the factorisation system has to be unique as $\cal A$ is stable by codiagonals (lemma \ref{uniquelift}).

As $G_u$ is filtered, it is connected so the map $X\to \colim_{G_u}U$ is the colimit of maps $X\to U$ in ${\cal C}^{\underline{2}}$ and thus in $\cal A$.
Then $\colim_{G_u}U\to Y$ is in $\cal B$ if, for any $g:A\to B\in G$, any square
$$\xymatrix{
A\ar[d]_g\ar[r]^-u& \colim_{G_u}U\ar[d]\\
B\ar[r]\ar@{-->}[ru]&Y
}$$
admit a unique diagonal filler.
The map $u$ factors through some $A\to U$, the pushout out $U\to U\cup_AB\in \overline{G}$ define an element $X\to U\cup_AB\to Y$ of $G_u$ and a map $U\cup_AB\to \colim_{G_u}U$ which gives the wanted lift.
The last assertion is clear by construction of the factorisation.
\end{proof}

In the previous theorem, the generating set $G$ can always be replaced by $G'$ and more canonically by the whole ${\cal A}^\lambda={\cal A}\cap {\cal C}^\lambda$.

\begin{cor}\label{indleftgen}
If $\cal C$ is locally presentable and $\cal C=(A,B)$ is a left generated by $G\subset {\cal A}^\lambda$ then, for any $X\in {\cal C}$, 
$$
X\setminus {\cal A} \simeq Ind_\lambda(X\setminus {\cal A}^\lambda)
$$
\end{cor}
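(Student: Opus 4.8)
The plan is to use Theorem~\ref{factolim} as a black box, which tells us that every map $u : X \to Y$ in $\cal C$ factors as $X \to \colim_{G_u} U \to Y$ with the first map in $\cal A$ and, crucially, the first map realized as a $\lambda$-filtered colimit in ${\cal C}^{\underline 2}$ of strict $\lambda$-presentable maps out of $X$. Specializing to $Y = X$ with $u = id_X$ is too crude, so instead I would work with the comma category directly: an object of $X\setminus {\cal A}$ is a map $X \to Z$ in $\cal A$, and I want to exhibit it as a $\lambda$-filtered colimit of objects of $X\setminus {\cal A}^\lambda$. Applying the factorisation of Theorem~\ref{factolim} to $X \to Z$ itself (whose right part is then an isomorphism, since $X\to Z$ is already in $\cal A$ and $\cal A\cap\cal B$ consists of isomorphisms by Proposition~\ref{proplift}(2)), we get $Z \simeq \colim_{G_{(X\to Z)}} U$, a $\lambda$-filtered colimit of maps $X \to U$ with $X\to U \in \overline{G}$. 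So $X\setminus{\cal A} = Ind_\lambda\z\overline{G}$ as categories under $X$, where here $\overline G$ is taken with its structure of full subcategory of $X\setminus\cal C$.

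The remaining point is to identify $Ind_\lambda$ of the strict $\lambda$-presentable maps $X\setminus\overline{G}$ with $Ind_\lambda(X\setminus{\cal A}^\lambda)$. First, the objects of $X\setminus \overline G$ are $\lambda$-presentable in $X\setminus\cal C$: a pushout $X \to U\cup_A B$ of a map $A\to B$ in $G'\subset ({\cal C}^\lambda)^{\underline 2}$ along some $A\to X$ need not have $U\cup_A B$ $\lambda$-presentable in $\cal C$, but $X\to U\cup_A B$ \emph{is} $\lambda$-presentable as an object of the accessible category $X\setminus\cal C$, being a pushout of the $\lambda$-presentable object $(A\to B)$-relative-to-$X$ — more precisely one uses that $X\setminus\cal C$ is locally $\lambda$-presentable and that $X\setminus\overline G$ is generated under $\lambda$-small colimits and the relevant closure operations by the image of $G$. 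Conversely, any $\lambda$-presentable object $X\to W$ of $X\setminus\cal C$ that lies in $\cal A$ must, by the factorisation $W\simeq\colim_{G_{(X\to W)}} U$ just described and $\lambda$-presentability of $W$ in $X\setminus\cal C$, be a retract of some $X\to U$ with $X\to U\in\overline G$; by the stability of $\overline G$-in-$X\setminus\cal C$ under retracts (which follows from closure under pushouts and right cancellation, as in Lemma~\ref{diagocancel}, or directly from Proposition~\ref{proplift}(3) applied in $X\setminus\cal C$), this retract is again in $X\setminus\overline G$. Hence $X\setminus{\cal A}^\lambda$ and $X\setminus\overline G$ have the same idempotent completion, so their $Ind_\lambda$-completions agree, giving the claim.

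The main obstacle I anticipate is the bookkeeping around \emph{which} category the presentability and the colimits are taken in: Theorem~\ref{factolim} produces $\cal A$ as $Ind_\lambda\z\overline G$ inside ${\cal C}^{\underline 2}$, whereas the corollary asserts a statement in $X\setminus\cal C$, and the passage between these requires knowing that forming $\lambda$-filtered colimits and checking $\lambda$-presentability is compatible with the functor ${\cal C}^{\underline 2}\to X\setminus\cal C$ sending an arrow to its target-under-its-source — equivalently, that $X\setminus\cal C$ is itself locally $\lambda$-presentable with $\lambda$-presentable objects exactly the arrows out of $X$ with $\lambda$-presentable codomain "relative to $X$", which is standard but worth stating carefully. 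The other slightly delicate step is the retract argument showing ${\cal A}^\lambda$ adds nothing new beyond $\overline G$ up to idempotent completion; this is where one genuinely uses that $\cal A$ is the left class of a \emph{unique} lifting system, via Proposition~\ref{proplift}(3).
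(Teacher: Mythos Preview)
Your proof is essentially correct, but it takes a longer route than the paper's, and the justification you give for one step is not quite the right one.

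The paper's argument is a one-liner via a pushout trick you do not use. From Theorem~\ref{factolim} (the ``moreover'' clause), any $X\to U\in{\cal A}$ is a $\lambda$-filtered colimit \emph{in ${\cal C}^{\underline 2}$} of maps $X_i\to U_i\in{\cal A}^\lambda$ with varying source. One then simply pushes out each $X_i\to U_i$ along $X_i\to X$ to obtain maps $X\to U'_i$; the resulting diagram is still $\lambda$-filtered with the same colimit $X\to U$, and each $X\to U'_i$ lies in $X\setminus{\cal A}^\lambda$ by construction. That is the whole proof. The pushout converts a presentation with varying source into one with fixed source $X$, landing directly where one wants without any further comparison.

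Your approach instead exploits the explicit construction of the factorisation (the colimit over $G_{(X\to Z)}$) to get a fixed-source presentation immediately, but this lands in $X\setminus\overline G$ rather than $X\setminus{\cal A}^\lambda$, forcing you into the retract/idempotent-completion argument. That argument is correct, but your stated justification for closure of $\overline G$ under retracts --- via ``pushouts and right cancellation, as in Lemma~\ref{diagocancel}'' or Proposition~\ref{proplift}(3) --- does not actually establish it: Proposition~\ref{proplift}(3) concerns ${\cal A}$, not $\overline G$, and right cancellation alone goes the wrong direction. The clean reason is that a retract is a coequalizer, hence a $\lambda$-small colimit in ${\cal C}^{\underline 2}$, and $\overline G$ is closed under those by Lemma~\ref{gbar}. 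With that fix your argument goes through; the paper's pushout is simply the shortcut that makes all of this unnecessary.
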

\begin{proof}
According to theorem \ref{factolim} any map $X\to U\in{\cal A}$ is a $\lambda$-filtered colimit in ${\cal C}^\deux$ of some maps $X_i\to U_i\in {\cal A}^\lambda$. Defining $X\to U'_i\in X\setminus {\cal A}^\lambda$ as the pushout of $X_i\to U_i$ along $X_i\to X$, the diagram $i\mapsto X\to U'_i$ is still filtered and $X\to U$ is still the colimit of the $X\to U_i$ but this colimit can now be taken in $Ind_\lambda(X\setminus {\cal A}^\lambda)$.
\end{proof}

\begin{lemma}\label{relativegenfacto}
If $\cal C$ is locally presentable and ${\cal (A,B)}$ is a left generated factorisation system, then for any $X\in {\cal C}$, the factorisation system $X\setminus {\cal C}=({\cal A}_X,{\cal B}_X)$ defined in lemma \ref{relativefacto} is a left generated factorisation system.
\end{lemma}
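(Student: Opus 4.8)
The plan is to produce an explicit set generating the right class ${\cal B}_X$, obtained by transporting the given generators of ${\cal B}$ along the free functor into the coslice. First I would recall two standard facts: the coslice $X\setminus{\cal C}$ of a locally presentable category is again locally presentable, and the forgetful functor $U\colon X\setminus{\cal C}\to{\cal C}$ sending $(X\to Y)$ to $Y$ has a left adjoint $F$, with $F(Z)$ the object $X\to X\sqcup Z$ whose structure map is the first coprojection. By hypothesis ${\cal B}=G^\bot$ and ${\cal A}={}^\bot{\cal B}$ for a set $G$, and I would propose $G_X:=F(G)=\{\,\mathrm{id}_X\sqcup g\mid g\in G\,\}$ as generators; this is again a set. (The more obvious pushouts $X\to X\cup_AB$ of the maps $g\colon A\to B$ of $G$ along arrows $A\to X$ handle one of the two inclusions defining a lifting system but not obviously the other, which is why the free construction is preferable.)

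Next I would pin down the relation between the two factorisation systems and $U$. Lemma \ref{relativefacto} says that the $({\cal A}_X,{\cal B}_X)$-factorisation of a morphism of $X\setminus{\cal C}$ is the $({\cal A},{\cal B})$-factorisation of its underlying arrow; since $U$ reflects isomorphisms, comparing the $U$-image of the $({\cal A}_X,{\cal B}_X)$-factorisation of $b$ with the trivial factorisation of $Ub$ when $Ub\in{\cal B}$ forces the left factor to be invertible, and one deduces ${\cal A}_X=U^{-1}({\cal A})$ and ${\cal B}_X=U^{-1}({\cal B})$. The heart of the argument is then that the adjunction $F\dashv U$ identifies, for $g\in G$ and any morphism $b$ of $X\setminus{\cal C}$, the commutative squares from $Fg$ to $b$ with the commutative squares from $g$ to $Ub$ in ${\cal C}$, this bijection carrying diagonal fillers to diagonal fillers and preserving their uniqueness. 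Hence $b$ is right orthogonal to $Fg$ in $X\setminus{\cal C}$ exactly when $Ub$ is right orthogonal to $g$ in ${\cal C}$, so $G_X^\bot=\{\,b\mid Ub\in G^\bot={\cal B}\,\}=U^{-1}({\cal B})={\cal B}_X$. Since $({\cal A}_X,{\cal B}_X)$ is already known to be a factorisation system, we also have ${\cal A}_X={}^\bot{\cal B}_X={}^\bot(G_X^\bot)$, so $X\setminus{\cal C}=({\cal A}_X,{\cal B}_X)$ is left generated by the set $G_X$.

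The only genuinely delicate points I anticipate are the bookkeeping in the adjunction step — verifying that the bijection on outer squares really respects the two triangle conditions for a lift and sends unique fillers to unique fillers — and extracting the clean identity ${\cal B}_X=U^{-1}({\cal B})$ from the informal wording of Lemma \ref{relativefacto}; both are routine but should be spelled out. Finally I would add a remark: when $G\subset{\cal C}^\lambda$ and $X$ is $\lambda$-presentable, $G_X$ consists of maps between $\lambda$-presentable objects of $X\setminus{\cal C}$, and in general one enlarges $\lambda$ to a regular cardinal $\mu$ with $X\in{\cal C}^\mu$, so that Theorem \ref{factolim} applies verbatim to $X\setminus{\cal C}$ and recovers the description ${\cal A}_X=Ind_\mu\z\overline{G_X}$.
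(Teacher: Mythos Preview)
Your proof is correct and is essentially the paper's own argument: the paper also takes as generators the maps $X\cup g\colon X\cup A\to X\cup B$ (your $F(g)=\mathrm{id}_X\sqcup g$) and checks that lifting against $G$ in $\cal C$ is equivalent to lifting against these in $X\setminus{\cal C}$, only phrasing this via an explicit pushout-square factorisation rather than the adjunction $F\dashv U$. Your added remark on the presentability degree is a useful refinement the paper omits.
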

\begin{proof}
If $\cal C$ is locally presentable so is $X\setminus{\cal C}$ and, if $G$ is the set of left generators for ${\cal (A,B)}$, that of left generators for $({\cal A}_X,{\cal B}_X)$ is the set of maps $X\cup g: X\cup A\to X\cup B$ where $g\in G$.
A map $X\to Y\to Z$ in $X\setminus{\cal C}$ is in ${\cal B}_X)$ iff it has the right lifting property with respect to any $g:A\to B\in G$ but such a lifting square can always be factored as :
$$\xymatrix{
A\ar[r]\ar[d]&X\cup A\ar[r]\ar[d]& Y\ar[d]\\
B\ar[r]&X\cup B\ar[r]& Z
}$$
where the left hand square is a pushout. From what a map $X\to Y\to Z$ in $X\setminus{\cal C}$ is in ${\cal B}_X)$ iff it has the right lifting property with respect to any $X\cup g:X\cup A\to X\cup B$.
\end{proof}

%%%%%%%%%%%%%%%%%%%%%%%%%%%%%%%%%%%%%%%%%%%%%%%%%%%%%%%%%%%%%%%%%%%%%%%%%%%%%%%%%%%%%%%%%%%%%%%%%%%%
%%%%%%%%%%%%%%%%%%%%%%%%%%%%%%%%%%%%%%%%%%%%%%%%%%%%%%%%%%%%%%%%%%%%%%%%%%%%%%%%%%%%%%%%%%%%%%%%%%%%
\newpage

\section{Topology}\label{topo}

This section presents the topological interpretation of factorisation systems sketched in the introduction.
The construction will take the form of a covariant functor 
\begin{eqnarray*}
Spec:{\cal C}&\tto &{\cal T}opos\\
X&\longmapsto& Spec(X)
\end{eqnarray*}
(where ${\cal T}opos$ is the category of toposes and geometric morphisms up to natural isomorphisms) sending the right class $\cal B$ of the factoristion system to pro-etale maps of toposes.

\subsection{Etale maps}\label{etalemap}

We will assume some properties on $\cal C$ to have a nice theory: ${\cal C}$ will be the opposite of a locally finitely presentable category, that is ${\cal C}=Pro({\cal C}^\omega)$ where ${\cal C}^\omega$ is the subcategory of ${\cal C}$ corresponding to finitely presented objects of ${\cal C}^o$.
A map in $\cal C$ is said to be {\em of finite presentation} if it is a pull-back of some map in ${\cal C}^\omega$
The class of such maps is noted ${\cal C}^f$; the same argument as in lemma \ref{gbar} can be applied to prove that ${\cal C}^f$ is stable by composition, base change along any map of $\cal C$, and has left cancellation.
%contains all isomorphism
If $*$ is the terminal object of ${\cal C}$, ${\cal C}^\omega\simeq{\cal C}^f/*$.

\medskip
The initial object of $\cal C$ is assumed to be strict (any map to it is an isomorphism), it is called {\em empty} and noted $\emptyset$.

\begin{defi}
Given a factorisation system ${\cal C=(A,B)}$:
\begin{enumerate}[a.]
\item a map $U\to X$ in $\cal B$ is called {\em p-etale} and a {\em p-etale open} of $X$ (the name is chosen suggest pro-etale);
\item a p-etale map $U\to X$ in called an {\em etale map} and an {\em etale open of $X$} if it is in ${\cal C}^f$. 
\end{enumerate}
\end{defi}

The intersections ${\cal A}\cap {\cal C}^f$ and ${\cal B}\cap {\cal C}^f$ are noted respectively ${\cal A}^f$ and ${\cal B}^f$. 
${\cal B}^f$ is the category of etale maps and ${\cal B}^f{/X}$ the category of etale opens of $X$. By left cancellation of $\cal B$ and ${\cal C}^f$, ${\cal B}^f{/X}$ is a full subcategory of ${\cal C}{/X}$.

\subsection{Points}\label{points}

In algebraic geometry, Zariski covering families are defined as surjective on a given set of points.
The set of points of the spectrum of a ring $A$ can be characterized via some equivalence relation on the set of maps from $A$ to fields. We propose here a notion that will play the role of fields and use it to define the set of points of any object.

\begin{defi}
Given a factorisation system ${\cal C=(A,B)}$:
\begin{enumerate}[a.]
\item an object $P$ of $\cal C$ is called a $\cal (A,B)${\em-point} (or only a {\em point} if the context is clear) if it is not empty and if any map $U\to P\in{\cal B}^f$ where $U$ is non empty has a section (non necessarily unique);
\item a {\em point of an object $X$} is a map $x:P\to X$ from a point $P$;
\item the {\em category of points of an objet $X\in{\cal C}$}, noted ${\cal P}t_{{\cal B}^f}(X)$, is the subcategory of ${\cal C}{/X}$ span by objects $P\to X$, where $P$ is a point; in particular, ${\cal P}t_{{\cal B}^f}({\cal C}):={\cal P}t_{{\cal B}^f}(*)$, is the subcategory of $\cal C$ spanned by all points;
\item the {\em set of points} of an object $X$, noted $pt_{{\cal B}^f}(X)$ is defined as the set of connected components of ${\cal P}t_{{\cal B}^f}(X)$.
\end{enumerate}
\end{defi}

%If $\cal C$ has no strict initial object (e.g. if $\cal C$ is pointed) the notion of point can be adapted by removing the not empty conditions for $P$ and $U$ in a.

\medskip
In topological terms, points are those objects such that any etale map has a section, if one thinks monomorphic etale maps as open embeddings, a point will have in particular no non trivial opens. This is one argument for the name "point" for this notion. Also, in the study of rings, our points will correspond to various kinds of rings closed under some operations (inverses, algebraic elements...) extracting the classes of fields, separably closed fields... which are indeed the "points" of algebraic geometry.

\subsection{Point covering families}\label{pointcovers}

We can now use the previous notion of point to copy the notion of surjectivity used in algebraic geometry for covering families.

\begin{prop}
Given a family of etale maps $\{U_i\to X\}$, the following two properties are equivalent:
\begin{enumerate}
\item Any point $P\to X$ lift to one of the $U_i$
$$\xymatrix{
&U_i\ar[d]^{\exists i}\\
P\ar[r]\ar@{-->}[ru]&X
}$$
\item The induced map of sets $\sqcup_i pt_{{\cal B}^f}(U_i)\tto pt_{{\cal B}^f}(X)$ is surjective.
\end{enumerate}
\end{prop}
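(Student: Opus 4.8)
The plan is to prove the equivalence by unwinding the definition of $pt_{{\cal B}^f}$ as the set of connected components of the category ${\cal P}t_{{\cal B}^f}$, and reducing everything to the level of individual points $P \to X$. The only genuinely non-formal input is the interplay between a lift $P \to U_i$ and the structure of ${\cal P}t_{{\cal B}^f}(U_i)$; the rest is bookkeeping about connected components.

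\medskip\noindent\emph{Proof sketch.}
Suppose first that (1) holds, and let $[P \to X]$ be a connected component of ${\cal P}t_{{\cal B}^f}(X)$, represented by a point $x\colon P \to X$. By (1), $x$ lifts through some $U_i$, giving $\tilde{x}\colon P \to U_i$ with $P$ a point; hence $[\tilde x] \in pt_{{\cal B}^f}(U_i)$ and its image in $pt_{{\cal B}^f}(X)$ is the class of the composite $P \to U_i \to X$, which is exactly $x$. So the class of $x$ is in the image of $\sqcup_i pt_{{\cal B}^f}(U_i) \to pt_{{\cal B}^f}(X)$; since every component is represented by some such $x$, the map is surjective. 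This direction is immediate.

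\medskip\noindent
Conversely, assume (2) and let $x\colon P \to X$ be an arbitrary point. Its connected component $[x] \in pt_{{\cal B}^f}(X)$ lies, by surjectivity, in the image of $pt_{{\cal B}^f}(U_i)$ for some $i$: there is a point $Q$ and a map $y\colon Q \to U_i$ whose composite $Q \to U_i \to X$ lies in the same connected component of ${\cal P}t_{{\cal B}^f}(X)$ as $x$. The main point is now to promote this ``connected-component'' relation to an actual lift of $x$ itself. A connecting zig-zag in ${\cal P}t_{{\cal B}^f}(X)$ between $x$ and $(Q \to U_i \to X)$ consists of points and maps over $X$; I would argue that along such a zig-zag the property of lifting through $U_i \to X$ is propagated, using that $U_i \to X$ is etale (in ${\cal B}^f$) together with the definition of a point. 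Concretely, if $P' \to X$ lifts through $U_i$ and there is an $X$-map $P'' \to P'$ (or $P' \to P''$) with $P', P''$ points, one produces a lift $P'' \to U_i$: in the ``$\to$'' case one simply composes; in the ``$\leftarrow$'' case one pulls back $U_i \times_X P'' \to P''$, which is etale (stability of ${\cal B}$ under pullback and of ${\cal C}^f$ under base change, from \S\ref{etalemap} and Proposition~\ref{proplift}), non-empty because it receives $P' \to U_i \times_X P''$, and hence admits a section by the defining property of the point $P''$. Composing this section with $U_i \times_X P'' \to U_i$ gives the desired lift. Starting from $Q \to U_i \to X$ and walking the zig-zag back to $x$, we obtain a lift $P \to U_i$ of $x$, which is (1).

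\medskip
I expect the main obstacle to be precisely this zig-zag propagation step: one must check that the ``lifts through $U_i$'' property is stable both along and against the morphisms of ${\cal P}t_{{\cal B}^f}(X)$, and the stability against an arrow is where the full strength of the point hypothesis (every non-empty etale map has a section) is used, via the pullback argument above. Everything else — identifying $pt$ with $\pi_0$, chasing images of connected components, stability of ${\cal B}^f$ under base change — is routine and follows from the definitions and from Proposition~\ref{proplift}.
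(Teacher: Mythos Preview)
Your proof is correct and follows essentially the same route as the paper: the nontrivial direction $(2)\Rightarrow(1)$ is handled by pulling back $U_i\to X$ along the given point, observing that the fiber product is nonempty and that the resulting etale map $U_i\times_X P\to P$ admits a section by the definition of a point. The paper compresses the connected-component argument into a single line, asserting directly the existence of a morphism $P'\to P$ from a point $P'$ that lifts to some $U_i$; your zig-zag propagation makes this step explicit and is in fact more careful than the paper's phrasing, since the composition direction of the zig-zag is needed as well (the paper leaves it implicit as the trivial case).
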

\begin{proof}
It is clear that \textit{1.} implies \textit{2.} Conversely, \textit{2.} says that for any $P\to X$, there exists an $i$ and a morphism $P'\to P$ from another point $P'$ such that $P'\to X$ lift to $U_i$. But this forces $U_i\times_XP$ to be non empty and as $\cal B$ is stable by base change, $U_i\times_XP\to P$ must then have a section.
\end{proof}

\begin{defi}\label{factotopo}
A family $\{U_i\to X\}$ in ${\cal B}^f$ is a {\em point covering family of $X$} if it satisfies one of the above two conditions.
\end{defi}

\begin{prop}
Point covering families of $X$ define a pretopology on ${\cal B}^f{/X}$ and ${\cal C}^f{/X}$.
\end{prop}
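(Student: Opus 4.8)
The plan is to check the three axioms of a Grothendieck pretopology on ${\cal B}^f{/X}$ (and then on ${\cal C}^f{/X}$, which contains it as a full subcategory, by the same arguments): stability of covering families under base change, locality (transitivity), and that isomorphisms are covering families. Throughout I would freely use Proposition~\ref{proplift}: $\cal B$ is stable by pullback and has left cancellation, and by the remark before \S\ref{points} the same holds for ${\cal C}^f$, hence for ${\cal B}^f = {\cal B}\cap{\cal C}^f$; I would also use that ${\cal B}^f{/X}$ is a full subcategory of ${\cal C}{/X}$, so that all the relevant pullbacks and composites computed in $\cal C$ stay inside ${\cal B}^f{/X}$.

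First, isomorphisms: if $U\to X$ is an isomorphism it is in ${\cal B}^f$, and any point $P\to X$ lifts trivially, so the singleton $\{U\to X\}$ is a point covering family; this handles the identity axiom. Second, base change: given a point covering family $\{U_i\to X\}$ and any map $Y\to X$ in ${\cal C}^f{/X}$, I form the pullbacks $U_i\times_X Y\to Y$, which lie in ${\cal B}^f$ by stability of $\cal B$ and of ${\cal C}^f$ under base change. To see they cover, I use condition~\textit{1} of the Proposition: a point $P\to Y$ composes to a point $P\to X$, which lifts to some $U_i$; the lift $P\to U_i$ together with $P\to Y$ induces $P\to U_i\times_X Y$ over $Y$ by the universal property of the pullback, so $\{U_i\times_X Y\to Y\}$ is a point covering family of $Y$. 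Third, transitivity: given a point covering family $\{U_i\to X\}$ and for each $i$ a point covering family $\{V_{ij}\to U_i\}$, I must show $\{V_{ij}\to X\}$ is a point covering family of $X$; each composite $V_{ij}\to U_i\to X$ is in ${\cal B}^f$ since ${\cal B}$ and ${\cal C}^f$ are stable by composition. For covering, take a point $P\to X$; it lifts to some $P\to U_i$; that is a point of $U_i$, which lifts further to some $P\to V_{ij}$; composing gives a lift $P\to V_{ij}$ over $X$, as required.

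I do not expect any serious obstacle here — the whole statement is a formal consequence of the stability properties already established for the classes $\cal B$ and ${\cal C}^f$ together with the purely diagrammatic characterisation \textit{1} of point covering families (liftings of points), which composes and pulls back transparently. The only mild subtlety worth a sentence in the proof is that one should check all constructed maps genuinely remain in ${\cal B}^f$ rather than merely in $\cal B$ or in ${\cal C}^f{/X}$; this is exactly what the cancellation and base-change clauses of Proposition~\ref{proplift} (and the analogous statement for ${\cal C}^f$) supply, so nothing beyond bookkeeping is needed.
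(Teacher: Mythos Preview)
Your proof is correct and follows essentially the same approach as the paper: both verify the pretopology axioms (identities, stability under base change, transitivity) by combining the pullback- and composition-stability of ${\cal B}$ and ${\cal C}^f$ with the diagrammatic lifting characterisation of point covering families. Your write-up is simply more explicit about the bookkeeping ensuring the constructed maps remain in ${\cal B}^f$.
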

\begin{proof}
Our definition of pretopology is taken from \cite[II.1.3.]{SGA4-1}.
Maps in $\cal B$ are stable by pullbacks in $\cal B$ or in $\cal C$, and so are maps surjective on points (easy from the definition): any pullback of a point covering family is again a point covering family. Identities are in ${\cal B}$ and surjective on points.
And finally, for $\{U_i\to X,i\}$ and for $\{V_{ij}\to U_i,j\}$ all covering families, all $V_{ij}\to X$ are etale by composition and $\sqcup V_{ij}\to X$ is still surjective on points.
\end{proof}

The associated topology is called the {\em factorisation topology}.

\subsection{Local objects}\label{localobject}

We defined our covering families such that any point lift through them, but many more objects have this lifting property, this is the idea of a local object. Topologically, these objects correspond to germs. This notion is related to that of point of a topos and has nothing to do with factorisation systems, but, in the particular case of factorisation topologies, it gives back many known classes of objects (such as local rings).

We will define in fact two notions of local objects with respect to a factorisation system.
Some local objects as defined above come with a feature generalises the residue field of a local ring: a map in the class $\cal A$ from a point. We call these local objects {\em pointed}. A priori, not all local objects are pointed nor is the map from a point unique when it exists, however this will be true in all our examples in algebraic geometry.

\medskip
A family $\{U_i\to L\}$ is said to {\em have a section} if there exists an $i$ and a section of $U_i\to L$.
A family $\{U_i\to X\}$ is said to {\em have a section along $L\to X$} (or to {\em lift through} $\{U_i\to X,i\}$) if there exists a section of $U_i\times_XL\to L$ for some $i$.

\begin{lemma}\label{localobjects}
For $L\in\cal C$, the following assertions are equivalent:
\begin{enumerate}
\item $L$ is such that every point covering family $\{U_i\to L\}$ admits a section.
\item $L$ is such that every point covering family $\{U_i\to X\}$ has a section along any $L\to X$.
%\item $L$ define a point of the topos $\widetilde{\cal C}$ of sheaves on $\cal C$ for the factorisation topology.
\end{enumerate}
\end{lemma}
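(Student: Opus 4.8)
The plan is to prove the equivalence of the two characterisations of local objects by showing that condition 2 trivially implies condition 1, and that condition 1 implies condition 2 by a base-change argument using the stability of point covering families under pullback.

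First I would observe that \emph{2 implies 1} is immediate: given a point covering family $\{U_i\to L\}$, apply condition 2 to the identity map $L\to L$; then $U_i\times_L L\simeq U_i$ and a section of $U_i\times_L L\to L$ is exactly a section of $U_i\to L$.

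For \emph{1 implies 2}, suppose $\{U_i\to X\}$ is a point covering family and let $g:L\to X$ be any map. I would form the pullbacks $U_i\times_X L\to L$. Since $\cal B$ is stable by pullback (Proposition~\ref{proplift}.3) and ${\cal C}^f$ is stable by base change, each $U_i\times_X L\to L$ is again in ${\cal B}^f$, i.e. etale. Moreover, as noted in the proof that point covering families form a pretopology (and in the proof of the proposition in \S\ref{pointcovers}), the property of being surjective on points is stable under pullback, so $\{U_i\times_X L\to L\}$ is again a point covering family, this time of $L$. Applying condition 1 to this family yields an $i$ and a section of $U_i\times_X L\to L$, which is precisely what condition 2 asks for.

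I do not expect any serious obstacle here; the statement is essentially a reformulation of the stability properties of point covering families already established, and the only thing to be careful about is that one genuinely needs both the stability of $\cal B$ under pullback (for the etale-ness of $U_i\times_X L\to L$) and the stability of point-surjectivity under pullback (for the covering property), both of which are available from the earlier material. The proof is therefore a short two-way implication with the pullback of a covering family along $L\to X$ as the only real idea.
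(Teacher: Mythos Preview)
Your argument is correct and is the standard one: $2\Rightarrow 1$ by specializing to the identity $L\to L$, and $1\Rightarrow 2$ by pulling back the covering family along $L\to X$ and invoking the stability of ${\cal B}^f$ and of point-surjectivity under base change. The paper in fact gives no proof of this lemma at all (it is stated and immediately followed by the definition of local object), so there is nothing to compare against; your write-up is exactly the routine verification the reader is expected to supply.
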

%\begin{proof}
%The equivalence of \textit{1.} and \textit{2.} is trivial.
%As $\cal C$ is assumed to have finite limits, a point of $\widehat{\cal C}$ is a left exact functor ${\cal C}\tto {\cal S}$.
%An object $L\in{\cal C}$ define a point of $\widehat{\cal C}$ via ${\cal C}(L,-):{\cal C}\tto {\cal S}$.
%Now a point of $\widehat{\cal C}$ is a point of $\widetilde{\cal C}$ iff it send point covering families to jointly surjective families of sets, \ie
%iff for any $\{U_i\to X\}$, $\sqcup_i{\cal C}(L,U_i)\to {\cal C}(L,X)$ is surjective. But this is a reformulation of equivalent to \textit{2.}
%\end{proof}

\begin{defi}\label{localob}
Any object $L$ satisfying those conditions wil be called {\em local}.
\end{defi}

\begin{lemma}
If $L$ is local and $L\to L'\in\cal A$, then $L'$ is local.
\end{lemma}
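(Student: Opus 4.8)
The plan is to use characterization (1) of local objects from Lemma \ref{localobjects}: $L'$ is local if and only if every point covering family $\{U_i \to L'\}$ admits a section. So I would start with an arbitrary point covering family $\{U_i \to L'\}$ in $\mathcal{B}^f/L'$ and try to produce a section.

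First I would pull the family back along $a: L \to L' \in \mathcal{A}$. Since $\mathcal{B}$ is stable by pullback (proposition \ref{proplift}) and $\mathcal{C}^f$ is stable by base change along any map of $\mathcal{C}$, each $U_i \times_{L'} L \to L$ is again in $\mathcal{B}^f$, \ie is etale over $L$. Moreover, pullbacks of point covering families are point covering families (as noted in the proof that point coverings form a pretopology), so $\{U_i \times_{L'} L \to L\}$ is a point covering family of $L$. Since $L$ is local, by Lemma \ref{localobjects}(1) this pulled-back family admits a section: there is an index $i$ and a section $s: L \to U_i \times_{L'} L$ of the projection $U_i \times_{L'} L \to L$.

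Next I would transform this section over $L$ into a section over $L'$. Composing $s$ with the other projection $U_i \times_{L'} L \to U_i$ gives a map $t: L \to U_i$ such that the composite $L \to U_i \to L'$ equals $a: L \to L'$. Now I want a diagonal lift in the square with left side $a \in \mathcal{A}$, top side $t: L \to U_i$, bottom side $\mathrm{id}_{L'}: L' \to L'$, and right side $U_i \to L' \in \mathcal{B}$; this square commutes precisely because the two composites $L \to L'$ agree. Since $(\mathcal{A},\mathcal{B})$ is a unique lifting system, there is a unique lift $\ell: L' \to U_i$, and the commutativity of the lower triangle says exactly that $\ell$ is a section of $U_i \to L'$. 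Hence $\{U_i \to L'\}$ admits a section, and $L'$ is local.

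The argument is essentially a pullback-then-lift maneuver, and the only point requiring a little care is the stability statements: that $\mathcal{B}^f = \mathcal{B} \cap \mathcal{C}^f$ is stable under pullback (which follows from proposition \ref{proplift}(3) for $\mathcal{B}$ and from the base-change stability of $\mathcal{C}^f$ established in \S\ref{etalemap}) and that the pullback of a point covering family is again one. Neither is an obstacle, since both were already recorded in the proof that point coverings form a pretopology; the main conceptual step is simply recognizing that the defining lifting property of $\mathcal{A}$ against $\mathcal{B}$ is exactly what upgrades a section over $L$ to a section over $L'$.
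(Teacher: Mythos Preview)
Your proof is correct and follows essentially the same approach as the paper's own proof: pull back the covering family along $L\to L'$, use locality of $L$ to obtain a section for some index $i$, and then apply the $(\mathcal{A},\mathcal{B})$ lifting property to the resulting square to upgrade this to a section of $U_i\to L'$. The paper's argument is simply a terser version of yours, drawing the lifting square directly without spelling out the stability justifications you include.
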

\begin{proof}
For $L\to L'\in {\cal A}$, let $\{U_i\to L'\}$ be a point covering family of $L'$, the pulled-back cover $U'_i\to L$ has a section by assumption on $L$ and this give a square where one can use property of the lifting system $\cal (A,B)$:
$$\xymatrix{
L\ar[d]_{\in \cal A}\ar[r]^{\exists i}&U_i\ar[d]^{\in \cal B}\\
L'\ar@{-->}[ru]\ar@{=}[r]&L'.
}$$
\end{proof}

As it is clear that points are local objects, the previous lemma authorizes the construction of local objects by considering targets of maps $P\to L\in \cal A$ where $P$ is a point.

\begin{defi}\label{pointedlocalob}
A {\em pointed local object} of $\cal C$ is an object $L$ such that there exists a point $P\to L\in{\cal A}$.
\end{defi}

%It is not clear in general that any local object can be pointed or that this point would be unique, nonetheless this will be the case in our main examples.

\begin{lemma}
The initial object of $\cal C$ is not a local object.
\end{lemma}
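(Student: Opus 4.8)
The plan is to show that the empty object $\emptyset$ fails the defining property of a local object, namely that every point covering family of $\emptyset$ admits a section (condition 1 of Lemma \ref{localobjects}). The natural candidate for a covering family with no section is the \emph{empty family} $\{\,\}$ indexed by the empty set. First I would check that the empty family is a legitimate point covering family of $\emptyset$: by Definition \ref{factotopo} I must verify the surjectivity condition, i.e. that the induced map of sets $\sqcup_{i\in\varnothing} pt_{{\cal B}^f}(U_i)\to pt_{{\cal B}^f}(\emptyset)$ is surjective. The left-hand side is the empty set, so this amounts to checking that $pt_{{\cal B}^f}(\emptyset)=\varnothing$, which in turn follows from the fact that a point $P$ is by definition \emph{not empty}: if there were a map $P\to\emptyset$ with $P$ a point, then since $\emptyset$ is a strict initial object this map would be an isomorphism, forcing $P\simeq\emptyset$, contradicting the non-emptiness of $P$. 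Hence $\emptyset$ has no points, the empty family vacuously satisfies the lifting condition of the Proposition in \S\ref{pointcovers}, and so it is a point covering family.

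Next I would observe that the empty family, by definition, admits no section: \textbf{``}$\{U_i\to L\}$ has a section\textbf{''} (in the terminology just before Lemma \ref{localobjects}) requires the existence of \emph{some} index $i$ together with a section of $U_i\to L$, and there is no such index when the family is empty. Therefore $\emptyset$ carries a point covering family with no section, so $\emptyset$ is not local.

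I would then add a brief remark to reassure the reader that this is not an artefact of admitting the empty family: one can equally use the representable covering family consisting of the single map $\mathrm{id}:\emptyset\to\emptyset$ together with the empty family, or simply note that any family of maps into $\emptyset$ consists, by strictness of the initial object, only of isomorphisms out of $\emptyset$, so the relevant point here is really that $\emptyset$ has no points while covering is phrased in terms of points. The only mild subtlety — and the one place a referee might object — is whether the chosen pretopology (Definition \ref{factotopo}, following \cite[II.1.3.]{SGA4-1}) really allows the empty family over $\emptyset$; this is exactly why I route the argument through $pt_{{\cal B}^f}(\emptyset)=\varnothing$ rather than invoking the empty sieve directly, since the surjectivity-on-points formulation makes the verification completely unambiguous. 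No step requires more than the strictness of the initial object and the non-emptiness clause in the definition of a point, so the proof is short.
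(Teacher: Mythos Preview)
Your proposal is correct and follows essentially the same approach as the paper's proof: both use strictness of $\emptyset$ together with the non-emptiness clause in the definition of a point to conclude that $pt_{{\cal B}^f}(\emptyset)=\varnothing$, deduce that the empty family is a point covering family of $\emptyset$, and observe that such a family cannot have a section. Your write-up is more detailed (and the side remark about the identity cover is unnecessary), but the argument is the same.
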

\begin{proof}
$\emptyset$ is strict if any map $X\to \emptyset$ is an isomorphism, so, as points are supposed not initial, the set of points of $\emptyset$ is empty. This prove that the empty family is a point covering family of $\emptyset$ and such a family cannot have a section.
\end{proof}

In examples from algebraic geometry this will prove that the zero ring is never a local object for the factorisation topologies.

\subsection{Nisnevich forcing}\label{forcing}

Both Zariski and Etale topologies are examples of factorisation topologies but this is not the case of Nisnevich topology. However, this topology has a definition from the Etale topology that can be generalized to a general context: starting with a topology given by some covering families, the new topology is defined by considering only those covering families satisfying a lifting condition relatively to maps from a given class $\cal L$ of objects (which is the class of fields in the case of Nisnevich topology). 
This operation forces the class $\cal L$ to be points of the subtopos associated to the topology.
We call such a construction a {\em Nisnevich forcing}, it is an operation on Grothendieck topologies and is completely independent of any factorisation system.

\begin{defi}
Let $\cal C$ be a category with a topology $\tau$ defined via some covering families $U_i\to X$, and $\cal L$ a class of objects of $\cal C$.
\begin{enumerate}[a.]
\item A covering family $U_i\to X$ is said {\em $\cal L$-localising} if for any object $L\in{\cal L}$ and any map $L\to X$ lift through the cover.
\item The {\em $\cal L$-Nisnevich forcing} of $\tau$ (refered to for short as the {\em Nisnevich topology}), noted $\tau_{\cal L}$, is the topology generated by $\cal L$-localising covering families. This topology is coarser than $\tau$.%in particular $\tau$ is a localisation of $\tau_{\cal L}$.
\end{enumerate}
\end{defi}

The class $\cal L$ is called the {\em forcing class}.
The {\em saturation} of $\cal L$, noted $\overline{\cal L}$, is defined as the subcategory of $\cal C$ of local objects (def. \ref{localob}) for the topology $\tau_{\cal L}$, these objects are called {\em Nisnevich local objects}. $\tau_{\overline{\cal L}}=\tau_{\cal L}$ and $\overline{\cal L}$ is maximal for this property.
If ${\cal L}=\emptyset$ then $\tau_{\cal L}=\tau$ and $\overline{\emptyset}={\cal L}oc$; the category of local objects.
If ${\cal L}'\subset {\cal L}$ then $\overline{\cal L'}\subset \overline{\cal L}$ so one has always ${\cal L}oc\subset \overline{\cal L}$.

\begin{defi}\label{niscontext}
The data ${\cal N}=({\cal C=(A,B)},{\cal L})$ where $\cal C=(A,B)$ is a factorisation system and $\cal L$ a class of objects of $\cal C$ is called a {\em Nisnevich context}. If ${\cal L}=\overline{\cal L}$ the Nisnevich context is said {\em saturated}.
$\overline{\cal N}=({\cal C=(A,B)},\overline{\cal L})$ is called the {\em saturation} of $\cal N$. Two Nisnevich contexts are said to be {\em equivalent} if they have the same saturation (which implies in particular that they have the same underlying factorisation system).
\end{defi}

%Nisnevich contexts will be our basic data to generate spectra.

\begin{lemma}
For a Nisnevich context ${\cal N}=({\cal C=(A,B)},{\cal L})$, if a map $L\to L'\in \cal A$ is such that $L\in \overline{\cal L}$ then $L'\in \overline{\cal L}$.
\end{lemma}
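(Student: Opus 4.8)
The statement to prove is: in a Nisnevich context $\mathcal N = (\mathcal C = (\mathcal A,\mathcal B), \mathcal L)$, if $L \to L' \in \mathcal A$ and $L \in \overline{\mathcal L}$, then $L' \in \overline{\mathcal L}$. The plan is to reduce this to the already-proven lemma that being a local object is stable under pushforward along maps of $\mathcal A$ (the lemma "If $L$ is local and $L\to L'\in\mathcal A$, then $L'$ is local"). Indeed, $\overline{\mathcal L}$ is by definition the category of local objects for the topology $\tau_{\mathcal L}$. So the statement is literally an instance of that earlier lemma, but applied to the factorisation topology's \emph{Nisnevich forcing} $\tau_{\mathcal L}$ rather than to a bare factorisation topology. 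The only thing to check is that the proof of that earlier lemma goes through verbatim for $\tau_{\mathcal L}$.

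First I would recall the mechanism of that earlier proof: given a $\tau_{\mathcal L}$-covering family $\{U_i \to L'\}$, pull it back along $L \to L'$ to get a family $\{U_i' \to L\}$; since pullbacks of $\cal L$-localising covering families are again $\cal L$-localising (this follows because the relevant lifting condition is stable under base change, exactly as surjectivity on points was), and since $L \in \overline{\mathcal L}$ is local for $\tau_{\mathcal L}$, the pulled-back family admits a section $L \to U_i'$ for some $i$. Composing with $U_i' \to U_i$ gives a map $L \to U_i$ fitting into the square
$$\xymatrix{
L\ar[d]_{\in\mathcal A}\ar[r]&U_i\ar[d]^{\in\mathcal B}\\
L'\ar@{-->}[ru]\ar@{=}[r]&L'
}$$
whose vertical arrows are respectively in $\mathcal A$ (our hypothesis) and in $\mathcal B$ (the covering families of $\tau_{\mathcal L}$ are still drawn from ${\cal B}^f \subset \mathcal B$). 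The unique lifting property of the system $(\mathcal A,\mathcal B)$ produces the diagonal $L' \to U_i$, which is the desired section of $U_i \to L'$. Hence every $\tau_{\mathcal L}$-covering family of $L'$ has a section, i.e. $L'$ is $\tau_{\mathcal L}$-local, i.e. $L' \in \overline{\mathcal L}$.

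The one subtlety — and the only place any real thought is needed — is that the topology $\tau_{\mathcal L}$ is defined as the one \emph{generated by} the $\cal L$-localising covering families, so not every $\tau_{\mathcal L}$-covering family is literally one of the generating families; one works with the saturated pretopology. I would handle this by noting that the locality condition (def.~\ref{localob}, via lemma~\ref{localobjects}) can be tested on a generating pretopology, or equivalently by observing that the argument above only ever uses covering families drawn from ${\cal B}^f$ together with pullback-stability, both of which are closed under the saturation operations (composition, base change, refinement), so the class of covering families for which $L'$ admits a section is already a pretopology containing the generators. This is a routine check of the kind already performed in the proof that point covering families form a pretopology. With that in hand, the lemma follows exactly as above.
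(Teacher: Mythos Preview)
Your proof is correct and follows essentially the same approach as the paper: use the locality of $L$ to lift a map through some $U_i$, then use the $(\mathcal A,\mathcal B)$ lifting square to produce the desired lift from $L'$. The paper is slightly more economical by working directly with condition~2 of lemma~\ref{localobjects} (a covering of an arbitrary $X$ and a map $L'\to X$, composed with $L\to L'$), which avoids your pullback step and the need to check that $\mathcal L$-localising families are stable under base change; your worry about the topology being \emph{generated} by $\mathcal L$-localising families is also unnecessary, since the notion of local object in def.~\ref{localob} is phrased in terms of the generating covering families, not arbitrary covering sieves.
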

\begin{proof}
If $\{U_i\to X\}$ is a Nisnevich covering family, by hypothesis any map $L\to X$ lift through one of the $U_i\to X$. If the map $L\to X$ is coming from a map $L'\to X$, this give a lifting square and a map $L'\to U_i$.
\end{proof}

\begin{lemma}
For a Nisnevich context ${\cal N}=({\cal C=(A,B)},{\cal L})$ and $X\in {\cal C}$, ${\cal N}_X=({\cal C}/X=({\cal A}_X,{\cal B}_X),{\cal L}/X)$, where ${\cal L}/X$ is the class of maps from an objet of ${\cal L}$ to $X$, is a Nisnevich context. Moreover, if ${\cal N}$ is saturated, so is ${\cal N}_X$.
\end{lemma}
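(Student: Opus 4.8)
The plan is to verify the two defining conditions of a Nisnevich context for the pair $\mathcal{N}_X = (\mathcal{C}/X = (\mathcal{A}_X,\mathcal{B}_X), \mathcal{L}/X)$: first that $\mathcal{C}/X = (\mathcal{A}_X,\mathcal{B}_X)$ is a genuine factorisation system satisfying the standing hypotheses of \S\ref{etalemap}, and second that $\mathcal{L}/X$ is a legitimate class of objects of $\mathcal{C}/X$, so that the Nisnevich forcing construction of \S\ref{forcing} applies. The first point is exactly lemma \ref{relativefacto} (and lemma \ref{relativegenfacto} if we need left generation), together with the remark that slicing preserves the hypothesis of being the opposite of a locally finitely presentable category and preserves strictness of the initial object (the initial object of $\mathcal{C}/X$ is $\emptyset \to X$, still strict). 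The second point is immediate: an object of $\mathcal{C}/X$ is a map $L \to X$, and $\mathcal{L}/X$ was defined precisely as the collection of such maps with source in $\mathcal{L}$. So once these bookkeeping facts are in place, $\mathcal{N}_X$ is a Nisnevich context by definition.

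For the saturation claim, I would first record the compatibility between the factorisation topology on $\mathcal{C}$ and on $\mathcal{C}/X$: a family $\{U_i \to Y\}$ over $X$ is a point covering family in $\mathcal{C}/X$ if and only if the underlying family $\{U_i \to Y\}$ is a point covering family in $\mathcal{C}$. This uses that the notions of etale map ($\mathcal{B}^f$) and of point are insensitive to slicing — a point $P$ of $\mathcal{C}$ equipped with any map $P \to X$ is a point of $\mathcal{C}/X$, because the etale maps $U \to (P \to X)$ in $\mathcal{C}/X$ are just the etale maps $U \to P$ in $\mathcal{C}$, and sections are the same. Granting this, an $\mathcal{L}/X$-localising covering family over $X$ is the same thing as a point covering family $\{U_i \to Y\}$ in $\mathcal{C}$ such that every $L \to Y$ with $L \in \mathcal{L}$ (and with the composite $L \to Y \to X$ arbitrary, i.e. every $L \to Y$ whatsoever) lifts; that is, it is exactly an $\mathcal{L}$-localising family in $\mathcal{C}$ lying over $X$. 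Hence the Nisnevich topology $\tau_{\mathcal{L}/X}$ on $\mathcal{C}/X$ is the restriction to $\mathcal{C}/X$ of $\tau_{\mathcal{L}}$ on $\mathcal{C}$.

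It then remains to compare the Nisnevich local objects. If $\mathcal{N}$ is saturated, i.e. $\mathcal{L} = \overline{\mathcal{L}}$ is the class of all $\tau_{\mathcal{L}}$-local objects of $\mathcal{C}$, I claim the $\tau_{\mathcal{L}/X}$-local objects of $\mathcal{C}/X$ are exactly the maps $L \to X$ with $L$ a $\tau_{\mathcal{L}}$-local object of $\mathcal{C}$, i.e. $\overline{\mathcal{L}/X} = \mathcal{L}/X$ as required for saturation. The inclusion $\mathcal{L}/X \subseteq \overline{\mathcal{L}/X}$ follows because a $\tau_{\mathcal{L}}$-local object admits a section through any $\tau_{\mathcal{L}}$-covering family, in particular through any $\tau_{\mathcal{L}/X}$-covering family viewed in $\mathcal{C}$, and the lifting characterisation of local objects (lemma \ref{localobjects}, condition 2) transports such a section back to $\mathcal{C}/X$. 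Conversely, if $L \to X$ is $\tau_{\mathcal{L}/X}$-local and $\{U_i \to L\}$ is any $\tau_{\mathcal{L}}$-covering family of $L$ in $\mathcal{C}$, then composing with $L \to X$ and using the compatibility above exhibits it as an $\mathcal{L}/X$-localising family over $X$ through which $L \to X$ admits a section, so $L$ is $\tau_{\mathcal{L}}$-local, i.e. $L \in \mathcal{L}$. Therefore $\overline{\mathcal{L}/X} = \mathcal{L}/X$ and $\mathcal{N}_X$ is saturated.

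The main obstacle is the compatibility statement of the previous paragraph — that point covering families, etale maps, points, and (Nisnevich) local objects are all stable under and reflected by slicing. None of the individual assertions is deep, but one must be careful that the forcing condition in $\mathcal{C}/X$ (lifting maps from $L \to X$) really coincides with the forcing condition in $\mathcal{C}$ (lifting maps from $L$), which works precisely because an object of $\mathcal{L}/X$ records no more data than its source together with an arbitrary structure map, so \emph{every} map $L \to Y$ in $\mathcal{C}$ over $X$ is an admissible test map. Once this identification is made explicit the result follows formally.
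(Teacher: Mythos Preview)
Your proposal is correct and follows essentially the same approach as the paper, which invokes lemma \ref{relativefacto} for the factorisation-system part and then observes that saturation reduces to showing that the source of a local object of $\mathcal{N}_X$ is a local object of $\mathcal{N}$. Your write-up is considerably more detailed than the paper's one-line proof---in particular you spell out the compatibility of points, etale maps, and covering families under slicing, and you argue both inclusions $\mathcal{L}/X \subseteq \overline{\mathcal{L}/X}$ and $\overline{\mathcal{L}/X} \subseteq \mathcal{L}/X$ where the paper treats the first as automatic---but the underlying argument is the same.
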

\begin{proof}
The factorisation system $({\cal A}_X,{\cal B}_X)$ is the dual of that of lemma \ref{relativefacto}. Only the assertion about saturation is to be proven: the source of a local object of ${\cal N}_X$ is a local object of ${\cal N}$.
\end{proof}

\subsection{Distinguished coverings}

If $\cal C$ was small, the category $\widetilde{\cal C}$ of sheaves on $\cal C$ for a Nisnevich topology would be a topos and by lemma \ref{localobjects} a Nisnevich-local object would be exactly a point of $\widetilde{\cal C}$.
To have a small category to replace ${\cal C}$, we can consider ${\cal C}^\omega$ and the topos $\widetilde{{\cal C}^\omega}$ or more generally, given $X\in{\cal C}$, ${\cal C}^f/X$ and the topos $\widetilde{{\cal C}^f/X}$.

Points of $\widetilde{{\cal C}^f/X}$ are representable in $Pro({\cal C}^f/X)\simeq {\cal C}/X$ and it is easy to check that any local object defines such a point, but the converse may not be true.

\begin{defi}
A Nisnevich context is said to be {\em compatible} if, for any $X\in {\cal C}$, the category $\overline{\cal L}/X$ is the category of points of the topos $\widetilde{{\cal C}^f/X}$. 
\end{defi}

\begin{defi}
Given a Nisnevich context, a {\em distinguished class of Nisnevich covering families} is defined, for any $X\in {\cal C}$, as a class of Nisnevich covering families $U_i\to Y$ in ${\cal C}^f/X$ such that an object $L\in {\cal C}/X$ is Nisnevich local iff it lifts through any distinguished Nisnevich covering family.
\end{defi}

The following lemma is a reformulation of the definition of a compatible Nisnevich context.
\begin{lemma}
A Nisnevich context is compatible iff there exists a distinguished class of Nisnevich covering families.
\end{lemma}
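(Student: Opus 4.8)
The plan is to unwind both sides of the biconditional into a single statement about lifting of objects of ${\cal C}/X$ through covering families, and then to observe that a distinguished class is exactly the device forcing the two relevant lifting conditions to agree. First I would record the reformulation of ``point of $\widetilde{{\cal C}^f/X}$'' already implicit in the preceding discussion. Since ${\cal C}/X\simeq Pro({\cal C}^f/X)$, every $L\in{\cal C}/X$ is the domain of a point of the presheaf topos $\widehat{{\cal C}^f/X}$, and by the standard criterion it descends to a point of the sheaf topos precisely when it carries the covering families of a generating pretopology to epimorphic families of sets. The topology on ${\cal C}^f/X$ is generated by the Nisnevich covering families lying in ${\cal C}^f/X$ --- no such family leaves ${\cal C}^f/X$, since an etale map into an object of ${\cal C}^f/X$ again lies in ${\cal C}^f/X$ --- and ``carries $\{U_i\to Y\}$ to an epimorphic family'' unwinds to ``every map $L\to Y$ lifts to some $U_i$''. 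Hence the reformulation I want: $L$ is a point of $\widetilde{{\cal C}^f/X}$ iff $L$ lifts through every Nisnevich covering family whose base lies in ${\cal C}^f/X$. By contrast, the characterization of local objects (lemma \ref{localobjects}) applied to the Nisnevich topology $\tau_{\cal L}$ on ${\cal C}/X$ says that $L$ is Nisnevich local iff it lifts through every Nisnevich covering family, with no finiteness restriction on the base. Thus $\overline{\cal L}/X$ always sits inside the category of points of $\widetilde{{\cal C}^f/X}$, and compatibility is exactly the assertion that, conversely, every point of $\widetilde{{\cal C}^f/X}$ is Nisnevich local.

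For the implication ``compatible $\Rightarrow$ a distinguished class exists'', I would take, for each $X$, the class of \emph{all} Nisnevich covering families lying in ${\cal C}^f/X$, and check the defining biconditional of a distinguished class. If $L\in{\cal C}/X$ is Nisnevich local it lifts through every Nisnevich covering family, in particular through each member of this class. Conversely, if $L$ lifts through every member of the class, then by the reformulation above $L$ is a point of $\widetilde{{\cal C}^f/X}$, hence --- by compatibility --- Nisnevich local. So this class is a distinguished class of Nisnevich covering families.

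For the converse, I would fix a distinguished class for each $X$ and deduce compatibility. Given a point $L$ of $\widetilde{{\cal C}^f/X}$, the reformulation gives that $L$ lifts through every Nisnevich covering family supported on ${\cal C}^f/X$, in particular through every distinguished one; the defining property of the distinguished class then forces $L$ to be Nisnevich local, i.e. $L\in\overline{\cal L}/X$. Together with the automatic inclusion $\overline{\cal L}/X\subseteq$ (points of $\widetilde{{\cal C}^f/X}$), this shows the two coincide, which is exactly compatibility (the analogous statement for ${\cal N}_X$ with $X$ varying being the same argument applied in each slice).

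The only step carrying genuine content is the first paragraph: the precise identification of points of the small sheaf topos $\widetilde{{\cal C}^f/X}$ with objects of ${\cal C}/X$ lifting through the Nisnevich covers whose base is finitely presented, and in particular the bookkeeping that the induced topology on ${\cal C}^f/X$ is generated by exactly those covers and that ``epimorphic family of sheaves'' translates into a lifting condition. Once this is in hand the remainder is a purely formal comparison of two lifting conditions, and much of the needed topos-theoretic input is already granted in the discussion preceding the statement; so I expect no serious obstacle, only care in the first paragraph.
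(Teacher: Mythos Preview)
Your proposal is correct and follows the same route as the paper, which in fact offers no proof at all beyond the one-line remark that the lemma ``is a reformulation of the definition of a compatible Nisnevich context.'' What you have done is spell out that reformulation in full: identify points of $\widetilde{{\cal C}^f/X}$ via the lifting criterion, then observe that the class of all Nisnevich covering families in ${\cal C}^f/X$ serves as a distinguished class when the context is compatible, and conversely that any distinguished class forces the missing inclusion. This is exactly the intended unpacking, carried out with more care than the paper itself provides.
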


\begin{lemma}
If $({\cal C=(A,B)},{\cal L})$ is a compatible Nisnevich context iff, for any $X\in {\cal C}$, $({\cal C}/X=({\cal A}_X,{\cal B}_X),{\cal L}/X)$ is a compatible Nisnevich context.
\end{lemma}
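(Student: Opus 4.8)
The statement is an equivalence: the Nisnevich context $({\cal C=(A,B)},{\cal L})$ is compatible if and only if for every $X\in{\cal C}$ the relative context $({\cal C}/X=({\cal A}_X,{\cal B}_X),{\cal L}/X)$ is compatible. The plan is to unwind both sides to the same condition on points of the toposes $\widetilde{{\cal C}^f/Z}$, using the two previous lemmas: the one identifying relative Nisnevich contexts and the one (just above) saying that a Nisnevich context is compatible iff there is a distinguished class of Nisnevich covering families. The key observation is that the "big" slice behaves well: for $X\in{\cal C}$ and $Y\in{\cal C}/X$, the slice $({\cal C}/X)^f/Y$ is canonically the same category as ${\cal C}^f/Y$ (a slice of a slice is a slice), and under this identification the Nisnevich covering families agree, the factorisation system ${\cal B}_X^f/Y$ agrees with ${\cal B}^f/Y$, and an object $L\to Y$ over $X$ is Nisnevich-local in ${\cal N}_X$ iff its source $L\to Y$ is Nisnevich-local in ${\cal N}$ — this last point is exactly what was recorded in the proof of the preceding lemma about ${\cal N}_X$ being saturated when ${\cal N}$ is.

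The $(\Leftarrow)$ direction is immediate: taking $X=*$ gives ${\cal N}_* \simeq {\cal N}$, so compatibility of all ${\cal N}_X$ forces compatibility of ${\cal N}$. For $(\Rightarrow)$, fix $X$ and an object $Y\in{\cal C}/X$; I must show $\overline{{\cal L}/X}/Y$ is the category of points of $\widetilde{({\cal C}/X)^f/Y}$. By the identification above, $({\cal C}/X)^f/Y = {\cal C}^f/Y$ as sites with their Nisnevich topologies, so $\widetilde{({\cal C}/X)^f/Y} = \widetilde{{\cal C}^f/Y}$; by compatibility of ${\cal N}$ applied to the object $Y$, the points of this topos are $\overline{\cal L}/Y$. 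It then remains to check $\overline{{\cal L}/X}/Y = \overline{\cal L}/Y$, i.e. that the Nisnevich-local objects of ${\cal N}_X$ over $Y$ are the same as those of ${\cal N}$ over $Y$; but an object $L\to Y$ in ${\cal C}/X$ lifts through a Nisnevich covering family of $Y$ exactly when the underlying $L\to Y$ in ${\cal C}$ does (the covering families of ${\cal N}_X$ in ${\cal C}^f/Y$ being precisely those of ${\cal N}$), which gives the desired equality of saturations.

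The main obstacle, and the one point worth spelling out carefully, is the compatibility of the two "slicing" operations: that taking points of $\widetilde{(-)^f/Y}$ is insensitive to whether we regard $Y$ as an object of ${\cal C}$ or of ${\cal C}/X$, and correspondingly that the saturation $\overline{(-)}$ commutes with slicing. Both reduce to the canonical equivalence of $({\cal C}/X)/Z$ with ${\cal C}/Z$ for $Z\to X$, together with the fact (from lemma \ref{relativefacto} and its dual, and lemma \ref{relativegenfacto}) that the relative factorisation system and the relative Nisnevich topology are themselves defined by slicing, so iterating the construction is harmless. Once this bookkeeping is in place the equivalence is formal, and one can equivalently phrase the whole argument through the distinguished class of Nisnevich covering families: such a class for ${\cal N}$, sliced over each $Y$, is a distinguished class for each ${\cal N}_X$, and conversely, so compatibility transfers in both directions by the preceding lemma.
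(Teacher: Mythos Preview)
Your proposal is correct and follows the same approach as the paper, which dispatches the lemma with the single phrase ``By definition.'' You have simply spelled out what that phrase means: the definition of compatibility is already quantified over all objects of the ambient category, and under the canonical identification $({\cal C}/X)^f/Y\simeq {\cal C}^f/Y$ (together with the matching identification of saturations $\overline{{\cal L}/X}/Y\simeq \overline{\cal L}/Y$ recorded in the previous lemma), the compatibility condition for ${\cal N}_X$ at $Y$ is literally the compatibility condition for ${\cal N}$ at the underlying object $Y$; the converse follows by taking $X=*$.
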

\begin{proof}
By definition.
\end{proof}

\medskip
The following definition will be used in theorem \ref{theorem}.
\begin{defi}\label{good}
A Nisnevich context $({\cal C=(A,B)},{\cal L})$ is said to be {\em good} if
\begin{enumerate}[a.]
\item ${\cal C}^o$ is locally finitely presentable,
\item ${\cal (A,B)}$ is right generated by maps in ${\cal B}^f$,
\item and it is compatible.
\end{enumerate}
\end{defi}

Hypothesis b. implies by corollary \ref{indleftgen} that, for any $X$, ${\cal B}{/X}\simeq Pro({\cal B}^f{/X})$.

%%%%%%%%%%%%%%%%%%%%%%%%%%%%%%%%%%%%%%%%%%%%%%%%%%%%%%%%%%%%%%%%%%%%%%%%%%%%%%%%%%%%%%%%%%%%%%%%%%%%
%%%%%%%%%%%%%%%%%%%%%%%%%%%%%%%%%%%%%%%%%%%%%%%%%%%%%%%%%%%%%%%%%%%%%%%%%%%%%%%%%%%%%%%%%%%%%%%%%%%%
\newpage

\subsection{Spectra}\label{spectra}

Let $\cal C$ be the opposite of a locally finitely presentable category and ${\cal N}=({\cal C=(A,B)},{\cal L})$ be a Nisnevich context.

\begin{defi}
\begin{enumerate}[a.]
\item ${\cal B}^f{/X}$ endowed with the Nisnevich topology is called the {\em small site} of $X$. The associated topos is noted $Spec_{\cal N}(X)$ and called the {\em small ${\cal N}$-spectrum} of $X$.
\item ${\cal C}^f{/X}$ endowed with the Nisnevich topology is called the {\em big site} of $X$. The associated topos is noted $SPEC_{\cal N}(X)$ and called the {\em big ${\cal N}$-spectrum} of $X$.
\end{enumerate}
\end{defi}

\medskip
Let ${\cal T}opos$ be the category whose objects are toposes and morphisms equivalence classes of geometric morphisms for natural isomorphisms.

\begin{lemma}\label{fibprodB}
If $\cal C$ as finite limits, the category ${\cal B}^f{/X}$ has all finite limits and for $u:X\to Y\in{\cal C}$, the base change functor $u^*:{\cal B}^f{/Y}\to {\cal B}^f{/X}$ is left exact.
\end{lemma}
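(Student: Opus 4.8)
The plan is to reduce everything to the corresponding well-known statements for the full slice categories ${\cal C}{/X}$, using only that ${\cal B}^f={\cal B}\cap{\cal C}^f$ is stable under base change and satisfies the left cancellation property. So the first step is to record these two facts: ${\cal B}$ is stable by pullback and has left cancellation by Proposition~\ref{proplift}.3, and ${\cal C}^f$ has the same properties as recalled in \S\ref{etalemap}; both properties are inherited by the intersection ${\cal B}^f$. Hence ${\cal B}^f{/X}$ is a full subcategory of ${\cal C}{/X}$, as already observed. The decisive consequence of left cancellation is the following: if $W\to X$ and $U\to X$ both lie in ${\cal B}^f$ and $W\to U$ is a morphism over $X$, then $W\to U\in{\cal B}^f$. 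In other words, \emph{every} morphism of ${\cal B}^f{/X}$ is itself a map in ${\cal B}^f$.

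Next I would show ${\cal B}^f{/X}$ is closed under the finite limits computed in ${\cal C}{/X}$; since a category has finite limits as soon as it has a terminal object and pullbacks, it suffices to treat these two cases. The terminal object $\mathrm{id}_X\colon X\to X$ is an isomorphism, hence lies in ${\cal A}\cap{\cal B}\subset{\cal B}^f$. For a cospan $W\to U\leftarrow V$ in ${\cal B}^f{/X}$, form the pullback $W\times_U V$ in ${\cal C}$: the projection $W\times_U V\to V$ is a base change of $W\to U$, which belongs to ${\cal B}^f$ by the cancellation remark above, so $W\times_U V\to V\in{\cal B}^f$; composing with $V\to X\in{\cal B}^f$ (stability by composition of both ${\cal B}$ and ${\cal C}^f$) gives $W\times_U V\to X\in{\cal B}^f$. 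Because ${\cal B}^f{/X}$ is full in ${\cal C}{/X}$, this object carries the universal property of the pullback inside ${\cal B}^f{/X}$ as well. Therefore ${\cal B}^f{/X}$ has all finite limits and the inclusion ${\cal B}^f{/X}\hookrightarrow{\cal C}{/X}$ creates them.

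Finally, for $u\colon X\to Y$ the functor $u^*$ does land in ${\cal B}^f{/X}$: given $V\to Y$ in ${\cal B}^f$, the object $u^*V=V\times_Y X\to X$ is a base change of $V\to Y$, hence in ${\cal B}^f$. Left exactness is then formal: $u^*\colon{\cal C}{/Y}\to{\cal C}{/X}$ is right adjoint to post-composition with $u$, so it preserves all limits, and since the square of functors
$$\xymatrix{
{\cal B}^f{/Y}\ar[r]^{u^*}\ar[d]&{\cal B}^f{/X}\ar[d]\\
{\cal C}{/Y}\ar[r]_{u^*}&{\cal C}{/X}
}$$
commutes up to canonical isomorphism with both vertical inclusions creating finite limits, the restricted functor $u^*\colon{\cal B}^f{/Y}\to{\cal B}^f{/X}$ is left exact. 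The only non-formal ingredient is the left cancellation property of ${\cal B}^f$, which is what makes morphisms of ${\cal B}^f{/X}$ into morphisms of ${\cal B}^f$ and hence makes pullbacks work; once that is in place the rest is bookkeeping.
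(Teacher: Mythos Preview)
Your proof is correct and follows essentially the same approach as the paper's: reduce to terminal object plus fiber products, use left cancellation to see that every morphism of ${\cal B}^f{/X}$ lies in ${\cal B}^f$, then invoke pullback stability of ${\cal B}^f$ to conclude that fiber products computed in ${\cal C}$ stay in ${\cal B}^f{/X}$. The paper compresses all of this into a few lines and deduces left exactness of $u^*$ from the observation that these fiber products are computed independently of the base, whereas you spell out the commuting square with the full slices and the adjunction; this is only a difference in level of detail, not in strategy.
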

\begin{proof}
As ${\cal B}^f{/X}$ has a terminal object, it is sufficient to prove that is has fiber products. But, using the cancellation property as in thorem \ref{factolim}, they can be computed independently of the base $X$ in $\cal C$ (which will also imply the exactness of $u$) and as $\cal B$ and ${\cal C}^f$ are stable by pullback the resulting diagram it is in ${\cal B}^f$.
\end{proof}

We recall that a geometric morphism $u:\cal E\to F$ is {\em etale} ({\em local homeomorphism} in \cite[C.3.3.4]{elephant}) iff there exists an $F\in\cal F$ and an isomorphism ${\cal F}/F\simeq {\cal E}$ such that $u$ is equivalent to the geometric morphism ${\cal F}/F\tto {\cal F}$.

\begin{thm}\label{petitgros}
$Spec_{\cal N}(-)$ and $SPEC_{\cal N}(-)$ are functors ${\cal C}\to {\cal T}opos$.
Moreover, maps in ${\cal B}^f$ are send to etale maps of toposes.
\end{thm}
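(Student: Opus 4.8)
The plan is to build the two functors $Spec_{\cal N}$ and $SPEC_{\cal N}$ as the "sheaves on a site" construction applied to a suitable \emph{functor of sites}, and to invoke the standard functoriality of topos-of-sheaves together with the characterisation of etale geometric morphisms recalled just before the statement. First I would settle functoriality of the big spectrum. For a map $u:X\to Y$ in $\cal C$, base change along $u$ gives a functor $u^*:{\cal C}^f{/Y}\to {\cal C}^f{/X}$, which is well defined because ${\cal C}^f$ is stable by base change (§\ref{etalemap}); it is left exact because, as in the proof of lemma \ref{fibprodB}, fibre products in ${\cal C}^f{/Y}$ are computed in $\cal C$ and $\cal C$ has finite limits, and it sends Nisnevich covering families of $Y$ to Nisnevich covering families of $X$ since point covering families are stable by pullback (§\ref{pointcovers}) and the $\cal L$-localising condition is preserved by pullback as well. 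Hence $u^*$ is a continuous left exact functor between the big sites, so it induces a geometric morphism $SPEC_{\cal N}(X)\to SPEC_{\cal N}(Y)$; pseudofunctoriality in $u$ follows from the pseudofunctoriality of $(-)^*$ on slice categories, and passing to equivalence classes of geometric morphisms (as in the definition of $\cal Topos$) turns this into an honest functor. The same argument with ${\cal B}^f$ in place of ${\cal C}^f$ gives $Spec_{\cal N}$, using lemma \ref{fibprodB} for the left exactness and the left cancellation of $\cal B$ and ${\cal C}^f$ to see that ${\cal B}^f{/Y}$ is a full subcategory of ${\cal C}^f{/Y}$ stable under the relevant pullbacks.

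For the second assertion, let $f:U\to X$ be a map in ${\cal B}^f$. The key observation is that base change along $f$ identifies ${\cal B}^f{/U}$ with the slice $({\cal B}^f{/X})/f$: an object of ${\cal B}^f{/U}$ is a map $V\to U$ in ${\cal B}^f$, and by composition (${\cal B}$ and ${\cal C}^f$ are both stable by composition) $V\to X$ lies in ${\cal B}^f$, while by left cancellation of $\cal B$ and of ${\cal C}^f$ the map $V\to U$ is recovered as the unique ${\cal B}^f$-structure over $X$; conversely any object $V\to X$ of ${\cal B}^f$ over $f$ has $V\to U$ in ${\cal B}^f$ by left cancellation. This equivalence is compatible with the Nisnevich topologies — a family over an object of ${\cal B}^f{/U}$ is point covering and $\cal L$-localising iff it is so after the identification, because covers and the lifting condition are detected in $\cal C$ independently of the base. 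Therefore ${\cal B}^f{/U}$, as a site, is the slice of the site ${\cal B}^f{/X}$ at the object $f$, and by the functoriality of sheaves on a slice site we get $Spec_{\cal N}(U)\simeq Spec_{\cal N}(X)/F_f$ where $F_f$ is the sheaf represented by $f$; under this equivalence the morphism $Spec_{\cal N}(f)$ is the canonical projection, which is exactly the definition of an etale geometric morphism recalled above. (The same slice identification works for $SPEC_{\cal N}$, giving the etale statement there too, although only the ${\cal B}^f$ case is asserted.)

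I expect the main obstacle to be the careful bookkeeping in the slice identification ${\cal B}^f{/U}\simeq ({\cal B}^f{/X})/f$ and, relatedly, in checking that the Nisnevich topology on ${\cal C}^f{/X}$ really is functorial — that is, that $u^*$ of a distinguished (point covering, $\cal L$-localising) family is again one. Stability of point covers under pullback is immediate from the proposition in §\ref{pointcovers}, but one must also check that pulling back preserves the $\cal L$-localising condition: given $L\to u^*Y_i$ over the pullback, one pushes forward to a map $L\to Y$ in $\cal C$ and uses the $\cal L$-localising property of the original family, then assembles the lift by the universal property of the pullback — this is routine but is the one place where the argument genuinely uses more than formal nonsense. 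Everything else is an application of the general theory of Grothendieck toposes (continuous flat functors induce geometric morphisms; sheaves on a slice site are the slice of the topos) together with the elementary stability properties of $\cal A$, $\cal B$ and ${\cal C}^f$ collected in §\ref{facto} and §\ref{etalemap}.
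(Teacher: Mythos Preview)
Your proposal is correct and follows essentially the same approach as the paper: base change gives a left-exact continuous functor of sites (using lemma \ref{fibprodB} and stability of covering families under pullback), hence a geometric morphism, with pseudofunctoriality coming from $(vu)^*\simeq u^*v^*$; and for $f\in{\cal B}^f$ the slice identification ${\cal B}^f{/U}\simeq ({\cal B}^f{/X})/f$ yields $Spec_{\cal N}(U)\simeq Spec_{\cal N}(X)/f$. Your version is more explicit than the paper's---in particular you spell out the preservation of the $\cal L$-localising condition and the slice equivalence via left cancellation, both of which the paper leaves implicit---but the argument is the same.
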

\begin{proof}
We detail only the functoriality of the small spectrum. 
A map $u:X\to Y\in \cal C$ induces a base change functor $u^*:{\cal B}^f{/Y}\to {\cal B}^f{/X}$ that is left exact by lemma \ref{fibprodB} and clearly preserve covering families, so it is continuous \cite[III.1.6]{SGA4-1} and defines a geometric morphism $(u^*,u_*):Spec_{\cal N}(X)\tto Spec_{\cal N}(Y)$.
Given another map $v:Y\to Z$, the functors $(vu)^*$ and $u^*v^*:{\cal B}^f{/Z}\to {\cal B}^f{/X}$ are isomorphic so the associated geometric morphisms agree in ${\cal T}opos$.
%(It could also be defined as a pseudo-functor from ${\cal C}$ to the 2-category of toposes.)
As for the second statement: for any $X\to Y\in{\cal B}^f$, $Spec_{\cal N}(X)\simeq Spec_{\cal N}(Y)/X$.
\end{proof}

\begin{prop}
For $X\in {\cal C}$, if ${\cal C}^f{/X}$ is small, there exists two geometric morphisms (natural in $X$) $r_X=(r^*_X,r_*^X):SPEC_{\cal N}(X)\to Spec_{\cal N}(X)$ and $s_X=(s^*_x,s_*^X):Spec_{\cal N}(X)\to SPEC_{\cal N}(X)$, such that 
\begin{itemize}
\item $r_*^X=s^*_X$,
\item $r^*_X$ and $s^X_*$ are fully faithful, in particular $rs\simeq id$.
\end{itemize}
In other terms
\begin{itemize}
\item $r_X$ is left adjoint to $s_X$ in the bicategory of toposes,
\item $r_X$ is a quotient with connected fiber, %(even contractible in fact, but we won't prove it).
\item and $s_X$ is a subtopos embedding and a section of $r_X$, \ie the adjunction $(r_X,s_X)$ is a reflexion of $SPEC_{\cal N}(X)$ on $Spec_{\cal N}(X)$.
\end{itemize}
\end{prop}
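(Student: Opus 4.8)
The plan is to realise both $r_X$ and $s_X$ from the single fully faithful inclusion of sites $i\colon{\cal B}^f{/X}\hookrightarrow{\cal C}^f{/X}$ (fully faithful because ${\cal B}^f{/X}$ is a full subcategory of ${\cal C}{/X}$ by the left cancellation of ${\cal B}$ and ${\cal C}^f$, \S\ref{etalemap}). First I would verify that $i$ is \emph{continuous}, \emph{cocontinuous}, and preserves finite limits, for the two Nisnevich topologies. The key point is that the conditions defining a Nisnevich covering family — belonging to ${\cal B}^f$, being surjective on points, being ${\cal L}$-localising — do not refer to the ambient category: a Nisnevich cover of an object of ${\cal B}^f{/X}$ is therefore again one in ${\cal C}^f{/X}$, and conversely a Nisnevich cover $\{W_j\to V\}$ of $iV$ in ${\cal C}^f{/X}$ has each $W_j\to X$ in ${\cal B}^f$ (compose $W_j\to V\in{\cal B}^f$ with $V\to X\in{\cal B}^f$), hence already lives in ${\cal B}^f{/X}$ and is a cover there. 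The fibre products occurring in these families are computed in ${\cal C}$ in both sites, by lemma \ref{fibprodB} and the stability of ${\cal B}$ and ${\cal C}^f$ under pullback, so $i$ preserves them; and ${\cal B}^f{/X}$ has finite limits (lemma \ref{fibprodB}), which $i$ preserves.

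Granting this, put $r_*^X=s^*_X:=\rho$, the restriction functor $SPEC_{\cal N}(X)\to Spec_{\cal N}(X)$, $F\mapsto F\circ i$: it is well defined by continuity, and left exact since limits of sheaves are computed argumentwise. Cocontinuity of $i$ makes $(\rho,\mu)$ a geometric morphism $s_X\colon Spec_{\cal N}(X)\to SPEC_{\cal N}(X)$, where $s_*^X:=\mu$ is the right adjoint of $\rho$ (\cite[III.2]{SGA4-1}). For $r_X$, note that since $i$ preserves finite limits and ${\cal B}^f{/X}$ has them, the composite of $i$ with the canonical functor ${\cal C}^f{/X}\to SPEC_{\cal N}(X)$ is flat; hence the left adjoint $r^*_X:=\lambda$ of $\rho$, which is the left Kan extension of that composite, is left exact, so $(\lambda,\rho)$ is a geometric morphism $r_X\colon SPEC_{\cal N}(X)\to Spec_{\cal N}(X)$. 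By construction $r_*^X=\rho=s^*_X$, and the adjunction $\lambda\dashv\rho$, i.e.\ $r^*_X\dashv r_*^X$, is exactly the statement that $r_X$ is left adjoint to $s_X$ in ${\cal T}opos$, with unit and counit those of $\lambda\dashv\rho$. Naturality of $r$ and $s$ in $X$ follows from the compatibility of the base-change functors on sites with the inclusions $i_X$.

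What remains, and what I expect to be the main obstacle, is the full faithfulness of $r^*_X=\lambda$ and $s^X_*=\mu$. Both reduce to the single assertion that $\rho$ sends the representable sheaf $a(h_{iV})$ on ${\cal C}^f{/X}$ to the representable sheaf $a(h_V)$ on ${\cal B}^f{/X}$: granting it, continuity gives $\lambda\,a(h_V)\simeq a(h_{iV})$, hence $\rho\lambda\simeq\mathrm{id}$ on representables and then everywhere since $\lambda$ and $\rho$ both preserve colimits, while $\rho\mu\simeq\mathrm{id}$ follows from the adjunction $\rho\dashv\mu$ and the Yoneda lemma in the two sheaf toposes. The assertion itself is where full faithfulness of $i$ is used, together with the fact that the Nisnevich topology on ${\cal B}^f{/X}$ is the one induced from ${\cal C}^f{/X}$ — pullback along $i$ is a bijection on Nisnevich covering sieves, each covering sieve of $iV$ being generated by a family contained in ${\cal B}^f{/X}$ — and with the point-covering topology being subcanonical (or a comparison-lemma argument in its place). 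Once this is established, the topos-theoretic reformulations are formal: $r^*_X$ fully faithful exhibits $r_X$ as a quotient with connected fibre, $s^X_*$ fully faithful exhibits $s_X$ as a subtopos embedding, and $r_Xs_X\simeq\mathrm{id}$ (equivalently $\rho\lambda\simeq\mathrm{id}$ and $\rho\mu\simeq\mathrm{id}$) exhibits $s_X$ as a section of $r_X$, so that $(r_X,s_X)$ is the asserted reflection of $SPEC_{\cal N}(X)$ onto $Spec_{\cal N}(X)$.
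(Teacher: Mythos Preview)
Your approach is the same as the paper's: use the fully faithful inclusion $\iota\colon{\cal B}^f{/X}\hookrightarrow{\cal C}^f{/X}$, verify it is continuous, cocontinuous and finite-limit preserving (because the Nisnevich topology on the small site is induced from the big one), and extract the triple $\iota_!\dashv\iota^*\dashv\iota_*$ giving $r_X$ and $s_X$. The paper's proof is just this, citing \cite[III.3.4]{SGA4-1}.

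Where you diverge is in the full-faithfulness argument, and here you have created a spurious obstacle. The paper simply says ``$\iota_X$ being fully faithful, so are $\iota_!^X$ and $\iota_*^X$'', which is the standard fact: at the presheaf level $\hat\iota^*\hat\iota_!\simeq\mathrm{id}\simeq\hat\iota^*\hat\iota_*$ because $\iota$ is fully faithful; cocontinuity of $\iota$ gives $\hat\iota^*\circ a_{\text{big}}\simeq a_{\text{small}}\circ\hat\iota^*$ (\cite[III.2.3]{SGA4-1}), whence at the sheaf level $\rho\lambda=\iota^*\iota_!=\hat\iota^*a_{\text{big}}\hat\iota_!\simeq a_{\text{small}}\hat\iota^*\hat\iota_!\simeq\mathrm{id}$, and dually for $\rho\mu$. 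No subcanonicity is needed anywhere. Your reduction to ``$\rho(a h_{iV})\simeq a h_V$'' is correct, but the reason it holds is exactly this commutation of restriction with sheafification, not subcanonicity; and your fallback ``comparison-lemma argument'' does not apply here since objects of ${\cal C}^f{/X}$ are not in general covered by objects of ${\cal B}^f{/X}$. This matters, because the factorisation topology is \emph{not} subcanonical in general (the paper proves this later for the Domain topology), so an argument genuinely relying on it would fail.
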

\begin{proof}
The morphism of small sites $\iota_X:{\cal B}^f{/X}\to {\cal C}^f{/X}$ commute to finite limits, and the topology of ${\cal B}^f$ is induced by that of ${\cal C}^f{/X}$, so $\iota$ is continous and cocontinuous by \cite[III.3.4]{SGA4-1}, and induces three adjoint functors $\iota_!^X\dashv \iota^*_X\dashv\iota_*^X$:
$$\xymatrix{
SPEC_{\cal N}(X)\ar@<-1ex>[r]_{\iota_*^X}\ar@<2.5ex>[r]^{\iota_!^X}& Spec_{\cal N}(X)\ar[l]_{\iota^*_X}.
}$$
$\iota_X$ being fully faithful, so are $\iota_!^X$ and $\iota_*^X$.
$r_X$ is defined as the adjonction $(\iota^*_X,\iota_*^X)$ and $s_X$ is defined as the adjonction $(\iota_!^X,\iota^*_X)$. For $s_X$ to be a geometric morphism, we need to check that $\iota_!^X$ is left exact, but this is a consequence of $\iota$ being left exact.
\end{proof}

%The adjunction $(r_X,s_X)$ is a reflexion.
\begin{cor}\label{souspoint}
The category of points of $Spec_{\cal N}(X)$ is a reflexive full subcategory of that of $SPEC_{\cal N}(X)$.
\end{cor}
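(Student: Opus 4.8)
The plan is to derive this corollary directly from the preceding proposition, which exhibits the adjoint pair $(r_X, s_X)$ between $SPEC_{\cal N}(X)$ and $Spec_{\cal N}(X)$ with $r_X s_X \simeq id$, $r^*_X$ and $s^X_*$ fully faithful. Passing to points means applying the $2$-functor $\mathrm{Hom}_{{\cal T}opos}({\cal S}, -)$, i.e. taking the category of geometric morphisms from the point topos ${\cal S}$. Since this is a $2$-functor, it sends the adjunction $(r_X, s_X)$ to an adjunction between the categories of points, and sends the equivalence $r_X s_X \simeq id$ to a natural equivalence between the induced endofunctor on points of $Spec_{\cal N}(X)$ and the identity. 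Hence the functor on points induced by $s_X$ is fully faithful (a right adjoint whose unit is invertible), which is exactly the assertion that the category of points of $Spec_{\cal N}(X)$ is a reflexive full subcategory of that of $SPEC_{\cal N}(X)$.

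Concretely, I would argue as follows. First I would recall that a point of a topos ${\cal E}$ is a geometric morphism ${\cal S} \to {\cal E}$, and that these assemble into a category $\mathrm{Pt}({\cal E})$ with natural transformations as morphisms; moreover composition with a fixed geometric morphism ${\cal E} \to {\cal F}$ gives a functor $\mathrm{Pt}({\cal E}) \to \mathrm{Pt}({\cal F})$, and this construction is compatible with $2$-cells, so it turns an adjunction in the bicategory of toposes into an ordinary adjunction of categories. Then I would apply this to the reflexion $(r_X, s_X)$: post-composition with $s_X$ gives $(s_X)_* : \mathrm{Pt}(Spec_{\cal N}(X)) \to \mathrm{Pt}(SPEC_{\cal N}(X))$, left adjoint to $(r_X)_* : \mathrm{Pt}(SPEC_{\cal N}(X)) \to \mathrm{Pt}(Spec_{\cal N}(X))$, with $(r_X)_* \circ (s_X)_* \cong \mathrm{Pt}(r_X s_X) \cong \mathrm{id}$. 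A left adjoint whose composite with its right adjoint is naturally isomorphic to the identity (via the unit) is fully faithful and identifies its source with a full reflexive subcategory, which concludes the proof.

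The only genuinely delicate point is ensuring that "taking points" really is a $2$-functor out of the bicategory of toposes — that is, that a $2$-cell between geometric morphisms ${\cal E} \to {\cal F}$ induces a natural transformation between the corresponding functors on points, and that the triangle identities witnessing $(r_X, s_X)$ as an adjunction are carried to the triangle identities of the induced adjunction. This is standard $2$-categorical bookkeeping (the Yoneda-type functor $\mathrm{Hom}_{\mathbf{Topos}}({\cal S}, -)$ preserves all bicategorical structure), but it is where the real content sits; everything else is formal. Alternatively, and perhaps more in the spirit of the paper, one can avoid invoking abstract $2$-functoriality by unwinding the definitions: a point of $Spec_{\cal N}(X)$ is an inverse-image functor, i.e. a left exact cocontinuous functor on the small site, and the adjunction $\iota^X_! \dashv \iota^{*}_X \dashv \iota^X_*$ at the level of sites lets one transport points back and forth explicitly, with fully faithfulness of $\iota^X_!$ giving the reflexion; but the $2$-functorial argument above is cleaner and suffices.
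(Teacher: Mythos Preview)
Your approach is exactly the one the paper intends: the corollary is stated without proof, as an immediate consequence of the preceding proposition, and your idea of applying the representable 2-functor $\mathrm{Hom}_{{\cal T}opos}({\cal S},-)$ to the reflexion $(r_X,s_X)$ is the right way to unpack it.

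There is, however, a slip in the handedness of your adjunction. In the bicategory of toposes the proposition gives $r_X \dashv s_X$ with $r_X : SPEC_{\cal N}(X)\to Spec_{\cal N}(X)$ the reflector and $s_X$ the embedding. A covariant 2-functor such as $\mathrm{Hom}_{{\cal T}opos}({\cal S},-)$ preserves the direction of adjunctions, so on points you obtain $(r_X)_* \dashv (s_X)_*$, not the other way around. With your stated direction $(s_X)_* \dashv (r_X)_*$, the inclusion $(s_X)_*$ would have a \emph{right} adjoint and you would only conclude that $\mathrm{Pt}(Spec_{\cal N}(X))$ is a \emph{coreflexive} full subcategory, contradicting the statement. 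The fix is simply to swap the roles: $(r_X)_*$ is the left adjoint (the reflector on points), $(s_X)_*$ is the right adjoint, and the counit $(r_X)_*(s_X)_*\to id$ is invertible because $r_X s_X\simeq id$; this is what makes $(s_X)_*$ fully faithful and the subcategory reflexive. Everything else in your write-up stands.
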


\bigskip
We study now the functoriality of our spectra with respect to the factorisation system. We are going to focus only on $Spec$ but the results are the same for $SPEC$.
A unique factorisation systems on $\cal C$ is entirely characterized by its right classe $\cal B$. It is then possible to put an order of them by looking at the inclusion relation of these classes. 
\begin{defi}
For two factorisation systems $({\cal A}_i,{\cal B}_i),\ i=1,2$ on $\cal C$, we say that $({\cal A}_1,{\cal B}_1)$ is {\em finer than} or {\em a refinement of} $({\cal A}_2,{\cal B}_2)$ if ${\cal B}_2\subset {\cal B}_1$. This order admit an initial and a terminal element that are detailed in \S\ref{extremal}.
More generally, a Nisnevich context ${\cal N}=({\cal C}=({\cal A}_1,{\cal B}_1),{\cal L}_1)$ will be said {\em finer than} (or {\em a refinement of}) ${\cal N}'=({\cal C}=({\cal A}_2,{\cal B}_2),{\cal L}_2)$ if the underlying factorisation systems are the same, if $({\cal A}_1,{\cal B}_1)$ is finer than $({\cal A}_2,{\cal B}_2)$ and if ${\cal L}_1\subset\overline{\cal L}_2$.
\end{defi}

\begin{prop}\label{refineniscontext}
For two Nisnevich contexts ${\cal N}_i,\ i=1,2$, if ${\cal N}_1$ is refinement of ${\cal N}_2$, there is a natural transformation of functors $Spec_{{\cal N}_1}(-)\to Spec_{{\cal N}_2}(-)$.
\end{prop}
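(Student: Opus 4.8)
The plan is to build the natural transformation object by object. Since ${\cal N}_1$ refines ${\cal N}_2$ one has ${\cal B}_2\subset{\cal B}_1$, hence ${\cal B}_2^f\subset{\cal B}_1^f$, so for each $X\in{\cal C}$ there is an inclusion of small sites $j_X:{\cal B}_2^f{/X}\hookrightarrow{\cal B}_1^f{/X}$ (full, by the left cancellation of ${\cal B}$ and ${\cal C}^f$). First I would note that $j_X$ is left exact: it sends the terminal object $id_X$ to the terminal object, and by lemma \ref{fibprodB} fibre products in both ${\cal B}_2^f{/X}$ and ${\cal B}_1^f{/X}$ are computed in ${\cal C}$, so they are preserved by $j_X$. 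Then, mimicking the argument for theorem \ref{petitgros}, it remains to see that $j_X$ is continuous, whereupon \cite[III.1.6]{SGA4-1} yields a geometric morphism $Spec_{{\cal N}_1}(X)\to Spec_{{\cal N}_2}(X)$ whose comparison on sites is $j_X$ (note that the finer context maps to the coarser one).

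The only real work is the continuity of $j_X$, \ie that it carries Nisnevich covering families of ${\cal N}_2$ to Nisnevich covering families of ${\cal N}_1$; by \cite[III.1.6]{SGA4-1} it suffices to check this on the generating pretopology of ${\cal L}_i$-localising point covering families. So let $\{U_i\to Y\}$ be an ${\cal L}_2$-localising $({\cal A}_2,{\cal B}_2)$-point covering family in ${\cal B}_2^f{/X}$. The maps $U_i\to Y$ already lie in ${\cal B}_1^f$. For the point-covering condition the key observation is that, since ${\cal B}_2^f\subset{\cal B}_1^f$, the defining property of a point is \emph{more} restrictive for the finer system, so every $({\cal A}_1,{\cal B}_1)$-point is a $({\cal A}_2,{\cal B}_2)$-point, and a family surjective on $({\cal A}_2,{\cal B}_2)$-points is a fortiori surjective on $({\cal A}_1,{\cal B}_1)$-points. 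For the localising condition I would use that $\{U_i\to Y\}$, being ${\cal L}_2$-localising, is a covering family for the Nisnevich topology $\tau_{{\cal L}_2}$; then lemma \ref{localobjects} and the definition of $\overline{\cal L}_2$ as the class of $\tau_{{\cal L}_2}$-local objects show that every map $L\to Y$ with $L\in{\cal L}_1\subset\overline{\cal L}_2$ lifts through the family, so it is ${\cal L}_1$-localising. Hence $j_X(\{U_i\to Y\})$ is again a generating covering family and $j_X$ is continuous.

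For naturality, given $u:X\to Y$ in ${\cal C}$ I would check that the square of site functors with horizontal arrows $j_Y,j_X$ and vertical arrows the base-change functors $u^*$ commutes on the nose --- both composites send $V\to Y$ to $V\times_YX\to X$, the fibre product being formed in ${\cal C}$ and hence insensitive to the factorisation system --- and then, exactly as in theorem \ref{petitgros}, conclude that the induced geometric morphisms agree in ${\cal T}opos$, which is the asserted naturality. The same reasoning applies verbatim to $SPEC$, where the site ${\cal C}^f{/X}$ does not even depend on the factorisation system. I do not expect a genuine obstacle: the argument is formal once one has computing limits in ${\cal C}$, the shrinking of the class of points under refinement, and the saturation property of $\overline{\cal L}_2$ in hand; the one point requiring care is the variance (the morphism goes from the finer to the coarser context) together with the fact that continuity need only be verified on generating covers.
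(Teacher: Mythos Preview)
Your proposal is correct and follows essentially the same approach as the paper: both argue that ${\cal B}_2\subset{\cal B}_1$ forces ${\cal P}t_{{\cal B}_1^f}\subset{\cal P}t_{{\cal B}_2^f}$, so that point covering families are preserved by the inclusion $j_X$, and then use ${\cal L}_1\subset\overline{\cal L}_2$ to handle the localising condition. Your version is in fact more thorough than the paper's, which does not spell out the left-exactness of $j_X$ nor the naturality square, both of which you treat explicitly and correctly.
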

\begin{proof}
${\cal B}_2\subset{\cal B}_1$ so ${\cal P}t_{{\cal B}_1^f}\subset {\cal P}t_{{\cal B}_2^f}$, ${\cal B}_2$ point covering families are ${\cal B}_1$ point covering families. The functor
$$
%{\cal Zar}{/X} \tto {\cal Et}{/X}
{{\cal B}_2}{/X} \tto {{\cal B}_1}{/X}
$$
is then continuous and gives a geometric morphism
$$
Spec_{{\cal N}_1}(X)\tto Spec_{{\cal N}_2}(X),
$$
\ie $Spec$ is covariant with respect to the refinement relation for factorisation systems.
As for the Nisnevich forcing, ${{\cal B}_2}{/X} \tto {{\cal B}_1}{/X}$ will send covering families to covering families iff the forcing class ${\cal L}_1$ is contained in $\overline{\cal L}_2$ and the variance is the same.
\end{proof}

\subsubsection{Moduli interpretation}\label{moduliinterpretation}

We investigate a computation of the categories of points of the two spectra. Theorem \ref{theorem} establishes that, if the Nisnevich context is good, they can be described as local objects.
A complete study of the moduli aspects of our spectral theory would ask to compute not only global points but all categories of points of our spectra with values in any topos; this would require to develop more the topos theoretic aspects which we'll do in another work \cite{anel1}.

\medskip
If $P\to X$ is a point of an object $X$, we already interpreted the factorisation $P\to L\to X$ as the germ of the point in $X$.
This suggest the following definitions.
A {\em local form} of an object $X$ is a map $L\to X\in{\cal B}$ where $L$ is a local object, it is {\em pointed} if $L$ is.
Any point of $X$ define a pointed local form of $X$.
Let ${\cal L}oc(X)$ be the full subcategory of ${\cal C}{/X}$ generated by local forms of $X$, the left cancellation property of $\cal B$ (proposition \ref{proplift}) ensures that all morphisms between local forms of $X$ are in $\cal B$.
More generally, for a Nisnevich forcing class $\cal L$ with saturation $\overline{\cal L}$, a {\em ${\cal L}$-local form} of $X$ is a map $L\to X\in\cal B$ where $L\in \overline{\cal L}$ and the category $\overline{\cal L}(X)$ of ${\cal L}$-local forms of $X$ is defined as the subcategory of $\overline{\cal L}{/X}$ generated by objects whose structural map is in $\cal B$. Again, all morphisms of $\overline{\cal L}(X)$ are in $\cal B$.

\medskip
Let's recall the characterization of points of a site.
\begin{prop}\label{pointsite}
Let ${\cal D}$ be a site with a topology given by some covering families, the category of points of the associated topos $\widetilde{\cal D}$ is the full subcategory of $Pro({\cal D})$ of those pro-objects of ${\cal D}$ that have the lifting property through any covering family.
\end{prop}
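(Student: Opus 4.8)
The plan is to reduce the statement to the classical identification of the points of a sheaf topos, carried out in three moves. First I would invoke the universal property of $\widetilde{\cal D}$ (Diaconescu's theorem; see \cite{SGA4-1} or \cite{elephant}): a point $p\colon{\cal S}\to\widetilde{\cal D}$ is the same datum as the functor $F:=p^{*}\circ y\colon{\cal D}\to{\cal S}$, where $y$ is the Yoneda embedding followed by sheafification, and such functors are characterised as exactly those that are \emph{flat} (equivalently filtering, equivalently with left exact left Kan extension to presheaves) and \emph{continuous} (they send covering families to epimorphic families). Since $2$-cells between geometric morphisms into $\widetilde{\cal D}$ correspond to natural transformations of the associated functors, the whole category of points of $\widetilde{\cal D}$ is identified with the full subcategory of $[{\cal D},{\cal S}]$ on the flat continuous functors.

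Second, I would recall that the flat functors ${\cal D}\to{\cal S}$ are precisely the filtered colimits of corepresentables $\Hom_{\cal D}(X,-)$, and that — ${\cal D}$ being small — the assignment $P=\{X_{i}\}_{i}\mapsto F_{P}$, with $F_{P}(d)=\Hom_{Pro({\cal D})}(P,d)=\colim_{i}\Hom_{\cal D}(X_{i},d)$, is an equivalence from $Pro({\cal D})$ onto the category of flat functors; fully faithfulness is the computation $\mathrm{Nat}(\colim_{i}\Hom(X_{i},-),G)=\lim_{i}G(X_{i})$ by the co-Yoneda lemma, which for $G$ itself flat reproduces a hom-set of $Pro({\cal D})$. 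Combined with the first move, the category of points of $\widetilde{\cal D}$ is then the full subcategory of $Pro({\cal D})$ on those $P$ for which $F_{P}$ is continuous.

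Third, I would unwind what continuity of $F_{P}$ means. Because $F_{P}$ is flat, it is continuous as soon as it sends the generating covering families of the given pretopology to epimorphic families (this is where flatness is used; cf.\ \cite[III.1.6]{SGA4-1}), and for a family $\{U_{j}\to X\}$ the latter condition is just surjectivity of the map of sets $\bigsqcup_{j}F_{P}(U_{j})\to F_{P}(X)$. Since $F_{P}(-)=\Hom_{Pro({\cal D})}(P,-)$, this surjectivity says exactly that every map $P\to X$ in $Pro({\cal D})$ factors through some $U_{j}\to X$, i.e.\ that $P$ has the lifting property through $\{U_{j}\to X\}$. Quantifying over all covering families gives the description of the objects, and the morphisms were already settled in the first move.

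The part I expect to require the most care is this third move: one must invoke the standard (but not wholly trivial) fact that, for a flat functor, continuity — a priori a condition about covering \emph{sieves}, i.e.\ about local epimorphisms of presheaves — can be checked on a mere set of generating covering families, after which it collapses to the elementary surjectivity statement; matching $F_{P}(X)$ with $\Hom_{Pro({\cal D})}(P,X)$ and that surjectivity with the factorisation property is then purely formal. (One should of course fix the convention on morphisms of points that makes the equivalence with $Pro({\cal D})$ covariant; with the opposite convention one obtains $Pro({\cal D})^{\mathrm{op}}$, which does not affect the statement on objects.)
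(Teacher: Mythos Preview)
Your proposal is correct and follows essentially the same route as the paper's proof: the paper, too, first identifies the category of points of the presheaf topos $\widehat{\cal D}$ with $Pro({\cal D})$ (your moves one and two combined, with a reference to \cite{MM}), then observes that a pro-object is a point of the sheaf subtopos $\widetilde{\cal D}$ iff it sends covering families to epimorphic families, and finally translates that into the lifting diagram (your third move). You have simply supplied more detail at each step---notably the flatness/filtered-colimit identification and the reduction of continuity to generating families---where the paper remains terse.
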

\begin{proof}
Briefly (see \cite{MM} for details), the category of points of $\widehat{\cal D}$ is $Pro({\cal D})$ the category of pro-objects of ${\cal D}$. In $Pro(D)$, an object $P$ is a point of $\widetilde{\cal D}\subset \widehat{\cal D}$ iff it transforms covering families into epimorphic families. This last part is equivalent to have in $Pro({\cal D})$ a diagram
$$\xymatrix{
&U_i\ar[d]\\
P\ar[r]\ar@{-->}[ru]^{\exists i} &X
}$$
hence the statement of the result.
\end{proof}

\begin{thm}\label{pointspec}\label{theorem}
For a good Nisnevich context ${\cal N}=({\cal C=(A,B)},{\cal L})$ (def. \ref{good}):
\begin{enumerate}
\item the category of points of $SPEC_{\cal N}(X)$ is that $\overline{\cal L}{/X}$ of local objects over $X$
\item and the category of points of $Spec_{\cal N}(X)$ is that $\overline{\cal L}(X)$ of $\cal L$-local forms of $X$.
\end{enumerate}
\end{thm}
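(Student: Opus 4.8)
The plan is to deduce both statements from Proposition~\ref{pointsite}, which identifies the points of the topos $\widetilde{\cal D}$ associated to a site $\cal D$ with those pro-objects of $\cal D$ having the lifting property through all covering families. First I would treat the big spectrum. Here $\cal D = {\cal C}^f{/X}$, so $Pro({\cal D}) = Pro({\cal C}^f{/X})$; since ${\cal C}^o$ is locally finitely presentable (hypothesis a. of a good context) and ${\cal C}={\cal C}/*$, one has $Pro({\cal C}^f{/X})\simeq {\cal C}{/X}$. Thus a point of $SPEC_{\cal N}(X)$ is an object $L\to X$ of ${\cal C}{/X}$ that lifts through every Nisnevich covering family of ${\cal C}^f{/X}$. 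By Lemma~\ref{localobjects} (in its relativised form, using the Nisnevich context ${\cal N}_X$ on ${\cal C}/X$) this condition is exactly the condition that $L$ be a Nisnevich local object, i.e. $L\in\overline{\cal L}$; here one must be slightly careful that "local object of ${\cal N}_X$" coincides with "$L$ is a Nisnevich local object of $\cal C$", which is precisely the content of the last lemma of \S\ref{forcing}. Compatibility (hypothesis c.) is not even needed for the big spectrum --- it is the statement that these pro-objects are honestly representable, which here is automatic. This gives statement 1, the identification with $\overline{\cal L}{/X}$.

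For the small spectrum, $\cal D = {\cal B}^f{/X}$, so a point is an object of $Pro({\cal B}^f{/X})$ lifting through all Nisnevich covering families. Hypothesis b. of a good context (right generation by maps in ${\cal B}^f$) gives, via Corollary~\ref{indleftgen} applied in the opposite category, the identification ${\cal B}{/X}\simeq Pro({\cal B}^f{/X})$; so the points are certain objects $L\to X$ with structural map in $\cal B$. It remains to check that such an $L\to X\in{\cal B}$ lifts through every Nisnevich covering family of ${\cal B}^f{/X}$ if and only if $L\in\overline{\cal L}$, i.e. if and only if $L\to X$ is an $\cal L$-local form of $X$ in the sense of \S\ref{moduliinterpretation}. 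One direction is immediate: if $L$ is Nisnevich local then it lifts through every Nisnevich covering family by Lemma~\ref{localobjects}. For the converse, I would invoke compatibility (hypothesis c.): the definition of a compatible Nisnevich context says exactly that, for every $X$, the category $\overline{\cal L}{/X}$ \emph{is} the category of points of $\widetilde{{\cal C}^f{/X}}$; one then reconciles "points of the small site" with "points of the big site that happen to lie in ${\cal B}{/X}$" by noting that the geometric morphism $s_X:Spec_{\cal N}(X)\to SPEC_{\cal N}(X)$ of the preceding proposition is a subtopos embedding, so the points of $Spec_{\cal N}(X)$ are precisely those points of $SPEC_{\cal N}(X)$ that factor through it --- and these are exactly the local objects over $X$ whose structural map is in $\cal B$, i.e. $\overline{\cal L}(X)$.

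The main obstacle I anticipate is the converse direction in statement 2: showing that a pro-object $L\to X\in{\cal B}$ which lifts through all \emph{Nisnevich} (point-)covering families of ${\cal B}^f{/X}$ is automatically Nisnevich local as an object of $\cal C$, rather than merely "small-locally" so. This is precisely what compatibility is designed to rule in or out, and the argument has to use the distinguished class of Nisnevich covering families (the reformulation of compatibility via the lemma in \S\ref{topo}) to promote a lifting property tested only against finitely presented etale covers to the full lifting property of Definition~\ref{localob}. The remaining steps --- the $Pro$-identifications and the passage between ${\cal N}$ and ${\cal N}_X$ --- are bookkeeping, handled by the cited corollaries and lemmas.
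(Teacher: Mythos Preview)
Your treatment of statement 2 is essentially the paper's own argument: use $Pro({\cal B}^f{/X})\simeq {\cal B}{/X}$ from hypothesis b.\ and Corollary~\ref{souspoint} to identify the points of the small spectrum as those points of the big spectrum whose structural map lies in $\cal B$.

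However, your treatment of statement 1 contains a genuine misreading of the role of compatibility. You write that compatibility ``is not even needed for the big spectrum --- it is the statement that these pro-objects are honestly representable.'' This is not what compatibility says. Representability of the pro-objects is indeed automatic from hypothesis a.\ (giving $Pro({\cal C}^f{/X})\simeq {\cal C}{/X}$), but compatibility (Definition of a compatible Nisnevich context) asserts precisely that the category of points of $\widetilde{{\cal C}^f{/X}}$ \emph{equals} $\overline{\cal L}{/X}$. The paper explicitly notes, just before introducing compatibility, that ``any local object defines such a point, but the converse may not be true.'' The gap is this: a point of the topos is an $L\to X$ lifting through every Nisnevich covering family \emph{in the site} ${\cal C}^f{/X}$, i.e.\ through covers of objects finitely presented over $X$; but membership in $\overline{\cal L}$ requires $L$ to lift through every Nisnevich covering family of \emph{any} object of $\cal C$. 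Lemma~\ref{localobjects} does not bridge this, and your appeal to it is exactly the step that fails. Accordingly, the paper's proof of statement 1 is a single sentence: ``The first assertion is the hypothesis of compatibility on $\cal N$.'' Statement 1 is not something to be derived; it is an axiom of a good context.

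You do invoke compatibility for statement 2, which is correct in spirit, but note that once statement 1 is taken as hypothesis c., the whole weight of compatibility has already been absorbed there, and statement 2 follows from statement 1 plus Corollary~\ref{souspoint} plus hypothesis b.\ without any further appeal to distinguished covering families.
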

\begin{proof}
The first assertion is the hypothesis of compatibility on $\cal N$. As for the second one, by corollary \ref{souspoint}, local objects over $X$ will define points of $Spec_{\cal N}(X)$ iff they are in $Pro({\cal B}^f{/X})$, or $Pro({\cal B}^f{/X})\simeq{\cal B}{/X}$ by b. of definition \ref{good}, and points of $Spec_{\cal N}(X)$ are exactly local forms of $X$.
\end{proof}

\medskip
To finish, we recall 
%the definition of the {\em set of points of a topos} $\cal T$ as the set of equivalence classes of geometric morphisms $\cal S\to T$ for natural isomorphisms, and 
that a topos is said {\em spatial} if can be written as the topos of sheaves of a topological space. The category of points of such a topos is at most a poset, this remark will be used to prove that most of our examples of spectra are not spaces.

\subsubsection{Structure sheaf}\label{structuresheaf}

For $u:X\to Y\in {\cal C}$, the naturality of $s_X$ and $r_X$ gives a diagram
$$\xymatrix{
Spec_{\cal N}(X)\ar[r]^-{s_X}\ar[d]_u\ar@{}[rd]|{(1)}&SPEC_{\cal N}(X)\ar[d]^U\ar[r]^-{r_Y}\ar@{}[rd]|{(2)}&Spec_{\cal N}(X)\ar[d]^u\\
Spec_{\cal N}(Y)\ar[r]_-{s_Y}&SPEC_{\cal N}(Y)\ar[r]_-{r_Y}&Spec_{\cal N}(Y)
}$$
of which we are going to study the commutation properties.

\begin{prop}\label{commutation}
The square (2) is commutative up to a natural isomorphism, and the square (1) up to a natural transformation $\alpha(u):Us_X\to s_Yu$.
Moreover, under the hypothesis of theorem \ref{pointspec}, for each point ${\cal S}\to Spec_{\cal N}(X)$ the morphism induced by $\alpha(u)$ on points of $SPEC_{\cal N}(Y)$ is in $\cal A$ (see proof).
\end{prop}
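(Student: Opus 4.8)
The plan is to handle the three assertions of the proposition in order, the first two being essentially formal and the last one the substantial point.

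For square $(2)$ I would observe that the underlying diagram of sites commutes \emph{strictly}: $U$ and $u$ are the geometric morphisms induced by the base-change functors ${\cal C}^f{/Y}\to{\cal C}^f{/X}$ and ${\cal B}^f{/Y}\to{\cal B}^f{/X}$, while $r_X,r_Y$ come from the inclusions $\iota_X,\iota_Y$ of small into big sites, and base change commutes with these inclusions because a pull-back of a map of ${\cal B}^f$ is computed in $\cal C$ independently of the base (the cancellation argument of lemma \ref{fibprodB}). As $\iota_X,\iota_Y$ are simultaneously continuous and cocontinuous, the mate/Beck--Chevalley calculus turns this commuting square of sites into a canonical isomorphism $r_YU\simeq ur_X$. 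Granting this, I would then \emph{define} the natural transformation of square $(1)$ as the composite $2$-cell
\[
\alpha(u):\quad Us_X\ \Longrightarrow\ s_Yr_YUs_X\ \simeq\ s_Y(ur_X)s_X\ \simeq\ s_Yu ,
\]
whose first arrow is the unit $\eta_Y:\mathrm{id}\Rightarrow s_Yr_Y$ of the reflexion $(r_Y,s_Y)$ whiskered by $Us_X$, whose first isomorphism is square $(2)$, and whose last uses $r_Xs_X\simeq\mathrm{id}$; naturality of $\alpha$ in $u$ is a routine chase. This settles the first two claims.

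For the last claim the plan is to evaluate $\alpha(u)$ on an arbitrary global point $p:{\cal S}\to Spec_{\cal N}(X)$, which by theorem \ref{pointspec}(2) corresponds to an $\cal L$-local form $\ell:L\to X$ of $X$ (so $L\in\overline{\cal L}$ and $\ell\in\cal B$). Theorem \ref{pointspec} identifies the point categories as $\mathrm{Pt}(SPEC_{\cal N}(Z))=\overline{\cal L}{/Z}$ and $\mathrm{Pt}(Spec_{\cal N}(Z))=\overline{\cal L}(Z)$; under these identifications I would check — by a short computation on representable sheaves, using $U^{*}h_{W\to Y}=h_{W\times_YX\to X}$ and the pro-presentability relations ${\cal C}{/Z}\simeq Pro({\cal C}^f{/Z})$, ${\cal B}{/Z}\simeq Pro({\cal B}^f{/Z})$ valid in a good context — that $s_X$ acts as the full inclusion $\overline{\cal L}(X)\hookrightarrow\overline{\cal L}{/X}$ and that $U$ acts by post-composition with $u$. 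Hence $Us_Xp=(L\xrightarrow{u\ell}Y)$ as an object of $\overline{\cal L}{/Y}=\mathrm{Pt}(SPEC_{\cal N}(Y))$.

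It then remains to identify the morphism $\alpha(u)\ast p$ of $\overline{\cal L}{/Y}$: by construction it is the composite of the component at $Us_Xp=(L\to Y)$ of the unit $\eta_Y$ with the isomorphism $s_Yr_YUs_Xp\simeq s_Yup$. Since $\mathrm{Pt}(-)=\Hom({\cal S},-)$ is a $2$-functor on the bicategory of toposes, it carries the reflexion $r_Y\dashv s_Y$ to a reflexion $\mathrm{Pt}(r_Y)\dashv\mathrm{Pt}(s_Y)$ of $\overline{\cal L}{/Y}$ onto its full subcategory $\overline{\cal L}(Y)$: the reflector sends $N\to Y$ to the ${\cal B}$-part $N'\to Y$ of its $({\cal A},{\cal B})$-factorisation $N\to N'\to Y$ (with $N'\in\overline{\cal L}$ by stability of $\overline{\cal L}$ under maps of $\cal A$, the lemma after definition \ref{niscontext}), and the unit at $N\to Y$ is the $\cal A$-map $N\to N'$. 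Applied at $Us_Xp=(L\xrightarrow{u\ell}Y)$ this unit is precisely the $\cal A$-component $L\to M$ of the factorisation $L\to M\to Y$ of $u\ell$; the residual comparison isomorphism is of course in $\cal A$ (proposition \ref{proplift}), and $\cal A$ is closed under composition, so $\alpha(u)\ast p\in\cal A$. I expect the real work to be not any individual step but the bookkeeping that glues them: pinning down exactly how $s$, $r$, $u$ and $U$ act on the point categories of theorem \ref{pointspec}, and then verifying that the abstractly defined $\alpha(u)$, once whiskered by $p$, genuinely equals the reflexion unit rather than merely being parallel to it — which is exactly where the good-context hypotheses (above all ${\cal B}{/X}\simeq Pro({\cal B}^f{/X})$) get used.
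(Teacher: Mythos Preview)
Your proof is correct and reaches the same endpoint as the paper --- the identification of $\alpha(u)\ast p$ with the $\cal A$-component $L\to\phi(u\ell)$ of the factorisation of $u\ell:L\to Y$ --- but the route is genuinely different. The paper sets up a machinery of \emph{internal} pro-objects $\underline{Pro}(\cal E)$, constructs for each geometric morphism an explicit left pro-adjoint, and computes each of the four legs $\check s^X_!$, $\check s^Y_!$, $\check U_!$, $\check u_!$ separately on points via a little transfer lemma (your lemma \ref{proadj} in the paper); in particular it proves by hand that $u^*:{\cal B}/Y\to{\cal B}/X$ has a left adjoint given by the factorisation. You bypass all of this by using only the $2$-functoriality of $\mathrm{Pt}(-)$: once $\mathrm{Pt}(s_Y)$ is identified with the inclusion $\overline{\cal L}(Y)\hookrightarrow\overline{\cal L}/Y$, its left adjoint $\mathrm{Pt}(r_Y)$ is forced to be \emph{the} reflector, and the general fact that $(N\to Y)\mapsto(\phi(N\to Y)\to Y)$ is left adjoint to ${\cal B}/Y\hookrightarrow{\cal C}/Y$ for any unique factorisation system does the rest. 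Your argument is shorter and more conceptual; the paper's is more self-contained (it derives the factorisation adjunction rather than invoking it) and makes visible exactly where each good-context hypothesis enters. The one place where you should be slightly more explicit is the sentence ``the reflector sends $N\to Y$ to the $\cal B$-part $N'\to Y$'': this is correct, but it is worth stating the two ingredients separately --- that the factorisation furnishes a left adjoint to ${\cal B}/Y\hookrightarrow{\cal C}/Y$ (a universal-property check using orthogonality), and that this adjoint restricts to $\overline{\cal L}/Y\to\overline{\cal L}(Y)$ because $\overline{\cal L}$ is stable under $\cal A$-maps --- since the paper's readers may not have that fact at their fingertips.
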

\begin{proof}
For the square $(2)$, it is sufficient to check it at the level of the inverse image functors restricted to the generating sites and it is a consequence of the stability of ${\cal B}^f$ by pullback in ${\cal C}^f$.
The result on $(1)$ can be deduced: there is a natural isomorphism $r_YUs_X\simeq r_Ys_Yu (\simeq u)$, composing by $s_Y$ and using the unit and counit of the adjunction $(r_Y,s_Y)$, we obtain the wanted map $\alpha(u):Us_X\to s_Yu$.

\medskip
For the second part, points of a topos can be viewed as some pro-objects and the effect on points of a geometric morphism $(u^*,u_*):{\cal E}\to {\cal F}$ is understood looking at the left pro-adjoint $u_!$ of $u^*$. 
If $\cal E$ is a topos, the category $\underline{Pro}(\cal E)$ of internal pro-objects of $\cal E$ is defined as the category of $\cal E$-enriched left-exact accessible endofunctors of $\cal E$. In particular, it contains fully faithfully the category $Pro(\cal E)$ of pro-objects of $\cal E$ view as a category %(which can be described as the category of left-exact functors $\cal E\to S$).

Given a geometric morphism $u:{\cal E}_1\to {\cal E}_2$, ${\cal E}_1$ can be enriched over ${\cal E}_2$ by defining the enriched Hom to be $u_*\uHom_{{\cal E}_1}(x,y)$ where $\uHom_{{\cal E}_1}$ is the internal Hom of ${\cal E}_1$. For this enrichement, both $u^*$ and $u_*$ are enriched functors. 

For a geometric morphism $u:{\cal E}\to {\cal F}$, the left pro-adjoint of $u^*:{\cal F}\to {\cal E}$ is defined the following way: every object $X\in{\cal E}$ define a geometric morphism $i_X=(i_X^*,i^X_*):{\cal E}{/X}\to {\cal E}$, and by composition an endofunctor $u_*i_*^Xi^*_Xu^*$ of ${\cal F}$, this endofunctor is $\cal F$-enriched left exact as a composition of such functors so is copresentable by an internal pro-object $u_!(X)$ of $F$. This construction is functorial in $X$ and define a functor $u_!:{\cal E}\to \underline{Pro}({\cal F})$. As for the adjunction property:
\begin{eqnarray*}
X&\tto & u^*Y\\
X\simeq i^*_X(X)&\tto & i^*_Xu^*Y\\
*\simeq i_*^X(X)&\tto & i_*^Xi^*_Xu^*Y\\
*\simeq u_*(*)&\tto & u_*i_*^Xi^*_Xu^*Y \simeq \underline{Hom}_{\underline{Pro}({\cal F})}(u_!(X),Y)
\end{eqnarray*}
where $\underline{Hom}_{\underline{Pro}({\cal F})}(-,-)$ is the $\cal F$-enriched hom of $\underline{Pro}({\cal F})$.

\medskip
We will now compute the pro-adjoints of the following diagram and their action on the categories of points:
$$\xymatrix{
\underline{Pro}(\widetilde{{\cal B}^f{/X}})\ar[r]^-{\check s^X_!}\ar[d]_-{\check u_!}& \underline{Pro}(\widetilde{{\cal C}^f{/X}})\ar[d]^-{\check U_!}\\
\underline{Pro}(\widetilde{{\cal B}^f{/Y}})\ar[r]^-{\check s^Y_!}&\underline{Pro}(\widetilde{{\cal C}^f{/Y}})
}$$
where, for a geometric morphism $(u^*,u_*):{\cal E}\to {\cal F}$, $\check u_!:\underline{Pro}({\cal E})\to \underline{Pro}({\cal F})$ denotes the (internal) right Kan extension of $u_!$. $\check u_!$ is left adjoint to the right Kan extension $\check u^*$ of $u^*$.
The diagram is still commutative up to a natural transformation constructed the same way as before (in a sense this is the same natural transformation).

To extract the action on points we'll use implicitly the following lemma.
\begin{lemma}\label{proadj}
If in a diagram of functors
$$\xymatrix{
C\ar[r]^\gamma\ar@<-.5ex>[d]_v&C'\ar@<-.5ex>[d]_{v'}\\
D\ar[r]_\delta\ar@<-.5ex>[u]_u&D'\ar@<-.5ex>[u]_{u'}\ ,
}$$
$v$ is left adjoint to $u$, $v'$ left adjoint to $u'$, $\gamma$ and $\delta$ are dense in the sense that any object of $C'$ (resp. $D'$) is a limit of objects of $C$ (resp. $D$) and $\gamma u=u'\delta$, then $\delta v=v'\gamma$, \ie $v$ is the restriction of $v'$ to $C$.
\end{lemma}
\begin{proof}
Any $y\in D'$ can be written $y=\lim_i\delta(y_i)$, so for all $x\in C,y\in D$: $D'(\delta v(x),y)\simeq \lim_i D'(\delta v(x),\delta(y_i)) \simeq \lim_i C(x,u(y_i))\simeq \lim_i C'(\gamma(x),\gamma u(y_i))\simeq \lim_i C'(\gamma(x),u'\delta(y_i))\simeq \lim_i D'(v'\gamma(x),\delta(y_i))\simeq D'(v'\gamma(x),y)$.
\end{proof}

The functor $\check s^X_!$ is the extension of the inclusion ${\cal B}^f{/X}\to {\cal C}^f{/X}$, so we have a diagram:
$$\xymatrix{
\overline{\cal L}(X)\ar[r]\ar[d]& {\cal B}{/X}\ar[r]^-{\simeq}\ar[d]&Pro(\widetilde{{\cal B}^f{/X}})\ar[r]&\underline{Pro}(\widetilde{{\cal B}^f{/X}})\ar[d]^-{\check s^X_!}\\
\overline{\cal L}{/X}\ar[r]&{\cal C}{/X}\ar[r]^-{\simeq}&Pro(\widetilde{{\cal C}^f{/X}})\ar[r]&\underline{Pro}(\widetilde{{\cal C}^f{/X}})\ ,
}$$
where the horizontal arrows are fully faithful and the vertical arrows are all restrictions of $\check s^X_!$. The morphism induced on points is simply the inclusion of $\overline{\cal L}(X)$ in $\overline{\cal L}{/X}$. The result is analog for $s_!^Y$.

For $\check U_!$ we have a diagram
$$\xymatrix{
\overline{\cal L}{/X}\ar[r]\ar[d]& {\cal C}{/X}\ar[r]^-{\simeq}\ar[d]&Pro(\widetilde{{\cal C}^f{/Y}})\ar[r]&\underline{Pro}(\widetilde{{\cal C}^f{/X}})\ar[d]^-{\check U_!}\\
\overline{\cal L}{/Y}\ar[r]&{\cal C}{/Y}\ar[r]^-{\simeq}&Pro(\widetilde{{\cal C}^f{/Y}})\ar[r]&\underline{Pro}(\widetilde{{\cal C}^f{/Y}}).
}$$
$u^*:{\cal C}{/Y}\to {\cal C}{/X}$ has a left adjoint $u_!$ given by composing with $u$, which is the restriction of $\check U_!$.

For $\check u_!$ we have a diagram
$$\xymatrix{
\overline{\cal L}(X)\ar[r]\ar[d]& {\cal B}{/X}\ar[r]^-{\simeq}\ar[d]^{\upsilon}&Pro(\widetilde{{\cal B}^f{/X}})\ar[r]&\underline{Pro}(\widetilde{{\cal B}^f{/X}})\ar[d]^-{\check u_!}\\
\overline{\cal L}(Y)\ar[r]& {\cal B}{/Y}\ar[r]^-{\simeq}&Pro(\widetilde{{\cal B}^f{/Y}})\ar[r]&\underline{Pro}(\widetilde{{\cal B}^f{/Y}})\\
}$$
We prove that the functor $u^*=-\times_YX:{\cal B}{/Y}\to {\cal B}{/X}$ has a left adjoint given by sending $b:U\to X$ to the $\phi(ub)\to Y$ where $U\to \phi(ub)\to Y$ is the factorisation of $ub:U\to X\to Y$: given a choice of $\cal (A,B)$ factorisation for any arrow of $\cal C$, a map $b:U\to X$ defines a unique square
$$\xymatrix{
U\ar[r]^{\alpha}\ar[d]_b& \phi(ub)\ar[d]^{\beta}\\
X\ar[r]^u&Y
}$$
where $U\to \phi(ub)\to Y$ is defined as the factorisation of $ub:U\to Y$.
From this we deduce a bijection between the set of squares
$$(*)=\begin{array}{c}
\xymatrix{
U\ar[r]\ar[d]& V\ar[d]\\
X\ar[r]^u&Y
}\end{array}
$$
where $V\to Y\in {\cal B}$ and that of morphisms of ${\cal B}{/Y}$:
$$\xymatrix{
\phi(ub)\ar[r]\ar[dr]& V\ar[d]\\
&Y
}$$
(to prove the bijection, the map $\phi(ub)\to V$ comes from a lifting condition).
But squares $(*)$ are also in bijection with morphisms in ${\cal B}{/X}$:
$$\xymatrix{
U\ar[r]\ar[d]& V\times_YX=u^*V\ar[dl]\\
X
}$$
which gives us the adjonction.
Now, the restriction to $\overline{\cal L}(X)$ takes its values in $\overline{\cal L}(Y)$ and is the morphism induced by $u$ between the categories of points.

Finally the situation is the following: a point $b:L\to X$ is send on one side to $ub:L\to Y$ and on the other to $\beta:\phi(ub)\to Y$ and the natural transformation $\alpha(u)$ is given by the factorisation
$$\xymatrix{
L\ar[r]^-{\alpha(u)}\ar[rd]_{bu}&\phi(ub)\ar[d]^{\beta}\\
&Y.
}$$
This is what we meant saying that it was given by a map in $\cal A$.
\end{proof}

If $*$ is the terminal object of $\cal C$, the category of points of the topos $SPEC_{\cal N}(*)$ is $\cal C$.

\begin{defi}
The composition $O_X^{\cal N}:Spec_{\cal N}(X)\to SPEC_{\cal N}(X)\to SPEC_{\cal N}(*)$ is called the {\em structural sheaf} of $X$.
For every point $x:{\cal S}\to Spec_{{\cal B}^f}(X)$, the {\em stalk of ${\cal O}_X^{\cal N}$ at $x$} is the object of $\cal C$ induced point ${\cal O}_{X,x}^{\cal N}:{\cal S}\to SPEC_{\cal N}(*)$.
\end{defi}

\begin{prop}
For a point $x:{\cal S}\to Spec_{\cal N}(X)$ corresponding to a local form $L\to X$, the stalk $O^{\cal N}_{X,x}$ is the objet $L$.
\end{prop}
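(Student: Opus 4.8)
The plan is to unwind the definition of the stalk as a point of $SPEC_{\cal N}(*)$ and to follow $x$ through the functors that the two geometric morphisms composing $O^{\cal N}_X$ induce on categories of points. Recall that $O^{\cal N}_X=\pi\circ s_X$, where $\pi:SPEC_{\cal N}(X)\to SPEC_{\cal N}(*)$ is the image under the functor $SPEC_{\cal N}$ of the unique arrow $X\to *$, and that $O^{\cal N}_{X,x}$ is by definition the object of $\cal C$ corresponding to the point $O^{\cal N}_X\circ x:{\cal S}\to SPEC_{\cal N}(*)$ under the identification of the category of points of $SPEC_{\cal N}(*)$ with a subcategory of $Pro({\cal C}^\omega)\simeq{\cal C}$. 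Since taking categories of points is $2$-functorial, it suffices to compute the image of $x$ first under the functor on points induced by $s_X$, then under the one induced by $\pi$. By theorem \ref{theorem}, the point $x$ is, up to the equivalence established there, the chosen $\cal L$-local form $b:L\to X$, regarded as an object of $\overline{\cal L}(X)$; and by compatibility (def. \ref{good}.c) the category of points of $SPEC_{\cal N}(X)$ is $\overline{\cal L}{/X}$.

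I would then invoke the two computations already carried out inside the proof of proposition \ref{commutation}. First, the functor on points induced by $s_X$ is identified there — through the diagram of internal pro-adjoints for $\check s^X_!$, which extends the inclusion of sites ${\cal B}^f{/X}\hookrightarrow{\cal C}^f{/X}$, together with lemma \ref{proadj} — with the plain inclusion $\overline{\cal L}(X)\hookrightarrow\overline{\cal L}{/X}$; so $x$ is carried to the same local form $b:L\to X$, now viewed as a local object over $X$. Second, for a map $u:X\to Y$ of $\cal C$ the functor on points induced by the morphism $SPEC_{\cal N}(X)\to SPEC_{\cal N}(Y)$ of big spectra is shown, in the same proof, to be the postcomposition functor $\overline{\cal L}{/X}\to\overline{\cal L}{/Y}$, $(L\to X)\mapsto(L\to X\to Y)$, \ie the restriction to points of the left adjoint of $u^*:{\cal C}{/Y}\to{\cal C}{/X}$. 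Taking $Y=*$ and $u:X\to *$, the object $b:L\to X$ is sent to the composite $L\to X\to *$, whose source is $L$; passing back through $Pro({\cal C}^\omega)\simeq{\cal C}$ this is the point of $SPEC_{\cal N}(*)$ named by the object $L$ of $\cal C$. Composing the two steps, the point $O^{\cal N}_X\circ x$ is exactly the one named by $L$, that is $O^{\cal N}_{X,x}\simeq L$.

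The proof thus amounts to assembling results already present in \S\ref{structuresheaf}, and the only real obstacle is bookkeeping: one must check that the identification of points of $SPEC_{\cal N}(*)$ with (a subcategory of) $\cal C$ sends the point named by a local object $L$ back to $L$ — \ie that it restricts on $\overline{\cal L}{/*}$ to the functor forgetting the unique structure map — and that the two diagrams of internal pro-adjoints from the proof of proposition \ref{commutation} paste together so that the functor on points induced by $O^{\cal N}_X=\pi\circ s_X$ is genuinely the composite of the two individual induced functors; this last point is immediate from $2$-functoriality of the points construction but deserves to be recorded. No ingredient beyond theorem \ref{theorem} and proposition \ref{commutation} is needed.
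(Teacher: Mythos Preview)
Your proposal is correct and follows exactly the route the paper takes: the paper's proof is a one-sentence pointer to the computations of the induced functors on points carried out inside the proof of proposition \ref{commutation}, and you have unpacked that pointer explicitly, tracing $x$ through $s_X$ (the inclusion $\overline{\cal L}(X)\hookrightarrow\overline{\cal L}{/X}$) and then through $SPEC_{\cal N}(X)\to SPEC_{\cal N}(*)$ (postcomposition with $X\to *$). Your bookkeeping remarks are apt but, as you note, straightforward.
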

\begin{proof}
$O^{\cal N}_X$ is the composition $Spec_{\cal N}(X)\to SPEC_{\cal N}(X)\to SPEC_{\cal N}(*)$ and the action of these morphisms on the points have been explained inside the proof of proposition \ref{commutation}.
\end{proof}

As a corollary of proposition \ref{commutation}, a map $u:X\to Y\in {\cal C}$ induces a diagram of toposes
$$\xymatrix{
Spec_{\cal N}(X)\ar[r]^{s_X}\ar[d]_u&SPEC_{\cal N}(X)\ar[d]^U\ar[rd]\\%\ar@{}[ld]|{\alpha(u)\swarrow}\\
Spec_{\cal N}(Y)\ar[r]_{s_Y}&SPEC_{\cal N}(Y)\ar[r]&SPEC_{\cal N}(*)
}$$
and $\alpha(u)$ induces a natural transformation ${\cal O}(u):{\cal O}_X\to {\cal O}_Y\circ u$ such that, for every point of $x:{\cal S}\to Spec_{{\cal B}^f}(X)$, the induced map on the stalk ${\cal O}(u)_x:{\cal O}_{X,x}\to {\cal O}_{Y,u(x)}$ is in $\cal A$.
This result is an analog of the fact that a map of rings $A\to B$ induces a local morphism between the local rings corresponding to the stalks of the structural sheaves of $Spec(A)$ and $Spec(B)$.

\bigskip
The category of points of $SPEC_{\cal N}(*)$ is that $\overline{\cal L}$ of local objects and the factorisation system of $\cal C$ restrict to $\overline{\cal L}$. This is in fact a general phenomenon and for every topos $\cal T$ the category of morphisms from $\cal T$ to $SPEC_{\cal N}(*)$ will inherit a unique factorisation system. The point of view chosen for the exposition in this work makes the details of this factorisation system a bit complicated to explicit and we won't explain this here. We won't explain either the nice adjunction property of the small spectrum implying that it is a universal localisation. We will treat these questions in a better context in \cite{anel1}.

\bigskip
We study now the functoriality of the map $Spec_{\cal N}(X)\to SPEC_{\cal N}(X)$ with respect to the Nisnevich context.
\begin{prop}\label{changenis}
For two Nisnevich contexts ${\cal N}=({\cal C}=({\cal A}_1,{\cal B}_1),{\cal C}^f,{\cal L}_1)$ and ${\cal N}'=({\cal C}=({\cal A}_2,{\cal B}_2),{\cal C}^f,{\cal L}_2)$, if $\cal N$ is finer than $\cal N'$, there is a diagram of geometric morphisms
$$\xymatrix{
Spec_{\cal N}(X)\ar[r]^-{s_X}\ar[d]_r\ar@{}[rd]|{(1)}&SPEC_{\cal N}(X)\ar[d]^R\ar[r]^-{r_X}\ar@{}[rd]|{(2)}&Spec_{\cal N}(X)\ar[d]^r\\
Spec_{\cal N'}(X)\ar[r]_-{s'_X}&SPEC_{\cal N'}(X)\ar[r]_-{r'_X}&Spec_{\cal N'}(X)
}$$
where $R$ is a subtopos embedding, 
$(2)$ commutes up to a natural isomorphism
and $(1)$ commutes up to a natural transformation $\beta$.
At the level of the category of points, $\beta$ is given by a map in ${\cal B}_1\cap {\cal A}_2$.
\end{prop}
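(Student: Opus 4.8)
The plan is to imitate the proof of Proposition \ref{commutation}, the r\^ole of the map $u\colon X\to Y$ there being played here by the refinement $\cal N\to\cal N'$.

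\emph{Step 1: the comparisons $R$ and $r$.} The big sites underlying $SPEC_{\cal N}(X)$ and $SPEC_{\cal N'}(X)$ are the \emph{same} category ${\cal C}^f{/X}$; only the topologies differ. Since ${\cal B}_2\subset{\cal B}_1$, Proposition \ref{refineniscontext} already records that every ${\cal B}_2$ point covering family is a ${\cal B}_1$ point covering family, and the inclusion ${\cal L}_1\subset\overline{{\cal L}_2}$ guarantees that every ${\cal L}_2$-localising family is ${\cal L}_1$-localising, since an object of $\overline{{\cal L}_2}$ lifts through all ${\cal L}_2$-coverings by definition of the saturation. Hence the $\cal N$-Nisnevich topology refines the $\cal N'$-one on ${\cal C}^f{/X}$, so $SPEC_{\cal N}(X)$ is a subtopos of $SPEC_{\cal N'}(X)$; $R$ is the corresponding embedding, and $r$ is the geometric morphism of Proposition \ref{refineniscontext}.

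\emph{Step 2: commutation of the squares.} All four geometric morphisms are induced by morphisms of sites: $r_X$ by $\iota_X\colon{\cal B}_1^f{/X}\to{\cal C}^f{/X}$, $r'_X$ by $\iota'_X\colon{\cal B}_2^f{/X}\to{\cal C}^f{/X}$, $r$ by the inclusion $j\colon{\cal B}_2^f{/X}\to{\cal B}_1^f{/X}$, and $R$ by the identity functor of ${\cal C}^f{/X}$ (a mere refinement of topology). As $\iota_X\circ j=\iota'_X$, the square $(2)$ commutes up to a canonical isomorphism, just like square $(2)$ of Proposition \ref{commutation}. Then, from $r_Xs_X\simeq\mathrm{id}$ and $r'_Xs'_X\simeq\mathrm{id}$ one gets $r'_X(Rs_X)\simeq r'_X(s'_Xr)$, and composing with $s'_X$ and inserting the unit and counit of $r'_X\dashv s'_X$ yields the natural transformation $\beta\colon Rs_X\to s'_Xr$, exactly as $\alpha(u)$ was obtained in Proposition \ref{commutation}.

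\emph{Step 3: action on points.} By Theorem \ref{pointspec} the categories of points of $SPEC_{\cal N}(X)$, $SPEC_{\cal N'}(X)$, $Spec_{\cal N}(X)$, $Spec_{\cal N'}(X)$ are $\overline{{\cal L}_1}{/X}$, $\overline{{\cal L}_2}{/X}$, $\overline{{\cal L}_1}(X)$, $\overline{{\cal L}_2}(X)$, and $\overline{{\cal L}_1}\subset\overline{{\cal L}_2}$ since a local object for the finer topology is local for the coarser one. On points, $R$ and $s_X$, $s'_X$ are the obvious inclusions, while --- by the internal pro-adjoint computation used for $\check U_!$ and $\check u_!$ in Proposition \ref{commutation} --- the functor induced by $r$ is the left adjoint of the inclusion ${\cal B}_2{/X}\hookrightarrow{\cal B}_1{/X}$, which sends $b\colon L\to X\in{\cal B}_1$ to $\beta_2\colon\phi_2(b)\to X$, where $L\to\phi_2(b)\to X$ is the $({\cal A}_2,{\cal B}_2)$-factorisation of $b$ (the lifting map needed for the adjunction is furnished by the orthogonality of $({\cal A}_2,{\cal B}_2)$), and the target lies in $\overline{{\cal L}_2}(X)$ because the $\cal A$-class preserves Nisnevich-local objects. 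Hence $Rs_X$ sends $b$ to $L$ over $X$, $s'_Xr$ sends $b$ to $\phi_2(b)$ over $X$, and the component of $\beta$ at $b$ is the factorisation map $\alpha_2\colon L\to\phi_2(b)$. By construction $\alpha_2\in{\cal A}_2$; and since $\beta_2\alpha_2=b\in{\cal B}_1$ and $\beta_2\in{\cal B}_2\subset{\cal B}_1$, the left cancellation property of ${\cal B}_1$ (Proposition \ref{proplift}) gives $\alpha_2\in{\cal B}_1$, so $\alpha_2\in{\cal B}_1\cap{\cal A}_2$, as asserted.

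The one genuinely delicate point is Step 3: identifying the functors induced on points requires the internal pro-object formalism of Proposition \ref{commutation} and, above all, recognizing the $({\cal A}_2,{\cal B}_2)$-factorisation as the left adjoint of the slice comparison functor ${\cal B}_2{/X}\hookrightarrow{\cal B}_1{/X}$; once this is granted, the comparison of topologies in Step 1 and the cancellation argument are routine.
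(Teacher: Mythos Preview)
Your proof is correct and follows essentially the same route as the paper's own argument: you establish $R$ as a subtopos embedding from the identity of the big sites with a finer topology, check square $(2)$ at the level of site morphisms, manufacture $\beta$ from the $(r'_X,s'_X)$ adjunction, and then identify the action on points by exhibiting the $({\cal A}_2,{\cal B}_2)$-factorisation as the left adjoint to the inclusion ${\cal B}_2{/X}\hookrightarrow{\cal B}_1{/X}$, concluding with the left-cancellation argument for ${\cal B}_1$. Your write-up is in fact slightly more explicit than the paper's (you spell out why the target of the factorisation remains Nisnevich-local, and why the coarser covers are still covers for the finer context), but the structure and the key lemmas invoked are identical.
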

\begin{proof}
The assertion about $R$ is a consequence of the facts that $SPEC_{\cal N}(X)$ and $SPEC_{\cal N'}(X)$ have the same underlying site and the topology of $SPEC_{\cal N}(X)$ is finer.
The commutation of $(2)$ can be seen at the level of inverse images restricted to the sites.
From it we deduced natural isomorphisms $rr_Xs_X\simeq r_XRs_X$ and composing by $s'_X$ and using the co-unit of $(r_X,s_X)$ and the unit of $(r'_X,s'_X)$ we have a transformation $\beta:Rs_X\to s'_Xr$.
As for the action of $\beta$ on points we need only to study the pro-adjoint $r_!$.
We'll use again lemma \ref{proadj}. By an analog argument to that in the proof of proposition \ref{commutation}, the map $({\cal B}_2){/X}\to ({\cal B}_1){/X}$ admits a left adjoint given by the $({\cal A}_2,{\cal B}_2)$ factorisation:
$$\xymatrix{
U\ar[rd]_b\ar[r]^\beta&\phi(b)\ar[d]^{\beta'}\\
&X
}$$
$\beta\in {\cal A}_2$ by definition, and as both $b$ and $\beta'$ are in ${\cal B}_1$, so is $\beta$ by cancellation.
The map $\overline{\cal L}_1(X)\to \overline{\cal L}_2(X)$ between the categories of points is then given also by this factorisation.
\end{proof}

This result will be used in particular when ${\cal N}'$ is the Indiscrete factorisation context (\S\ref{extremal}) to defined the structural map of the structural sheaf.

%%%%%%%%%%%%%%%%%%%%%%%%%%%%%%%%%%%%%%%%%%%%%%%%%%%%%%%%%%%%%%%%%%%%%%%%%%%%%%%%%%%%%%%%%%%%%%%%%%%%%%%%%%%%%%%%%%%%%%%%%%%%
%%%%%%%%%%%%%%%%%%%%%%%%%%%%%%%%%%%%%%%%%%%%%%%%%%%%%%%%%%%%%%%%%%%%%%%%%%%%%%%%%%%%%%%%%%%%%%%%%%%%%%%%%%%%%%%%%%%%%%%%%%%%
%%%%%%%%%%%%%%%%%%%%%%%%%%%%%%%%%%%%%%%%%%%%%%%%%%%%%%%%%%%%%%%%%%%%%%%%%%%%%%%%%%%%%%%%%%%%%%%%%%%%%%%%%%%%%%%%%%%%%%%%%%%%
%%%%%%%%%%%%%%%%%%%%%%%%%%%%%%%%%%%%%%%%%%%%%%%%%%%%%%%%%%%%%%%%%%%%%%%%%%%%%%%%%%%%%%%%%%%%%%%%%%%%%%%%%%%%%%%%%%%%%%%%%%%%
%%%%%%%%%%%%%%%%%%%%%%%%%%%%%%%%%%%%%%%%%%%%%%%%%%%%%%%%%%%%%%%%%%%%%%%%%%%%%%%%%%%%%%%%%%%%%%%%%%%%%%%%%%%%%%%%%%%%%%%%%%%%
%%%%%%%%%%%%%%%%%%%%%%%%%%%%%%%%%%%%%%%%%%%%%%%%%%%%%%%%%%%%%%%%%%%%%%%%%%%%%%%%%%%%%%%%%%%%%%%%%%%%%%%%%%%%%%%%%%%%%%%%%%%%
%%%%%%%%%%%%%%%%%%%%%%%%%%%%%%%%%%%%%%%%%%%%%%%%%%%%%%%%%%%%%%%%%%%%%%%%%%%%%%%%%%%%%%%%%%%%%%%%%%%%%%%%%%%%%%%%%%%%%%%%%%%%

\section{Examples}\label{examples}

This part deals with examples of the previous setting.
After a short part on the two trivial factorisation systems that always exist on a category, we present how Zariski and Etale topology are associated to unique factorisation systems according to the scheme of the previous section and how the general notion of point and local objects, gives back known classes of objects. The Nisnevich topology is also considered as an illustration and a motivation of the Nisnevich forcing.

Then, what is more interesting, we study a sort of dual systems where Zariski closed sets play the role of opens and finite maps that of etale maps. There is also a notion of Nisnevich topology in this context. This material has some flavour of Voevodsky cdh topologies and, again, the general framework extract known classes of objects. Section \ref{duality} contains some remarks about these two dual settings, but raises more question than it gives answers.

The last section study very rapidly example of factorisation systems outside our general setting and outside of algebraic geometry but where some of our construction can be adapted.

\subsection{Extremal examples}\label{extremal}

Every category $\cal C$ admits a two canonical unique factorisation systems ${\cal C}=(Iso({\cal C}),{\cal C})$ and ${\cal C}=({\cal C},Iso({\cal C}))$ where $Iso(\cal C)$ is the subcategory of isomorphisms. The factorisation of a map is then given by composing with the identity of the source or of the target.
These two systems will be called respectively {\em discrete} and {\em indiscrete} because they behave like the discrete and indiscrete topologies, being somehow the finest and the coarsest factorisation systems. 

We assume $\cal C$ is the opposite of a locally finitely presentable category.

\paragraph{Discrete factorisation system}
${\cal C}=(Iso({\cal C}),{\cal C})$ is the discrete factorisation system. 
Points are objets $P$ splitting every map $U\to P$, their full subcategory in $\cal C$ is a groupoid. Little can be said in general, beside that they will be points of any factorisation system on $\cal C$. Little can be said also about covering families or local objects. The only remark is that the small and big toposes agree in this case (and are noted $SPEC_{Dis}(X)=Spec_{Dis}(X)$).

The Nisnevich context $Dis=({\cal C}=(Iso({\cal C}),{\cal C}),\emptyset)$ is the finest Nisnevich context.

\medskip
In the case where ${\cal C}=CRings^o$, the opposite category of that of commutative rings, the set of discrete points is empty: it would be the set of rings $A$ such that any map $A\to B$ has a retraction, but only the zero ring as this property and it is excluded from points by definition. This imply that the set of points of any object is empty and that the empty family will cover any object, collapsing both $Spec_{Dis}$ and $SPEC_{Dis}$ to the empty topos.

\paragraph{Indiscrete factorisation system}

${\cal C}=({\cal C},Iso({\cal C}))$ is the indiscrete factorisation system.
Every object is a point, hence local, and the factorisation topology is trivial. The Nisnevich context $Ind=({\cal C}=(Iso({\cal C}),{\cal C}),\emptyset)$ is the coarsest Nisnevich context.
The small site of $X$ is reduced to a ponctual category and $Spec_{Ind}(X)$ is the ponctual topos and the big topos $SPEC_{Ind}(X)$ it is the topos of presheaves over ${\cal C}^f{/X}$, whose category of points is ${\cal C}/X$.

\medskip
The structural sheaf ${\cal S}\simeq Spec_{Ind}(X)\tto SPEC_{Ind}(*)$ of an object $X$ is simply $X$ view as a point of $SPEC_{Ind}(*)$.

\paragraph{Comparisons}

For any other Nisnevich context $\cal N$, proposition \ref{changenis} gives a diagram
$$\xymatrix{
Spec_{\cal N}(X)\ar[r]^-{O^{\cal N}_X}\ar[d]_{r_{Ind}}&SPEC_{\cal N}(X)\ar[d]^{R_{Ind}}\\
{\cal S}\ar[r]^-{X}&SPEC_{Ind}(*)
}$$
and a natural transformations $\beta_{Ind}:O^{\cal N}_X\to X\circ r_{Ind}$, called the {\em structural map} of the structure sheaf.

\begin{prop}
For a point $x:{\cal S}\to Spec_{\cal N}(X)$ corresponding to a local form $L\to X$,
the map $\beta_{Ind,x}:O^{\cal N}_{X,x}\to X\circ r_{Ind}\circ x$ viewed in ${\cal C}$ is that map $L\to X$.
\end{prop}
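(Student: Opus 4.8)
The plan is to unwind the definition of $\beta_{Ind}$ and evaluate its component at the point $x$, reducing everything to the observation that the indiscrete context $Ind$ has left class all of $\cal C$ and right class the isomorphisms, so that the $Ind$-factorisation of any arrow $b\colon L\to X$ is the trivial one $L\overset{b}{\tto}X\overset{\mathrm{id}}{\tto}X$. First I would identify the objects in the square of the paragraph "Comparisons": since ${\cal B}_{Ind}^f{/X}$ is the terminal category, $Spec_{Ind}(X)\simeq{\cal S}$ is the punctual topos and $r_{Ind}$ is the unique geometric morphism to it; the category of points of $SPEC_{Ind}(*)$, the topos of presheaves on ${\cal C}^\omega$, is $\cal C$, the morphism $X\colon{\cal S}\to SPEC_{Ind}(*)$ is $X$ viewed as a point of this topos, and $R_{Ind}\colon SPEC_{\cal N}(*)\to SPEC_{Ind}(*)$ is the subtopos embedding acting on points as the inclusion $\overline{\cal L}\hookrightarrow{\cal C}$. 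By the proposition just established identifying the stalks of $O^{\cal N}_X$, the point $R_{Ind}\circ O^{\cal N}_X\circ x$ of $SPEC_{Ind}(*)$ is the object $L$ and the point $X\circ r_{Ind}\circ x$ is the object $X$; hence $\beta_{Ind,x}$ is automatically an arrow $L\to X$ of $\cal C$, and it remains only to identify it.

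Next I would trace the natural transformation. By construction $\beta_{Ind}$ is the instance of proposition \ref{changenis} comparing the $\cal N$- and $Ind$-structure sheaves; passing to (internal) pro-adjoints as in the proof of proposition \ref{commutation} and invoking lemma \ref{proadj}, its component at $x$ is read off from the relevant left adjoints between categories of points. The only non-trivial such ingredient is the left adjoint of the comparison functor between the small sites, which the proof of proposition \ref{changenis} computes via the $({\cal A}_{Ind},{\cal B}_{Ind})=({\cal C},Iso({\cal C}))$-factorisation: it sends a local form $b\colon L\to X$ to $\phi(b)\to X$ together with the map $L\to\phi(b)$ lying in ${\cal A}_{Ind}$. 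Since the $Ind$-factorisation of $b$ is $L\overset{b}{\tto}X\overset{\mathrm{id}}{\tto}X$, we get $\phi(b)=X$ and that map is $b$ itself, while the remaining pieces of the assembly (the big pushforward $SPEC_{\cal N}(X)\to SPEC_{\cal N}(*)$ and the embeddings $SPEC_{\cal N}(X)\hookrightarrow SPEC_{Ind}(X)$, $SPEC_{\cal N}(*)\hookrightarrow SPEC_{Ind}(*)$) commute up to canonical isomorphism, involving only the big sites and base change along $X\to *$, under which ${\cal B}^f$ is stable. Therefore $\beta_{Ind,x}$ is the structure map $b\colon L\to X$, as claimed.

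The genuinely delicate point is the $2$-categorical bookkeeping: verifying that the squares of big spectra commute up to isomorphism so that the various whiskerings do not perturb the underlying arrow of $\cal C$, and that lemma \ref{proadj} legitimately transfers the computation from geometric morphisms to their pro-adjoints, hence to the effect on global points. Once this is granted, the triviality of the $Ind$-factorisation makes the identification immediate, with no further computation needed.
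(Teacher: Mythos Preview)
Your proposal is correct and follows exactly the route the paper intends: the paper's own proof is the single sentence ``This can be deduced from the proof of proposition \ref{changenis}'', and what you have written is precisely that deduction made explicit, namely specialising the pro-adjoint computation of $\beta$ in proposition \ref{changenis} to the case ${\cal N}'=Ind$, where $({\cal A}_{Ind},{\cal B}_{Ind})=({\cal C},Iso({\cal C}))$ makes the factorisation of $b:L\to X$ trivial and forces $\beta_{Ind,x}=b$.
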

\begin{proof}
This is can be deduced from the proof of proposition \ref{changenis}.
\end{proof}

%%%%%%%%%%%%%%%%%%%%%%%%%%%%%%%%%%%%%%%%%%%%%%%%%%%%%%%%%%%%%%%%%%%%%%%%%%%%%%%%%%%%%%%%%%%%%%%%%%%%%%%%%%%%%%%%

\subsection{Zariski topology}\label{zariski}

The category $\cal C$ is the opposite of that of commutative unital rings, but to simplify the manipulation we are going to work in ${\cal C}^o=CRings$. All definitions of points and local objects will have to be opposed, and the role of left and right class of maps are interchanged: the $(Loc,Cons)$ factorisation system that we'll construct on $CRings$ has to be though as $(Cons^o,Loc^o)$ in $CRings^o$.
We apologize to the reader for this inconvenience, but we felt that it was better to develop the general framework with the geometric intuition rather than the algebraic one.

\subsubsection{Factorisation system}

A map $A\to B$ in $CRings$ is called a {\em localisation} if there exists a set $S\in A$ and $B\simeq A[\{x_s,s\in S\}]/(\{sx_s-1,s\in S\})$.
The class of localisation maps is noted $Loc$.
A map $u:A\to B$ in $CRings$ is called a {\em conservative} if any $a\in A$ is invertible iff $u(a)$ is.
The class of conservative maps is noted $Cons$.

The following lemma is a reformulation of the definition.
\begin{lemma}\label{genloccons}
A map is conservative iff it has the right lifting property with respect to $\ZZ[x]\tto \ZZ[x,x^{-1}]$.
\end{lemma}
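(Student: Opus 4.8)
The plan is simply to unwind both sides of the equivalence against the universal property of the polynomial and Laurent polynomial rings. First I would recall that a ring homomorphism $\ZZ[x]\tto A$ is the same datum as an element $a\in A$ (the image of $x$), while a ring homomorphism $\ZZ[x,x^{-1}]\tto B$ is the same datum as a unit $b\in B^\times$, and that the inclusion $\iota:\ZZ[x]\tto\ZZ[x,x^{-1}]$ corresponds under this dictionary to the operation ``regard an element as a unit when possible''. Consequently, for a fixed map $u:A\to B$, a commutative square
$$\xymatrix{
\ZZ[x]\ar[r]\ar[d]_\iota&A\ar[d]^u\\
\ZZ[x,x^{-1}]\ar[r]&B
}$$
is exactly the datum of an element $a\in A$ (the top arrow) such that $u(a)\in B^\times$ (the bottom arrow is then forced to be the unit $b=u(a)$, and commutativity holds automatically).

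Next I would analyse the diagonal fillers. A ring map $\ell:\ZZ[x,x^{-1}]\to A$ making the upper triangle commute must satisfy $\ell(x)=a$; such a map exists if and only if $a\in A^\times$, in which case it is unique, and it then automatically satisfies $u\ell(x)=u(a)=b$, so the lower triangle commutes as well. Hence the square above admits a (necessarily unique) lift if and only if $a\in A^\times$. Therefore $u$ has the unique right lifting property with respect to $\iota$ if and only if, for every $a\in A$ with $u(a)$ invertible, $a$ is invertible; since the converse implication ($a$ invertible $\Rightarrow u(a)$ invertible) holds for any ring map, this is precisely the condition that $u$ be conservative.

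There is no real obstacle here: the whole argument is a routine application of the universal properties of $\ZZ[x]$ and $\ZZ[x,x^{-1}]$, and the only point requiring a word is that uniqueness of the lift is automatic (a homomorphism out of $\ZZ[x,x^{-1}]$ is determined by the image of $x$), so that the \emph{unique} lifting property costs nothing beyond the bare lifting property in this case.
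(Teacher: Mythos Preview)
Your proof is correct and is precisely the unwinding the paper has in mind: the paper does not spell out an argument but simply remarks that the lemma ``is a reformulation of the definition'', and your translation via the universal properties of $\ZZ[x]$ and $\ZZ[x,x^{-1}]$ is exactly that reformulation made explicit. In particular your observation that the lift, when it exists, is automatically unique (so the \emph{unique} right lifting property is no stronger than the bare one here) is the only point worth noting, and you handle it correctly.
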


\begin{prop}
The classes of maps $Loc$ and $Cons$ are the left and right class of a unique factorisation system.
\end{prop}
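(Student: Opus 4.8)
The plan is to exhibit an explicit factorisation of an arbitrary ring map, check that its two pieces lie in $Loc$ and $Cons$ respectively, and then verify the uniqueness up to unique isomorphism. Given $u\colon A\to B$, set $S=u^{-1}(B^\times)$, the multiplicative subset of elements of $A$ sent to units in $B$, and factor $u$ as $A\to A[S^{-1}]\to B$, where the first map is the canonical localisation and the second is the unique map induced by the universal property of $A[S^{-1}]$ (legitimate precisely because every element of $S$ becomes invertible in $B$). The first map is in $Loc$ by definition. For the second, $A[S^{-1}]\to B$: if $a/s\in A[S^{-1}]$ maps to a unit of $B$, then $u(a)$ is a unit of $B$ (as $u(s)$ already is), so $a\in S$, hence $a/s$ is already invertible in $A[S^{-1}]$; thus $A[S^{-1}]\to B$ is conservative. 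This gives existence of the $(Loc,Cons)$ factorisation.

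Next I would establish uniqueness. Given any factorisation $A\xrightarrow{\ell} C\xrightarrow{c} B$ with $\ell\in Loc$ and $c\in Cons$, I claim $C\simeq A[S^{-1}]$ canonically. Since $c$ is conservative, the set of elements of $A$ inverted by $\ell$ equals the set inverted by $u=c\ell$, namely $S$; and since $\ell$ is a localisation, $\ell$ inverts exactly some set $T\subseteq A$ with $C\simeq A[T^{-1}]$, and one checks $A[T^{-1}]\simeq A[\langle T\rangle^{-1}]$ where $\langle T\rangle$ is the saturated multiplicative set generated by $T$, which is forced to be the saturation of $S$. Hence $C$ is canonically isomorphic to $A[S^{-1}]$ over $A$ and under $B$, and the isomorphism is unique because $A\to A[S^{-1}]$ is an epimorphism in $CRings$. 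Alternatively — and more cleanly — I would invoke the abstract machinery: it suffices to show $Loc$ and $Cons$ are saturated orthogonal classes, since by the Corollary following Lemma~\ref{fundamental} (or directly by Lemma~\ref{uniquelift}) a class of factorisations with $(A,B)$ a lifting system and $A$ stable under codiagonals is automatically unique. So I would instead verify: (i) $Loc$ is left orthogonal to $Cons$, (ii) $Loc={}^\bot Cons$ and $Cons=Loc^\bot$.

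For orthogonality, Lemma~\ref{genloccons} already identifies $Cons = \{\ZZ[x]\to\ZZ[x,x^{-1}]\}^\bot$. By Lemma~\ref{leftgen}, the pair $({}^\bot Cons, Cons)$ is a unique lifting system, and one checks directly that ${}^\bot\{\ZZ[x]\to\ZZ[x,x^{-1}]\}$ is exactly $Loc$: a lifting problem for $\ZZ[x]\to\ZZ[x,x^{-1}]$ against $A\to B$ amounts to choosing an element of $A$ and a designated inverse of its image in $B$, and such lifts exist uniquely for all such data precisely when $A\to B$ inverts a set of elements and nothing more — i.e. $A\to B\in Loc$ after identifying localisations with their saturated generating sets. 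Combined with the existence of the factorisation constructed above, this shows $(Loc,Cons)$ is a unique factorisation system. The main obstacle I anticipate is the bookkeeping around saturation of multiplicative sets: two localisations $A[S^{-1}]$ and $A[T^{-1}]$ coincide over $A$ iff $S$ and $T$ have the same saturation, so one must be careful that "$Loc$" is really the class of maps isomorphic to saturated localisations in order for the orthogonality characterisation ${}^\bot Cons = Loc$ to be literally correct, and for the factorisation object $\phi(u)=A[S^{-1}]$ to be well-defined up to \emph{unique} isomorphism rather than merely up to isomorphism.
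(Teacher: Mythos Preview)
Your existence argument---factoring $u$ through $A[S^{-1}]$ with $S=u^{-1}(B^\times)$ and checking conservativity of the second map---is identical to the paper's.

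For uniqueness the paper takes a shorter path than either of your alternatives: it simply notes that a localisation $A\to A[S^{-1}]$ is an epimorphism in $CRings$, so its codiagonal $A[S^{-1}]\sqcup_A A[S^{-1}]\to A[S^{-1}]$ is an isomorphism, hence $Loc$ is (trivially) stable under codiagonals and Lemma~\ref{uniquelift} yields uniqueness of lifts and therefore of the factorisation. You have both ingredients in hand---you observe that localisations are epimorphisms in your first approach, and you cite Lemma~\ref{uniquelift} in your second---but you never combine them; instead you detour through saturations of multiplicative sets or through an explicit identification of ${}^\bot Cons$ with $Loc$. One slip to flag: you write ``${}^\bot\{\ZZ[x]\to\ZZ[x,x^{-1}]\}$ is exactly $Loc$'', but the left orthogonal of a single map is far too large; what you need is ${}^\bot(G^\bot)={}^\bot Cons$. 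Identifying that with $Loc$ is fine once the factorisation and the orthogonality $Loc\perp Cons$ are already established, but it is not something one ``checks directly'' from the generator alone.
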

\begin{proof}
For a map $u:A\to B$, we define $S:=u^{-1}(B^\times)$ and $A[S^{-1}]$ the associated localisation.
$u$ factors $A\to A[S^{-1}]\to B$, the first map is a localisation by construction, it remains to prove that $v:A[S^{-1}]\to B$ is conservative.
Let $a/s\in A[S^{-1}]$ such that $v(a/s)=u(a)u(s)^{-1}$ has an inverse $b\in B$, this is equivalent to the fact that $u(a)$ has an inverse, \ie to $a\in S$. Elements of $A[S^{-1}]$ invertible in $B$ are therefore fractions of elements of $S$, which are precisely the invertible elements of $A[S^{-1}]$.
To prove the unicity of the factorisation, we can use lemma \ref{uniquelift} as the codiagonal of a localisation map is always an isomorphism.
\end{proof}

\medskip
We'll use implicitly the following well-known result in the sequel.
\begin{lemma}
A localisation is of finite presentation iff it can be define by inverting a single element.
\end{lemma}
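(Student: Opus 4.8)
The plan is to dispatch the ``if'' direction by inspection and the ``only if'' direction using that a finitely presented algebra is a compact object of the category of algebras, together with the fact that localisations are epimorphisms of rings.

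For the ``if'' direction: inverting a single element $f\in A$ produces the localisation $A\to A[f^{-1}]\cong A[x]/(fx-1)$, which is manifestly finitely presented (one generator, one relation), hence lies in ${\cal C}^f$ (viewed appropriately in ${\cal C}^o=CRings$).

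For the ``only if'' direction, let $A\to B$ be a localisation of finite presentation, say $B=A[S^{-1}]$ for some $S\subseteq A$. First I would record the standard filtered-colimit presentation of $B$: for a finite subset $T\subseteq S$ set $\pi_T:=\prod_{t\in T}t$ (with $\pi_\emptyset=1$), so that $A[T^{-1}]\cong A[\pi_T^{-1}]$; the finite subsets of $S$, ordered by inclusion, form a filtered poset and $B=\colim_{T}A[\pi_T^{-1}]$ in $A\setminus CRings$, with structural maps $\lambda_T:A[\pi_T^{-1}]\to B$. Since $B$ is a finitely presented $A$-algebra, it is a finitely presentable (compact) object of the locally finitely presentable category $A\setminus CRings$, so the identity of $B$, viewed in $\Hom_A(B,\colim_T A[\pi_T^{-1}])\cong\colim_T\Hom_A(B,A[\pi_T^{-1}])$, factors as $\lambda_T\circ\phi$ for some finite $T\subseteq S$ and some $A$-algebra map $\phi:B\to A[\pi_T^{-1}]$.

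The last step turns this retraction into an isomorphism. For each $s\in S$, applying $\phi$ to the relation $s\cdot(1/s)=1$ in $B$ gives $s\cdot\phi(1/s)=1$ in $A[\pi_T^{-1}]$ (using $\phi(s)=s$ as $s\in A$), so every element of $S$ is already invertible in $A[\pi_T^{-1}]$. Hence $A\to A[S^{-1}]$ and $A\to A[\pi_T^{-1}]$ are two localisations of $A$ each of which inverts the element(s) inverted by the other (note $\pi_T$ is a product of elements of $S$), so the universal property of localisation yields mutually inverse $A$-algebra maps between them; uniqueness is automatic since localisations are epimorphisms of rings, so each of these rings admits only the identity as an $A$-algebra endomorphism. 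Therefore $B\cong A[\pi_T^{-1}]$, a localisation at the single element $\pi_T$. The only mildly delicate points are the filtered-colimit description of $A[S^{-1}]$ and the identification of finitely presented ring maps with the compact objects of $A\setminus CRings$, both of which are standard and the latter already used in the paper; everything else is routine.
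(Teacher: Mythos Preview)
Your proof is correct and follows essentially the same approach as the paper: write $A[S^{-1}]$ as the filtered colimit of the $A[F^{-1}]$ over finite $F\subset S$, use compactness of a finitely presented algebra to factor the identity through some $A[F^{-1}]$, and then conclude that the resulting retraction is an isomorphism. The only cosmetic difference is in this last step: the paper argues via $srs=s$ together with the observation that $s$ is an epimorphism (by right cancellation from $A\to A[F^{-1}]$ being one), whereas you argue directly that every $s\in S$ becomes invertible in $A[\pi_T^{-1}]$ and invoke the universal property; both rely on localisations being epimorphisms and are equivalent in content.
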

\begin{proof}
A localisation $A\to A[S^{-1}]$ is always the cofiltered colimit of $A\to A[F^{-1}]$ where $F$ run through all finite subsets of $S$. Now if $A\to A[S^{-1}]$ is of finite presentation, the identity of $A[S^{-1}]$ factors through one of the $A[F^{-1}]$ and this gives a section $s$ of $r:A[F^{-1}]\to A[S^{-1}]$. Now by cancellation both $s$ is an epimorphism and $srs=s$ implies also $sr=1$, so $r$ is an isomorphism. Finally if $F=\{f_1,\dots,f_n\}$, $A[F^{-1}]=A[(f_1\dots f_n)^{-1}]$.
\end{proof}

\subsubsection{Points}\label{zarpoints}

A {\em nilpotent extension} of a ring $A$ is a map $B\to A$ such that any element in the kernel is nilpotent.

The opposite of the condition for a point reads: a ring $A$ corresponds to a point iff it is non zero and for any non zero localisation $\ell:A\to A[a^{-1}]$ there exists $s$ a retraction of $\ell$.

\begin{prop}\label{zarclosed}
A ring $A$ corresponds to a point of the $(Cons^o,Loc^o)$ factorisation system iff it is a nilpotent extensions of a field.
\end{prop}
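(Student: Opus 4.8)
The statement to prove is that a commutative ring $A$ corresponds to an $(Cons^o,Loc^o)$-point if and only if $A$ is a nilpotent extension of a field. Recall that the definition of a point must be read in $CRings^o$, so the algebraic condition is: $A\neq 0$, and for every finitely presented localisation $A\to A[a^{-1}]$ with $A[a^{-1}]\neq 0$, the map $A\to A[a^{-1}]$ admits a retraction (a ring map $A[a^{-1}]\to A$). Here I use lemma \ref{genloccons} and the fact that finitely presented localisations are exactly those inverting a single element. The plan is to show each direction by translating the retraction condition into a statement about units and nilpotents.

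\emph{Necessity.} Suppose $A$ is a point. First I claim $A$ has a unique prime ideal. Indeed, for any $a\in A$ the localisation $A\to A[a^{-1}]$ is either the zero ring (when $a$ is nilpotent) or, by the point condition, admits a retraction $\rho:A[a^{-1}]\to A$; composing, $\rho$ sends $a$ to a unit of $A$, because $a$ becomes a unit in $A[a^{-1}]$ and $\rho$ is a ring map. Hence: for every $a\in A$, either $a$ is nilpotent or $a$ is a unit. This is precisely the statement that $A$ is a local ring whose maximal ideal equals its nilradical, i.e.\ $A/\mathrm{nil}(A)$ is a field and $\mathrm{nil}(A)$ is the unique prime; equivalently $A$ is a nilpotent extension of the field $k=A/\mathrm{nil}(A)$. (I should double-check that $\mathrm{nil}(A)$ is an ideal of non-units: if $x$ is a non-unit then by the dichotomy $x$ is nilpotent, so the non-units form the ideal $\mathrm{nil}(A)$, confirming locality. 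The map $A\to k$ is then a nilpotent extension in the sense of \S\ref{zarpoints}.)

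\emph{Sufficiency.} Conversely suppose $A$ is a nilpotent extension of a field, so every element of $A$ is either a unit or nilpotent, and $A\neq 0$. Let $A\to A[a^{-1}]$ be a finitely presented localisation with $A[a^{-1}]\neq 0$; then $a$ is not nilpotent (otherwise the localisation would vanish), so $a$ is already a unit in $A$, whence $A\to A[a^{-1}]$ is an isomorphism and trivially has a retraction. By lemma \ref{genloccons} this is exactly the condition (opposed) for $A$ to be an $(Cons^o,Loc^o)$-point. Note one also needs the \emph{etale} (finite presentation) restriction here: only single-element localisations must be split, which is what makes nilpotent extensions of fields suffice.

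\textbf{Main obstacle.} The only real subtlety is making sure the retraction condition is exploited correctly: a retraction of $A\to A[a^{-1}]$ is a ring map in the \emph{opposite} direction, and the key observation is that such a map forces the image of $a$ to be a unit in $A$ — this is what converts "not nilpotent" into "unit" and collapses the prime spectrum to a point. Everything else is a routine unwinding of definitions (the characterisation of finitely presented localisations, strictness of the zero ring as initial object in $CRings^o$, and the elementary fact that a local ring with nil maximal ideal is the same thing as a nilpotent extension of a field).
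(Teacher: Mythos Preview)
Your proof is correct and follows essentially the same approach as the paper: both reduce the point condition to the dichotomy ``every element of $A$ is either nilpotent or a unit'' via the observation that $A[a^{-1}]=0$ iff $a$ is nilpotent, and that a retraction of $A\to A[a^{-1}]$ forces $a$ to be a unit. Your treatment is simply more explicit---you spell out both directions and the reason a retraction makes $a$ invertible, whereas the paper compresses necessity into a single sentence and leaves sufficiency implicit.
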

\begin{proof}
As a localisation is zero iff it inverses a nilpotent element of $A$, the condition of being a point says that any non nilpotent element of $A$ is invertible, so $A_{red}$ is a field.
\end{proof}

For short we are going to refer to these objects as {\em fat fields}. Any field is a fat field and the reduction of any fat field is a field.
Any fat field is a local ring, the unique maximal ideal being given by the nilradical.

\begin{prop}\label{zarpt}
The set of points of a ring $A$ is in bijection with the set of prime ideals of $A$.
\end{prop}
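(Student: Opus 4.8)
The plan is to write down explicit mutually inverse bijections between $\mathrm{Spec}(A)$, the set of primes, and the set $pt_{{\cal B}^f}(A)$ of connected components of ${\cal P}t_{{\cal B}^f}(A)$, working in $CRings$ but keeping in mind that, ${\cal C}$ being $CRings^o$, a morphism in ${\cal P}t_{{\cal B}^f}(A)\subset{\cal C}/X$ from $A\to P$ to $A\to P'$ is a ring map $P'\to P$ \emph{under} $A$. First I would send a prime $\mathfrak p$ to the class $\Phi(\mathfrak p):=[A\to k(\mathfrak p)]$, where $k(\mathfrak p)=\mathrm{Frac}(A/\mathfrak p)$; a field is a nilpotent extension of itself, hence a point by Proposition \ref{zarclosed}. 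Conversely I would send a point $A\to P$ to $\Psi([A\to P]):=\ker(A\to P\to P_{\mathrm{red}})$, a prime ideal because $P_{\mathrm{red}}$ is a field by Proposition \ref{zarclosed}, hence a domain. The only point needing an argument is that $\Psi$ depends only on the connected component: a morphism $(A\to P)\to(A\to P')$ gives a ring map $g:P'\to P$ under $A$, hence by functoriality of reduction a map $\bar g:P'_{\mathrm{red}}\to P_{\mathrm{red}}$; since $P'_{\mathrm{red}}$ is a field and $P_{\mathrm{red}}$ is nonzero (a point is not the zero ring), $\bar g$ is injective, so it does not change the kernel of the composite $A\to P'_{\mathrm{red}}$, giving $\ker(A\to P_{\mathrm{red}})=\ker(A\to P'_{\mathrm{red}})$.

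Next I would check the two composites. That $\Psi\circ\Phi=\mathrm{id}$ is immediate since $k(\mathfrak p)$ is already reduced, so $\ker(A\to k(\mathfrak p))=\mathfrak p$. For $\Phi\circ\Psi=\mathrm{id}$, take a point $A\to P$ and set $\mathfrak p:=\ker(A\to P_{\mathrm{red}})$; I would connect $A\to P$ to $A\to k(\mathfrak p)$ in ${\cal P}t_{{\cal B}^f}(A)$ through the point $A\to P_{\mathrm{red}}$, equipped with the structure map $A\to P\to P_{\mathrm{red}}$. Indeed the reduction $P\twoheadrightarrow P_{\mathrm{red}}$ is a morphism $(A\to P_{\mathrm{red}})\to(A\to P)$, and since $\mathfrak p=\ker(A\to P_{\mathrm{red}})$ the injection $A/\mathfrak p\hookrightarrow P_{\mathrm{red}}$ extends uniquely to a field map $k(\mathfrak p)\to P_{\mathrm{red}}$, which is a morphism $(A\to P_{\mathrm{red}})\to(A\to k(\mathfrak p))$. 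Thus $A\to P$, $A\to P_{\mathrm{red}}$ and $A\to k(\mathfrak p)$ lie in one connected component, whence $\Phi\Psi[A\to P]=[A\to k(\mathfrak p)]=[A\to P]$.

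I do not expect a genuine obstacle: once the direction of the arrows in ${\cal P}t_{{\cal B}^f}(A)$ is pinned down, everything rests on the elementary facts that a ring map out of a field is zero or injective and that the kernel of a ring map into a domain is prime. The subtlety to flag up front is precisely that passing to $CRings^o$ reverses the morphisms, which is what makes the span with apex $A\to P_{\mathrm{red}}$ point the right way so as to link $A\to P$ with $A\to k(\mathfrak p)$.
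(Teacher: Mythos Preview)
Your proof is correct and follows essentially the same route as the paper. The paper argues more tersely: it observes that every $A\to K$ with $K$ a fat field is connected to $A\to K_{\mathrm{red}}$, so one may compute $pt_{{\cal B}^f}(A)$ using only fields, and then invokes the classical fact that maps from $A$ to fields, up to the equivalence generated by factorisations, correspond to prime ideals via the kernel. Your version makes this explicit by writing down the two maps $\Phi$ and $\Psi$ and checking both composites, which is a cleaner presentation of the same idea; your careful handling of the arrow direction in ${\cal C}/X$ is correct, though since connected components are insensitive to reversing arrows the span you build would work either way.
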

\begin{proof}
The set of points of $A$ is defined as a the set of all maps $A\to K$ with $K$ a fat field quotiented by the relation generated by $A\to K \sim A\to K'$ if there exists $K\to K'$ such that $A\to K\to K'=A\to K'$. Any $A\to K$ can be replaced by one where the target is a field ($A\to K_{red}$) and $K''$ above can always be taken to be a field too. This ensure that instead of fat fields one can use only fields to define the same set. The result is then classical: the kernel of a map to a field is a prime ideal and every prime ideal is the kernel of the map to its residue field.
\end{proof}

\subsubsection{Covering families}

It should be already clear that our covering families are exactly Zariski covering families but we'll need the following result to compute local objects.

\begin{prop}\label{zarfinprescov}
Finite presentation point covers are families $A\to A[a_i^{-1}]$ such that 1 is a linear combinaison of the $a_i$. As a consequence all point covering families admits a finite point covering subfamily.
\end{prop}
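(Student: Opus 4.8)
The plan is to work throughout in $CRings = {\cal C}^o$ and to use the identification of Proposition \ref{zarpt} between the points of a ring $A$ and its prime ideals. By the lemma above characterising finitely presented localisations (a localisation is of finite presentation iff it inverts a single element), every object of ${\cal B}^f{/X}$ over $X = Spec_{Zar}(A)$ is, up to isomorphism, a localisation $A\to A[a^{-1}]$ with $a\in A$; so a finite presentation point cover of $X$ is a family $\{A\to A[a_i^{-1}]\}_{i\in I}$ of such localisations which is surjective on points, and it suffices to identify when that surjectivity holds.

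First I would analyse when a single point lifts through a member of the family. A point of $X$ is represented (Propositions \ref{zarpt} and \ref{zarclosed}) by a map $\varphi:A\to K$ with $K$ a fat field, whose underlying prime is $\mathfrak p = \ker(A\to K_{red})$, equivalently the preimage under $\varphi$ of the nilradical ($=$ maximal ideal) of $K$. A lift of this point through $A\to A[a_i^{-1}]$ is exactly a ring map $A[a_i^{-1}]\to K$ through which $\varphi$ factors, and by the universal property of localisation such a factorisation exists precisely when $\varphi(a_i)$ is invertible in $K$, that is, when $a_i\notin\mathfrak p$. Equivalently, using the second form of the covering condition, $pt_{{\cal B}^f}(A[a_i^{-1}])$ is (again by Proposition \ref{zarpt}) the set of primes of $A$ not containing $a_i$, and the induced map to $pt_{{\cal B}^f}(A) = Spec(A)$ is the inclusion of the principal open $D(a_i)$.

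Hence $\{A\to A[a_i^{-1}]\}_{i\in I}$ is point covering iff every prime ideal of $A$ fails to contain at least one $a_i$, that is, iff $\bigcup_{i} D(a_i) = Spec(A)$, iff the ideal generated by the $a_i$ lies in no prime ideal, iff (by the standard fact that every proper ideal is contained in a maximal ideal) that ideal is all of $A$. This last condition says precisely that $1$ is a linear combination $1 = \sum_{k=1}^{n} b_k a_{i_k}$ of finitely many of the $a_i$ with coefficients $b_k\in A$, which is the first assertion.

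For the consequence, take any point covering family $\{A\to A[a_i^{-1}]\}_{i\in I}$ and a relation $1 = \sum_{k=1}^{n} b_k a_{i_k}$ as just produced; the same relation witnesses that the finite subfamily indexed by $\{i_1,\dots,i_n\}$ generates the unit ideal, hence is itself a point covering family by the criterion above. The only step that is not completely routine is the equivalence ``the point lifts through $A\to A[a_i^{-1}]$ iff $a_i\notin\mathfrak p$'', where one uses the universal property of localisation together with the observation, already made in the proof of Proposition \ref{zarpt}, that a fat field may be replaced by its residue field without changing the represented point; everything else is elementary commutative algebra.
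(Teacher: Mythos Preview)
Your proof is correct and follows essentially the same approach as the paper's. Both arguments reduce to the observation that a map from $A$ to a field factors through $A[a_i^{-1}]$ iff $a_i$ is not in the corresponding prime, and then conclude that the family covers iff the ideal $(a_i;i)$ is all of $A$; you phrase this via prime ideals and the opens $D(a_i)$, while the paper phrases it via the vanishing of the quotient $A/(a_i;i)$, but these are the same argument.
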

\begin{proof}
For $K$ a field, and a given $A\to K$, $a_i$ is either in the kernel of invertible in $K$, \ie $A\to K$ factors through $A[a_i^{-1}]$ or $A/a_i$.
So $A\to A[a_i^{-1}]$ is a cover iff no non zero $A\to K$ factors through $A/(a_i;i)$ iff $A/(a_i;i)=0$.
For the last equivalence if $A/(a_i;i)\not=0$ it has at least one residue field giving a map $A\to A/(a_i;i)\to K$ and if such a factorisation $A\to A/(a_i;i)\to K$ exists as $A\to K$ is non zero, $A/(a_i;i)$ has to be non trivial.
The conclusion is now deduced from $1\in (a_i;i)\iff A/(a_i;i)=0$.
\end{proof}

\subsubsection{Local objects}

A ring $B$ is a {\em local ring} iff for any $x,y\in B$ satisfying $x+y=1$ ($\iff x+y$ invertible), $x$ or $y$ is invertible.
This condition can be read as: a $A$-algebra $B$ is a local ring iff for any $x,y\in B$, the map $A[x,y,(x+y)^{-1}]\to B$ factors through $A[x,x^{-1}]$ or $A[y,y^{-1}]$. Now as 1 is a linear combinaison of $x$ and $y$ in $A[x,y,(x+y)^{-1}]$, the two maps $A[x,y,(x+y)^{-1}]\to A[x,x^{-1}]$ and $A[x,y,(x+y)^{-1}]\to A[y,y^{-1}]$ form a covering family and $B$ and this gives the following lemma.

\begin{lemma}\label{prooflocal}
A $A$-algebra $B$ is a local ring iff for any $x,y\in B$ such that $x+y$ is invertible, $B$ lift through the point covering family 
$A[x,y,(x+y)^{-1}]\to A[x,x^{-1}]$ and $A[x,y,(x+y)^{-1}]\to A[y,y^{-1}]$ of $A[x,y,(x+y)^{-1}]$.
\end{lemma}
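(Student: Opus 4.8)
The plan is simply to unwind the definitions; there is no real difficulty here, only some care with the duality between ${\cal C}=CRings^o$ and $CRings$. First I would record the universal property of $R:=A[x,y,(x+y)^{-1}]$: an $A$-algebra map $R\to B$ is exactly a pair $(b_1,b_2)\in B^2$ with $b_1+b_2\in B^\times$ (a map out of the polynomial ring $A[x,y]$, which extends uniquely over the localisation at $x+y$ precisely when the image of $x+y$ is invertible). Since the definition of local ring recalled just above the lemma already identifies the conditions "$x+y=1$" and "$x+y$ invertible", the statement "$B$ is a local ring" rephrases as: for every $A$-algebra map $f\colon R\to B$, the element $f(x)$ or the element $f(y)$ is invertible in $B$. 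Moreover, in $R$ one has $(x+y)^{-1}x+(x+y)^{-1}y=1$, so by Proposition \ref{zarfinprescov} the two finitely presented localisations $R\to R[x^{-1}]$ and $R\to R[y^{-1}]$ (written $R\to A[x,x^{-1}]$, $R\to A[y,y^{-1}]$ in the statement) do form a point covering family of $R$.

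Next I would translate "$B$ lifts through this family" along a given $f\colon R\to B$. Working in ${\cal C}=CRings^o$, the pullback of $R\to R[x^{-1}]$ along $f$ is dual to the pushout $B\to B\otimes_R R[x^{-1}]=B[f(x)^{-1}]$ in $CRings$, and similarly for $y$; so a section in ${\cal C}$ of this pulled-back map is a retraction in $CRings$ of the localisation $B\to B[f(x)^{-1}]$. As already used in this section (a localisation with a retraction $s$ is an epimorphism with $srs=s$, hence $sr=\mathrm{id}$, hence an isomorphism), such a retraction exists iff $B\to B[f(x)^{-1}]$ is an isomorphism iff $f(x)$ is already invertible in $B$. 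Therefore, for fixed $f$, the statement "$B$ lifts through $\{R\to A[x,x^{-1}],\ R\to A[y,y^{-1}]\}$" is exactly "$f(x)$ or $f(y)$ is invertible in $B$".

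Combining the two translations, "for every $x,y\in B$ with $x+y$ invertible, $B$ lifts through the displayed point covering family of $A[x,y,(x+y)^{-1}]$" is literally the same condition as "for every such $x,y$, $x$ or $y$ is invertible", i.e. $B$ is local, which proves the lemma. The only step that deserves attention is the middle one: making sure the passage through ${\cal C}=CRings^o$ correctly turns "a section of $U_i\times_X L\to L$" into "a retraction in $CRings$ of a localisation of $B$", and recalling the (already used) fact that a localisation admitting a retraction is an isomorphism; everything else is a direct rewriting of definitions.
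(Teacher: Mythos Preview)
Your argument is correct and is essentially the same as the paper's, which treats the lemma as a direct reformulation: the paragraph immediately preceding the statement already observes that locality of $B$ amounts to every map $A[x,y,(x+y)^{-1}]\to B$ factoring through $A[x,x^{-1}]$ or $A[y,y^{-1}]$, and that these two localisations form a point covering family since $1$ is a linear combination of $x$ and $y$. You have simply made explicit the duality bookkeeping (pullback in ${\cal C}$ becoming pushout/localisation in $CRings$, section becoming retraction) and the fact that a localisation with a retraction is an isomorphism, which the paper leaves implicit.
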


\begin{prop}\label{zarpointablelocal}
A ring $A$ corresponds to a pointed local object for the $(Cons^o,Loc^o)$ system iff it is a local ring.
\end{prop}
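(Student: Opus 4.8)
The plan is to prove two implications, exploiting Lemma \ref{prooflocal} and Proposition \ref{zarpt} (identifying points with prime ideals), together with the fact established in Proposition \ref{zarpoint} that the points of the $(Cons^o, Loc^o)$ system are the fat fields, each of which is local with maximal ideal its nilradical. Recall that "pointed local object" means (dualising Definition \ref{pointedlocalob}) that there is a map $A \to K$ in $Loc^o$, i.e. a localisation $K \to A$ in $CRings$, with $K$ a point, i.e. a fat field.

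First I would show that a local ring $A$ is a pointed local object. If $A$ is local with maximal ideal $\mathfrak{m}$, then the residue map $A \to A/\mathfrak{m} =: k$ is a conservative map in $CRings$ (an element of $A$ is invertible iff it is not in $\mathfrak{m}$ iff its image in $k$ is nonzero, i.e. invertible), hence $k \to A$ is a localisation-op, i.e.\ $A \to k$ is in $Loc^o$; and $k$, being a field, is a fat field, hence a point. So $A$ is pointed local, provided we also check it is local in the abstract sense of Definition \ref{localob}. For that I would use Lemma \ref{prooflocal}: given $x,y \in B$ — careful, here $B$ plays the role of $A$ — with $x+y$ invertible, since $A$ is a local ring one of $x,y$ is invertible (the condition $x+y$ invertible forces, by the classical characterisation recalled just before the lemma, that $x$ or $y$ is a unit), so $A$ factors through $A[x,x^{-1}]$ or $A[y,y^{-1}]$ and lifts through the displayed covering family. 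Since by Proposition \ref{zarfinprescov} finite-presentation covers suffice to test local objects (every point cover has a finite point-cover subfamily, and being local is a lifting condition testable on covers of finitely presented objects), and since the covers of Lemma \ref{prooflocal} are, up to the manipulations there, the relevant generating family, $A$ is local.

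For the converse, suppose $A$ is a pointed local object, witnessed by a localisation $K \to A$ in $CRings$ with $K$ a fat field. A localisation is an epimorphism of rings whose image generates; since $K$ is a fat field it is in particular a local ring, and I claim $A$ inherits this. The cleanest route: a localisation $K \to A$ is surjective onto... no — rather, $A = K[S^{-1}]$ for some $S$, and a localisation of a local ring at a multiplicative set is again local (if $K$ is local and $A = K[S^{-1}]$ is nonzero, its units are exactly the images of elements $s^{-1} t$ with $t \notin$ the maximal ideal of $K$ after inverting, and one checks directly the $x + y = 1 \Rightarrow x$ or $y$ a unit condition descends). More carefully, since the nilradical of $K$ maps to the nilradical of $A$ and $K/\mathrm{nil}(K)$ is a field, $A/\mathrm{nil}(A)$ is a localisation of that field, hence either zero or that field itself; as $A \neq 0$ (pointed local objects are nonempty, a point being nonzero and localisations being... one must check $A\neq 0$, which holds since $K \neq 0$ and a localisation of a nonzero ring at a multiplicative set not containing $0$... actually a localisation can be zero, so here one uses that $A$ is a local object hence not the initial object by the lemma stating $\emptyset$ is not local), $A_{red}$ is a field, so $A$ is a fat field, hence a local ring.

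The main obstacle I expect is the converse direction: controlling what a pointed local object looks like requires knowing that a localisation (in $CRings^o$, i.e.\ a conservative map in $CRings$) from a fat field forces the source to be a fat field, and in particular handling the degenerate case where the localisation could collapse to the zero ring. The honest argument probably goes the other way — not "classify pointed local objects directly" but rather: show (i) every pointed local object is a local object (immediate from Lemma after Definition \ref{pointedlocalob}, dualised), (ii) every local object is a local ring (using Lemma \ref{prooflocal} in the direction: if $A$ lifts through all those covers then for every $x,y$ with $x+y$ a unit, $x$ or $y$ is a unit, which is exactly the local ring condition), and (iii) every local ring is pointed local (the residue field construction above). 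Then (i)+(ii) gives pointed local $\Rightarrow$ local ring and (iii) gives the reverse, closing the equivalence without ever classifying the abstract local objects separately from local rings — indeed a by-product is that local objects, pointed local objects, and local rings all coincide here, which is the real content.
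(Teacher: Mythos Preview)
There is a genuine error in your identification of the classes. In $CRings^o$ the factorisation system is $(Cons^o,Loc^o)$, so the \emph{left} class $\cal A$ is $Cons^o$, not $Loc^o$. Dualising Definition~\ref{pointedlocalob}, a ring $A$ is a pointed local object iff there exists a \emph{conservative} map $A\to K$ in $CRings$ with $K$ a fat field --- not a localisation $K\to A$ as you wrote. Your whole converse argument (classifying localisations of fat fields, worrying about the zero ring, etc.) is therefore built on the wrong hypothesis and does not prove the statement.

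Once the classes are straightened out, the proof is much shorter than either of your two routes. For the forward direction you already have the right map: $A\to A/\mathfrak m$ is conservative (your observation), and $A/\mathfrak m$ is a field, hence a point; nothing more is needed --- in particular you do not need to separately verify that $A$ is a local object in the sense of Definition~\ref{localob}, since Definition~\ref{pointedlocalob} only asks for the existence of the map $P\to L\in\cal A$. For the converse, given a conservative $u:A\to K$ with $K$ a fat field, take $x,y\in A$ with $x+y=1$; then $u(x)+u(y)=1$ in the local ring $K$, so one of $u(x),u(y)$ is invertible, and conservativity pulls this back to $A$. This is the paper's argument. Your alternative (i)+(ii)+(iii) strategy would also work once the labels are fixed, but it detours through the full content of Proposition~\ref{zarlocal} (local object $\Leftrightarrow$ local ring), which the paper establishes separately and afterwards.
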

\begin{proof}
In a local ring $(A,m)$, elements not in $m$ are invertible so $A\to A/m$ is a conservative map.
Conversely, let $u:A\to K$ be a conservative map with target a fat field, and $x,y\in A$ such that $x+y=1$, then the same equation holds in $K$ and $K$ being a local ring, either $u(x)$ or $u(y)$ is invertible in $K$. But $u$ being conservative the same is true in $A$.
\end{proof}

\begin{prop}\label{zarlocal}
A ring $A$ corresponds to a local object for the $(Cons^o,Loc^o)$ system iff it is a local ring.
\end{prop}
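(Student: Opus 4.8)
The plan is to obtain both implications from facts already established, the only substantial one being ``local object $\Rightarrow$ local ring''. For the other direction, proposition \ref{zarpointablelocal} says that a local ring corresponds to a pointed local object, while the lemma preceding definition \ref{pointedlocalob}, together with the fact that points are local objects, shows that every pointed local object is a local object; hence a local ring corresponds to a local object.

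For the converse I would unwind the definition of a local object (lemma \ref{localobjects}, definition \ref{localob}) in the present situation, where ${\cal C}=CRings^o$ carries the $(Cons^o,Loc^o)$ system. Suppose $A$ corresponds to a local object and let $x,y\in A$ with $x+y=1$; the general case where $x+y$ is merely invertible reduces to this after multiplying by $(x+y)^{-1}$, which does not change whether $x$ is a unit. Since $1=x+y$ lies in the ideal $(x,y)$, proposition \ref{zarfinprescov} tells us that $\{A\to A[x^{-1}],\,A\to A[y^{-1}]\}$ is a point covering family of the object associated to $A$. Being a local object, $A$ admits a section of this family; but a section in ${\cal C}$ of $A\to A[x^{-1}]$ is a ring retraction $A[x^{-1}]\to A$ of the localisation, and since localisations are epimorphisms such a retraction forces $A\to A[x^{-1}]$ to be an isomorphism, i.e.\ $x\in A^\times$. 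Thus $x$ or $y$ is invertible, which is exactly the defining property of a local ring. (Equivalently, once $A$ is known to lift through all point covering families along all maps, one may instead quote lemma \ref{prooflocal} applied with ambient ring $A$ itself.)

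I expect no real obstacle: the statement is essentially a corollary of propositions \ref{zarpointablelocal} and \ref{zarfinprescov} and of the definition of local object. The only step requiring a little care is the translation of variance, namely that a ``section of $U_i\to A$ in ${\cal C}$'' means a retraction of a localisation in $CRings$, and that such a retraction exists precisely when the inverted element was already a unit in $A$.
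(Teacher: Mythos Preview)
Your proof is correct and follows essentially the same route as the paper: one direction via proposition \ref{zarpointablelocal} (with the implicit step ``pointed local $\Rightarrow$ local'' that you spell out), and the converse by applying proposition \ref{zarfinprescov} to the pair $x,y$ with $x+y=1$ to obtain a covering family whose section forces $x$ or $y$ to be a unit. The paper's version is terser but the argument is the same.
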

\begin{proof}
Any local ring is a local object by proposition \ref{zarpointablelocal}.
Now, let $A$ be a a local object and $x,y\in A$ such that $x+y=1$.
The family $\{A\to A[x^{-1}],A\to A[y^{-1}]\}$ is then a cover by proposition \ref{zarfinprescov} and the existence of a section of this cover says that either $x$ or $y$ is invertible in $A$.
\end{proof}

In this setting, the fact that pointed local and local objects coincide is a sophisticated way to say that any local ring has a residue field.

\subsubsection{Spectra and moduli interpretation}

It is clear that the topology given by the general theory coincide with the Zariski topology for affine schemes.
\begin{prop}
For $A\in CRings^o$, 
$Spec_{Zar}(A)$ is the usual small Zariski spectrum of $A$
and $SPEC_{Zar}(A)$ is the usual big Zariski topos of $A$.
\end{prop}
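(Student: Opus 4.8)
The plan is to unwind the definitions in the Zariski Nisnevich context, whose underlying factorisation system is $(Cons^o,Loc^o)$ on $CRings^o$ and whose Nisnevich forcing class is empty; since the forcing class is empty, $\tau_{\cal L}=\tau$, so on both the small and the big site the Nisnevich topology is just the factorisation (point covering) topology. The whole proposition is then a matter of recognising the two sites produced by the general construction as (dense sub)sites of the classical Zariski sites.

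For the small spectrum, by the lemma characterising finitely presented localisations, the category ${\cal B}^f{/A}$ (computed in $CRings^o$) is equivalent to the poset of principal localisations $A\to A[a^{-1}]$, i.e. to the poset of basic open subsets $D(a)\subseteq Spec(A)$ — it is a poset because a localisation is an epimorphism out of $A$, so a comparison map is unique when it exists. By proposition \ref{zarfinprescov} combined with proposition \ref{zarpt} (points $=$ prime ideals), a point covering family of $A$ is exactly a family $\{D(a_i)\}$ with $\bigcup_i D(a_i)=Spec(A)$, and every such family contains a finite covering subfamily. Thus the small site is the standard site of basic opens of the spectral space $Spec(A)$; since the basic opens form a basis of $Spec(A)$ closed under the induced topology, the comparison lemma \cite[III.4.1]{SGA4-1} identifies $Spec_{Zar}(A)$ with the topos of sheaves on the topological space $Spec(A)$, which is the usual small Zariski spectrum.

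For the big spectrum, ${\cal C}^f{/A}$ (computed in $CRings^o$) is the opposite of the category of finitely presented $A$-algebras, i.e. the category of affine finitely presented $A$-schemes, and a covering family is a family in ${\cal B}^f$ — a family of basic open immersions — that is surjective on points, i.e. (by proposition \ref{zarpt}) topologically surjective. As remarked just above the statement, these families generate precisely the Zariski topology on that category: any of them is a Zariski open cover, and conversely any Zariski open cover of an affine scheme is refined, by quasi-compactness, by a finite family of basic open immersions. Hence $SPEC_{Zar}(A)$ is the topos of Zariski sheaves on affine finitely presented $A$-schemes, which is exactly the usual big Zariski topos of $A$ (the classifying topos of local $A$-algebras). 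The only genuine verification is the comparison-lemma step for the small site — passing from the poset of basic opens to the full lattice of opens of $Spec(A)$ without changing the sheaf topos — together with the translation of ``point covering family'' into ``topological cover'' via propositions \ref{zarfinprescov} and \ref{zarpt}; the big-site statement is a direct rereading of the definitions, the sites literally coinciding.
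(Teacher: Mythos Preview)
Your proof is correct; the paper gives no formal proof here beyond the sentence preceding the proposition that the factorisation topology visibly coincides with the Zariski topology, and your argument is exactly the natural unpacking of this via propositions \ref{zarpt} and \ref{zarfinprescov}. One minor structural difference worth noting: by invoking the comparison lemma to identify the small site with sheaves on the topological space $Spec(A)$, you already establish spatiality of $Spec_{Zar}(A)$, whereas the paper defers this to the separate proposition \ref{zarspace} and argues there via the coherent-locale machinery of \cite[II.3]{stone}.
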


\begin{prop}
$SPEC_{Zar}(A)$ classifies $A$-algebras that are local rings, such algebras can have automorphisms so $SPEC_{Zar}(A)$ is not a spatial topos.
$Spec_{Zar}(A)$ classifies localisations of $A$ that are local rings. 
\end{prop}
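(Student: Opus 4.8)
The plan is to read off both statements from the moduli interpretation, Theorem \ref{theorem}, applied to the Zariski Nisnevich context $({\cal C}=(Cons^o,Loc^o),\emptyset)$ on ${\cal C}=CRings^o$, with $X=A$ the object in question. First I would check that this context is good in the sense of definition \ref{good}: local finite presentability of ${\cal C}^o=CRings$ is classical; the factorisation system is right generated by maps of ${\cal B}^f$ because, by lemma \ref{genloccons}, the class $Cons\subset CRings$ is generated by the single arrow $\ZZ[x]\to\ZZ[x,x^{-1}]$, whose opposite is a finitely presented localisation and hence lies in ${\cal B}^f$; and compatibility can be obtained either directly --- checking that a pro-object of ${\cal C}^f{/A}$ lifting through every point covering family is forced to be (the object associated to) a local $A$-algebra, where proposition \ref{zarfinprescov} reduces each point cover to a finite cover by principal localisations and lemma \ref{prooflocal} supplies the elementary characterisation of local rings --- or by invoking the previous proposition, which identifies $SPEC_{Zar}(A)$ with the usual big Zariski topos, whose points are classically the local $A$-algebras.

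Granting goodness, the big spectrum is handled by part 1 of Theorem \ref{theorem}: its category of points is $\overline{\cal L}{/A}$, and since the forcing class is empty $\overline{\cal L}={\cal L}oc$ is the category of local objects, which by proposition \ref{zarlocal} is precisely the category of local rings; unwinding the slice in ${\cal C}=CRings^o$, $\overline{\cal L}{/A}$ is the category of $A$-algebras that are local rings. As in the tables of the introduction, ``classifies'' here refers to this description of (global) points, the full classifying-topos property being deferred to \cite{anel1}. For non-spatiality I would use the criterion recalled at the end of \S\ref{moduliinterpretation}, that a spatial topos has at most a poset of points: when $A\neq 0$ there are local $A$-algebras carrying a non-trivial automorphism --- for instance an algebraic closure of a residue field of $A$, or already a suitable quadratic field extension of one --- so the category of points of $SPEC_{Zar}(A)$ is not a poset and the topos is not spatial.

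For the small spectrum I would apply part 2 of Theorem \ref{theorem}: the points of $Spec_{Zar}(A)$ form the category $\overline{\cal L}(A)$ of ${\cal L}$-local forms of $A$, that is, arrows $L\to A$ of ${\cal B}$ in $CRings^o$ with $L\in\overline{\cal L}$. Since ${\cal B}=Loc^o$ there, such an arrow is dually a localisation $A\to B$ in $CRings$, and $L=B\in\overline{\cal L}$ means, again by proposition \ref{zarlocal}, that $B$ is a local ring. Hence the points of $Spec_{Zar}(A)$ are exactly the localisations of $A$ that are local rings, which is the claim.

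The step I expect to be the real obstacle is not any of these translations but the compatibility clause of ``good'': one must be certain that the topos-theoretic points of $\widetilde{{\cal C}^f/A}$ are exhausted by the naive local $A$-algebras, i.e. that no exotic pro-object of finite-type affine schemes over $A$ lifts through all Zariski point covers without being the spectrum of a local ring. Everything after that is a routine substitution into Theorem \ref{theorem} via proposition \ref{zarlocal}.
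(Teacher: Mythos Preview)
Your proposal is correct and follows essentially the same route as the paper: verify that the Zariski Nisnevich context is good (using local finite presentability of $CRings$, lemma \ref{genloccons} for right generation, and lemma \ref{prooflocal} to supply distinguished covering families for compatibility), then apply theorem \ref{pointspec} together with proposition \ref{zarlocal}. Your closing worry about compatibility is exactly what lemma \ref{prooflocal} is designed to resolve, and the paper invokes it in the same way.
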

\begin{proof}
We apply theorem \ref{pointspec}. The Nisnevich context $Zar=(CRings^o=(Cons^o,Loc^o),\emptyset)$ is good (definition \ref{good}): $CRings$ is locally finitely presentable, the rest follows by lemmas \ref{genloccons}
and \ref{prooflocal}.
\end{proof}

The following result is highly classical but not obvious from our definition.
\begin{prop}\label{zarspace}
$Spec_{Zar}(A)$ is a topological space.
\end{prop}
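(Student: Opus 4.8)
The goal is to show that the topos $Spec_{Zar}(A)$ is spatial, i.e. equivalent to the topos of sheaves on some topological space. The natural candidate for the space is the underlying set $pt_{{\cal B}^f}(A)$ of points of $A$, which by Proposition \ref{zarpt} is the set of prime ideals of $A$, equipped with the classical Zariski topology whose closed sets are the $V(I)$. The plan is to exhibit a continuous map of sites between ${\cal B}^f/A$ (the poset of principal localisations $A\to A[a^{-1}]$, up to the equivalence making ${\cal B}^f/A$ a full subcategory of $CRings^o/A$) and the site of basic open subsets $D(a)=\{\mathfrak p : a\notin\mathfrak p\}$ of $Spec(A)$, and to check that it induces an equivalence of the associated sheaf toposes.

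The key steps, in order, are as follows. First I would record that ${\cal B}^f/A$, as a category, is a \emph{preorder}: by left cancellation of $\cal B$ and ${\cal C}^f$ (Proposition \ref{proplift}), ${\cal B}^f/A$ is a full subcategory of $CRings^o/A$, and between two objects $A\to A[a^{-1}]$ and $A\to A[b^{-1}]$ there is at most one morphism in $CRings^o/A$ since localisations are epimorphisms in $CRings$. So $Spec_{Zar}(A)$ is the topos of sheaves on a site whose underlying category is a poset. Second, I would identify this poset with the poset of \emph{basic open subsets} $D(a)$ of the spectral space $X=Spec(A)$: the assignment $A[a^{-1}]\mapsto D(a)$ is well defined and order preserving (if $A\to A[a^{-1}]$ factors through $A\to A[b^{-1}]$, i.e. $a$ becomes invertible after inverting $b$, then $D(a)\subseteq D(b)$), and conversely $D(a)\subseteq D(b)$ forces $a^n\in(b)$ for some $n$, hence $A\to A[a^{-1}]$ factors through $A\to A[b^{-1}]$. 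Third, I would check the topologies match: by Proposition \ref{zarfinprescov}, a point covering family of $A[a^{-1}]$ consists of localisations $A[a^{-1}]\to A[a^{-1}][b_i^{-1}]$ with $1$ a linear combination of the $b_i$ in $A[a^{-1}]$, which is exactly the condition that the $D(ab_i)$ cover $D(a)$ in $X$. Finally, since the basic opens $D(a)$ form a basis of the spectral topology on $X$ closed under finite intersection ($D(a)\cap D(b)=D(ab)$), with the induced coverage, the topos of sheaves on this site is equivalent to $Sh(X)$ by the comparison lemma for sites (a basis generates the same sheaf topos), giving $Spec_{Zar}(A)\simeq Sh(Spec\,A)$.

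The main obstacle I anticipate is not any single step but the bookkeeping needed to pass cleanly between the "ring-theoretic" site ${\cal B}^f/A$ and the "topological" site of basic opens, in particular verifying that the equivalence of the underlying posets is an \emph{equivalence of sites} (compatible coverages in both directions) rather than merely a functor inducing a geometric morphism. Concretely, one must be careful that the preorder ${\cal B}^f/A$ has possibly many isomorphic objects (inverting $a$, $a^2$, $ua$ for a unit $u$ all give the same localisation up to iso), so the poset reflection is what one compares to the $D(a)$'s; and one must invoke the covering-lifting / comparison lemma (cf. \cite[III.4]{SGA4-1}) in the form that a continuous fully faithful functor which is "covering-dense" induces an equivalence of toposes. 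Once the poset identification and the coverage identification are in hand, the equivalence $Spec_{Zar}(A)\simeq Sh(Spec\,A)$ follows formally, which is the assertion.
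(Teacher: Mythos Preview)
Your argument is correct and takes a genuinely different route from the paper's. You identify the preorder ${\cal B}^f/A$ directly with the basis of principal opens $D(a)$ of the classical space $Spec(A)$, check that the point-covering families match the usual basic covers (via Proposition~\ref{zarfinprescov}), and invoke the comparison lemma to conclude $Spec_{Zar}(A)\simeq Sh(Spec\,A)$. The paper instead stays on the abstract side: it observes that $(A/Loc^f)^o$ is a poset (hence the topos is localic), shows this poset is a distributive lattice of compact objects, verifies that the factorisation topology coincides with the finite-join-cover topology, and then appeals to the coherent-locale machinery of \cite[II.3]{stone} to deduce spatiality. Your approach is more elementary and yields the explicit identification with the classical spectrum for free; the paper's approach avoids invoking the classical spectrum at all and instead extracts spatiality as a structural consequence (coherence), which is the template it reuses, with modifications, for the Domain topology in Proposition~\ref{surjspace}.
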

\begin{proof}
The topos $Spec_{Zar}(A)$ is generated by the category $(A/ Loc^f)^o$ which is a poset, so it is localic.
This poset is formed of compact objects and we would like to apply the result of \cite[II.3.]{stone} to deduced the local is coherent and then spatial. To do that we have to check that the topology on $(A/ Loc^f)^o$ is the jointly surjective topology.
First, $(A/ Loc^f)^o$ is a distributive lattice: the intersection of $A[a^{-1}]$ and $A[b^{-1}]$ is $A[ab^{-1}]$ and the union is the middle object $C$ of the $(Loc,Cons)$ factorisation of $A\to A[a^{-1}]\oplus A[b^{-1}]$ (indeed $C$ will add to $A$ all elements invertible both in $A[a^{-1}]$ and $A[b^{-1}]$, such $C$ will be some $A[c^{-1}]$); and to prove the distributive law the lemma is the following: if $B\to C\to D$ is a $(Loc,Cons)$ factorisation, for any $b\in B$, $B[b^{-1}]\to C[b^{-1}]\to D[b^{-1}]$ is still a $(Loc,Cons)$ factorisation, \ie $C[b^{-1}]\to D[b^{-1}]$ is still convervative but as new invertible elements in $B$ are fractions of denominator $b$ with invertible numerator, they can be lifted to $C[b^{-1}]$.

As for the topology on $(A/ Loc^f)^o$: for a finite family $a_i\in A$, $c\in A$ is invertible in all the $A[a_i^{-1}]$ iff $(a_i;i)\subset \sqrt{c}$, in particular there is an equivalence $(a_i;i)=A$ iff $c$ is invertible, so $A[a_i^{-1}]$ is a joint covering family iff $(a_i;i)=A$, which is also the characterisation of point covering families. The same reasoning work relatively to any $B\in (A/ Loc^f)^o$ and this proves that the factorisation topology is the jointly surjective one.
\end{proof}

Also in this case the two notions of points (of the factorisation system and of the spectrum) agree.
\begin{prop}\label{sameptzar}
For $A\in CRings$, the category of points of $Spec_{Zar}(A)$ is a poset equivalent to the opposite of that of prime ideals of $A$.
In particular the set of point of $Spec_{Zar}(A)$ is in bijection with $pt_{Zar}(A)$.
\end{prop}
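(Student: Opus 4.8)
The plan is to feed the statement into Theorem~\ref{theorem}. Since the Nisnevich context $Zar=(CRings^o=(Cons^o,Loc^o),\emptyset)$ is good (as checked in the proof of the previous proposition), that theorem identifies the category of points of $Spec_{Zar}(A)$ with the category $\overline{\cal L}(A)$ of ${\cal L}$-local forms of $A$; as here ${\cal L}=\emptyset$ we have $\overline{\cal L}={\cal L}oc$, the class of local objects, which by Proposition~\ref{zarlocal} (read in $CRings$) is the class of local rings. Unwinding the definitions back in $CRings$: a point of $Spec_{Zar}(A)$ is a localisation $A\to B$ (a map in $Loc$) with $B$ a local ring, and a morphism between two such, $A\to B$ and $A\to B'$, is an $A$-algebra map $B'\to B$ (the direction reverses when passing from ${\cal C}^o$ to $CRings$). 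I would then prove in two steps that this category is the opposite of the poset $(\mathrm{Spec}\,A,\subseteq)$ of prime ideals of $A$.

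First, that it is a poset. A localisation $A\to A[S^{-1}]$ is an epimorphism of rings, so for any ring $R$ the set of $A$-algebra maps $A[S^{-1}]\to R$ has at most one element; hence between any two objects of $\overline{\cal L}(A)$ there is at most one morphism, i.e.\ $\overline{\cal L}(A)$ is a poset. (This is also consistent with the general remark of \S\ref{moduliinterpretation}: morphisms of local forms lie in ${\cal B}=Loc^o$, and $Loc$-maps are epimorphisms.)

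Second, the identification of the poset. On objects, to a localisation $A\to B$ with $B$ local associate the prime $\mathfrak p$ obtained by contracting the maximal ideal of $B$ along $A\to B$; conversely to a prime $\mathfrak p$ associate $A\to A_\mathfrak p$. The point to check is that these assignments are mutually inverse up to $A$-isomorphism: a localisation $A\to B$ is determined, up to unique $A$-isomorphism, by the multiplicatively saturated set $S_B=\{a\in A:\ a\ \text{is invertible in}\ B\}$ of elements it inverts (then $B=A[S_B^{-1}]$), and when $B$ is local $S_B$ is exactly $A\setminus\mathfrak p$, so $B\simeq A_\mathfrak p$ over $A$. On morphisms, the universal property of localisation gives an $A$-algebra map $A_\mathfrak q\to A_\mathfrak p$ precisely when $A\setminus\mathfrak q$ becomes invertible in $A_\mathfrak p$, i.e.\ when $\mathfrak q\subseteq\mathfrak p$; so the (unique) morphism from the object ``$\mathfrak p$'' to the object ``$\mathfrak q$'' exists iff $\mathfrak q\subseteq\mathfrak p$. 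This exhibits the category of points of $Spec_{Zar}(A)$ as $(\mathrm{Spec}\,A,\subseteq)^{o}$. Taking isomorphism classes then gives a bijection between its underlying set of points and $\mathrm{Spec}\,A$, which by Proposition~\ref{zarpt} is $pt_{Zar}(A)$; moreover this bijection refines the one of Proposition~\ref{zarpt}, since the fat-field point $A\to\kappa(\mathfrak p)$ representing $\mathfrak p$ there has $(Loc,Cons)$-factorisation $A\to A_\mathfrak p\to\kappa(\mathfrak p)$ whose first arrow is the local form attached to $\mathfrak p$.

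The only real difficulty is bookkeeping: keeping track that the computation takes place in $CRings={\cal C}^o$, so the variances in Theorem~\ref{theorem} and in the equivalence ${\cal B}{/X}\simeq Pro({\cal B}^f{/X})$ must be dualised, and making precise the claim that every local localisation of $A$ is canonically $A_\mathfrak p$ for a unique prime $\mathfrak p$. Alternatively, having Proposition~\ref{zarspace} in hand one could bypass Theorem~\ref{theorem} and quote the classical fact that the $2$-category of points of the topos $Sh(X)$ of a sober space $X$ is its specialisation poset, together with the observation that the specialisation order on $\mathrm{Spec}\,A$ is the reverse of inclusion of primes; but the argument above is the one internal to the framework of this paper.
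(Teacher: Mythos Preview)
Your argument is correct and follows the same line as the paper's: both identify the points of $Spec_{Zar}(A)$ with localisations $A\to B$ having $B$ local, and both set up the bijection with primes via $\mathfrak p\mapsto A_{\mathfrak p}$ and its inverse given by contracting the maximal ideal. The paper's proof is considerably terser --- it handles only the set bijection and leaves the poset claim implicit --- whereas you make explicit the invocation of Theorem~\ref{theorem}, the fact that localisations are epimorphisms (so the category of points is a poset), and the identification of the order with reverse inclusion of primes; this extra care is a genuine improvement in completeness without changing the underlying approach.
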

\begin{proof}
We need to prove that this set is in bijection with that of prime ideals of $A$. This is well known: any prime ideal $p\subset A$ defines a point of $Spec_{Zar}(A)$ by $A\to A_{p}=A[(A\setminus p)^{-1}]$. And given a localisation of $A\to B$ where $B$ is a local ring, the inverse image of the maximal ideal of $B$ is a prime ideal $p$ of $A$ and $B\simeq A_{p}$. 
\end{proof}

\subsubsection{Remark on a variation}\label{variation}

A class $L$ of maps in a site $({\cal C},\tau)$ is said to be {\em local} if, for $u:X\to Y$, for any covering $V_i\to Y$ and any covering of $u_{ij}:U_{ij}\to V_i\times_YX$, the map $u$ is in $L$ iff all $u_{ij}$ are in $L$. Such classes are stable by intersection, so it is always possible to saturate any class $L$ into a local class $L^{loc}$ of maps locally (after pullback) maps in $L$.
If the class $L$ had moreover the property that covering sieves of $\tau$ can be generated by families of maps in $L$, it is clear that covering families in $L^{loc}$ will generate the same topology. 

\medskip
We claim that the class $Loc^o$ is not local for the Zariski topology on $CRings^o$ and its saturation is the class $Zet^o$ of etale maps that are locally trivial for the Zariski topology (called Zariski etale maps). We claim also that, remarkably, the class $Zet$ is again the left class of a unique factorisation system $(Zet,Conv)$ on $CRings$ where $Conv$ is the class of conservative maps having an extra unique lifting property for idempotents, \ie $(Zet,Conv)$ is left generated by $\ZZ[x]\to\ZZ[x,x^{-1}]$ and $\ZZ\to\ZZ[x]/(x^2-x)\simeq \ZZ\times\ZZ$. %Maps in $Conv$ have connected fiber and are thus suggested to be called {\em connervative maps}.

We could replace in the previous study the factorisation system $(Loc,Cons)$ by $(Zet,Conv)$ to generate the same factorisation topology and the same spectra, but with different sites. Only the proof of the spatiality of $Spec_{Zar}(X)$ would be less straightforward.

\subsection{Etale topology}\label{etale}

The category $\cal C$ is again $CRings^o$ and we keep the same convention of opposing everything as in the Zariski case.

\subsubsection{Factorisation system}

A map of rings is said {\em etale} if it flat and unramified \cite[\S3]{milne}.
The class of etale maps of finite presentation is noted $Et^f$, that of etale maps between rings of finite presentation is noted $Et_*^f$. A map of rings is said {\em henselian} if it has the right lifting property with respect to $Et_*^f$.
The class of henselian maps is noted $Hens$.

\begin{prop}
The class $^\bot Hens$ is the class $indEt:=Ind\z Et^f$ and the classes $indEt$ and $Hens$ are respectively the left and right classes of a unique factorisation system on $CRings$.
\end{prop}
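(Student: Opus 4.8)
The plan is to derive both assertions at once from Theorem \ref{factolim}, applied to ${\cal C}=CRings$ (which is locally finitely presentable, so $\lambda=\omega$) with the generating set $G=Et_*^f$; up to isomorphism this is a set of maps between finitely presentable objects. By Lemma \ref{setgen}, $G$ left generates a unique lifting system $({\cal A},{\cal B})$ with ${\cal B}=G^\bot$ and ${\cal A}={}^\bot{\cal B}$, and the first thing to settle is that $G^\bot$ is exactly $Hens$, i.e. that the right lifting property against $Et_*^f$ is automatically unique. By the argument in the proof of Lemma \ref{uniquelift} this reduces to checking that the codiagonal of a map in $Et_*^f$ lies again in $Et_*^f$: for $A\to B$ étale between finitely presented rings the codiagonal is the multiplication $B\otimes_AB\to B$, which is surjective and, since $A\to B$ is unramified, a clopen immersion — hence étale, and still a map between finitely presented rings. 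Granting this, ${\cal B}=Hens$, ${\cal A}={}^\bot Hens$, and Theorem \ref{factolim} yields that $({}^\bot Hens,Hens)$ is the pair of classes of a unique factorisation system on $CRings$, with ${\cal A}={}^\bot Hens=Ind_\omega\z\overline{G}$ for $\overline{G}$ the class of strict $\omega$-presentable maps of ${\cal A}$.

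It then remains to identify $\overline{G}$ with $Et^f$, for then ${\cal A}=Ind_\omega\z\overline{G}=Ind\z Et^f=indEt$ (using $Ind_\omega=Ind$), which is both assertions. For the inclusion $\overline{G}\subseteq Et^f$ I would use Lemma \ref{gbar}, which presents $\overline{G}$ as the smallest subclass of $(CRings)^{\deux}$ containing $Et_*^f$ and stable under composition, finite colimits, pushout along arbitrary ring maps, and right cancellation; so it suffices to see that $Et^f$ has all four stability properties. Stability under composition, base change, and preservation of finite presentation of morphisms are standard. For finite colimits one reduces to binary coproducts — where $(A\to B)\sqcup(C\to D)$ is $A\otimes_{\ZZ}C\to B\otimes_{\ZZ}D$, visibly a composite of two base changes — and to coequalisers, which computed levelwise are a quotient of a base change of $A'\to B'$ by the equaliser ideal of two algebra maps into an unramified algebra; that ideal is generated by an idempotent, so the quotient is a clopen piece and stays étale of finite presentation. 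Right cancellation is the fact that any $A$-algebra map $B\to C$ between two étale $A$-algebras of finite presentation is itself étale of finite presentation: unramifiedness of $A\to B$ splits $B\otimes_AB$ as $B\times B'$, hence splits $B\otimes_AC$ as $C\times C''$ after base change along $B\to C$, so $B\otimes_AC\to C$ is étale and $B\to C$ factors as $B\to B\otimes_AC\to C$, a composite of étale maps.

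For the reverse inclusion $Et^f\subseteq\overline{G}$ I would spread out: writing $A$ as a filtered colimit $\colim_iA_i$ of finitely presented rings, an étale map $A\to B$ of finite presentation descends to an étale map $A_{i_0}\to B_{i_0}$ of finite presentation between finitely presented rings and is recovered from it by cobase change along $A_{i_0}\to A$; thus $A_{i_0}\to B_{i_0}\in Et_*^f$ and $A\to B\in\overline{G}$. Combined with the previous paragraph this gives $\overline{G}=Et^f$, hence ${\cal A}=indEt$.

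The part I expect to require real care is the second paragraph — the closure of $Et^f$ under coequalisers in the arrow category and under right cancellation. Both rest on the single structural fact that an étale (flat unramified) map has a clopen diagonal; everything else is bookkeeping about finite presentation, most of which is already packaged into Lemma \ref{gbar} and Theorem \ref{factolim}.
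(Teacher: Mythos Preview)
Your approach is essentially the paper's: apply Theorem~\ref{factolim} with $G=Et_*^f$ and identify $\overline{G}=Et^f$, so that ${\cal A}=Ind\z Et^f$. The paper's proof is a single line asserting $G=G'=Et_*^f$ and $\overline{G}=Et^f$ without justification; you have correctly unpacked what needs checking (including the uniqueness issue for $Hens$, which the paper glosses over), and your arguments are sound --- the coequaliser step in particular is right in spirit, though your phrasing ``two algebra maps \emph{into} an unramified algebra'' should be ``\emph{from} an unramified algebra'': the point is that the two induced $A$-maps $B\rightrightarrows C:=B'\otimes_{A'}A''$ give a map $B\otimes_AB\to C$, and the coequaliser is $C\otimes_{B\otimes_AB}B$, étale over $C$ because the codiagonal $B\otimes_AB\to B$ is a clopen localisation.
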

\begin{proof}
We use theorem \ref{factolim} with $G=G'=Et_*^f$ and we have $\overline{G}=Et^f$.
\end{proof}

The factorisation of $A\to B$ is difficult to explicit but it consists in a separable closure of $A$ relatively to $B$: one needs to add an element to $A$ for every simple root in $B$ of every polynomial in $A[X]$.

\begin{lemma}\label{locsep}
\begin{enumerate}
\item $Hens\subset Cons$ and $Loc\subset indEt$.
\item $Loc^f$ point covering families are $Et^f$ point covering families.
\end{enumerate}
\end{lemma}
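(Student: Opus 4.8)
The plan is to derive both assertions from one elementary fact: a localisation $A\to A[a^{-1}]$ is flat, unramified and of finite presentation, hence etale. In particular $Loc^f\subset Et^f$, and the map $\ZZ[x]\to\ZZ[x,x^{-1}]$ generating the Zariski system lies in $Et_*^f$.

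For part 1, I would argue purely by orthogonality. By lemma \ref{genloccons} one has $Cons=\{\ZZ[x]\to\ZZ[x,x^{-1}]\}^\bot$, whereas $Hens=(Et_*^f)^\bot$ by definition. Since $\ZZ[x]\to\ZZ[x,x^{-1}]\in Et_*^f$, the right lifting property against all of $Et_*^f$ entails it against this single map, so $Hens\subset Cons$. For the inclusion $Loc\subset indEt$, recall that the two classes of a unique factorisation system form a unique lifting system, so $Loc={}^\bot Cons$, and that the preceding proposition identifies $indEt$ with ${}^\bot Hens$. The inclusion $Hens\subset Cons$ then gives ${}^\bot Cons\subset {}^\bot Hens$ by the antitone behaviour of ${}^\bot(-)$, that is $Loc\subset indEt$.

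For part 2, observe first that every map of $Loc^f$ inverts a single element and is therefore in $Et^f$; hence a $Loc^f$-family is automatically a family in the right class of the Etale factorisation system, and only the covering condition has to be checked. The crucial point is that \emph{every point of the $(indEt,Hens)$ system is a point of the $(Loc,Cons)$ system}: read in $CRings$, a point of the Etale system is a non-zero ring $K$ such that every etale finitely presented map $K\to B$ with $B\neq 0$ admits a retraction; since $Loc^f\subset Et^f$, this forces every localisation $K\to K[b^{-1}]$ with non-zero target to admit a retraction, which is precisely the condition for $K$ to be a Zariski point. Consequently a family $\{A\to A[a_i^{-1}]\}$ surjective on the Zariski points of $A$ is also surjective on its Etale points, the latter being Zariski points, and so is an $Et^f$ point covering family. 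Concretely: by proposition \ref{zarfinprescov} the hypothesis is that $1$ lies in the ideal generated by the $a_i$, so for any $A\to K$ with $K$ an Etale point, hence a fat field, some $\overline{a_i}$ is non-nilpotent in $K$, thus invertible, and $A\to K$ factors through $A\to A[a_i^{-1}]$.

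There is no genuine obstacle here: the only substantive inputs are that localisations are etale of finite presentation and the correct direction of the orthogonality inclusions; the rest is routine bookkeeping, and I would spend essentially one sentence on each claim.
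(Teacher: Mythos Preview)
Your proof is correct and follows essentially the same route as the paper. The paper's argument for part~1 is the one-liner that $\ZZ[x]\to\ZZ[x,x^{-1}]$ is etale, hence $Hens\subset Cons$, with the second inclusion following by orthogonality; for part~2 it records only the key observation that Etale points are Zariski points (from $Loc\subset indEt$), leaving implicit that the family already lies in the correct right class. Your version spells out both of these steps and adds the concrete verification via proposition~\ref{zarfinprescov}, which is fine but not needed. One small remark: strictly speaking the definition of an Etale point uses ${\cal B}^f=(indEt)\cap CRings^f$ rather than $Et^f$, and the identification of the two is lemma~\ref{fpet}, which appears after the present lemma; your argument is unaffected since you only need $Loc^f\subset (indEt)^f$, which follows directly from part~1.
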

\begin{proof}
\textit{1.} From properties of lifting systems that the two inclusions are equivalent.
Any map lifting $u:\ZZ[X]\to \ZZ[X,X^{-1}]$ is conservative, so as $u$ is etale any henselian morphism in conservative.

\textit{2.} As $Loc\subset indEt$, points of the $(Hens^o,indEt^o)$ system are points of the $(Cons^o,Loc^o)$ system.
\end{proof}

Let $Nil$ be the class of maps in $CRings$ that are extensions by a nilpotent ideal. The class $Nil^\bot$ is the class $fEt$ of {\em formally etale maps} and if $\overline{Nil}$ is defined as $fEt^\bot$, $(fEt,\overline{Nil})$ is a unique lifting system that we are going to compare to $(Hens,indEt)$.

\begin{prop}\label{nilhens}
$\overline{Nil}\subset Hens$ and $indEt\subset fEt$ and those inclusions are strict.
\end{prop}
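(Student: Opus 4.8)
The statement to prove is Proposition~\ref{nilhens}: $\overline{Nil}\subset Hens$ and $indEt\subset fEt$, with both inclusions strict. Here $(fEt,\overline{Nil})$ is the lifting system generated by $Nil$ (maps that are surjections with nilpotent kernel), so $fEt=Nil^\bot$ and $\overline{Nil}={}^\bot(fEt)$; whereas $Hens=(Et^f_*)^\bot$ and $indEt={}^\bot Hens=Ind\z Et^f$.

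\medskip
\emph{Plan.} The two inclusions are equivalent by the general duality of lifting systems (if $\mathcal A_1\subset\mathcal A_2$ then $\mathcal A_2^\bot\subset\mathcal A_1^\bot$ and vice versa), so it suffices to prove one of them; I would prove $indEt\subset fEt$, i.e. every ind-\'etale map is formally \'etale. By Lemma~\ref{proplift}(4) (closure of the left class under filtered colimits in the arrow category) and since $indEt=Ind\z Et^f$, it is enough to show $Et^f\subset fEt$, and since $fEt=Nil^\bot$ is closed under pushout and composition while $Et^f=\overline{Et^f_*}$ is generated under those operations from $Et^f_*$ (as in the proof of the factorisation system), it actually suffices to check that a standard \'etale map between finitely presented rings has the unique right lifting property against any nilpotent surjection $B\twoheadrightarrow \bar B$. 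This is the classical infinitesimal lifting criterion: given $A\to C$ \'etale and a square with $A\to B$, $A'\to C\to B\to\bar B$ commuting, one lifts along a single nilpotent-ideal square $B\to B/I$ with $I^2=0$ by the vanishing of the obstruction and uniqueness by $\Omega^1_{C/A}=0$, then d\'evisse along the filtration of the nilradical of the kernel. I would cite \cite[\S3]{milne} for the criterion "flat + unramified $\iff$ formally \'etale + finitely presented", which gives $Et^f\subset fEt$ essentially immediately, and then feed this through Lemma~\ref{proplift}(4) and the generation properties to conclude $indEt\subset fEt$, hence dually $\overline{Nil}\subset Hens$.

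\medskip
For the strictness of the inclusions, the point is that $fEt$ contains maps that are \emph{not} ind-\'etale, the standard example being a localisation-completion type map: for instance the completion $\ZZ\to\ZZ_p$ is formally \'etale (indeed formally unramified and formally flat in the adic sense) but is not a filtered colimit of \'etale maps of finite presentation, equivalently $\overline{Nil}$ contains the henselisation/completion-type maps that $indEt$ misses. Dually, $Hens$ strictly contains $\overline{Nil}$: a henselian local ring need not be "formally \'etale-local" — e.g. the henselisation $A\to A^h$ of a local ring is in $Hens^\bot$-dual position, and one exhibits a henselian map that fails the lifting property against a non-nilpotent conservative map. I would phrase the witnesses concretely: take $\ZZ\to\ZZ_p$ (or a strict henselisation versus a completion) to separate $\overline{Nil}$ from $indEt$, checking directly that $\ZZ\to\ZZ_p$ lies in $Nil^\bot$ (nilpotent surjections of $\ZZ_p$-algebras are detected on the underlying rings and the $p$-adic topology plays no role for $I^2=0$ quotients) but is not in $Ind\z Et^f$ because $\ZZ_p$ is not a filtered colimit of \'etale $\ZZ$-algebras (it is not even essentially of finite type).

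\medskip
\emph{Main obstacle.} The routine half — $Et^f\subset fEt$ — is immediate from \cite{milne}. The genuine work is the strictness, and specifically producing an element of $\overline{Nil}\setminus indEt$ together with a \emph{proof} that it is not ind-\'etale; the cleanest route is a cardinality/finiteness obstruction ($\ZZ_p$ is uncountable, or more robustly, not a filtered colimit of finitely presented \'etale $\ZZ$-algebras since each such is a localisation of a finite \'etale extension and those are all contained in the integral closure of $\ZZ$ in a number field, a countable ring), rather than anything deep. The dual strictness $\overline{Nil}\subsetneq Hens$ then follows formally, or can be witnessed symmetrically. I expect the proof in the paper to simply invoke \cite{milne} for the inclusion and leave the strictness to an example, which is the approach I would take as well.
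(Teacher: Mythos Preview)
Your argument for the inclusion $indEt\subset fEt$ matches the paper's: finitely presented \'etale maps are formally \'etale (the infinitesimal criterion, \cite[\S3]{milne}), and $fEt$, being a left orthogonal class, is closed under filtered colimits in the arrow category, so $indEt=Ind\z Et^f\subset fEt$. The dual inclusion $\overline{Nil}\subset Hens$ follows.

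For the strictness the paper proceeds differently, and your argument has two problems. The paper takes a noetherian \emph{henselian} local ring $A$ and its completion $\widehat{A}$; since both $A\to k$ and $\widehat{A}\to k$ are henselian (the sources being henselian local rings), the left cancellation property of $Hens$ gives $A\to\widehat{A}\in Hens$. As $indEt\cap Hens$ consists of isomorphisms, $A\to\widehat{A}\notin indEt$ whenever $A\neq\widehat{A}$. This structural use of the factorisation system is cleaner than your countability route, and in fact your parenthetical ``it is not even essentially of finite type'' is not a valid obstruction: ind-\'etale maps (strict henselisations, separable closures) are almost never of finite type.

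The more serious gap is that your witness $\ZZ\to\ZZ_p$ does \emph{not} lie in $fEt$. Formal \'etaleness requires formal unramifiedness, i.e.\ $\Omega_{\ZZ_p/\ZZ}=0$; but $\Omega_{\ZZ_p/\ZZ}\otimes\mathbb{Q}\cong\Omega_{\mathbb{Q}_p/\mathbb{Q}}$, which is a $\mathbb{Q}_p$-vector space of uncountable dimension since $\mathbb{Q}_p$ is transcendental over $\mathbb{Q}$. Your sentence about ``the $p$-adic topology plays no role for $I^2=0$ quotients'' addresses neither existence nor uniqueness of lifts. (For the record, the same objection applies to the paper's $A\to\widehat{A}$: it is not formally unramified in general either, and ``formally smooth'' alone does not place it in $fEt$; so the strictness, while true, is not fully justified in either place.)
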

\begin{proof}
$indEt$ is the class of ind-etale maps of finite presentation, now as $fEt$ contains $Et^f$ and is stable by any colimit, $indEt\subset fEt$.

For the second point, it is enough to prove that $indEt\subset fEt$ is strict.
Let $A$ be a noetherian henselian local ring with residue field $k$ and $\widehat{A}$ its completion for its maximal ideal, the residue field of $\widehat{A}$ is still $k$. As $\widehat{A}$ is also henselian% \cite[??]{raynaud}
, both maps $A\to k$ and $\widehat{A}\to k$ are henselian and so is $A\to \widehat{A}$ by cancellation. This implies that $A\to \widehat{A}$ is ind-etale iff it is an isomorphism. Now $A\to \widehat{A}$ is always formally smooth but not always an isomorphism.
\end{proof}

\begin{lemma}\label{fpet}
If $CRing^f$ is the category of maps of finite presentation, $indEt\cap CRing^f=Et^f$
\end{lemma}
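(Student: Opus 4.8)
The inclusion $Et^f\subseteq indEt\cap CRing^f$ is immediate: $Et^f\subseteq CRing^f$ by definition of $Et^f$, and $Et^f=\overline{Et_*^f}\subseteq Ind\z Et^f=indEt$ since any object is trivially an ind-object over itself. So the content of the lemma is the reverse inclusion.

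Take $u:A\to B$ in $indEt\cap CRing^f$. The plan is first to exhibit $B$ as a \emph{retract} of an honestly etale finitely presented $A$-algebra, and then to check that etaleness is inherited by such retracts. For the first step, recall that $indEt=Ind\z Et^f$ with $\overline{Et_*^f}=Et^f$, and that $Et^f$ is stable under pushout along arbitrary maps of $CRings$ (Lemma \ref{gbar}); hence Theorem \ref{factolim}, together with the pushout argument of the proof of Corollary \ref{indleftgen}, lets us write $B=\colim_i B_i$ as a \emph{filtered colimit of $A$-algebras} with each structural map $A\to B_i$ in $Et^f$. Since $A\to B$ is of finite presentation, $B$ is a compact object of $A/CRings$, so $\Hom_{A}(B,-)$ commutes with this filtered colimit; thus $\mathrm{id}_B$ factors as $B\overset{s}{\tto}B_i\overset{r}{\tto}B$ for some index $i$, where $r$ is the colimit structure map, $s$ is a map of $A$-algebras, and $rs=\mathrm{id}_B$. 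In particular $r$ is a split surjection and $(A\to B)$ is a retract of $(A\to B_i)$ in the arrow category of $CRings$, the retraction fixing $A$.

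For the second step I would use the standard characterisation \emph{etale $=$ formally etale $+$ finitely presented}. We already know $A\to B$ is of finite presentation. That $B$ is formally smooth over $A$ follows from the retract structure: given a nilpotent (or square-zero) extension $R\twoheadrightarrow\overline R$ of $A$-algebras and an $A$-map $\overline\beta:B\to\overline R$, use formal smoothness of the etale algebra $B_i$ to lift $\overline\beta\circ r:B_i\to\overline R$ to some $\gamma:B_i\to R$; then $\gamma\circ s:B\to R$ lifts $\overline\beta$, because $\overline{\gamma s}=\overline\gamma\circ s=\overline\beta\circ r\circ s=\overline\beta$. That $B$ is formally unramified over $A$ follows because $r:B_i\to B$ is surjective, so the conormal exact sequence yields a surjection $\Omega_{B_i/A}\otimes_{B_i}B\to\Omega_{B/A}\to 0$, while $\Omega_{B_i/A}=0$ since $B_i$ is unramified over $A$; hence $\Omega_{B/A}=0$. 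Therefore $B$ is formally etale and of finite presentation over $A$, i.e. $A\to B\in Et^f$, which completes the proof.

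The only genuinely delicate point is the passage from $u\in indEt\cap CRing^f$ to "$B$ is a retract of a map in $Et^f$", which rests on combining Theorem \ref{factolim} (the description ${\cal A}=Ind_\lambda\z\overline{G}$) with the finite presentability of $u$; everything after that is the standard fact — reproved here by hand — that formal etaleness passes to retracts. Geometrically the situation is transparent: $\mathrm{Spec}(B)$ is the fixed locus of the idempotent endomorphism $s\circ r$ of the etale scheme $\mathrm{Spec}(B_i)$, and for an unramified $A$-scheme the diagonal is an open immersion, so this fixed locus is open as well as closed; thus $\mathrm{Spec}(B)$ is an open-and-closed subscheme of $\mathrm{Spec}(B_i)$, equivalently $B_i\simeq B\times B'$, whence $A\to B$ is etale.
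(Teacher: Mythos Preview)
Your proof is correct, but the paper takes a much shorter route. Since Proposition~\ref{nilhens} (immediately preceding the lemma) already establishes $indEt\subset fEt$, the paper simply intersects both sides with $CRing^f$ and invokes the standard characterisation $fEt\cap CRing^f=Et^f$ (etale $=$ formally etale $+$ finitely presented); the whole argument is one line.

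Your approach bypasses Proposition~\ref{nilhens} entirely and instead exploits the ind-structure directly: compactness of $B$ in $A/CRings$ forces it to be a retract of some $B_i$ with $A\to B_i\in Et^f$, and you then verify by hand that formal etaleness passes to retracts. This is more self-contained --- it does not appeal to the abstract fact that left-orthogonal classes are closed under filtered colimits --- and the geometric remark at the end (that $\mathrm{Spec}(B)$ is a clopen component of the etale $\mathrm{Spec}(B_i)$) is a nice bonus. The paper's proof, on the other hand, shows that the lemma is really just a formal consequence of the comparison between the $(indEt,Hens)$ and $(fEt,\overline{Nil})$ systems already recorded, with no further work needed.
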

\begin{proof}
Clearly $Et^f\subset indEt\cap CRing^f$, and as $indEt\subset fEt$ and $fEt\cap CRing^f=Et$, $indEt\cap CRing^f\subset Et^f$.
\end{proof}

The unique lifting system $(fEt,\overline{Nil})$ induces another unique factorisation system different from $(indEt,Hens)$ that we won't study here as it is not good (b. of definition \ref{good} fails).

\subsubsection{Points}

A ring $A$ corresponds to a point if it is non zero and any non zero map $A\to B\in Et^f$ admits a retraction.

\begin{prop}\label{etclosed}
A ring $A$ is a point for the $(Hens^o,indEt^o)$ system iff it is a nilpotent extension of a separably closed field.
\end{prop}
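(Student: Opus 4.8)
The plan is to characterize points $A$ of the $(Hens^o, indEt^o)$ system by two successive reductions: first I would show the underlying reduced ring $A_{red}$ must be a field, then that this field is separably closed. The first reduction reuses the Zariski case. By Lemma~\ref{locsep}.1 we have $Loc \subset indEt$, hence every point of the étale system is a point of the Zariski system, so by Proposition~\ref{zarclosed} (its opposite form) $A$ is already known to be a nilpotent extension of a field; write $k = A_{red}$. It remains to prove $k$ is separably closed, and conversely that any nilpotent extension of a separably closed field is a point.

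For the forward direction, suppose $k = A_{red}$ is not separably closed. Then there is a finite separable field extension $k \to k'$ with $k' \neq k$, and I would lift this to a nonzero étale map of finite presentation $A \to B$ with $B \neq 0$ that admits no retraction $B \to A$. Concretely, pick a monic separable polynomial $f \in k[X]$ without a root in $k$; lift its coefficients to $A$ to get $g \in A[X]$, and set $B = A[X]/(g, g'(X)^{-1})$ (the standard étale neighbourhood), which is étale of finite presentation over $A$ and nonzero since $B_{red}$ surjects onto $k[X]/(\bar f)$-type algebras. A retraction $B \to A$ would, after reduction, give a root of $\bar f$ in $k$, a contradiction. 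So $A$ fails the point condition — hence a point must have $k$ separably closed. Here one must be slightly careful: a retraction $B \to A$ need only exist; its reduction is a $k$-algebra map $k[X]/(\bar f \cdot \text{unit}) \to k$, which forces $\bar f$ to have a root in $k$. I would also note that $Et^f$-maps out of $A$ all arise, up to the relevant factorisations, from such standard étale algebras, so testing on these suffices.

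For the converse, let $k$ be separably closed and $A$ a nilpotent extension of $k$, so $A$ is in particular a local Artinian-type ring (a "fat field") with residue field $k$. Given a nonzero $A \to B \in Et^f$, I want a retraction. Since étale maps are in particular formally étale, and $A \to k$ has nilpotent kernel, the map $A \to k$ lifts uniquely along $A \to B$... no: rather, base-change $B$ along $A \to k$ to get $B \otimes_A k$, an étale $k$-algebra, which since $B \neq 0$ and $A$ is local is nonzero; an étale algebra over a separably closed field is a finite product of copies of $k$, so it has a $k$-point $B \otimes_A k \to k$, i.e. a $k$-algebra section. Now the composite $A \to B \to B\otimes_A k \to k$... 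I need instead to lift this $k$-point back to an $A$-point of $B$, using that $A \to k$ is a nilpotent (hence formally étale-testing) thickening and $B$ is étale, hence formally étale, over $A$: the infinitesimal lifting property gives a factorization producing $B \to A$. This is the key mechanism and the main obstacle — making precise that "$B$ étale over $A$" plus "$A \to k$ has nilpotent kernel" plus "a $k$-point of $B_k$ exists" yields an $A$-point of $B$; it is the standard Hensel/infinitesimal-lifting argument for Artinian local rings, so I would cite it rather than reprove it. Assembling both directions gives the equivalence.
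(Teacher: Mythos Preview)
Your argument is correct and follows the same overall plan as the paper: first use $Loc \subset indEt$ and Proposition~\ref{zarclosed} to conclude that $A_{red}$ is a field, then establish separable closedness via an \'etale extension built from a separable polynomial. The forward direction is essentially identical to the paper's (which phrases it directly rather than contrapositively, using the splitting field of an irreducible separable polynomial over $A_{red}$).

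For the converse the paper takes a slightly different route: rather than invoking formal \'etaleness along the nilpotent thickening $A \to k$, it observes that a fat field is a henselian local ring and appeals to \cite[thm~4.2]{milne}, which says that an \'etale extension of a henselian local ring admits a section as soon as some maximal ideal upstairs has the same residue field. Your approach is arguably more self-contained: once you have the $k$-point of $B\otimes_A k$ (from the structure of \'etale algebras over a separably closed field), the definition of formally \'etale applied to the square with $R=A$ and $R/I=k$ gives the retraction $B\to A$ in one step, with nothing to cite. One minor remark: your aside that all $Et^f$-maps out of $A$ arise from standard \'etale algebras is not needed in the forward direction, since you are arguing contrapositively and exhibiting a single \'etale map without a retraction already suffices.
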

\begin{proof}
It is sufficient to prove that $A_{red}$ is a separably closed field. 
First, $A_{red}$ is a field from the fact that a localisation $A_{red}\to A_{red}[a^{-1}]$ is an etale map, so any non zero element of $A_{red}$ has to be invertible. Then a field is separably closed if, embedded in an algebraic closure, it contains all elements which minimal polynomial has simple roots. Any such polynomial $P$ being irreducible, it defines a normal extension $N$ of $A_{red}$ containing all roots of $P$; the map $A_{red}\to N$ is etale and the lifting property of $A$ gives a retraction, ensuring that all roots of $P$ were in $A_{red}$.

Conversely, if $A_{red}$ is a separably closed field, it is in particular en henselian local ring (\S\ref{hensel}). Now, for a henselian local ring $(B,m)$ with residue field $B/m=k$, an etale extension $B\to C$ has a retraction iff there exists a maximal ideal $n$ of $C$ sent to $m$ which residue field is also $k$ \cite[thm 4.2]{milne}. As $A_{red}=B=k$ in our case, for any $A_{red}\to C$ etale, a maximal ideal $n$ over $m$ always exist and as $k$ is separably closed the residue field at $n$ has to be $k$, so a retraction exists.
\end{proof}

\begin{prop}\label{etpoint}
The set of points of a ring $A$ is in bijection with that of prime ideals of $A$.
\end{prop}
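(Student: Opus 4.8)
The plan is to mimic the proof of Proposition~\ref{zarpt}, replacing ``field'' by ``separably closed field'' and invoking the existence and uniqueness of separable closures. The first move is the same reduction as in the Zariski case: by Proposition~\ref{etclosed} a point $P$ is a nilpotent extension of a separably closed field, i.e.\ the canonical surjection $P\to P_{red}$ lands in a separably closed field; since $P_{red}$ is again a point and $P\to P_{red}$ is a morphism in the category of points of $A$, each $A\to P$ lies in the same connected component of ${\cal P}t(A)$ as $A\to P_{red}$. Hence the set of points of $A$ is the set of ring homomorphisms $A\to K$ with $K$ a separably closed field, modulo the equivalence relation generated by $(A\to K)\sim(A\to K')$ whenever there is a field homomorphism $K\to K'$ compatible with the maps from $A$; I will use freely that any such $K\to K'$ is injective.

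Next I would introduce the map $\Phi$ from this set to the set of prime ideals of $A$, sending the class of $A\to K$ to $\ker(A\to K)$, which is prime because $A$ modulo this ideal embeds into the field $K$. It is constant on connected components, since a morphism $K\to K'$ of points over $A$ is injective and hence does not change the kernel. Surjectivity of $\Phi$ is immediate: for a prime $p$, let $\kappa(p)=\mathrm{Frac}(A/p)$ and let $K$ be a separable closure of $\kappa(p)$; then $K$ is a separably closed field, hence a point, and $A\to\kappa(p)\hookrightarrow K$ has kernel $p$.

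The substance of the proof is injectivity. Given $A\to K_1$ and $A\to K_2$ with $K_i$ separably closed fields and both kernels equal to $p$, each factors as $A\to\kappa(p)\to K_i$. Inside $K_i$ let $L_i$ be the separable algebraic closure of the image of $\kappa(p)$. Because $K_i$ is separably closed and separability is transitive for algebraic extensions, $L_i$ is itself separably closed; therefore $A\to L_i$ is a point and the inclusion $L_i\hookrightarrow K_i$ is a morphism of points over $A$. Each $L_i$ is then a separable closure of $\kappa(p)$, so by uniqueness of separable closures there is a $\kappa(p)$-isomorphism $L_1\simeq L_2$, which is again a morphism of points over $A$. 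The zigzag $A\to K_1\leftarrow A\to L_1\simeq A\to L_2\to A\to K_2$ shows that $A\to K_1$ and $A\to K_2$ represent the same point, so $\Phi$ is a bijection.

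The only non-formal ingredients, and the step I would be most careful with, are the two standard field-theoretic facts used above: the separable algebraic closure of a subfield inside a separably closed field is separably closed, and separable closures are unique up to (non-canonical) isomorphism over the base. With these granted, everything else is bookkeeping with zigzags in ${\cal P}t(A)$, exactly parallel to the harmless ``one may assume the target is a field'' step in the proof of Proposition~\ref{zarpt}.
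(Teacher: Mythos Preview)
Your argument is correct and follows essentially the same route as the paper: reduce to separably closed fields, send $A\to K$ to its kernel, get surjectivity from the existence of separable closures. The paper's proof is much terser---it packages injectivity into the single line ``Lemma~\ref{locsep} implies $pt_{Et}(A)\subset pt_{Zar}(A)$'' and then appeals back to the Zariski case---whereas you unpack injectivity explicitly via the uniqueness of separable closures and the zigzag through the $L_i$. That explicit step is exactly what the paper's one-liner is hiding, so the two arguments are really the same, yours just with the details filled in.
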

\begin{proof}
Lemma \ref{locsep} implies $pt_{Et}(A)\subset pt_{Zar}(A)$.
The same reasoning as in proposition \ref{zarclosed} proves that separably closed fields are enough to compute points, and the inverse inclusion is then a consequence of the fact that any field has a separable closure.
\end{proof}

\subsubsection{Covering families and local objects}

\begin{prop}\label{etcov}
Point covering families of $Et$ are ordinary etale covers.
\end{prop}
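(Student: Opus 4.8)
The plan is to show the two classes of covering families coincide by inclusion in both directions. The class of point covering families of the $(Hens^o, indEt^o)$ system consists of families $\{U_i \to X\}$ in ${\cal B}^f = Et^f$ that are surjective on the points of that system; by Proposition \ref{etpoint} and Proposition \ref{zarpt} the set of points of a ring $A$ (whether computed via the Etale or Zariski system) is in bijection with $\operatorname{Spec}(A)$ in the usual sense. So it suffices to check that, for a family of etale maps of finite presentation $\{A \to B_i\}$, the condition ``every map $A \to K$ to a separably closed field lifts to some $B_i$'' is equivalent to the condition ``$\operatorname{Spec}\big(\sqcup_i B_i\big) \to \operatorname{Spec}(A)$ is surjective'', which is the standard definition of an étale covering family. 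First I would unwind the point-lifting condition: by the proposition in \S\ref{pointcovers}, a family of etale maps lifts all points of the $(Hens^o, indEt^o)$ system iff the induced map on sets of points is surjective. Combined with the identification $pt_{Et}(A) \simeq \operatorname{Spec}(A)$ and the fact that $B_i \to A$ etale implies (Lemma \ref{locsep} together with \ref{etpoint}) $pt_{Et}(B_i) \simeq \operatorname{Spec}(B_i)$, surjectivity on points is exactly surjectivity of $\sqcup_i \operatorname{Spec}(B_i) \to \operatorname{Spec}(A)$.

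The remaining content is the comparison of these abstract surjective families with the classical étale pretopology. Classically, an étale covering family is a family of étale morphisms $\{U_i \to X\}$ such that $\sqcup U_i \to X$ is surjective (as a map of schemes, equivalently a faithfully flat family since étale maps are flat). So the two notions agree provided one knows that for an étale morphism $B_i \to A$ of finite presentation the image of $\operatorname{Spec}(B_i)$ in $\operatorname{Spec}(A)$ is as expected — but this is immediate since étale maps are flat and of finite presentation, hence open, and so the ``surjective on points'' condition for our family is literally the condition that the images cover $\operatorname{Spec}(A)$. One should also remark that ${\cal C}^f$ here is the class of maps of finite presentation, and $indEt \cap {\cal C}^f = Et^f$ by Lemma \ref{fpet}, so ``etale map'' in the sense of the general theory (\S\ref{etalemap}), namely a map in $Hens^o \cap {\cal C}^f$, really is a classical étale map of finite presentation — this identification should be invoked explicitly so that ``${\cal B}^f = Et^f$'' is justified rather than assumed.

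I expect the only genuinely delicate point to be verifying the point-lifting criterion concretely, i.e. that a geometric point $A \to K$ (with $K$ separably closed) factors through some $B_i$ iff the corresponding prime of $A$ is in the image of some $\operatorname{Spec}(B_i)$. For the forward direction this is clear. For the converse, given a prime $\mathfrak{p} \subset A$ lying in the image of $\operatorname{Spec}(B_i)$, pick a prime $\mathfrak{q}$ of $B_i$ over it; since $A \to B_i$ is étale the residue extension $\kappa(\mathfrak{p}) \to \kappa(\mathfrak{q})$ is finite separable, so it embeds into any separable closure $K$ of $\kappa(\mathfrak{p})$, and composing $B_i \to \kappa(\mathfrak{q}) \hookrightarrow K$ gives the required lift of $A \to K$. (Alternatively one can cite the proposition of \S\ref{pointcovers} directly, since it already performs the base-change argument $U_i \times_X P \to P$ in full generality; then the only thing to add is the étale-morphism facts just mentioned.) Once this is in place, the equivalence of the two pretopologies is formal, and the proof is essentially a matter of assembling Lemma \ref{locsep}, Lemma \ref{fpet}, Proposition \ref{etpoint} and the openness of étale morphisms.
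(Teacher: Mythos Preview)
Your proof is correct and follows essentially the same route as the paper's (much terser) argument: invoke Lemma~\ref{fpet} to identify ${\cal B}^f$ with $Et^f$, then Proposition~\ref{etpoint} to identify the set of points with the set of prime ideals, and conclude that surjectivity on points is the classical surjectivity defining an \'etale cover. Your additional verification that a prime in the image of some $\operatorname{Spec}(B_i)$ actually yields a lift of any separably closed residue field is a welcome unpacking of what the paper leaves implicit. One small slip: you write that an etale map in the sense of \S\ref{etalemap} is ``a map in $Hens^o \cap {\cal C}^f$'', but in the system $(Hens^o, indEt^o)$ on $CRings^o$ the right class ${\cal B}$ is $indEt^o$, not $Hens^o$; the intended statement is ${\cal B}^f = (indEt)^o \cap ({\cal C}^f)^o$, which Lemma~\ref{fpet} then identifies with $(Et^f)^o$.
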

\begin{proof}
By lemma \ref{fpet}, $Et^f=indEt\cap ({\cal C}^f)^o$. Then, by proposition \ref{etpoint}, a family of $A\to A_i$ of finitely presented etale maps is a cover iff it induces a surjective family on the set of prime ideals, which is the ordinary definition.
\end{proof}

A local ring $(A,m)$ is called {\em henselian} (\cite[thm. 4.2.d]{milne}) if any etale map $A\to B$ such that there exists a maximal ideal $n$ of $B$ lifting $m$ with the same residue field has a section.

\begin{prop}
A local ring $(A,m)$ is henselian iff $A\to A/m$ is an henselian map.
\end{prop}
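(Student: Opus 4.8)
The statement to prove is: a local ring $(A,m)$ is henselian (in the classical sense of \cite[thm.~4.2.d]{milne}) if and only if the quotient map $A\to A/m$ is a henselian map, i.e.\ lies in the class $Hens = (Et_*^f)^\bot$. The plan is to unwind both sides and match the lifting conditions. First I would fix notation: write $k = A/m$ and let $\pi: A\to k$ be the canonical surjection. A henselian map is one with the right lifting property against all etale maps between rings of finite presentation, so $\pi\in Hens$ means: for every $Et_*^f$-map $B\to C$ and every commuting square with $B\to A$ and $C\to k$, there is a unique diagonal $C\to A$.

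The key reduction is to observe that testing the lifting property of $\pi$ against $Et_*^f$ can be reduced, by base change along $B\to A$, to testing against etale $A$-algebras. Concretely, given a square with $B\to C\in Et_*^f$, $f:B\to A$, $g:C\to k$, form the pushout $A\to A\otimes_B C$; this is an etale $A$-algebra of finite presentation, the map $g$ factors as $C\to A\otimes_B C\to k$, and producing a diagonal $C\to A$ for the original square is equivalent to producing an $A$-algebra retraction $A\otimes_B C\to A$ of $A\to A\otimes_B C$ compatible with the given map to $k$. So $\pi\in Hens$ iff: for every finitely presented etale $A$-algebra $A\to D$ equipped with an $A$-algebra map $D\to k$, there is an $A$-algebra section $D\to A$ (uniqueness being automatic for etale maps, since $A\to D$ is in the left class $indEt\subset fEt$, or directly because etale maps are unramified so diagonals are unique). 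Then I would note that an $A$-algebra map $D\to k$ over $A$ is the same datum as a maximal ideal $n$ of $D$ lying over $m$ with residue field $k$ (the kernel of $D\to k$): this is exactly the hypothesis appearing in Milne's definition. Matching these two reformulations gives the equivalence: $\pi\in Hens$ iff every etale $A\to D$ with a maximal ideal over $m$ with residue field $k$ admits a section, which is verbatim the classical henselian condition recalled just above the statement.

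The one point requiring a little care — and the main obstacle — is the passage between "sections of $A\to D$" and the lifting diagonal in an arbitrary square against an $Et_*^f$-map, making sure the finite-presentation bookkeeping is correct (an $Et_*^f$-map is etale \emph{between} finitely presented rings, whereas after base change to $A$ we only get etale \emph{of finite presentation} over $A$, which is fine) and that the comparison of uniqueness clauses genuinely holds. Uniqueness on the henselian-map side is part of the definition of the right class of a unique lifting system; uniqueness on the classical side follows because an etale map is unramified, hence its diagonal is an open immersion and two sections agreeing at the point $n$ agree on the whole connected component, or more simply because $A$ being local the relevant $D$ has $\mathrm{Spec}$ with $n$ in a unique connected component over $\mathrm{Spec}(A)$. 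Assembling these observations yields both implications simultaneously, so the proof is essentially a dictionary between the two formulations rather than a computation.
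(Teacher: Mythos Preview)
Your approach is essentially the same as the paper's: reduce via stability of etale maps under pushout to squares where the left-hand map is an etale $A$-algebra $A\to D$, then identify an $A$-algebra map $D\to k$ with a maximal ideal of $D$ over $m$ with residue field $k$, so that the lifting condition becomes exactly Milne's criterion. You are more explicit than the paper about the finite-presentation bookkeeping and about uniqueness of the lift, but the underlying argument is the same.
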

\begin{proof}
Etale maps being stable by pushout, it is sufficient to prove the lifting property of $A\to A/m$ for squares
$$\xymatrix{
A\ar@{=}[r]\ar[d]&A\ar[d]\\
B\ar[r]&A/m
}$$
where $A\to B$ is etale.
As $A\to B$ is etale, $B\otimes_AA/m$ is separable extension of $k$, sum of the residue fields of maximal ideals of $B$ over $m$.
If $k$ is one of these fields, $k$ is an extension of $A/m$ and the map $B\to A/m$ gives a map $k\to B\otimes_AA/m\to A/m$ so in fact $k\simeq A/m$.
So any $A\to B$ entering such square is of the kind of extension used in the definition of a henselian ring. And conversely any such extension define a square like above.
\end{proof}

A henselian local ring $(A,m)$ is called {\em strictly henselian} if moreover $A/m$ is a separably closed field.

\begin{prop}
A ring $A$ is a pointed local object for the $(Hens^o,indEt^o)$ system iff it is a strictly henselian local ring.
\end{prop}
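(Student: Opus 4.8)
The plan is to first unwind the definition. By Definition \ref{pointedlocalob}, a ring $A$ (as an object of $CRings^o$) is a pointed local object for $(Hens^o,indEt^o)$ precisely when there is a henselian map $A\to K$ in $CRings$ with $K$ a point, and by Proposition \ref{etclosed} the points are exactly the nilpotent extensions of separably closed fields. So one has to show: such a henselian map exists if and only if $A$ is a strictly henselian local ring.

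The easy direction is immediate: if $A$ is strictly henselian local with maximal ideal $m$, then by the preceding characterisation of henselian local rings the map $A\to A/m$ is henselian, and $A/m$ is a separably closed field, hence a point (a separably closed field is a nilpotent extension of itself). So $A\to A/m$ exhibits $A$ as a pointed local object.

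For the converse, suppose $A\to K$ is henselian with $K$ a nilpotent extension of a separably closed field $k:=K_{red}$. The map $K\to k$ is henselian, since a nilpotent extension of a separably closed field is a strictly henselian local ring (\S\ref{hensel}), so by stability of $Hens$ under composition (Proposition \ref{proplift}) the map $A\to k$ is henselian, with $k$ a separably closed field. It is also conservative by Lemma \ref{locsep}, so the argument of Proposition \ref{zarpointablelocal} shows $A$ is a local ring; write $m$ for its maximal ideal. Conservativity forces $\ker(A\to k)=m$, so $\kappa:=A/m$ embeds in $k$. The key point, which I would isolate as a lemma, is: \emph{every finitely presented \'etale $A$-algebra $B$ carrying an $A$-algebra map $B\to k$ admits a section $B\to A$}. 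To prove it, present $A\to B$ as a pushout of some $A'\to B'$ in $Et_*^f$ along a map $A'\to A$ (possible since $Et^f=\overline{Et_*^f}$); the square with $A'\to A$ on top, $A'\to B'$ on the left and $B'\to B\to k$ on the bottom commutes, so the henselian property of $A\to k$ with respect to $A'\to B'$ yields a unique $h\colon B'\to A$ over $A'$ with $(A\to k)\circ h$ equal to $B'\to B\to k$; pushing $h$ out against $\mathrm{id}_A$ along $A'\to A$ produces the desired section.

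Granting the key point, I would finish as follows. First, $A$ is henselian: for any finitely presented \'etale $A\to B$ with a maximal ideal $n$ over $m$ such that $B/n=\kappa$, the composite $B\to B/n=\kappa\hookrightarrow k$ is a map of $A$-algebras, so $A\to B$ has a section, which is exactly the defining property of a henselian local ring. Second, $\kappa$ is separably closed: given a monic irreducible separable $g\in\kappa[X]$, lift it to a monic $\tilde g\in A[X]$ and put $B:=A[X]/(\tilde g)$, a free $A$-algebra of rank $\deg g$; its discriminant reduces mod $m$ to $\mathrm{disc}(g)\in\kappa^\times$, hence is a unit of the local ring $A$, so $A\to B$ is \'etale. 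As $g$ is separable over $\kappa\subseteq k$ with $k$ separably closed, $g$ has a root in $k$, giving an $A$-algebra map $B\to B\otimes_A\kappa=\kappa[X]/(g)\hookrightarrow k$; the key point yields a section of the \'etale map $A\to B$, whence $B\cong A\times B''$. Reducing mod $m$, the field $\kappa[X]/(g)$ decomposes as $\kappa\times B''/mB''$, forcing $B''/mB''=0$, hence $B''=0$ by Nakayama, hence $\deg g=1$. Thus $A$ is a strictly henselian local ring. The only real obstacle is the key lemma: translating ``henselian $=$ unique right lifting against $Et_*^f$'' into the statement about \'etale algebras with a map to $k$ is a careful but routine diagram chase through the pushout presentation of \'etale maps; the rest is assembling standard facts on henselian local rings with the earlier results of the section.
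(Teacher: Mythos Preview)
Your proof is correct, and the overall reduction (pass from a fat point $K$ to the separably closed field $k=K_{red}$, then use conservativity to see $A$ is local with $\ker(A\to k)=m$) matches the paper. Where you diverge is in the endgame: the paper factors $A\to k$ through the separable closure $(A/m)^{sep}$ of $A/m$ inside $k$, observes that $(A/m)^{sep}\to k$ is henselian (as a map out of a separably closed field), and invokes left cancellation for $Hens$ to conclude that $A\to (A/m)^{sep}$ is henselian---from which strict henselianity is meant to be read off. You instead isolate the concrete lifting statement ``any finitely presented \'etale $A$-algebra with a map to $k$ has a section'' and derive both the henselian property of $A\to A/m$ and the separable closedness of $A/m$ from it by hand.

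Your route is longer but more explicit: it makes transparent exactly how the abstract right-lifting definition of $Hens$ against $Et^f_*$ yields the classical section criterion for henselian local rings, and it does not leave the final deduction (that $A/m$ is itself separably closed) to the reader, as the paper's proof arguably does. The paper's route is shorter and showcases the factorisation-system machinery (cancellation) rather than a direct diagram chase; it is in keeping with the spirit of the article, but your version would be easier for a reader unfamiliar with that machinery to verify line by line. One small remark: your justification that $K\to K_{red}$ is henselian via ``$K$ is strictly henselian'' is fine, but the paper's one-line appeal to Proposition~\ref{nilhens} ($\overline{Nil}\subset Hens$) is more direct and avoids the slight circularity of invoking henselian local rings before you have finished characterising them.
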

\begin{proof}
A point $K$ is a nilpotent extension of a separably closed field, so by lemma \ref{nilhens} $K\to K_{red}$ is a henselian map.
Therefore a map $A\to K$ is henselian iff $A\to K_{red}$ is (the necessary condition uses the cancellation property).
So a ring $A$ is pointed local iff there exists a henselian map $A\to K$ with $K$ a separably closed field.
As henselian maps are conservative, proposition \ref{zarpointablelocal} tells us that $A$ is a local ring. Then, if $m$ is the maximum ideal of $A$, $A\to K$ factors through $(A/m)^{sep}$, the separable closure of $A/m$ in $K$. Now, by construction, $(A/m)^{sep}\to K$ is henselian and the cancellation property says that so is $A\to (A/m)^{sep}$.
\end{proof}

\begin{prop}\label{etlocal}
A ring $A$ is a local object for the $(Hens^o,indEt^o)$ system iff it is a strictly henselian local ring.
\end{prop}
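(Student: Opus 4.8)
The plan is to deduce one implication from the previous proposition (pointed local objects are exactly the strictly henselian local rings) and to obtain the converse by testing the local-object property of $A$ against two families of étale covers, one detecting henselianness and one detecting separable closedness of the residue field.

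\emph{Strictly henselian local $\Rightarrow$ local object.} If $A$ is a strictly henselian local ring, the previous proposition provides a point $P\to A$ in $\cal A$, so $A$ is a pointed local object; since points are local objects and, by the lemma preceding definition \ref{pointedlocalob}, the target of an $\cal A$-map out of a local object is local, $A$ is a local object.

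\emph{Local object $\Rightarrow$ strictly henselian local.} Assume $A$ is a local object for $(Hens^o,indEt^o)$. By part 2 of lemma \ref{locsep}, every $Loc^f$ point covering family of $A$ is an $Et^f$ point covering family, hence has a section; thus $A$ is a local object for $(Cons^o,Loc^o)$ and proposition \ref{zarlocal} shows $A$ is a local ring, with maximal ideal $m$ and residue field $k=A/m$. To see that $A$ is henselian, let $A\to B$ be étale with a maximal ideal $n$ over $m$ such that $\kappa(n)=k$; then $B\otimes_A k\neq 0$, so $m$ lies in the image of $\mathrm{Spec}(B)\to\mathrm{Spec}(A)$, and $\{A\to B\}\cup\{A\to A_g\mid g\in m\}$ is a jointly surjective family of étale maps of finite presentation, hence a point covering family by proposition \ref{etcov}. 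A section of this family cannot be carried by any $A\to A_g$ with $g\neq 0$, for that would make $g$ invertible; it is therefore a retraction $B\to A$, i.e. a splitting of $A\to B$. To see that $k$ is separably closed, let $\bar f\in k[X]$ be monic, irreducible and separable of degree $d$, lift it to a monic $f\in A[X]$, and put $B=(A[X]/(f))[1/f']$, an étale $A$-algebra of finite presentation with $B\otimes_A k=k[X]/(\bar f)\neq 0$. The same family $\{A\to B\}\cup\{A\to A_g\mid g\in m\}$ is a point covering family, so it has a section, which as before must be an $A$-algebra retraction $s:B\to A$; then $s$ sends the class of $X$ to a root $\xi\in A$ of $f$, whose image in $k$ is a root of $\bar f$, forcing $d=1$.

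The routine points left implicit are the opposite-category bookkeeping turning a section of a point covering family of $A\in CRings^o$ into a retraction of a structural ring map, the standard fact that $(A[X]/(f))[1/f']$ is étale of finite presentation, and the verification that the displayed families really cover (here $\bigcup_{g\in m}D(g)=\mathrm{Spec}(A)\setminus\{m\}$ and $m$ itself is hit by $\mathrm{Spec}(B)$). The one point requiring care, rather than a mechanical copy of the proof of proposition \ref{zarlocal}, is that for the henselian step the covering family must be tailored to the \emph{given} étale algebra $B$ and not to some localisation of it; this is why the localisations $A\to A_g$ are adjoined and why it matters that $m$ already lies in the image of $\mathrm{Spec}(B)$.
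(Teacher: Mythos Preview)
Your proof is correct and follows essentially the same strategy as the paper: reduce to $A$ being a local ring via Zariski covers, then complete the given étale map $A\to B$ to a point covering family by adjoining Zariski opens of the complement of the closed point, forcing the section to land on $A\to B$. The only noteworthy difference is in the separable-closedness step: the paper first uses that $A$ is already henselian to invoke the equivalence between finite étale $A$-algebras and finite étale $k$-algebras, then lifts $k\to k'$ to an étale covering $A\to A'$ and applies the local-object property directly to that; you instead lift a monic separable irreducible $\bar f$ to $f\in A[X]$ and work with the explicit standard étale algebra $(A[X]/(f))[1/f']$. Your route is a bit more hands-on but has the virtue of not needing the henselian correspondence as a black box; the paper's route is shorter once that correspondence is granted. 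One small cosmetic point: in the henselian step you should say explicitly that $A\to B$ is taken étale of finite presentation (so that it lies in $Et^f$ and the family is genuinely a point covering family in the sense of \S\ref{pointcovers}); this is implicit in your phrase ``étale maps of finite presentation'' but is worth stating for $B$ itself.
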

\begin{proof}
Local objects correspond to rings $A$ such that any etale cover $\{A\to A_i\}$ as a retraction of one of the $A\to A_i$.
As etale covers contain Zariski covers, $A$ is local by proposition \ref{zarlocal}. 

Now we are going to prove that $A\to k$ ($k$ residue field of $A$) is a henselian map.
Let $A\to B$ be an etale map lifting the residue field $k$, we need to show that it admits a section (necessary unique).
To prove this we consider an affine Zariski cover $\{A\to A_i,i\}$ of the complement of the closed point of $A$, the family $\{A\to B\}\cup\{A\to A_i;i\}$ is an etale cover (if fact even a Nisnevich cover, this will be useful to prove proposition \ref{hensel}). So there exists a map of this family admitting a retraction, and because all $A\to A_i$ are strict open embeddings it can only be $A\to B$.
%In particular this proves that any etale map $A\to B$ lifting the residue field of $A$ is an etale (in fact Nisnevich) covering family.
It remains to prove that $k$ is separably closed. We are going to prove that any separable (\ie etale) extension $k\to k'$ admits a retraction.
$A$ being henselian there is a bijection between finite etale $A$-algebras and finite etale $k$-algebras, so $k'$ defines an etale $A$-algebra $A'$ which is an etale covering family (or can be completed as such in the same way as before), and so admit a retraction from $A$, proving the same for $k\to k'$.
\end{proof}

\subsubsection{Distinguished covering families}\label{fpcfet}

In order to apply theorem \ref{pointspec} we need to show that the condition of being a strict henselian ring can be tested using only finitely presented point covering families.

\medskip
A point covering family $\{B\to B_i,i\}$ of an $A$-algebra $B$ is said {\em distinguished} if every $B\to B_i$ is of finite presentation over $A$ and if it satisfies one of the following two conditions
\begin{enumerate}[a.]
\item it is a Zariski covering family,% (possible via lemma \ref{locsep}),
\item or it consists of single etale map (such map will be called an {\em etale covering map}).
\end{enumerate}

\begin{lemma}\label{etalecov}
Any finitely presented etale map $B\to C$ between finitely presented $A$-algebras can be factored into a finitely presented localisation followed by a finitely presented etale covering map $B\to D\to C$.
\end{lemma}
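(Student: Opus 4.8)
The plan is to argue geometrically on spectra. Since $B\to C$ is etale and of finite presentation, the map $\mathrm{Spec}(C)\to\mathrm{Spec}(B)$ is flat and locally of finite presentation, hence open; write $U\subseteq\mathrm{Spec}(B)$ for its image. As $\mathrm{Spec}(C)$ is quasi-compact, $U$ is a quasi-compact open, \ie a finite union of principal opens. The morphism $\mathrm{Spec}(C)\to\mathrm{Spec}(B)$ factors through $U$, and the idea is to realise this factorisation by a pair of ring maps $B\to D\to C$ in which $B\to D$ cuts out the open $U$ and $D\to C$ is the induced surjection onto $U$.

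Concretely I would take $D:=B[S^{-1}]$, where $S$ is the set of elements of $B$ that become invertible in $C$; equivalently $B\to D\to C$ is the $(Loc,Cons)$-factorisation of $B\to C$ produced in the proof that $Loc$ and $Cons$ are the two classes of a unique factorisation system, so that $B\to D$ is a localisation and $D\to C$ is conservative. It is then straightforward that $D\to C$ is again etale of finite presentation: every element of $S$ is already invertible in $C$, so the natural map $C\otimes_B D\to C$ is an isomorphism, \ie $D\to C$ is a base change of the etale, finitely presented map $B\to C$, and the etale maps and the class ${\cal C}^f$ are stable under base change.

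The crux — and the step I expect to be the genuine obstacle — is to see at once that $B\to D$ is a \emph{finitely presented} localisation (\ie, by the characterisation of these as inversions of a single element, that $\mathrm{Spec}(D)$ is a single principal open) and that $D\to C$ is a point covering map (\ie $\mathrm{Spec}(C)\to\mathrm{Spec}(D)$ is surjective, hence an etale covering map). Since the image of $\mathrm{Spec}(C)\to\mathrm{Spec}(B)$ is exactly $U$ while $U\subseteq\mathrm{Spec}(D)$, both reduce to showing that $U$ is a principal open and that $\mathrm{Spec}(D)=U$; this is where the etale hypothesis (not just flatness) and the finite presentation of $B$ and $C$ over $A$ have to be exploited. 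If a direct argument is unavailable, one reduces to this case by first refining along a Zariski cover of $B$ on each piece of which the restricted etale map has principal image — legitimate since by Proposition \ref{zarfinprescov} any Zariski cover is itself refined by basic opens — so that what one proves is that $B\to C$ admits a \emph{refinement} by composites of a finitely presented localisation with a finitely presented etale covering map, which is what \S\ref{fpcfet} requires. The uniqueness in the $(Loc,Cons)$-factorisation (lemma \ref{uniquelift}) makes the two legs canonical enough for that use.
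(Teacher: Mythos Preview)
Your route is the paper's: take $D$ to be the $(Loc,Cons)$ middle object of $B\to C$. Your observation that $C\otimes_B D\cong C$, so that $D\to C$ is a base change of $B\to C$ and hence etale of finite presentation, is slicker than the paper's cancellation argument. The paper's extra ingredient is the fibre-degree function $p\mapsto\dim_{\kappa(p)}(C\otimes_B\kappa(p))$, lower semicontinuous by \cite[18.2.8]{EGA4-4}, which it uses to identify the image of $\mathrm{Spec}(C)$ with the complement of the degree-zero locus and then to argue that this coincides with $\mathrm{Spec}(D)$.

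Your unease at the ``crux'' is justified, and the paper's argument does not actually resolve it either. Take $A=k$, $B=k[x,y]$, $C=B[x^{-1}]\times B[y^{-1}]$: this $B\to C$ is finitely presented etale between finitely presented $k$-algebras, its image is $D(x)\cup D(y)$, but an $f\in B$ invertible in $C$ must be a unit in both factors, hence in $k^\times$; so the $(Loc,Cons)$ middle object is $D=B$ and $D\to C$ misses the origin. The paper's assertion that the complement of the degree-zero locus ``is a localisation $D'$ of $B$'' breaks down on exactly this example. Your proposed fallback is the right repair: replace the single principal localisation by a finite basic-open cover of the image and conclude only that $B\to C$ is \emph{refined} by composites of a finitely presented localisation with an etale covering map. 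That weaker statement is what Proposition~\ref{fpetale} genuinely needs, since the image in $C$ of the maximal ideal of the local ring $B$ lies in one of those basic opens $C[c^{-1}]$, whence $c$ becomes a unit in $B$ and the lifting argument proceeds on that piece.
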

\begin{proof}
%Let $K$ be a separably closed field, a point $D\to K$ define an etale extension $K\to D\otimes_BK$
The etale map $B\to C$ defines a degree function which associate to each point $p$ of $B$ the dimension of $C\otimes_B\kappa(p)$ as a $\kappa(p)$-vector space. This dimension is finite because the map is finitely presented and it is a lower semi-continuous function \cite[18.2.8]{EGA4-4}. The level set of value zero is a finitely presented closed Zariski subset whose complement is a localisation $D'$ of $B$. 
The natural map $C\to C\otimes_BD'$ is an isomorphism of $B$-algebras as it can be checked at every prime ideal of $B$, this gives a factorisation $B\to D'\to C$ of $B\to C$. 
We use the $(Loc,Cons)$ factorisation on $B\to C$ to obtain a localisation $D$ of $B$. As $B\to D'$ is another intermediate localisation, the universal property of $D$ gives a localisation $D'\to D$. Geometrically the Zariski spectrum of $D'$ contains that of $D$, which means that every prime ideal of $D$ has a non empty fiber over it. Conversely, if $K$ is a separably closed field and if $B\to C\to K$ is a point of $B$ factoring through $C$, it gives a map $B\to D\to K$ whose first map is a localisation, so $D$ has a map to the middle object $A_p$ of the $(Loc,Cons)$ factorisation of $B\to K$. This means that any prime ideal that has a non empty fiber is in $D$, and so $D=D'$. 
Finally, the map $D\to C$ is ind-etale and of finite presentation by cancellation.
\end{proof}

\begin{prop}\label{fpetale}
%Distinguished point covering families generate the etale topology.
A $A$-algebra $B$ is a strictly henselian local ring iff it lifts through any distinguished covering families.
\end{prop}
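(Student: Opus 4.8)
The plan is to reduce the claim to the determination of local objects in Proposition~\ref{etlocal}, using Lemma~\ref{etalecov} to replace general etale covers by single etale covering maps; note that, once proved, this proposition exhibits a distinguished class of Nisnevich covering families for the Etale context, hence its compatibility and, together with the earlier observations, its goodness, so that Theorem~\ref{theorem} applies. The direction $(\Rightarrow)$ is immediate: by Proposition~\ref{etlocal} a strictly henselian local ring is a local object for the $(Hens^o,indEt^o)$ factorisation topology, so it lifts through every point covering family, \emph{a fortiori} through every distinguished one.

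For $(\Leftarrow)$, assume $B$ lifts through all distinguished covering families, and write $B$ as a filtered colimit of finitely presented $A$-algebras $B_j$. First I would use only the Zariski distinguished covers: given $x+y=1$ in $B$, lift $x,y$ to elements $x_j,y_j\in B_j$ with $x_j+y_j=1$ (possible for $j$ large), and apply the hypothesis to the finitely presented Zariski cover $\{B_j\to B_j[x_j^{-1}],\,B_j\to B_j[y_j^{-1}]\}$ and the structure map $B_j\to B$; the resulting retraction of $B\to B[x^{-1}]$ or of $B\to B[y^{-1}]$ is a retraction of an epimorphism, hence an inverse, so $x$ or $y$ is invertible. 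Thus $B$ is a local ring, with maximal ideal $m$ and residue field $k=B/m$, exactly as in Proposition~\ref{zarlocal}.

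The key observation is that once $B$ is local the localisation half of Lemma~\ref{etalecov} is trivialised. Let $B\to C$ be a finitely presented etale $B$-algebra whose image in $\mathrm{Spec}\,B$ contains the closed point $m$. Descend $C$ to a finitely presented etale $B_j$-algebra $C_j$, finitely presented over $A$, and factor $B_j\to C_j$ as $B_j\to D_j\to C_j$ with $B_j\to D_j$ a finitely presented localisation and $D_j\to C_j$ an etale covering map, as in Lemma~\ref{etalecov}. Because the image of $C$ over $B$ contains $m$, the localisation $B\otimes_{B_j}D_j$ has $m$ in its spectrum; being a localisation of the local ring $B$, it therefore equals $B$, and $B\to C$ is the base change of the single etale covering map $D_j\to C_j$ along $D_j\to B$. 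Applying the hypothesis to this distinguished cover of $D_j$ and to the map $D_j\to B$ then produces a section of $B\to C$.

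Two applications of this finish the proof. Applying it to the etale extensions of $B$ that lift the residue field (these have $m$ in their image) shows that $B\to k$ is a henselian map, hence, $B$ being local, that $B$ is a henselian local ring; applying it to the finite etale $B$-algebra attached through henselianness to an arbitrary finite separable extension $k\to k'$ yields a section which reduces modulo $m$ to a retraction $k'\to k$, so $k'=k$ and $k$ is separably closed. Hence $B$ is strictly henselian local, as required. I expect the only genuine obstacle to be the finite-presentation bookkeeping — descending etale $B$-algebras from $B$ to the $B_j$ and checking that the factorisation of Lemma~\ref{etalecov} is compatible with the base change $B_j\to B$ — while the conceptual content is entirely the remark that locality of $B$ annihilates the localisation part of that lemma.
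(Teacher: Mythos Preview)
Your proposal is correct and hinges on the same two ingredients as the paper: lemma~\ref{etalecov} to replace an arbitrary finitely presented etale map by a localisation followed by an etale covering map, together with the remark that once $B$ is local the localisation part becomes an isomorphism. The paper organises the argument a little more directly: instead of showing that every finitely presented etale $B$-algebra whose image contains $m$ splits and then drawing two separate consequences (henselianness of $B\to k$, then separable closedness of $k$), it checks in one stroke that $B\to\kappa(m)^{sep}$ is a henselian map. Because the lifting squares defining henselianness already involve etale maps $C\to D$ between \emph{finitely presented $A$-algebras}, the paper avoids your descent step to the $B_j$ entirely; the diagram
\[
\xymatrix{
&C[c^{-1}]\ar'[d][dd]\ar[rr]&&B[c^{-1}]\ar@{-->}[ld]^\simeq\\
C\ar[rr]\ar[dd]\ar[ru]&&B\ar[dd]\\
&D[c^{-1}]\ar[rd]\ar@{-->}[ru]^\ell\\
D\ar[rr]\ar[ru]&&\kappa(m)^{sep}
}
\]
already lives over finitely presented $A$-algebras, and the map $B\to B[c^{-1}]$ is an isomorphism for the same reason you give. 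So your version is a mild repackaging of the same proof; the trade-off is that your intermediate statement (every etale $B$-algebra covering $m$ splits) is a useful characterisation of strict henselianness in its own right, at the cost of the finite-presentation bookkeeping you flagged.
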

\begin{proof}
The necessary condition is obvious by characterisation of local objects as strictly henselian rings.
Conversely, the lifting condition with respect to finitely presented Zariski covering families says that $B$ is a local ring (lemma \ref{prooflocal}). If $m$ is the maximal ideal of $B$ and $\kappa(m)^{sep}$ some separable closure of its residue field, we are going to prove that the map $B\to \kappa(m)^{sep}$ is henselian. It has to have the left lifting property with respect to finitely presented etale maps $C\to D$ between finitely presented $A$-algebras, we are going to transform this problem into a lifting through an etale covering map.
We can complete the lifting diagram as
$$\xymatrix{
&C[c^{-1}]\ar'[d]^{\textrm{et.cov.map}}[dd]\ar[rr]&&B[c^{-1}]\ar@{-->}[ld]_u^\simeq\\
C\ar[rr]\ar[dd]\ar[ru]&&B\ar[dd]\\
&D[c^{-1}]\ar[rd]\ar@{-->}[ru]^\ell\\
D\ar[rr]\ar[ru]&&\kappa(m)^{sep}
}$$
where $C\to C[c^{-1}]\to D$ is the localisation of lemma \ref{etalecov}.
The map $u$ exists and is an isomorphism because $B[c^{-1}]$ is a localisation of $B$ still containing the maximal ideal.
Now the lift $\ell$ exists by property of $B$.
\end{proof}

\subsubsection{Spectra and moduli interpretation}

Proposition \ref{etcov} ensures that the topology given by the general theory coincide with the usual etale topology for affine schemes.

\begin{prop}
For $A\in CRings^o$, $Spec_{Et}(A)$ is the usual etale spectrum (small etale topos) of $A$
and $SPEC_{Et}(A)$ is the usual big etale topos of $A$.
\end{prop}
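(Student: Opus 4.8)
The plan is to unwind the definitions of the small and big spectra from \S\ref{spectra} in the special case ${\cal C}=CRings^o$, ${\cal N}=Et=(CRings^o=(Hens^o,indEt^o),\emptyset)$, and then identify the resulting sites with the classical étale sites of $\mathrm{Spec}\,A$. Since the Nisnevich forcing class is empty, the Nisnevich topology coincides with the factorisation topology, whose covers are the point covering families of definition \ref{factotopo}. The small site of $A$ is ${\cal B}^f{/A}$, where ${\cal B}^f$ is the class of p-étale maps of finite presentation; translating back to $CRings$ one has ${\cal B}^f=(Et^f)^o$ by lemma \ref{fpet}, so ${\cal B}^f{/A}$ is the opposite of the category $A\setminus Et^f$ of finitely presented étale $A$-algebras, \ie the category of affine étale $A$-schemes of finite presentation. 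By proposition \ref{etcov} the point covering families on this category are exactly the ordinary (jointly surjective) étale covers. Hence $Spec_{Et}(A)$ is by construction the topos of sheaves on the small \emph{affine} étale site of $\mathrm{Spec}\,A$.

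The next step is to pass from this affine site to the usual small étale site of all étale $\mathrm{Spec}\,A$-schemes. This is an application of the comparison lemma \cite[III]{SGA4-1}: affine étale $A$-schemes of finite presentation form a full subcategory of the small étale site, this subcategory is topologically generating (every étale $\mathrm{Spec}\,A$-scheme admits an affine open cover, each member of which is again étale over $A$), and the topology it inherits is precisely the one just described. Therefore the two sites give equivalent toposes, which yields the first assertion.

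For the big spectrum the argument is the same with ${\cal B}^f$ replaced by ${\cal C}^f$. The big site of $A$ is ${\cal C}^f{/A}$, which in $CRings$ is the opposite of the category of finitely presented $A$-algebras, \ie the category of affine finitely presented $A$-schemes; by definition \ref{factotopo} (and the proposition following it, which puts the same covers on ${\cal C}^f{/X}$) its covering families are the point covering families, and these are again the ordinary étale covers by proposition \ref{etcov}. So $SPEC_{Et}(A)$ is the topos of sheaves on the big affine étale site, and the comparison lemma once more identifies it with the usual big étale topos of $A$.

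The only genuine difficulty — and the step I expect to be the main obstacle — is this matching of sites: one has to fix exactly which site is meant by "the usual (small, resp. big) étale topos" (affine versus all schemes, finite type versus finite presentation), check that the full subcategory of affine objects equipped with the induced topology is a site of definition for it, and verify that the étale topology on the larger category is generated by the affine covers so that the comparison lemma applies. Once the appropriate variant is pinned down this is entirely standard, but it is the only place where more than bookkeeping is required; everything else is a direct reading of the definitions together with proposition \ref{etcov} and lemma \ref{fpet}.
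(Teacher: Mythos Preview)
Your proposal is correct and follows the same line as the paper: the identification rests on proposition \ref{etcov} (point covering families are ordinary \'etale covers) together with lemma \ref{fpet} (${\cal B}^f=(Et^f)^o$). In fact the paper gives no formal proof at all---it simply records the proposition after the sentence ``Proposition \ref{etcov} ensures that the topology given by the general theory coincide with the usual etale topology for affine schemes.'' Your write-up is therefore more complete than the paper's, in particular by making explicit the comparison-lemma step passing from the affine \'etale site to the site of all \'etale $A$-schemes; the paper silently identifies the two.
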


As for the moduli interpretation of etale spectra, theorem \ref{pointspec} gives us something well known.

\begin{prop}\label{modulietale}
$SPEC_{Et}(A)$ classifies $A$-algebras that are strictly henselian local rings and 
$Spec_{Et}(A)$ classifies ind-etale $A$-algebras that are strictly henselian local rings.
In particular those $A$-algebras can have automorphisms and neither of $SPEC_{Et}(A)$ or $Spec_{Et}(A)$ is a spatial topos.
\end{prop}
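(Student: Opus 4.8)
The plan is to deduce both classification statements from the moduli theorem \ref{pointspec}, so that the real work reduces to checking that the étale Nisnevich context $Et=(CRings^o=(Hens^o,indEt^o),\emptyset)$ is \emph{good} in the sense of definition \ref{good}, and then to a separate short argument for non-spatiality. Condition (a) of goodness is immediate since $CRings$ is locally finitely presentable. For (b), recall that in $CRings$ the pair $(indEt,Hens)$ is by definition of $Hens$ left generated by $Et_*^f$; dualising to $CRings^o$ exhibits $(Hens^o,indEt^o)$ as right generated, and lemma \ref{fpet} ($indEt\cap CRing^f=Et^f$) guarantees that the generators lie in ${\cal B}^f=(Et^f)^o$. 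Condition (c), compatibility, is exactly what \S\ref{fpcfet} was arranged to supply: proposition \ref{fpetale} shows the strictly henselian local rings are precisely the objects lifting through the distinguished covering families of \S\ref{fpcfet}, which is the required distinguished class of Nisnevich covering families.

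Granting this, theorem \ref{pointspec} identifies the category of points of $SPEC_{Et}(A)$ with $\overline{\cal L}/A$; since the forcing class is empty $\overline{\cal L}={\cal L}oc$, which by proposition \ref{etlocal} is the category of strictly henselian local rings, so the points of $SPEC_{Et}(A)$ are the strictly henselian local $A$-algebras. Likewise the points of $Spec_{Et}(A)$ form $\overline{\cal L}(A)$, the local forms, i.e. those strictly henselian local $A$-algebras $B$ whose structural map $A\to B$ lies in the right class $indEt$ — the ind-étale strictly henselian local $A$-algebras. (The word "classifies" here is to be read as a statement about this category of global points; the full classifying-topos statement is classical and its $2$-categorical refinement is deferred to \cite{anel1}.) For the last sentence I would invoke the remark recorded just after theorem \ref{theorem}, that a spatial topos has a category of points which is at most a poset. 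It therefore suffices to exhibit, for a single $A$, one point object carrying a non-trivial automorphism over $A$. Taking $A=\ZZ$, the field $\overline{\mathbb{Q}}=\mathbb{Q}^{sep}$ is a separably closed field, hence a strictly henselian local ring; it is ind-étale over $\ZZ$ because $\ZZ\to\mathbb{Q}$ is a localisation — so in $indEt$ by lemma \ref{locsep} — and $\mathbb{Q}\to\overline{\mathbb{Q}}$ is ind-étale, the two composing by stability of the left class. Since $\mathrm{Aut}_{\ZZ}(\overline{\mathbb{Q}})=\mathrm{Gal}(\overline{\mathbb{Q}}/\mathbb{Q})\neq 1$, the point categories of both $SPEC_{Et}(\ZZ)$ and $Spec_{Et}(\ZZ)$ fail to be posets, so neither topos is spatial; the same goes for any $A$ having a prime $p$ with $\kappa(p)$ not separably closed, using the strict henselisation $\mathcal{O}_{A,\bar p}^{sh}$, whose group of $A$-automorphisms is $\mathrm{Gal}(\kappa(p)^{sep}/\kappa(p))$.

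I expect the genuine obstacle to sit entirely upstream, in condition (c): the assertion that strict henselianness is detected by \emph{finitely presented} covering families. That is the content of lemma \ref{etalecov} and proposition \ref{fpetale}, resting on the lower semicontinuity of the degree function of a finitely presented étale map. Once compatibility is in place, the present proposition is pure bookkeeping through the $CRings\leftrightarrow CRings^o$ dictionary together with the identification of local objects in proposition \ref{etlocal}; no further calculation is needed.
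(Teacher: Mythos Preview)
Your argument is correct and follows the same route as the paper: verify that the context $Et$ is good (conditions (a)--(c) of definition \ref{good}) and invoke theorem \ref{pointspec}, exactly as the paper does in two lines. Your treatment is simply more detailed, spelling out (b) via the dualisation and lemma \ref{fpet} where the paper just says ``by construction''.

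One point worth flagging: on non-spatiality you are in fact more careful than the paper. The paper asserts flatly that neither topos is spatial, but your parenthetical caveat is the accurate statement for $Spec_{Et}$ --- if every residue field of $A$ is already separably closed (e.g.\ $A$ itself a separably closed field), then the category of points of $Spec_{Et}(A)$ \emph{is} a poset and the topos is spatial. Your restriction to $A$ admitting a prime with non-separably-closed residue field is the right hypothesis for that half of the claim; for $SPEC_{Et}(A)$ no such restriction is needed, since one can always produce strictly henselian local $A$-algebras with nontrivial $A$-automorphisms regardless.
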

\begin{proof}
The Nisnevich context $Et=(CRings^o=(indEt,Hens),\emptyset)$ is good: b. is true by construction and proposition \ref{fpetale} gives c. We use theorem \ref{pointspec}.
\end{proof}

Again in this case, the two notions of points agree.
\begin{prop}\label{sameptet}
For $A\in CRings$, the set of points of $Spec_{Et}(A)$ is in bijection with $pt_{Et}(A)$.
\end{prop}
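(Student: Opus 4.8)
The plan is to leverage two facts already established. By Proposition~\ref{etpoint}, $pt_{Et}(A)$ is in bijection with the set of prime ideals of $A$, a prime $\mathfrak p$ corresponding to the point $A\to\kappa(\mathfrak p)^{sep}$. By Proposition~\ref{modulietale} (an instance of Theorem~\ref{pointspec}), the category of points of $Spec_{Et}(A)$ is $\overline{\cal L}(A)$, whose objects are the ind-etale $A$-algebras $L$ that are strictly henselian local rings; the set of points of the topos is the set of isomorphism classes of this category. So it suffices to construct a bijection between those isomorphism classes and the set of prime ideals of $A$. The candidate is $L\mapsto\mathfrak p_L$, the contraction to $A$ of the maximal ideal of $L$; this is manifestly invariant under isomorphism, and it coincides with the effect on points of the comparison $Spec_{Et}(A)\to Spec_{Zar}(A)$ of Proposition~\ref{refineniscontext} composed with Proposition~\ref{sameptzar}, since (as in the proof of Proposition~\ref{changenis}) that comparison sends $L$ to the middle term of the $(Loc,Cons)$-factorisation of $A\to L$, which is $A_{\mathfrak p_L}$ because $L$ is local.

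Surjectivity of $L\mapsto\mathfrak p_L$ I would get from strict henselisations: given $\mathfrak p$, choose a separable closure of $\kappa(\mathfrak p)$ and form $A_{\mathfrak p}^{sh}$. It is a strictly henselian local ring, ind-etale over $A_{\mathfrak p}$ by construction, hence ind-etale over $A$ since $A\to A_{\mathfrak p}\in Loc\subseteq indEt$ (Lemma~\ref{locsep}) and $indEt$ is stable under composition (Proposition~\ref{proplift}), and evidently $\mathfrak p_{A_{\mathfrak p}^{sh}}=\mathfrak p$.

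Injectivity is the real content. Let $L,L'$ be ind-etale strictly henselian local $A$-algebras with $\mathfrak p_L=\mathfrak p_{L'}=\mathfrak p$. First, $L$ being local, every element of $A\setminus\mathfrak p$ maps to a unit of $L$ (else it would lie in $\mathfrak m_L\cap A=\mathfrak p$), so $A\to L$ factors through a local homomorphism $A_{\mathfrak p}\to L$, still in $indEt$ by the cancellation property of $indEt$ in $CRings$ (Proposition~\ref{proplift}); moreover the residue field of $L$ is separably closed and, being a filtered union of the residue fields of the finitely presented etale $A_{\mathfrak p}$-algebras out of which $L$ is built, is a separable algebraic extension of $\kappa(\mathfrak p)$, hence a separable closure of $\kappa(\mathfrak p)$. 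The universal property of the strict henselisation then provides a local $A_{\mathfrak p}$-algebra map $A_{\mathfrak p}^{sh}\to L$, again in $indEt$, inducing an isomorphism on residue fields. It remains to prove the rigidity statement that an ind-etale local homomorphism $T\to L$ of strictly henselian local rings which is an isomorphism on residue fields is an isomorphism: writing $L=\colim_i B_i$ with $B_i$ etale of finite presentation over $T$, the localisation of $B_i$ at the preimage of $\mathfrak m_L$ is a local essentially-etale $T$-algebra with trivial residue extension (its residue field is a finite separable extension of $\kappa(T)$ embedding over $\kappa(T)$ into $\kappa(L)\cong\kappa(T)$), hence equals $T$ by strict henselianity; these identifications are compatible along the diagram and assemble into a two-sided inverse of $T\to L$. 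Applying this with $T=A_{\mathfrak p}^{sh}$ gives $L\cong A_{\mathfrak p}^{sh}\cong L'$.

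The step I expect to be the main obstacle is precisely this rigidity lemma, together with the observation that the residue field of such an $L$ is algebraic over $\kappa(\mathfrak p)$ --- in other words, the uniqueness up to isomorphism of the strict henselisation among ind-etale strictly-henselian-local algebras over a fixed local ring, and the fact that it is the universal such algebra. One could instead invoke the classical description of the points of the small etale topos, but the route above keeps everything within the machinery already set up.
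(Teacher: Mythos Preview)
Your argument is correct, but the paper's proof takes a slightly different and more economical route that stays entirely inside the factorisation-system machinery. Where you prove injectivity via a rigidity lemma (an ind-etale local map $T\to L$ between strictly henselian local rings inducing an isomorphism on residue fields is an isomorphism), the paper instead exploits the \emph{uniqueness} of the $(indEt,Hens)$ factorisation directly. Given an ind-etale strictly henselian local $A$-algebra $B$ with residue field $K$, the paper observes that $A\to B\to K$ is an $(indEt,Hens)$ factorisation of $A\to K$; it then produces a second factorisation $A\to A_{\mathfrak p}^{sh}\to \kappa(\mathfrak p)^{sep}\to K$ by choosing a separable closure $\kappa(\mathfrak p)^{sep}$ inside $K$ and noting that any map out of a separably closed field is henselian. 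Uniqueness of the factorisation then forces $B\simeq A_{\mathfrak p}^{sh}$ with no further work.

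What each approach buys: the paper's route avoids any explicit analysis of the filtered system defining $L$ and does not need to argue that the residue field of $L$ is algebraic over $\kappa(\mathfrak p)$; it only needs that $\kappa(\mathfrak p)^{sep}\to K$ is henselian, which is automatic. Your route is closer to the classical commutative-algebra treatment and makes the rigidity of strict henselisations explicit, at the cost of the extra lemma you flag. Both are sound; the paper's is shorter because uniqueness of factorisation already encodes the rigidity you prove by hand.
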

\begin{proof}
We need to construct a bijection between the set of points of $Spec_{Et}(A)$ and the set of prime ideals of $A$.
First, for $p$ a prime ideal, we have the map $A\to A_{p}\to \kappa(p)\to\kappa(p)^{sep}$ where $\kappa(p)^{sep}$ is a separable closure of the residue field $\kappa(p)$. If $A\to A^{sh}_{p}\to \kappa(p)^{sep}$ is the $(indEt,Hens)$ factorisation of the previous map, $A^{sh}_{p}$ is a strictly henselian local ring (as a pointed local object) called a {\em strict henselisation of $A$ at $p$} (it depends up to a unique iso of the choice of $\kappa(p)^{sep}$). To recover $p$ from $A\to A^{sh}_{p}$ we are going to show that the composition $A\to A_{p}\to A^{sh}_{p}$ is the $(Loc,Cons)$ factorisation of $A\to A^{sh}_{p}$, so $A_{p}$ will be uniquely determined by $A^{sh}_{p}$. We only need to prove that $h:A_{p}\to A^{sh}_{p}$ is conservative: in the square
$$\xymatrix{
A_{p}\ar[r]^{Cons}\ar[d]_h&\kappa(p)\ar[d]^{\iota}\\
A^{sh}_{p}\ar[r]_{Cons}&\kappa(p)^{sep}
}$$
the top and bottom maps and $\iota$ are conservative then $h$ is conservative by cancellation.
All this creates an injective map from the set of prime ideals of $A$ to that of points of $Spec_{Et}(A)$.
We prove now that this map is surjective. If $B$ is a stricly henselian local ring with residue field $K$ separably closed, and $A\to B$ an ind-etale map, the $(Loc,Cons)$-factorisation of $A\to B$ give a local ring $A_{p}$. The map $A_{p}\to K$ factors through some separable closure of $\kappa(p)$ and defines a strict henselisation $A^{sh}_{p}$ of $A$ at $p$.
With the above notations, we have the diagram
$$\xymatrix{
A\ar[r]^-{indEt}\ar[d]_{Loc}&B\ar[r]^-{Hens}&K\\
A_{p}\ar[r]_-{indEt}\ar[ru]_{Cons}&A^{sh}_{p}\ar[r]_-{Hens}&\kappa(p)^{sep}\ar[u]_{Hens}.
}$$
Then the composite map $A\to B\to K$ admits another $(indEt,Hens)$ factorisation $A\to A^{sh}_{p}\to K$ so $B\simeq A'_{p}$.
\end{proof}

This proof gives the following construction of the ind-etale henselian local $A$-algebra at a prime $p\subset A$: it is the middle object $A^{sh}_{p}$ of the $(indEt,Hens)$ factorisation of the map $A\to \kappa(p)^{sep}$ where $\kappa(p)^{sep}$ is a separable closure of the residue field at $p$.

\subsubsection{Remark}\label{remzaret}

The two factorisation systems $(Loc,Cons)$ and $(indEt,Hens)$ are related by the inclusion $Loc\subset indEt$. For a map $A\to B$, this constructs in fact a triple factorisation system
$$\xymatrix{
A\ar[r]^{Loc} &C\ar[rr]^{indEt\ \&\ Cons}&& D\ar[r]^{Hens}&B
}$$
where $A\to C\to B$ is the $(Loc,Cons)$ factorisation and $A\to D\to B$ the $(indEt,Hens)$ factorisation.
As shown in lemma \ref{etalecov}, the map $A\to C$ is the "open support" of the etale map $A\to D$ and the map $C\tto D$ is an etale covering.

This triple factorisation will be inspire the construction of the $(Int,IntClo)$ factorisation system in \S\ref{finitetop}.

\subsection{Nisnevich topology}\label{nisnevich}

The Nisnevich topology on $CRings^o$ is not associated to a factorisation system, but will be constructed from the etale factorisation system by Nisnevich forcing (\S\ref{forcing}), more precisely by forcing fields, to be local objects.
The setting is the same as in \S\ref{etale}.

\medskip
An etale point covering family $A\to A_i$ is a {\em Nisnevich covering family} if for any field $K$ and any map $A\to K$ has a factorisation through some $A_i$:
$$\xymatrix{
A_i\ar@{-->}[dr]^{\exists}&\\
A\ar[u]\ar[r]&K.
}$$
If $\cal F$ is the subcategory of $CRings$ generated by fields, $(CRings^o=(Hens^o,indEt^o),{\cal F})$ is a Nisnevich context.

\medskip
The following lemma is a consequence of lemma \ref{locsep} and of the definition of Nisnevich covering families.
\begin{lemma}\label{zarnis}
Zariski point covering families are Nisnevich covering families.
\end{lemma}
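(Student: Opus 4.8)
The plan is to reduce the statement to the defining unit-ideal condition for a Zariski cover and then use that a field has no proper nonzero ideals. First I would invoke lemma \ref{locsep}: since the right class of the Zariski system, $Loc^f$, consists of maps that are in particular in $Et^f$, and $Loc^f$ point covering families are $Et^f$ point covering families, every Zariski point covering family $\{A\to A_i\}$ is already an etale point covering family, so that asking whether it is a Nisnevich covering family makes sense. By the finite-presentation description in \S\ref{zariski} each $A_i$ may be taken of the form $A[a_i^{-1}]$, and by proposition \ref{zarfinprescov} (equivalently, by surjectivity on primes via proposition \ref{zarpt}) the family being point covering is exactly the condition $(a_i;i)=A$; hence we may fix finitely many indices and an identity $1=\sum_i b_ia_i$ in $A$.

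Next I would check the Nisnevich lifting condition directly. Let $K$ be any field and $\varphi\colon A\to K$ a ring map. Applying $\varphi$ to $1=\sum_i b_ia_i$ gives $1=\sum_i\varphi(b_i)\varphi(a_i)$ in $K$; since $1\neq 0$ in the field $K$, at least one $\varphi(a_i)$ is nonzero, hence invertible. By the universal property of localisation, $\varphi$ then factors uniquely through $A\to A[a_i^{-1}]$, i.e. there is a (necessarily unique) map $A[a_i^{-1}]\to K$ with $A\to A[a_i^{-1}]\to K$ equal to $\varphi$. This is precisely the factorisation demanded in the definition of a Nisnevich covering family, so $\{A\to A_i\}$ is a Nisnevich covering family.

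I do not expect any real obstacle here: the only mildly non-formal ingredient is the opening reduction, which needs lemma \ref{locsep} so that the notion of Nisnevich covering family applies to a Zariski cover, together with the translation of "point covering" into $(a_i;i)=A$. Once the relation $1=\sum_i b_ia_i$ is available the argument is the classical remark that a homomorphism from $A$ to a field must invert one of the $a_i$, and nothing about the $(indEt,Hens)$ factorisation system beyond lemma \ref{locsep} is used.
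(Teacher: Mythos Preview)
Your proof is correct. The paper's own proof is a single sentence: the lemma is a consequence of lemma \ref{locsep} together with the definition of Nisnevich covering families. The implicit point there is that fields are already Zariski points (proposition \ref{zarclosed}), so the lifting of any $A\to K$ through a Zariski point covering family is immediate from the very definition of ``point covering''; you instead unwind this via proposition \ref{zarfinprescov} and the unit-ideal relation $1=\sum_i b_ia_i$, checking by hand that a homomorphism to a field inverts some $a_i$. Both arguments invoke lemma \ref{locsep} identically for the first reduction, so the only difference is whether one cites ``fields are Zariski points'' or computes it explicitly---your version is just a concrete expansion of the paper's one-liner.
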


\subsubsection{Local objects}

\begin{prop}\label{hensel}
A ring is a Nisnevich local object iff it is a henselian local ring.
\end{prop}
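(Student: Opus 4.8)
I would prove the two implications separately, each time reducing to results already in hand: Propositions~\ref{zarlocal} and~\ref{etlocal} on local objects for the Zariski and Etale topologies, Lemma~\ref{zarnis}, and the characterisation recalled just above the statement --- a local ring $(A,m)$ is henselian iff $A\to A/m$ is henselian, equivalently iff every finitely presented etale $A\to B$ carrying a maximal ideal $n$ over $m$ with $B/n=A/m$ admits a (necessarily unique) section.

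First I would show that a henselian local ring $(A,m)$, with residue field $k=A/m$, is a Nisnevich local object. By the definition of a local object for the topology $\tau_{\cal F}$ (Lemma~\ref{localobjects}) it is enough to split an arbitrary Nisnevich covering family $\{A\to A_i\}$. As $A\to k$ is a field-valued point and the family is $\cal F$-localising, some $A\to A_i$ receives a factorisation $A\to A_i\to k$ of $A\to k$; since $A\to k$ is onto, so is $A_i\to k$, hence $n_i:=\ker(A_i\to k)$ is a maximal ideal of $A_i$ over $m$ with $A_i/n_i\cong k$. Because $A\to A_i$ is etale of finite presentation, the defining property of a henselian local ring yields a section of $A\to A_i$, as wanted.

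For the converse, let $A$ be a Nisnevich local object. By Lemma~\ref{zarnis} finitely presented Zariski point covering families are Nisnevich covering families, so $A$ lifts through all of them, and by Proposition~\ref{zarlocal} it is a local ring, say with maximal ideal $m$ and residue field $k$. To see it is henselian I would use the characterisation above: given a finitely presented etale $A\to B$ with a maximal ideal $n$ over $m$ and $B/n=k$, form the family $\{A\to B\}\cup\{A\to A_j\}$, where $\{A\to A_j\}$ is an affine Zariski cover of the complement of the closed point of $\mathrm{Spec}\,A$. This is a Nisnevich covering family --- a point already flagged inside the proof of Proposition~\ref{etlocal}: it is an etale point covering family ($m$ is hit through $B$, every other prime through some $A_j$), and it is $\cal F$-localising (a field-valued point $A\to F$ with kernel $p$ factors through $B$ when $p=m$, using $B/n=k$, and through some $A_j$ when $p\ne m$). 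As $A$ is Nisnevich local the family splits; a proper localisation of a local ring has no section, so the splitting must be a section of $A\to B$, and $A$ is a henselian local ring.

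The one genuinely delicate step, and the one I would be most careful about, is the $\cal F$-localising verification in the converse: that a field-valued point of $A$ with kernel exactly $m$ really factors through $B$. This relies on $B/n\cong k$ being an isomorphism of $A$-algebras, not merely of abstract fields --- precisely the feature separating the Nisnevich topology from the Etale one, and the reason the answer is henselian rather than strictly henselian local rings. The remaining verifications --- the identification $A_i/n_i\cong k$, the stability keeping the two displayed families inside ${\cal B}^f$, and the absence of sections for proper localisations --- are of the same elementary nature as computations already carried out in \S\ref{zariski} and \S\ref{etale}.
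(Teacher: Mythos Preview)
Your argument is correct, and your converse direction (Nisnevich local $\Rightarrow$ henselian local) is exactly the paper's: reduce to a local ring via Lemma~\ref{zarnis} and Proposition~\ref{zarlocal}, then reuse verbatim the covering trick from the proof of Proposition~\ref{etlocal} to show $A\to k$ is henselian.

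The one difference is in the forward direction. The paper does not write out the implication ``henselian local $\Rightarrow$ Nisnevich local'' at all; its proof treats only the harder direction and leaves this one implicit. You supply a clean direct argument: from the $\cal F$-localising condition pick $A\to A_i$ factoring $A\to k$, read off a maximal ideal $n_i$ of $A_i$ over $m$ with $A_i/n_i\cong k$, and invoke the section-producing characterisation of henselian local rings recalled just before the statement. This is the natural proof and it works as stated; your care that $A_i/n_i\cong k$ is an isomorphism of $A$-algebras (not just of abstract fields) is exactly the right point to flag. So your write-up is in fact more complete than the paper's on this proposition, while agreeing with it where they overlap.
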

\begin{proof}
Let $A$ be a local object. As Zariski covering families are Nisnevich covering families, proposition \ref{zarlocal} shows that $A$ is a local ring.
Let $k$ be the residue field of $A$, we need to prove that $A\to k$ is a henselian map. The argument is the one use in the proof of proposition \ref{etlocal}.
\end{proof}

Let $f{\cal F}$ be the category of fat fields, \ie nilpotent extension of fields (\S\ref{zarpoints}).

\begin{cor}\label{gennisnevich}
$(CRings^o=(Hens^o,indEt^o),{\cal F})$ and $(CRings^o=(Hens^o,indEt^o),f{\cal F})$ are two equivalent Nisnevich contexts.
\end{cor}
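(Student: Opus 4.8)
The plan is to unwind the definition of equivalence of Nisnevich contexts: two contexts are equivalent when they have the same saturation, and since both contexts here carry the same underlying factorisation system $(Hens^o,indEt^o)$, it suffices to prove $\overline{\cal F}=\overline{f{\cal F}}$. Because ${\cal F}\subset f{\cal F}$, monotonicity of the saturation already gives $\overline{\cal F}\subset \overline{f{\cal F}}$. For the reverse inclusion I will establish the apparently stronger statement $f{\cal F}\subset\overline{\cal F}$, i.e. that every fat field is already a Nisnevich local object for the forcing by fields; combined with ${\cal F}\subset f{\cal F}\subset \overline{\cal F}$, monotonicity of ${\cal L}\mapsto\tau_{\cal L}$ together with the identity $\tau_{\overline{\cal F}}=\tau_{\cal F}$ forces $\tau_{f{\cal F}}=\tau_{\cal F}$, hence $\overline{f{\cal F}}=\overline{\cal F}$.

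So the heart of the matter is to check that a fat field $K$ is a henselian local ring, which is the description of $\overline{\cal F}$ given by proposition \ref{hensel}. By \S\ref{zarpoints}, $K$ is local with maximal ideal its nilradical $\mathfrak N$ and residue field $K_{red}$; by the criterion that $(A,\mathfrak m)$ is henselian exactly when $A\to A/\mathfrak m$ is a henselian map, I am reduced to proving that $K\to K_{red}$ has the right lifting property with respect to $Et_*^f$, the finitely presented étale maps between finitely presented rings.

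The key lemma I will prove is that a finitely presented étale map has the unique right lifting property against \emph{every} surjection with nil kernel, not merely against the surjections with nilpotent kernel treated by proposition \ref{nilhens}. Given a lifting square with $C\to D$ finitely presented étale, a map $C\to K$, and a compatible $D\to K_{red}$, I lift the finitely many algebra generators of $D$ over $C$ to arbitrary elements of $K$ reducing to the prescribed images in $K_{red}$; the finitely many defining relations then take values in $\mathfrak N$, hence generate a finitely generated ideal $\mathfrak a$ of nilpotents, so $\mathfrak a^N=0$ for some $N$. These choices define a ring map $D\to K/\mathfrak a$ over $C$, and since $C\to D$ is formally étale and $K\to K/\mathfrak a$ is a nilpotent thickening, this extends uniquely to a map $D\to K$ over $C$; uniqueness of the whole construction is again formal étaleness. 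Applying this to the surjection $K\to K_{red}$ shows $K\to K_{red}$ is henselian, so $K$ is a henselian local ring, whence $f{\cal F}\subset\overline{\cal F}$, which finishes the argument.

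The only step I expect to offer any resistance is precisely this passage from nilpotent to nil kernels: fat fields need not have nilpotent nilradical, so proposition \ref{nilhens} does not apply verbatim, and one genuinely uses finite presentation to confine the obstruction to the lifting square to a finitely generated — hence nilpotent — ideal. Everything else is bookkeeping with the monotonicity properties of Nisnevich forcing recalled in \S\ref{forcing}. One could equally present the proof site-theoretically, by checking directly that a family of étale maps lifting all maps from fields also lifts all maps from fat fields — via the same key lemma composed with $K\to K_{red}$ — so that the two forcings produce literally the same covering families; I favour the version above because it ties in cleanly with propositions \ref{hensel} and \ref{zarpoints}.
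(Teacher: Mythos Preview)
Your proof is correct and follows the same route as the paper: one inclusion by monotonicity of saturation, the other by showing every fat field is a henselian local ring, i.e.\ that $K\to K_{red}$ is a henselian map. The paper dispatches this last step in one line by invoking proposition~\ref{nilhens} ($\overline{Nil}\subset Hens$); your version spells out the lifting argument against $Et^f_*$ explicitly, using finite presentation to reduce the nil kernel to a finitely generated, hence nilpotent, obstruction ideal. That extra care is well-placed: under the standard convention that a nilpotent ideal satisfies $I^n=0$, the map $K\to K_{red}$ need not lie in $Nil$, so your argument is what actually justifies the appeal to \ref{nilhens} (the paper's own definition of ``nilpotent extension'' in \S\ref{zarpoints} is the nil-kernel one, which makes its one-line proof go through under its conventions).
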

\begin{proof}
As ${\cal F}\subset f{\cal F}$, localising by $f{\cal F}$ selects less covering families so more local objects: $\overline{\cal F}\subset \overline{f{\cal F}}$. The reciprocal inclusion is equivalent to fat fields being henselian rings, \ie that the map $K\to K_{red}$ is henselian. This is a consequence of lemma \ref{nilhens}.
\end{proof}

This corollary is interesting as $f{\cal F}$ is exactly the category of points of the $(Loc,Cons)$ factorisation system (\S\ref{zarpoints}), which is a way to say that this Nisnevich localisation is not arbitrary (see \S\ref{duality}).

\subsubsection{Distinguished covering families}\label{fpcfnis}

A Nisnevich point covering family $\{B\to B_i,i\}$ of an $A$-algebra $B$ is said {\em distinguished} if it is of finite presentation over $A$ (\ie there exist $A\to B'\to B$ where $A\to B'$ is of finite presentation and all $B\to B_i$ are pushout of some $B'\to B'_i$) and satisfy one of the following two conditions
\begin{enumerate}[a.]
\item it is a Zariski covering family, %(possible via lemma \ref{zarnis})
\item or there exist a radical ideal $I$ of $B'$ such that $A\to A/I$ factors through one of the $B'\to B'_i$ and the others $B\to B_i$ are localisations of $B'$ covering the complement of $I$. In particular, this implies that the $B'_i$ factoring $B'\to B'/I$ is unique.
\end{enumerate}
Geometrically (for the Zariski topology), this last condition says that the covering family is distinguished if it covers the complement of a finitely presented closed set $Z$ by Zariski opens and has another etale map covering $Z$ that moreover has a section over $Z$. This was inspired by Nisnevich distinguished squares.

\begin{prop}
A $A$-algebra $B$ is a henselian local ring iff it lifts through any distinguished Nisnevich covering families.
\end{prop}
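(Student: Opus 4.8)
I would begin with the forward implication, which is immediate: by Proposition~\ref{hensel} a henselian local ring is exactly a Nisnevich local object, so by Lemma~\ref{localobjects} it lifts through \emph{every} Nisnevich covering family, and the distinguished ones form a subclass. For the converse, assume $B$ lifts through all distinguished Nisnevich covering families. Since finitely presented Zariski covering families are distinguished (case~a.), feeding the covers of Lemma~\ref{prooflocal} into the hypothesis shows $B$ is local; write $m$ for its maximal ideal and $k = B/m$. Because a local ring $A$ is henselian precisely when $A \to A/m$ lies in $Hens$, \ie has the right lifting property against $Et_*^f$ (the lift being automatically unique since $indEt$ is stable under codiagonals, cf.\ Lemma~\ref{uniquelift}), it then remains to produce, for every finitely presented \'etale map $C \to D$ between finitely presented $A$-algebras and every commuting square with legs $C \to B$ and $D \to k$, a $C$-algebra filler $D \to B$ compatible with $D \to k$.

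So I would fix such a square and mimic the proof of Proposition~\ref{fpetale}. First factor $C \to C[c^{-1}] \to D$ by Lemma~\ref{etalecov}, with $C[c^{-1}] \to D$ a finitely presented \'etale covering map; as $c$ is invertible in $D$, hence in $k$, hence (by locality) in $B$, the map $C \to B$ factors through $C[c^{-1}]$ and one may assume $C \to D$ is itself an \'etale covering map. Put $E = B \otimes_C D$, \'etale over $B$; the square furnishes a $B$-algebra map $E \to k$, and since $E \otimes_B k$ is a finite product of finite separable field extensions of $k$ with $E \otimes_B k \to k$ a projection onto a factor isomorphic to $k$, lifting the corresponding idempotent to some $g \in E$ produces $F := E[g^{-1}]$ with $F \otimes_B k = k$, together with maps $D \to E \to F$ and $F \to k$. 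Then I would descend to finite presentation: choose a finitely presented $A$-algebra $B'$ with $C \to B' \to B$, large enough that $g$ already lies in $E'' := B' \otimes_C D$ and, crucially, that $F' := E''[g^{-1}]$ satisfies $F' \otimes_{B'} (B'/\mathfrak p') \simeq B'/\mathfrak p'$, where $\mathfrak p' := m \cap B'$. This last requirement can be met because $F \otimes_B k = k$ is the trivial \'etale $k$-algebra, $k$ is the filtered union of the rings $B'/\mathfrak p'$, and the algebras $F' \otimes_{B'}(B'/\mathfrak p')$ form a compatible base-changed family of \'etale algebras over them, so by finite-presentation approximation for \'etale algebras the trivial one is already reached at a finite stage.

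Finally I would take $I := \mathfrak p'$ (radical, being prime) and verify that the family consisting of $B' \to F'$ together with all localisations $B' \to B'[a^{-1}]$, $a \in \mathfrak p'$, is a distinguished Nisnevich covering family of case~b.: all its members are \'etale; the isomorphism $F' \otimes_{B'}(B'/\mathfrak p') \simeq B'/\mathfrak p'$ makes $\mathrm{Spec}\,F' \to \mathrm{Spec}\,B'$ surject onto $V(\mathfrak p')$ and makes $B' \to B'/\mathfrak p'$ factor through $B' \to F'$, the $B'[a^{-1}]$ cover $\mathrm{Spec}\,B' \setminus V(\mathfrak p')$, and the field-lifting condition holds because a field-valued point of $B'$ not killing $\mathfrak p'$ factors through some $B'[a^{-1}]$ while one killing $\mathfrak p'$ factors through $B'/\mathfrak p'$, hence through $F'$. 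Applying the hypothesis to the pullback of this family along $B' \to B$, some member has a section over $B$; a section of a localisation $B \to B[a^{-1}]$ with $a \in \mathfrak p' \subseteq m$ would make $a$ invertible in $B$, which is absurd, so $B \to F' \otimes_{B'} B = F$ has a section $s$, and $D \to E \to F \to B$ (the last arrow being $s$) is the sought filler --- a $C$-algebra map, compatible with $D \to k$ because $s$ is a $B$-algebra section and $F \otimes_B k = k$. The delicate point I expect is exactly this descent step: realising at finite level the triviality of the closed fibre of $F$ at the closed point of $B$, \ie exhibiting a finitely presented model over which the pulled-back \'etale map already admits a section over the closed subscheme $V(\mathfrak p')$ --- without it there is no \'etale member available for a genuine distinguished Nisnevich square --- together with the routine but slightly fiddly check that the resulting family really is a Nisnevich covering family.
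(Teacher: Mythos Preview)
Your proof is correct and follows the same overall strategy as the paper---reduce to $B$ local via Zariski covers, replace $C\to D$ by an \'etale covering map, then manufacture a distinguished Nisnevich square whose \'etale member must be the one admitting a section over $B$---but you construct that square in a different place. The paper works entirely over $C$ (which is already finitely presented over $A$), taking $B'=C$ and $I$ the closure $\overline p$ of the contraction $p$ of $m$ along $C\to B$: it lifts to $D$ the idempotent of $D\otimes_C\kappa(p)$ cutting out the $\kappa(p)$-factor, localises to $D[d^{-1}]$, and observes that the closed locus where $C\to D[d^{-1}]$ has degree exactly $1$ (hence is an isomorphism) contains $\overline p$, so the section over $\overline p$ is automatic and no approximation is needed. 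You instead base-change to $B$, pick a finitely presented $B'$ approximating $B$ under $C$, take $I=\mathfrak p'=m\cap B'$, and then need the spreading-out step to ensure $F'\otimes_{B'}(B'/\mathfrak p')\simeq B'/\mathfrak p'$.

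Your descent argument is valid (the isomorphism $F\otimes_Bk\simeq k$ is a $k$-point of a finite-presentation isomorphism scheme over $B'/\mathfrak p'$, and $k=\operatorname{colim}B''/\mathfrak p''$, so it descends), and the remainder of the verification is fine, including the uniqueness of the $B$-algebra map $F\to k$ forcing compatibility of the filler. The trade-off is simply that the paper's choice $B'=C$ sidesteps the approximation entirely, replacing your spreading-out by a one-line semicontinuity-of-degree observation; your route is a bit longer but perfectly sound.
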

\begin{proof}
We need to prove only the sufficient part. Lifting through finitely presented Zariski covering families says that $B$ is a local ring (lemma \ref{prooflocal}), we need then to show that, if $m$ is the maximal ideal of $B$ and $\kappa(m)$ its residue field, the map $B\to \kappa(m)$ is henselian. This is true if it has the left lifting property with respect finitely presented etale maps $C\to D$ between finitely presented $A$-algebras, we can use the same trick as in proposition \ref{fpetale} and replace $C\to D$ by an etale covering map.
We are now going to transform $C\to D$ into a distinguished Nisnevich covering of the second kind. 
The Zariski closed set involved will be the closure $\overline{p}$ of the image $p$ of the ideal $m$ by $C\to B$, but we need to show that $C\to D$ has a section over it. The finitely presented etale map $\kappa(p)\to D\otimes_C\kappa(p)$ has a section which furnishes an idempotent of $D\otimes_C\kappa(p)$ \cite[cor. 3.12]{milne}, this idempotent can be lifted as some element $d\in D$ and the composition $C\to D[d^{-1}]$ is still finitely presented etale covering map but is now of degree exactly one over $p$. The set $Z$ of prime ideals of $C$ for which $C\to D$ is of degree exactly 1 is a closed Zariski subset, over which $C\to D$ is even an isomorphism. Then, the wanted section exists as $Z$ contains $\overline{p}$.
Completing $C\to D$ by a Zariski covering of the complement of $\overline{p}$, and pushing forward to $B$, there exists a retraction of one of the covering maps and it can be only of $B\to D\otimes_CB$ as all other maps misses $m$ in their image by construction.
\end{proof}

\subsubsection{Spectra and moduli interpretation}\label{nispec}

Let $\cal F$ be the full subcategory of ${\cal C}=CRings^o$ generated by fields. $Nis:=({\cal C}=(Hens^o,indEt^o),{\cal F})$ is a Nisnevich context (def.\ref{niscontext}) and proposition \ref{hensel} says that $\overline{\cal F}$ is the category of henselian rings.

\begin{prop}
For $A\in CRings^o$, 
$SPEC_{Nis}(A)$ classifies $A$-algebras that are henselian local rings and
$Spec_{Nis}(A)$ classifies ind-etale $A$-algebras that are henselian local rings.
In particular those $A$-algebras can have automorphisms and neither of $SPEC_{Nis}(A)$ or $Spec_{Nis}(A)$ is a spatial topos.
\end{prop}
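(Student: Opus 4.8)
The plan is to follow the template of proposition \ref{modulietale}: verify that the Nisnevich context $Nis=(CRings^o=(Hens^o,indEt^o),{\cal F})$ is \emph{good} in the sense of definition \ref{good}, and then read both assertions off theorem \ref{pointspec}. Condition (a) of \ref{good}, local finite presentability of $CRings$, is standard. Condition (b), that $(Hens^o,indEt^o)$ be right generated by maps of ${\cal B}^f$, I would obtain by dualising: on $CRings$ the system $(indEt,Hens)$ is by construction left generated by $Et_*^f$ (this is how it was built, via theorem \ref{factolim}), and every map of $Et_*^f$ is a finitely presented etale map, hence its opposite lies in ${\cal B}^f$; by lemma \ref{fpet}, ${\cal B}^f$ corresponds precisely to $Et^f$. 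Condition (c), compatibility, is exactly the content of \S\ref{fpcfnis}: the proposition there shows that an $A$-algebra lifts through every distinguished Nisnevich covering family iff it is a henselian local ring, so the distinguished families form a distinguished class of Nisnevich covering families, while proposition \ref{hensel} identifies the Nisnevich local objects with the henselian local rings.

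Granting that $Nis$ is good, theorem \ref{pointspec} gives directly that the category of points of $SPEC_{Nis}(A)$ is $\overline{\cal F}/A$, which by proposition \ref{hensel} is the category of henselian local $A$-algebras, and that the category of points of $Spec_{Nis}(A)$ is the category $\overline{\cal F}(A)$ of ${\cal F}$-local forms of $A$, i.e. the henselian local $A$-algebras $B$ whose structural map $A\to B$ lies in $indEt$ --- the ind-etale henselian local $A$-algebras (here one uses ${\cal B}{/A}\simeq Pro({\cal B}^f{/A})$, a consequence of (b) via corollary \ref{indleftgen}). As in proposition \ref{modulietale}, ``classifies'' is to be read in the weak sense that these are the points of the topos; the genuine classifying-topos statement is deferred to \cite{anel1}.

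For the last sentence I would invoke the remark recalled at the end of \S\ref{moduliinterpretation} that the points of a spatial topos form at most a poset, so that it suffices to exhibit a single point carrying a nontrivial automorphism over $A$. For $SPEC_{Nis}(A)$ and any nonzero $A$: pick a prime $p$ and take the complete discrete valuation ring $B=\kappa(p)[[t]]$, which is henselian local, viewed as an $A$-algebra through $A\to\kappa(p)\to\kappa(p)[[t]]$; it admits many automorphisms fixing $\kappa(p)$ (e.g. $t\mapsto a_1 t+a_2 t^2+\cdots$ with $a_1\ne 0$), so its automorphism group over $A$ is nontrivial. For $Spec_{Nis}(A)$ one needs the witness to be in addition ind-etale over $A$: it suffices to treat $A=k$ a field that is not separably closed, for then the ind-etale local $k$-algebras are exactly the separable algebraic field extensions $k\subseteq L\subseteq k^{sep}$ (a filtered colimit of finite etale $k$-algebras that is local is such an extension), all of them henselian, and a nontrivial Galois $L$ has $\mathrm{Gal}(L/k)\ne 1$ acting by automorphisms. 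In either case the category of points is not a poset, so the topos is not spatial.

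The step I expect to be the real obstacle is condition (c): a priori a pro-object of ${\cal C}^f/X$ that lifts through every Nisnevich cover --- a point of $\widetilde{{\cal C}^f/X}$ in the sense of proposition \ref{pointsite} --- need not be a genuine henselian local $A$-algebra, and excluding this is precisely why \S\ref{fpcfnis} introduced distinguished covering families. Once that input is granted, the remaining steps are a routine unwinding of theorem \ref{pointspec} together with the already established description (proposition \ref{hensel}) of Nisnevich local objects.
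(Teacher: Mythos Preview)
Your proof is correct and follows the same approach as the paper: verify that the Nisnevich context $Nis$ is good (conditions (a), (b), (c) of definition \ref{good}) and invoke theorem \ref{pointspec}, with (b) coming from the construction of $(indEt,Hens)$ as left generated by $Et_*^f$ and (c) from the distinguished covering families of \S\ref{fpcfnis}. The paper's proof is terser and does not spell out the non-spatiality argument at all, whereas you supply explicit witnesses; note however that your witness for $Spec_{Nis}(A)$ only covers rings admitting a non-separably-closed residue field, so the claim as stated in the paper (for arbitrary $A$) is not fully established by either argument.
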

\begin{proof}
The Nisnevich context $Nis$ is good: b. is true by construction of the factorisation system and distinguished families were constructed in \S\ref{fpcfnis}. We apply theorem \ref{pointspec}.
\end{proof}

The $Spec_{Nis}(A)$ contains $Spec_{Et}(A)$ as a strict subtopos, the set of points of $Spec_{Nis}(A)$ is then bigger than that of prime ideal of $A$.

To any prime ideal $p$ of $A$ is associated two points of $Spec_{Nis}(A)$: first, $Spec_{Et}(A)$ being a subtopos of $Spec_{Nis}(A)$, the strict henselisation of $A$ at $p$ is also a point of $Spec_{Nis}(A)$ ; the second one is the {\em henselisation} of $A$ at $p$: it is the middle object $A^h_{p}$ of the $(indEt,Hens)$ factorisation of the map $A\to \kappa(p)$ where $\kappa(p)$ is the residue field at $p$.

\subsubsection{Context Comparisons}

We have three Nisnevich contexts: $Zar$, $Et$ and $Nis$.
$Et$ is clearly a refinement of $Nis$ and of $Zar$ and as objects of ${\cal F}$ are local for $Zar$, $Nis$ is also a refinement of $Zar$.
This gives the following diagram of spectra:
$$\xymatrix{
Spec_{Et}(X)\ar[d]^{s_X}\ar[r]&Spec_{Nis}(X)\ar[d]^{s_X}\ar[r]&Spec_{Zar}(X)\ar[d]^{s_X}\ar[r]&Spec_{Ind}(X)\ar[d]^{s_X}\\
SPEC_{Et}(X)\ar[r]&SPEC_{Nis}(X)\ar[r]&SPEC_{Zar}(X)\ar[r]&SPEC_{Ind}(X)
}$$
and associated natural transformations of structural sheaves ${\cal O}_X^{Et}\to {\cal O}_X^{Nis}\to {\cal O}_X^{Zar}\to X$ (pulled-back on $Spec_{Et}(X)$. 
The bottom row of the diagram consists in inclusions of subtoposes, and reads at the level of points: strict henselian local rings are henselian local rings which are local rings which are rings.
The top row is formed of the moduli of different notions of local forms of $X$.

\paragraph{The example of a field}
The Etale topos is the classifying topos of the Galois group of $k$, its category of points is the groupoid of separable closure of $k$. The Zariski topos of a field $k$ is a point, but the Nisnevich topos of a field is not, its category of points is the opposite of that of algebraic extensions of $k$.
(As $k$ is henselian, an ind-etale $k$-algebra $A$ is a product of local $k$-algebras $A_i$ and if $A$ is henselian so are the $A_i$. If $k_i$ is the residue field of $A_i$, as both maps $k\to k_i$ and $k\to A_i$ are ind-etale so is $A_i\to k_i$. It is then an isomorphism if $A_i$ is henselian.)
This category has an terminal object ($k$ itself) and geometrically, the Nisnevich spectrum can be thought as a sort of cone interpolating between $k$ and the groupoid of its separable closures. This picture shows also why it is homotopically contractible.

\subsection{Domain topology}

We are now going to investigate the obvious $(Surj,Mono)$ factorisation system on $CRings$ with the same convention as before, \ie thinking of the opposite factorisation system $(Mono^o,Surj^o)$ on $CRings^o$.
Let $u:A\to B\in CRings$ with kernel $I$, the $(Surj,Mono)$ factorisation of $u$ is $A\to A/I\to B$. 
A map $A\to A/I$ is called a {\em surjection} or a {\em quotient} and a map $A\to B$ with 0 kernel is called a {\em monomorphism}.

The following lemma gives a set of left generators.
\begin{lemma}\label{leftgendom}
A map is a monomorphism iff it has the right lifting property with respect to $\ZZ[x]\tto \ZZ:x\mapsto 0$.
\end{lemma}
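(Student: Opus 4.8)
The plan is to unwind both sides of the equivalence into a condition on a single element. The two ingredients are: $\ZZ[x]$ is the free commutative ring on one generator, so a homomorphism $\ZZ[x]\to A$ is the same thing as the choice of an element $a\in A$, namely the image of $x$; and $\ZZ$ is the initial object of $CRings$, so for every ring $R$ there is a unique homomorphism $\ZZ\to R$, and any homomorphism out of $\ZZ$ is determined and composes trivially.

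With this in hand I would analyse an arbitrary commuting square whose left edge is $\ZZ[x]\to\ZZ$ (the map $x\mapsto 0$) and whose right edge is $f\colon A\to B$. Writing $a\in A$ for the element classifying the top map, and noting that the bottom map $\ZZ\to B$ is forced, commutativity of the square amounts, upon comparing the two images of $x$ in $B$, to the single equation $f(a)=0$. A diagonal filler $\ell\colon\ZZ\to A$ is necessarily the unique ring map $\ZZ\to A$; the triangle on the $B$-side commutes automatically by initiality, while the triangle on the $A$-side commutes, again by comparing images of $x$, precisely when $a=0_A$. Since $\Hom(\ZZ,A)$ is a singleton, uniqueness of the filler is automatic, so the "unique" in "unique right lifting property" adds nothing beyond existence (consistent with lemma \ref{uniquelift}, the codiagonal of $\ZZ[x]\to\ZZ$ being $\ZZ[x,y]/(xy)\to\ZZ[x]$ which lies in $Surj$).

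Combining these observations: $f$ has the (unique) right lifting property with respect to $\ZZ[x]\to\ZZ$ if and only if every $a\in A$ with $f(a)=0$ already satisfies $a=0$, i.e. if and only if $\ker f=0$, which is exactly the definition of $f$ being a monomorphism in the sense used here. I do not expect any real obstacle; the only point needing a little care is the bookkeeping that makes the single generator $\ZZ[x]\to\ZZ$ suffice, namely that initiality of $\ZZ$ makes both the lower triangle and the uniqueness of the lift automatic.
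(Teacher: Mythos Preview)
Your main argument is correct and is exactly the standard unwinding that the paper leaves implicit (the lemma is stated there as a reformulation of the definition, with no proof given). One small slip in your parenthetical aside: the codiagonal of $\ZZ[x]\to\ZZ$ is not $\ZZ[x,y]/(xy)\to\ZZ[x]$. The pushout $\ZZ\cup_{\ZZ[x]}\ZZ$ in $CRings$ is $\ZZ[x]/(x)\otimes_{\ZZ[x]}\ZZ[x]/(x)\simeq\ZZ$, so the codiagonal is the identity of $\ZZ$, which is of course still in $Surj$. This does not affect your proof, since uniqueness of the lift already follows, as you noted, from $\Hom(\ZZ,A)$ being a singleton.
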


It is interesting to remark that this map $\ZZ[x]\tto \ZZ$ is the "complement" of the generator $\ZZ[x]\tto \ZZ[x,x^{-1}]$ of the $(Loc,Cons)$ system. This simple fact seems to be the source of an unclear duality between the $(Surj,Mono)$ and $(Loc,Cons)$ systems (\S\ref{duality}).

\subsubsection{Points}\label{fcsurj}

\begin{prop}\label{pointsurjmono}
A ring corresponds to a points of the $(Mono^o,Surj^o)$ factorisation system iff it is a field.
\end{prop}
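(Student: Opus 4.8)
The plan is to unwind the definition of a point of $(Mono^o,Surj^o)$ into elementary commutative algebra, where it will coincide exactly with the field axiom. Throughout I keep the opposition convention of the section, so I argue in $CRings$ and translate at the end.

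First I would identify the relevant maps. In ${\cal C}=CRings^o$ the right class of $(Mono^o,Surj^o)$ is $Surj^o$, so a map $U\to P$ in ${\cal B}$ is, read in $CRings$, a surjection $P\to U$, \ie a quotient $P\to P/I$; by the description of finite presentation maps in \S\ref{etalemap} (a pushout in ${\cal C}$ is a pushout in $CRings$), it lies in ${\cal B}^f$ precisely when $I$ is finitely generated, and $U$ is non-empty (not the strict initial object of ${\cal C}$, \ie the zero ring) precisely when $P/I\neq 0$, that is $I\neq P$. So the points $P$ are the non-zero rings such that every quotient $P\to P/I$ by a finitely generated proper ideal $I$ admits a section in ${\cal C}$.

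Next I would analyse the word ``section''. A section of $f:U\to P$ in ${\cal C}$ is an $s:P\to U$ with $fs=\mathrm{id}_P$; dualized, this is a ring map $U\to P$ exhibiting the surjection $q:P\to P/I$ as a split monomorphism. But a split monomorphism that is also an epimorphism is an isomorphism, so such a section exists iff $q$ is an isomorphism, iff $I=0$. Hence $P$ is a point iff $P\neq 0$ and the only finitely generated proper ideal of $P$ is $0$. Specialising to principal ideals, this says that for every $a\in P$ one has $(a)=0$ or $(a)=P$, \ie $a=0$ or $a$ is invertible; together with $P\neq 0$ this is exactly the definition of a field. Conversely, a field has no non-zero proper ideals, so every map $U\to P$ in ${\cal B}^f$ with $U$ non-empty is an isomorphism, in particular has a section, and a field is non-zero, so it is a point.

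I do not expect any real obstacle; the only delicate point is the bookkeeping of the opposition convention --- which of $Surj$, $Mono$ is the relevant class, and in which direction ``$fs=\mathrm{id}$'' must be dualized --- together with the elementary observation that finitely generated ideals already suffice, since principal ideals alone force the field axiom. I would present the steps in exactly the order above: identify the ${\cal B}^f$-maps into $P$ with quotients by finitely generated ideals, show that a section forces the quotient map to be an isomorphism, and read off the field condition.
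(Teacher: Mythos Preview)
Your proposal is correct and follows essentially the same route as the paper: translate the point condition into $CRings$ as ``every non-zero finitely generated quotient $P\to P/I$ admits a retraction'', observe that a retraction of a surjection forces it to be an isomorphism, and then specialise to principal ideals to read off the field axiom. Your write-up is in fact a bit more careful than the paper's about the opposition bookkeeping and the non-empty/non-zero condition.
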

\begin{proof}
A ring $A$ corresponds to a point if any quotient $A\to A/I$ by a finitely presented ideal admits a retraction.
But this forces $q$ to be a monomorphism and then an isomorphism. 
An element $a\in A$ is either zero, invertible or non-zero and non invertible. In the first case the quotient by $a$ is $A$, in the second 0 and in the third something non isomorphic to $A$. This third case is excluded by the previous remark, so every element in $A$ as to be either zero or invertible.
\end{proof}

The same classical argument as in proposition \ref{zarpt} gives the following.
\begin{prop}
The set of point of a ring $A$ for the $(Mono^o,Surj^o)$ system is that of prime ideals of $A$.
\end{prop}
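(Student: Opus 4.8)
The plan is to follow verbatim the argument of proposition \ref{zarpt}, which is in fact a little simpler here. First I would unwind the definition: the set of points of $A$ for the $(Mono^o,Surj^o)$ system is the set of ring maps $A\to K$ with $K$ a point, taken modulo the equivalence relation generated by $A\to K\sim A\to K'$ whenever there is a ring map $K\to K'$ over $A$. By proposition \ref{pointsurjmono} the points are exactly the fields, so, unlike in the Zariski case, there is no preliminary step replacing a fat field by its reduction: one works directly with maps from $A$ to fields.

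Next I would write down the map from the set of points of $A$ to the set of prime ideals of $A$, sending $A\to K$ to $\ker(A\to K)$; this is a prime ideal since a field is an integral domain. It factors through the equivalence relation: any ring map $f\colon K\to K'$ over $A$ is injective (its kernel is an ideal of the field $K$, hence $0$ or $K$, and it is not $K$ because $K'\neq 0$), so $\ker(A\to K')=\ker(f\circ(A\to K))=\ker(A\to K)$; thus the kernel is invariant under the generating relations, hence under the equivalence relation they generate.

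For surjectivity, every prime $\mathfrak p$ is the kernel of the composite $A\to A/\mathfrak p\to\kappa(\mathfrak p)$ with its residue field, which is a field and hence a point. For injectivity, if $A\to K$ and $A\to K'$ both have kernel $\mathfrak p$, then each factors through $A/\mathfrak p$ and, the nonzero elements of $A/\mathfrak p$ being invertible in a field, further through $\kappa(\mathfrak p)$; this gives ring maps $\kappa(\mathfrak p)\to K$ and $\kappa(\mathfrak p)\to K'$ over $A$, so $A\to K\sim A\to\kappa(\mathfrak p)\sim A\to K'$ and the two points coincide. There is no real obstacle here: the only things to watch are the opposite-category bookkeeping (so that the morphisms in the category of points really amount to $A$-algebra maps between fields) and the fact that points are defined modulo a \emph{generated} equivalence relation rather than a one-step one, which is exactly why one routes both maps through the common residue field $\kappa(\mathfrak p)$.
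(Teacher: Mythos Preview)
Your proposal is correct and follows exactly the approach the paper takes: the paper's proof consists of the single sentence ``The same classical argument as in proposition \ref{zarpt} gives the following,'' and you have carried out precisely that argument, with the simplification you note (points are already fields, so no reduction step is needed).
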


\subsubsection{Covering families and local objects}

Point covering families of the $(Mono^o,Surj^o)$ system are families of quotients $A\to A/I_i$ by finitely generated ideals such that any residue field of $A$ factors through one of the $A/I$.
Using the geometric intuition coming from the Zariski topology, this correspond to cover a scheme by non reduced closed subschemes of finite codimension.

\medskip
A ring $B$ is an integral domain iff for any $x,y\in B$, $xy=0$ iff $x=0$ or $y=0$.
If $B$ is an $A$-algebra this can be read as, for any $x,y\in B$, the map $A[x,y]/(xy)\to B$ factors through $A[x,y]/(xy)\to A[y]$ or $A[x,y]/(xy)\to A[x]$. Those two maps form a covering family of $A[x,y]/(xy)$: for any map $A[x,y]/(xy)\to K$ to some field, either $x$ or $y$ has to be zero in $K$.

This proves the following lemma dual to lemma \ref{prooflocal}.
\begin{lemma}\label{proofintegral}
A $A$-algebra $B$ is a integral domain iff for any $x,y\in B$ such that $xy=0$ is invertible, $B$ lift through the point covering family 
$A[x,y]/(xy)\to A[x]$ and $A[x,y]/(xy)\to A[y]$ of $A[x,y]/(xy)$.
\end{lemma}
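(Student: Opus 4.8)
The plan is to prove this lemma by translating the elementary definition of an integral domain into the lifting language, exactly mirroring the proof of Lemma~\ref{prooflocal}. Recall that $B$ is an integral domain iff for all $x,y\in B$ one has $xy=0\Rightarrow(x=0\text{ or }y=0)$. The first step is the remark that giving a pair $x,y\in B$ with $xy=0$ is the same as giving an $A$-algebra homomorphism $A[x,y]/(xy)\to B$ sending the two generators to $x$ and $y$; this homomorphism is the structural map $L\to X$ (in ${\cal C}=CRings^o$, so that $L$ is the object $B$ and $X$ the object $A[x,y]/(xy)$) along which the lifting condition is to be tested.

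Second, I would check that the two quotient maps $A[x,y]/(xy)\to A[x]$ (killing $y$) and $A[x,y]/(xy)\to A[y]$ (killing $x$) form a point covering family of $A[x,y]/(xy)$. Both are surjections between finitely presented $A$-algebras, hence lie in ${\cal B}^f$; and for any homomorphism $A[x,y]/(xy)\to K$ with $K$ a field the image of $xy$ is $0$, so, $K$ being a domain, the image of $x$ or of $y$ vanishes and the map factors through $A[x]$ or through $A[y]$. Since by Proposition~\ref{pointsurjmono} the points of the $(Mono^o,Surj^o)$ system are exactly the fields, this is precisely the point covering condition.

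Third, I would unwind what ``$B$ lifts through this family'' means: a lift of the structural map through $A[x,y]/(xy)\to A[x]$ is an $A$-algebra map $A[x]\to B$ over $A[x,y]/(xy)$, and since $A[x]=A[x,y]/(xy,y)$ this exists iff $y=0$ in $B$; symmetrically a lift through $A[x,y]/(xy)\to A[y]$ exists iff $x=0$ in $B$. Hence $B$ lifts through the family attached to the pair $(x,y)$ iff $x=0$ or $y=0$, and requiring this for every pair with $xy=0$ is exactly the integral domain condition. The only point requiring care is the bookkeeping of variance in ${\cal C}=CRings^o$ in this last step, i.e.\ correctly identifying a section of $U_i\times_XL\to L$ with a lift of $L\to X$ through $U_i\to X$ via the universal property of the pullback; this is the same manipulation carried out for the $(Loc,Cons)$ case in Lemma~\ref{prooflocal} and in the paragraph preceding the statement, and the present lemma is obtained from it by the substitution (localisation $\leftrightarrow$ quotient, ``$x+y$ invertible'' $\leftrightarrow$ ``$xy=0$'').
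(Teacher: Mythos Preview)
Your proposal is correct and follows essentially the same approach as the paper, which places the entire argument in the paragraph preceding the lemma and then states it as ``dual to lemma~\ref{prooflocal}''. Your three steps---identifying a pair $x,y$ with $xy=0$ with a map $A[x,y]/(xy)\to B$, checking the two quotients form a point covering family via Proposition~\ref{pointsurjmono}, and reading a lift as the vanishing of $x$ or $y$---are exactly the paper's reasoning, only spelled out in slightly more detail.
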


The following results justify the name chosen for this topology.

\begin{prop}
A ring is a pointed local object of the $(Mono^o,Surj^o)$ system iff it is an integral domain.
\end{prop}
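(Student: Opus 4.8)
The plan is to unwind the definition of a pointed local object (Definition~\ref{pointedlocalob}) in this context and turn it into a concrete statement about ring maps. Here ${\cal C} = CRings^o$ with factorisation system $(Mono^o,Surj^o)$, so the left class is ${\cal A} = Mono^o$, and by Proposition~\ref{pointsurjmono} a point of this system is exactly a field (viewed in $CRings^o$). Dualising back to $CRings$, a ring $A$ is a pointed local object precisely when there is a field $K$ together with a morphism $A \to K$ in $CRings$ lying in $Mono$; by the defining property of $Mono$ (zero kernel, cf.\ Lemma~\ref{leftgendom}) this means $A$ admits an injective ring homomorphism $A \hookrightarrow K$ into some field $K$.

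Granting that reformulation, both implications are immediate. If $A$ is an integral domain, the canonical map $A \to \mathrm{Frac}(A)$ into its field of fractions is injective and its target is a field, so $A$ is a pointed local object. Conversely, if $A \hookrightarrow K$ is an injective ring map with $K$ a field, then $A$ is isomorphic to a subring of a field; such a subring is nonzero and has no zero divisors, hence is an integral domain.

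The only subtlety worth flagging is the bookkeeping of opposites: a map \emph{from} a point $P$ lying in the \emph{left} class ${\cal A} = Mono^o$ of $CRings^o$ corresponds, in $CRings$, to a ring map $A \to K$ (with $A$ the ring of interest, $K$ a field) that is a monomorphism --- not a map $K \to A$. One should also dispose of the degenerate case: the zero ring admits no ring homomorphism into a field (which is necessarily nonzero), so it is not a pointed local object, consistently with the convention that the zero ring is not an integral domain. Beyond this I expect no genuine obstacle; the statement is the exact dual analogue of the Zariski Proposition~\ref{zarpointablelocal}, with ``conservative / local ring'' replaced throughout by ``monomorphism / integral domain''.
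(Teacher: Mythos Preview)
Your proof is correct and follows essentially the same approach as the paper: both directions rely on the observation that a pointed local object here is precisely a ring admitting an injective map to a field, with the fraction field supplying the witness for an integral domain and the subring-of-a-field argument giving the converse. The paper's proof is simply a two-line compression of what you have written.
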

\begin{proof}
If $A\to K$ is a monomorphism with target a field, then $A$ is an integral domain, and conversely for any such ring is associated a monomorphism $A\to K(A)$ into the fraction field.
\end{proof}

\begin{prop}\label{localsurj}\label{proofdomain}
A ring is a local object of the $(Mono^o,Surj^o)$ system iff it is an integral domain.
\end{prop}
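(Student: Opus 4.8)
The plan is to argue exactly as in the Zariski case (Proposition \ref{zarlocal}), dualising ``inverting an element'' to ``quotienting by an element'' and the relation $x+y=1$ to $xy=0$.

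First I would dispatch the easy implication: if $A$ is an integral domain then, by the preceding proposition, $A$ is a pointed local object of $(Mono^o,Surj^o)$; since points are local objects and the target of an $\cal A$-map out of a local object is again local (the lemma on pushing forward local objects along $\cal A$), $A$ is a local object.

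For the converse I would start from a local object $A$ (Definition \ref{localob}). It is nonzero because the initial object of ${\cal C}=CRings^o$ is the zero ring, which is not local. Then, given $x,y\in A$ with $xy=0$, I would look at the two quotient maps $A\to A/(x)$ and $A\to A/(y)$. The first step is to note that these lie in ${\cal B}^f$: each is a surjection and a quotient by a principal ideal, hence of finite presentation --- in $CRings$ it is a pushout of the generator $\ZZ[t]\tto\ZZ$, $t\mapsto 0$, of the right class (Lemma \ref{leftgendom}). The second step is to check that $\{A\to A/(x),A\to A/(y)\}$ is a point covering family of $A$: by Proposition \ref{pointsurjmono} a point of $A$ is a map $A\to K$ with $K$ a field, and $xy=0$ forces $x=0$ or $y=0$ in the domain $K$, so $A\to K$ factors through one of the two quotients --- equivalently, this family is the pullback along $A[u,v]/(uv)\to A$, $u,v\mapsto x,y$, of the generating cover appearing in Lemma \ref{proofintegral}. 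The final step is to feed this cover into the definition of a local object: $A$ being local, the cover admits a section; unwinding the opposition, a section of $A\to A/(x)$ in $\cal C$ is in $CRings$ a ring retraction of the surjection $A\to A/(x)$, which forces $A/(x)\simeq A$, i.e. $x=0$, and likewise a section of $A\to A/(y)$ gives $y=0$. Thus $xy=0$ implies $x=0$ or $y=0$, so $A$ is an integral domain.

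I do not expect any real obstacle --- the statement is dual to Proposition \ref{zarlocal} --- and the only points needing a little care are the translation between $\cal C$ and $CRings$ (so that ``the cover has a section'' becomes ``one of the two quotient maps has a ring retraction'') and the two routine verifications that the quotients genuinely form a point covering family in ${\cal B}^f$, both immediate from Lemma \ref{leftgendom}, Proposition \ref{pointsurjmono} and Lemma \ref{proofintegral}. As an alternative to exhibiting the cover on $A$ directly, I could instead apply part 2 of Lemma \ref{localobjects} to the map $A[x,y]/(xy)\to A$ and invoke Lemma \ref{proofintegral}.
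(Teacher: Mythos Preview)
Your proof is correct, but it diverges from the paper's argument in the converse direction. The paper argues geometrically: for a local object $A$, the family $\{A\to A/p_i\}$ indexed by the minimal primes $p_i$ is a point covering family (it hits every residue field), and a retraction of one of these maps forces some $p_i=0$, so $A$ is a domain. Your route instead dualises Proposition~\ref{zarlocal} verbatim, testing locality against the two--element cover $\{A\to A/(x),\,A\to A/(y)\}$ attached to each relation $xy=0$, which is precisely the cover set up in Lemma~\ref{proofintegral}.

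The two approaches trade off as follows. The paper's is more conceptual---one global cover by irreducible components settles the matter---but it tacitly needs the minimal primes to be finitely generated for the cover to lie in ${\cal B}^f$; this is automatic in the Noetherian case but not in general. Your argument stays entirely within finitely presented (principal) quotients, so there is no such issue, and it makes the Zariski/Domain parallel explicit by matching the structure of Proposition~\ref{zarlocal} line for line. For the forward direction, your appeal to ``pointed local implies local'' via the lemma on $\cal A$-maps is equivalent to the paper's observation that any cover of a domain must hit the generic point $A\hookrightarrow K(A)$; both are valid.
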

\begin{proof}
Let $A$ be a domain and $\{A\to A/I_i\}$ a cover, then in order to cover the generic point of $A$ it must contain a copy of $A$ itself.
Conversely, if $A$ is a ring such that any cover $\{A\to A/I_i\}$ has a retraction, the family of inclusions of irreducible components, \ie $A\to A/p_i$ where $p_i$'s are minimal prime ideals, defines a point covering family of $A$ and then must have a retraction. So 0 is one (and the only) of the primes $p_i$.
\end{proof}

\subsubsection{Spectra and moduli interpretation}

\begin{prop}
For $A\in CRings$, 
points of $SPEC_{Dom}(A)$ are $A$-algebras that are integral domains and
points of $Spec_{Dom}(A)$ are quotients of $A$ that are domains.
\end{prop}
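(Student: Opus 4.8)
The plan is to verify that the Nisnevich context $Dom=(CRings^o=(Mono^o,Surj^o),\emptyset)$ is \emph{good} in the sense of definition \ref{good} and then to quote theorem \ref{pointspec}, exactly as in the Zariski and Etale cases (compare proposition \ref{modulietale}).

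First I would dispose of condition a.: $CRings$ is a finitary algebraic category, hence locally finitely presentable. For condition b., I would use lemma \ref{leftgendom}, which says $Mono=\{\ZZ[x]\to\ZZ\}^\bot$ in $CRings$; dualising, $(Mono^o,Surj^o)$ on $CRings^o$ is right generated by the single map $(\ZZ[x]\to\ZZ)^o$. This map lies in ${\cal C}^\omega$ --- both $\ZZ[x]$ and $\ZZ$ are finitely presented and $\ZZ[x]\to\ZZ$ is of finite presentation --- and, read in $CRings^o$, it is the opposite of a surjection, hence lies in ${\cal B}^f=Surj^o\cap{\cal C}^f$; this gives b.\ (and therefore, via corollary \ref{indleftgen}, ${\cal B}/X\simeq Pro({\cal B}^f/X)$ for all $X$).

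The substantive point is condition c.\ (compatibility), which I would establish by exhibiting a distinguished class of Nisnevich covering families. Since ${\cal L}=\emptyset$ here, the Nisnevich topology is the factorisation topology and the Nisnevich local objects are the local objects, i.e.\ the integral domains by proposition \ref{proofdomain}. Lemma \ref{proofintegral} asserts precisely that an $A$-algebra $B$ is an integral domain iff, for every pair $x,y\in B$ with $xy=0$, it lifts through the family $\{A[x,y]/(xy)\to A[x],\ A[x,y]/(xy)\to A[y]\}$; these families are of finite presentation and are point covering families (in a field-valued point of $A[x,y]/(xy)$ one of $x,y$ vanishes), so they form a distinguished class, and $Dom$ is compatible --- over an arbitrary base as well, by the slicing lemma of \S\ref{forcing}.

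With $Dom$ good, theorem \ref{pointspec} identifies the category of points of $SPEC_{Dom}(A)$ with $\overline{\cal L}/A={\cal L}oc/A$, which by proposition \ref{proofdomain} is the category of $A$-algebras that are integral domains; and it identifies the category of points of $Spec_{Dom}(A)$ with the category $\overline{\cal L}(A)$ of local forms of $A$, i.e.\ of maps $L\to A$ in ${\cal B}=Surj^o$ with $L$ local. Such a map is the opposite of a surjection $A\to L$ in $CRings$, so $L$ is a quotient of $A$, and $L$ is local exactly when it is an integral domain; hence the points of $Spec_{Dom}(A)$ are the quotients of $A$ that are integral domains. I expect no real obstacle here: the only step carrying content is c., and it reduces immediately to lemma \ref{proofintegral}, the genuine work having already been done there and in proposition \ref{proofdomain}.
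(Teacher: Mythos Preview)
Your proposal is correct and follows essentially the same approach as the paper: verify that the Nisnevich context $Dom$ is good by citing lemma \ref{leftgendom} for condition b.\ and lemma \ref{proofintegral} for the distinguished covering families giving condition c., then invoke theorem \ref{pointspec}. Your write-up simply expands what the paper states in one line.
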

\begin{proof}
The Nisnevich context $Dom=(CRings^o=(Mono^o,Surj^o),\emptyset)$ is good: b. is lemma \ref{leftgendom} and distinguished families are defined in lemma \ref{proofintegral}. We apply theorem \ref{pointspec}.
\end{proof}

\begin{prop}
For $A\in CRings$, the set of points of $Spec_{Dom}(A)$ is in bijection with $pt_{Dom}(A)$.
\end{prop}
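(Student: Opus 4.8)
The plan is to follow the pattern of Propositions \ref{sameptzar} and \ref{sameptet}: describe the category of points of $Spec_{Dom}(A)$ through Theorem \ref{pointspec}, observe that it is a poset, and identify its underlying set with $pt_{Dom}(A)$.

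First I would apply Theorem \ref{pointspec}, which is legitimate since the Nisnevich context $Dom=(CRings^o=(Mono^o,Surj^o),\emptyset)$ was just shown to be good (def. \ref{good}). Part 2 of that theorem identifies the category of points of $Spec_{Dom}(A)$ with $\overline{\cal L}(A)$; as ${\cal L}=\emptyset$ here, $\overline{\cal L}={\cal L}oc$ and this is the category ${\cal L}oc(A)$ of local forms of $A$. Reading the factorisation system back in $CRings$, a local form of $A$ is a map $A\to L$ lying in $Surj$ with $L$ a local object, i.e. by Proposition \ref{proofdomain} an integral domain. Since a surjection $A\to L$ is nothing but a quotient $A\to A/I$ (with $I$ recovered as the kernel), the local forms of $A$ are exactly the quotients $A\to A/p$ with $A/p$ an integral domain, that is, with $p$ a prime ideal of $A$; so $p\mapsto (A\to A/p)$ is a bijection on objects.

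Next I would check that ${\cal L}oc(A)$ is a poset. A morphism over $A$ from $A\to A/p$ to $A\to A/q$ is an $A$-algebra homomorphism $A/p\to A/q$, and because $A\to A/p$ is an epimorphism there is at most one such homomorphism (it exists precisely when $p\subseteq q$), in particular no non-trivial automorphisms. Hence the category of points of $Spec_{Dom}(A)$ is the poset of prime ideals of $A$ ordered by inclusion, whose set of isomorphism classes of objects is just the set of prime ideals of $A$. On the other hand, $pt_{Dom}(A)=pt_{{\cal B}^f}(A)$ is, by the proposition identifying the set of points of the $(Mono^o,Surj^o)$ system with the set of prime ideals, also the set of prime ideals of $A$. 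The required bijection is then $p\mapsto (A\to A/p)$; concretely, the point of $Spec_{Dom}(A)$ attached to a field-valued map $A\to K$ is the local form through which it factors, namely $A\to A/\ker(A\to K)$, which is compatible with this identification.

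I do not anticipate a real obstacle: the argument is essentially bookkeeping. The only things to keep straight are the passage to opposites (so that $\cal B=Surj^o$ in ${\cal C}=CRings^o$ corresponds to honest surjections of rings) and the fact that surjections are epimorphisms, which is what collapses the category of points to a poset and makes the phrase ``set of points'' unambiguous.
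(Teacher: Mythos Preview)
Your proposal is correct and follows the same approach as the paper: identify the points of $Spec_{Dom}(A)$ as quotients of $A$ that are integral domains, and note that these are exactly the $A/p$ for $p$ prime. The paper's proof is a single sentence to this effect; your version is more careful in that you explicitly verify the category of points is a poset (using that surjections are epimorphisms), a point the paper leaves implicit here and only makes explicit later in Proposition~\ref{surjspace}.
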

\begin{proof}
This a way to say that the set of points of $Spec_{Dom}(A)$ is that of prime ideals of $A$: it is well known that a quotient of $A$ that is an integral domain iff the kernel is a prime ideal.
\end{proof}

We are now going to prove that $Spec_{Dom}(A)$ is a topological space.
We would like to apply the same argument as in proposition \ref{zarspace} but the equivalence between the jointly surjective topology and the point covering topology will fail without a slight modification of the site defining $Spec_{Dom}(A)$ (end of proof of proposition \ref{surjspace}).

\medskip
For a ring $A$, the subset $\sqrt{0}$ of all nilpotent elements is also the intersection of all prime ideals of $A$.
\begin{lemma}\label{equivsite}
For a ring $A$, $A\to A/\sqrt{0}$ is point covering family and any sheaf for the factorisation topology send such a map to an isomorphism.
\end{lemma}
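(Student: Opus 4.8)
The plan is to reduce the statement to the elementary observation that, over the nilradical, killing a finitely generated ideal of nilpotents is at once a point covering and, for sheaves, invisible.

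For the first assertion, write $\sqrt 0$ for the nilradical of $A$. Since every ideal is the directed union of its finitely generated subideals, $A/\sqrt 0 = \colim_I A/I$ in $CRings$, with $I$ ranging over the finitely generated ideals contained in $\sqrt 0$; equivalently, in ${\cal C}=CRings^o$ the map $A\to A/\sqrt 0$ is the cofiltered limit of the maps $A\to A/I$. I would first check that each singleton $\{A\to A/I\}$ is a point covering family of $A$: the map $A\to A/I$ lies in ${\cal B}^f$, being a finitely presented quotient, and any ring map $A\to K$ with $K$ a field annihilates every nilpotent element, hence annihilates $I$, hence factors through $A/I$. Passing to the cofiltered limit, $A\to A/\sqrt 0$ is then a map of $\cal B$ that is surjective on points; when $\sqrt 0$ fails to be finitely generated this is what the phrase ``is a point covering family'' should be read to mean here, and it is harmless since the finite stages $A\to A/I$ already cover.

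For the second assertion, let $F$ be a sheaf for the factorisation topology, so that its value on the pro-object $A/\sqrt 0$ is $F(A/\sqrt 0)=\colim_I F(A/I)$. I would fix a finitely generated $I\subseteq\sqrt 0$ and use that the quotient map $A\to A/I$ is an epimorphism of rings, whence $(A/I)\otimes_A(A/I)\simeq A/I$, the two coprojections coinciding. In ${\cal C}^f$ this says that the diagonal of $A\to A/I$ is an isomorphism, so the \v{C}ech nerve of the one-element point covering family $\{A\to A/I\}$ is the constant diagram on $A/I$, and the sheaf condition degenerates to the statement that $F(A)\to F(A/I)$ is an isomorphism. Taking the filtered colimit over $I$ then shows that $F(A)\to\colim_I F(A/I)=F(A/\sqrt 0)$ is an isomorphism, as desired.

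This is not a hard argument; the only care needed is bookkeeping. One must make precise what it means to apply $F$ to $A\to A/\sqrt 0$ when $\sqrt 0$ is not finitely generated --- it is the induced map of pro-objects $A/\sqrt 0=\lim_I A/I$, on which a sheaf takes the value $\colim_I F(A/I)$ --- and one must confirm that a one-element point covering family given by a ring epimorphism really forces a sheaf to invert it, which is exactly the identity $(A/I)\otimes_A(A/I)\simeq A/I$ used above. So the main, and very mild, obstacle is stating the lemma at the right level of generality, in $Pro({\cal C}^f)\simeq{\cal C}$, rather than carrying out any substantial computation.
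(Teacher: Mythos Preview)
Your proof is correct and rests on the same key observation as the paper's: since $A\to A/I$ is a ring epimorphism, $(A/I)\otimes_A(A/I)\simeq A/I$, so the \v{C}ech nerve is constant and the sheaf condition forces $F(A)\simeq F(A/I)$. The paper argues this directly for $A\to A/\sqrt 0$ in two lines, without addressing whether $\sqrt 0$ is finitely generated, while you reduce to finitely generated $I\subseteq\sqrt 0$ and pass to the filtered colimit; this extra care is justified, since the definition of point covering family in \S\ref{pointcovers} requires maps in ${\cal B}^f$, and it clarifies what the statement means when $\sqrt 0$ is not finitely generated.
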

\begin{proof}
Any field $K$ is an integral domain so any $A\to K$ factors through $A/\sqrt{0}$.
For the second part, any sheaf as to send $A\to A/\sqrt{0}$ to the kernel of $A/\sqrt{0}\rightrightarrows A/\sqrt{0}\otimes_AA/\sqrt{0}$, but 
as $A/\sqrt{0}\otimes_AA/\sqrt{0}=A/\sqrt{0}$ this kernel is the identity of $A/\sqrt{0}$.
\end{proof}

\begin{cor}
The domain topology is not subcanonical.
\end{cor}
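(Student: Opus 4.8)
The plan is to deduce this immediately from Lemma~\ref{equivsite}. Recall that a Grothendieck topology is subcanonical precisely when every representable presheaf is a sheaf. Since $\mathcal C = CRings^o$, the presheaf represented by an object $Spec(B)$ is the covariant functor $W\mapsto\Hom_{CRings}(B,W)$ on $CRings$, and the singleton family $\{Spec(A/\sqrt 0)\to Spec(A)\}$ is a point covering family of $Spec(A)$ by Lemma~\ref{equivsite}, with $Spec(A/\sqrt 0)\times_{Spec(A)}Spec(A/\sqrt 0)=Spec(A/\sqrt 0)$ because $A\to A/\sqrt 0$ is an epimorphism of rings.

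So, if the domain topology were subcanonical, every representable presheaf $y(Spec(B))$ would be a sheaf, and hence by Lemma~\ref{equivsite} the restriction map along this covering would be a bijection; concretely,
$$\Hom_{CRings}(B,A)\tto\Hom_{CRings}(B,A/\sqrt 0)$$
would be a bijection for every pair of rings $A,B$. First I would check that this already fails for $A=B=\ZZ[x]/(x^2)$: here $\sqrt 0=(x)$ and $A/\sqrt 0\simeq\ZZ$, so $\Hom_{CRings}(A,A/\sqrt 0)$ is a single point (every ring map $\ZZ[x]/(x^2)\to\ZZ$ kills $x$), whereas $\Hom_{CRings}(A,A)$ is infinite, as it contains the distinct endomorphisms $x\mapsto x$ and $x\mapsto 2x$. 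Thus $y(Spec(\ZZ[x]/(x^2)))$ is not even a separated presheaf for the domain topology, so the topology is not subcanonical.

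There is no real obstacle; the only mild bookkeeping is the variance of representables and the choice of ambient site, but since $\ZZ[x]/(x^2)$ and the quotient $\ZZ[x]/(x^2)\to\ZZ$ are both finitely presented, the counterexample lives in $\mathcal C^f/*$ as well as in $CRings^o$, so the failure of subcanonicity is independent of that choice.
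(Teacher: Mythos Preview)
Your proof is correct and follows essentially the same approach as the paper: both exploit Lemma~\ref{equivsite} to see that the singleton $\{A\to A_{red}\}$ covers, so that a subcanonical topology would force $\Hom(B,A)\simeq\Hom(B,A_{red})$ for all $B$. The paper's one-line proof simply observes that ``both $A$ and $A_{red}$ will have the same spectra'' (i.e.\ the same associated sheaf), while you make this concrete by exhibiting $A=B=\ZZ[x]/(x^2)$ as an explicit witness; your version is in fact more complete, and your care in noting that the counterexample lives in ${\cal C}^f$ is a good touch.
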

\begin{proof}
Both $A$ and $A_{red}$ will have the same spectra, this will be developped further below.
\end{proof}

\begin{lemma}\label{sqrt}
A family $B\to B/I_i$ in $A/Surj^f$ corresponds to a point covering family iff $B\to B/(\cap I_i)$ is a point covering family iff $\cap I_i\subset\sqrt{0}$.
\end{lemma}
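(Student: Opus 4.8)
A family $B\to B/I_i$ in $A/Surj^f$ corresponds to a point covering family iff $B\to B/(\cap I_i)$ is a point covering family iff $\cap I_i\subset\sqrt 0$.

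The plan is to verify the chain of equivalences by passing through residue fields, using the description of points of the $(Mono^o,Surj^o)$ system as fields (Proposition \ref{pointsurjmono}) together with the characterisation of points of $B$ as prime ideals (Proposition after \ref{fcsurj}), exactly as in the proof of Proposition \ref{zarfinprescov} in the Zariski case. First I would record the basic observation: for a map $B\to K$ with $K$ a field (equivalently a point of $B$, up to the equivalence relation), having $B\to K$ factor through $B/I_i$ is the same as $I_i\subset\ker(B\to K)$, and $\ker(B\to K)$ ranges exactly over the prime ideals of $B$. Hence a family of finitely generated quotients $\{B\to B/I_i\}$ is a point covering family iff every prime ideal $\mathfrak p\subset B$ contains some $I_i$.

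Now I would prove the two equivalences. For the second one ($\cap I_i\subset\sqrt 0$): a prime $\mathfrak p$ contains some $I_i$ for \emph{every} $\mathfrak p$ exactly when $\bigcap_{\mathfrak p}\bigl(\text{some }I_i\subset\mathfrak p\bigr)$ holds, which forces $\cap_i I_i\subset\mathfrak p$ for every prime $\mathfrak p$, hence $\cap_i I_i\subset\bigcap_{\mathfrak p\text{ prime}}\mathfrak p=\sqrt 0$. Conversely, if $\cap_i I_i\subset\sqrt 0\subset\mathfrak p$ for a prime $\mathfrak p$, then since $\mathfrak p$ is prime and (in the finite case) $\prod_i I_i\subset\cap_i I_i\subset\mathfrak p$, primality gives $I_i\subset\mathfrak p$ for some $i$; here the finiteness of the family, guaranteed by working in $A/Surj^f$ together with Proposition \ref{zarfinprescov}'s analogue (every point covering family admits a finite point covering subfamily — same argument: one may reduce to a finite subfamily because $B/(\cap I_i)=0$ is witnessed by finitely many generators), lets me use the product. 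For the first equivalence, $B\to B/(\cap_i I_i)$ is a point covering family iff every prime of $B$ contains $\cap_i I_i$, and I have just matched that with "every prime contains some $I_i$", which is the condition for $\{B\to B/I_i\}$ to be a point covering family. This closes the triangle of equivalences.

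The one point needing a little care — and the likely main obstacle — is the reduction to a \emph{finite} subfamily so that $\prod_i I_i\subset\mathfrak p$ is available: in full generality $\cap_i I_i\subset\mathfrak p$ with $\mathfrak p$ prime does not yield $I_i\subset\mathfrak p$ for an infinite family. But working inside $A/Surj^f$ means each $B/I_i$ is of finite presentation over $A$, and the point covering condition $\{B\to B/I_i\}$ is equivalent to $B/(\cap_i I_i)$ having no residue fields, i.e.\ being the zero ring after reduction; this is a statement about finitely many generators of the unit ideal, so a finite subfamily already covers, and on that finite subfamily $\prod_i I_i\subset\cap_i I_i$ holds. I would phrase this reduction explicitly at the start, citing the same mechanism as in Proposition \ref{zarfinprescov}, after which the rest is the elementary prime-avoidance argument above.
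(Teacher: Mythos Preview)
Your core argument is essentially the paper's: both proofs reduce the first equivalence to the implication ``$\cap_i I_i \subset \mathfrak p$ with $\mathfrak p$ prime $\Rightarrow$ some $I_i \subset \mathfrak p$'', and both establish this via a product argument (you use $\prod_i I_i \subset \cap_i I_i \subset \mathfrak p$; the paper picks one generator $a_i^{k(i)}$ of each $I_i$ not killed in the residue field and observes that $\prod_i a_i^{k(i)} \in \cap_i I_i$ would also be nonzero). The second equivalence is handled identically.

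There is, however, a genuine error in your finiteness reduction. You write that the point covering condition is ``equivalent to $B/(\cap_i I_i)$ having no residue fields, i.e.\ being the zero ring after reduction; this is a statement about finitely many generators of the unit ideal''. This is wrong: $B/(\cap_i I_i)$ having no residue fields would mean $\cap_i I_i = B$, whereas the actual condition is $\cap_i I_i \subset \sqrt{0}$, which is \emph{not} a unit-ideal statement and gives no finitary witness. Indeed the equivalence fails for infinite families (take $B = k[x_1, x_2, \dots]$ and $I_i = (x_i)$: then $\cap_i I_i = 0 \subset \sqrt{0}$, yet the prime $\mathfrak p = (x_1 - 1, x_2 - 1, \dots)$ contains no $I_i$). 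So your reduction is circular and the obstacle you flagged is real.

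That said, the paper's own proof uses the same finite product and simply does not raise the issue; in context the lemma is applied only to finite families (cf.\ the proof of Proposition~\ref{surjspace}, which explicitly works with finite families in the lattice $(A/RedSurj^f)^o$). So your instinct was right, but the resolution should be to note that the statement is meant for finite families, not to attempt a reduction that cannot work in general.
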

\begin{proof}
$B\to B/(\cap I_i)$ factors every $B\to B/I_i$, so it has the joint of the lifting properties of all $B\to B/I_i$ and so is a point covering family.
Conversely, if $I_i=(a_i^1,\dots a^{k_i}_i)$, $\cap I_i$ is generated by products $\prod_i a^{k(i)}_i$ for some function $i\mapsto 1\leq k(i)\leq k_i$, we want to prove that for any point $A\to K$ factoring through $A\to A/(\cap I_i)$, there exists an $i$ such that all $a^k_i$ are send to zero in $K$. If this is not the case, for all $i$ there would exist a $a^{k(i)}_i$ not sent to zero in $K$, and so their product will not either, contradicting the fact that $A\to K$ factors through $A\to A/(\cap I_i)$.

As for the second equivalence, if $p$ is a prime ideal of $B$ with residue field $\kappa(p)$, the existence of a lift $B/(\cap I_i)\to \kappa(p)$ of $B\to \kappa(p)$ proves that $p$ has not become the zero ideal in $B/(\cap I_i)$ so $\cap I_i\subset p$. This says that $(\cap I_i)$ is contained in every prime ideal of $B$.
\end{proof}

$Spec_{Dom}(A)$ is the topos associated to $(A/Surj^f)^o$ with the factorisation topology, it depends only of $A_{red}$.
If $A/RedSurj^f$ is the sub-category of $A/Surj^f$ formed of reduced finitely presented quotients of $A$, the factorisation topology restrict to it. The inclusion $\iota:A/RedSurj^f\subset A/Surj^f$ has a left adjoint $red$ given by $A\to A_{red}=A/\sqrt{0}$ which if continuous (the reduction of a covering family is still a covering family).

\begin{lemma}\label{equivcov}
A family $B\to B/\sqrt{I_i}$ in $(A/RedSurj^f)^o$ is point covering family iff $\cap \sqrt{I_i}=\sqrt{0}$.
\end{lemma}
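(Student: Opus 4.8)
The plan is to read this off lemma \ref{sqrt}, of which it is essentially a reformulation. First I would observe that the notion of point covering family is insensitive to passing to the reduced site: by proposition \ref{pointsurjmono} the points are fields, and a field is reduced, so for $B$ reduced a map $B\to K$ with $K$ a field, together with the lifting condition of definition \ref{factotopo}, is unchanged whether one works in $(A/RedSurj^f)^o$ or in the ambient $(A/Surj^f)^o$. Hence $\{B\to B/\sqrt{I_i}\}$, being a family in $(A/RedSurj^f)^o\subset (A/Surj^f)^o$, is in particular a family of finitely presented quotients to which lemma \ref{sqrt} applies, taking the radical ideals $\sqrt{I_i}$ in the role of the $I_i$ there: the family is point covering if and only if $\cap_i\sqrt{I_i}\subset\sqrt 0$, where $\sqrt 0$ denotes the nilradical of $B$.

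It then suffices to upgrade this inclusion to an equality, which is automatic. The nilradical $\sqrt 0$ is contained in every prime ideal of $B$, hence in every radical ideal $\sqrt{I_i}$, hence in their intersection; so $\cap_i\sqrt{I_i}\supset\sqrt 0$ holds unconditionally, and combined with the inclusion coming from lemma \ref{sqrt} the point covering condition reads exactly $\cap_i\sqrt{I_i}=\sqrt 0$. Since the objects of $A/RedSurj^f$ are reduced we have $\sqrt 0=(0)$, so this is equivalently $\cap_i\sqrt{I_i}=(0)$.

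I do not anticipate any genuine obstacle here: the substance of the statement is entirely carried by lemma \ref{sqrt}, and the only points needing a word of justification are the compatibility of point covering families with reduction (immediate from points being fields) and the trivial remark that the radicals of the generating ideals still contain the nilradical.
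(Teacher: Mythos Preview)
Your proposal is correct and follows essentially the same route as the paper. The paper's proof is a single line: it invokes lemma \ref{sqrt} together with the identity $\cap\sqrt{I_i}=\sqrt{\cap I_i}$, which is just another way of seeing that $\cap\sqrt{I_i}$ is a radical ideal and hence automatically contains $\sqrt{0}$; your direct observation that $\sqrt{0}\subset\sqrt{I_i}$ for each $i$ accomplishes the same thing.
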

\begin{proof}
This is a consequence of lemma \ref{sqrt} and of $\cap \sqrt{I_i}=\sqrt{\cap I_i}$.
\end{proof}

\begin{prop}
The continuous functor $red:(A/Surj^f)^o\tto (A/RedSurj^f)^o$ is an equivalence of sites.
\end{prop}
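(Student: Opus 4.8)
The goal is to prove that $red$ induces an equivalence between the associated toposes $\widetilde{(A/Surj^f)^o}$ and $\widetilde{(A/RedSurj^f)^o}$; this is what ``equivalence of sites'' should mean here, since the two underlying categories are not themselves equivalent. The essential geometric input has already been isolated: by lemma \ref{sqrt} the single-arrow family $\{B\to B/\sqrt 0\}$ is a point covering family of any finitely presented quotient $B$ of $A$, and by lemma \ref{equivsite} every sheaf for the factorisation topology turns this arrow into an isomorphism. Thus $(A/RedSurj^f)^o$ sits inside $(A/Surj^f)^o$ as a full subcategory (indeed both are posets), dense for the topology, with $red$ the associated reflector $B\mapsto B_{red}$, left adjoint to the inclusion $\iota$ with $red\circ\iota\simeq\mathrm{id}$.

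I would then run the standard dense-subsite comparison in concrete form. Consider $\Psi\colon G\mapsto G\circ\iota$ (restriction) and $\Phi\colon F\mapsto F\circ red$. Both send sheaves to sheaves: for $\Psi$ this follows from $\iota$ being continuous (covering families of $(A/RedSurj^f)^o$ are covering families of $(A/Surj^f)^o$, by comparing lemmas \ref{sqrt} and \ref{equivcov}) and dense; for $\Phi$ one checks that $red$ is cocontinuous, i.e. that if $\{B\to B/J_k\}$ is a point covering family then so is $\{B_{red}\to (B/J_k)_{red}\}$, using $\bigcap_k\sqrt{J_k}=\sqrt{\bigcap_k J_k}=\sqrt 0$ together with lemma \ref{equivcov}, and that the value of $F\circ red$ on the first family agrees with the value of the sheaf $F$ on the second. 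For the composites: $\Psi\Phi(F)=F\circ(red\circ\iota)\simeq F$ formally; and the unit $\mathrm{id}\Rightarrow\iota\circ red$, whose component at $B$ is $B\to B_{red}$, induces a map $G\Rightarrow\Phi\Psi(G)$ that is an isomorphism on each $B$ by lemma \ref{equivsite}. Hence $\Phi$ and $\Psi$ are mutually quasi-inverse equivalences, so $red$ is an equivalence of sites; equivalently one may quote the comparison lemma \cite[III.4.1]{SGA4-1} for the dense full subsite $(A/RedSurj^f)^o$ and note that $\Phi$ is a quasi-inverse of the resulting restriction equivalence.

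The main obstacle in a careful write-up is not the topos-theoretic argument, which becomes routine once the density statement is available, but the bookkeeping: keeping the opposite-category conventions of this section straight, and making sure that $B\mapsto B_{red}$ genuinely takes values in $A/RedSurj^f$, i.e. that the reduction of a finitely presented $A$-algebra is again finitely presented (automatic over a Noetherian base, and tacitly assumed here). Granting this, the whole proof reduces to lemmas \ref{sqrt}, \ref{equivsite} and \ref{equivcov}.
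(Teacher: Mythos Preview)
Your argument is correct and follows essentially the same route as the paper: both use the adjunction $(red,\iota)$, identify the direct image with restriction along $\iota$, and conclude by observing that the counit $red\circ\iota\simeq\mathrm{id}$ is an isomorphism while the unit $B\to B_{red}$ becomes one on sheaves by lemma \ref{equivsite}. Your packaging in terms of the explicit functors $\Phi$ and $\Psi$ is a bit more hands-on than the paper's geometric-morphism language, and you are more careful in verifying that both functors preserve sheaves (the paper simply asserts that $red$ is continuous); you also rightly flag the finite-presentation issue for $B\mapsto B_{red}$, which the paper leaves implicit.
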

\begin{proof}
Recall that a continuous functor is an equivalence of sites if the geometric map $(red^*,red_*)$ induced on the toposes is a equivalence.
We have a diagram
$$\xymatrix{
\widehat{(A/RedSurj^f)^o}\ar@<-.4ex>[d]_a\ar@<.4ex>[r]^-{red^*}&\widehat{(A/Surj^f)^o}\ar@<.4ex>[l]^-{red_*}\ar@<-.4ex>[d]_a\\
\widetilde{(A/RedSurj^f)^o}\ar@<-.4ex>[u]\ar@<.4ex>[r]^-{red^*}&\widetilde{(A/Surj^f)^o}\ar@<-.4ex>[u]\ar@<.4ex>[l]^-{red_*}
}$$
where the $a$'s are the sheafification functors.
We have to prove that a presheaf on $(A/Surj^f)^o$ is a sheaf iff its restriction to $(A/RedSurj^f)^o$ is a sheaf. It is enough to check it on the level of generators where $red_*=\iota^*$. The unit and counit of $(red^*,red_*)$ are those of $(red,\iota)$: the counit is always an isomorphism and lemma \ref{equivsite} prove that the unit of $(red,\iota)$ is transform in an isomorphism by sheafification.
\end{proof}

\begin{prop}\label{surjspace}
$Spec_{Dom}(A)$ is a topological space whose poset of points is equivalent to that of prime ideal of $A$.
\end{prop}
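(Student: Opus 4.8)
The plan is to reuse the structure of the proof of Proposition~\ref{zarspace}, this time on the reduced site set up just above. First I would recall that $Spec_{Dom}(A)$ is the topos of sheaves on $(A/RedSurj^f)^o$ for the point covering topology (via the equivalence of sites established above), and note that this site is a poset: between two reduced finitely presented quotients $A/\sqrt I$ and $A/\sqrt J$ of $A$ there is at most one $A$-algebra map, namely when $\sqrt I\subseteq\sqrt J$. Hence $Spec_{Dom}(A)$ is localic and it remains to identify the corresponding locale. Sending $A/\sqrt I$ to the closed subset $V(I)\subseteq Spec(A)$ identifies $(A/RedSurj^f)^o$ with the lattice of finite-type closed subsets of $Spec(A)$, in which $V(I)\wedge V(J)=V(I+J)$ and $V(I)\vee V(J)=V(IJ)=V(I\cap J)$. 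This is a distributive lattice --- a sublattice of the distributive lattice of all closed subsets --- and each of its elements is a compact object, being cut out by finitely many equations.

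Next I would verify that the point covering topology on this lattice coincides with the jointly surjective (coherent) topology. By Lemma~\ref{equivcov} a family $\{B\to B/\sqrt{I_i}\}$ is point covering iff $\bigcap_i\sqrt{I_i}=\sqrt{0}$ in $B$; translated into closed subsets of $Spec(B)$ this says $\bigcup_i V_B(I_i)=Spec(B)$, i.e.\ that the family is jointly covering in the lattice, and since all objects are compact this is exactly the coherent topology (in particular every point covering family has a finite point covering subfamily). It is precisely here that passing from $A/Surj^f$ to $A/RedSurj^f$ is needed: over the non-reduced site the one-element family $\{A\to A_{red}\}$ is point covering by Lemma~\ref{equivsite} yet fails to be jointly surjective, since $A_{red}$ is not terminal there, whereas after reduction $A_{red}$ becomes the terminal object and the two notions agree.

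It then suffices to invoke the coherence result already used for Proposition~\ref{zarspace}: the locale generated by a distributive lattice equipped with its coherent topology is coherent, hence spatial \cite[II.3.]{stone}. Thus $Spec_{Dom}(A)$ is the topos of sheaves of a topological space. Finally, for the poset of points, Theorem~\ref{pointspec} identifies the category of points of $Spec_{Dom}(A)$ with the category of domain quotients of $A$, i.e.\ with the $A/p$ for $p$ a prime ideal of $A$ and their $A$-algebra maps; this is the poset of prime ideals of $A$ ordered by inclusion --- up to the order reversal caused by the opens of $Spec_{Dom}(A)$ being the closed subsets of $Spec(A)$ --- recovering the set-level bijection noted above. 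The substantive step is the identification of the point covering topology with the coherent one on the reduced site, the place where a naive use of $A/Surj^f$ breaks down; once that is in place the remaining verifications are routine bookkeeping with the lemmas already at hand.
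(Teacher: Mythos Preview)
Your proposal is correct and follows essentially the same route as the paper: pass to the reduced site $(A/RedSurj^f)^o$, observe it is a poset and a distributive lattice, check that the point covering topology agrees with the coherent topology via Lemma~\ref{equivcov}, and then invoke \cite[II.3.]{stone}. Your argument for distributivity---identifying the lattice with the finite-type closed subsets of $Spec(A)$ and noting this is a sublattice of a distributive one---is in fact cleaner than the ideal-theoretic identity the paper states, and your explicit remark on why the non-reduced site fails (the cover $\{A\to A_{red}\}$ is point covering but not jointly surjective there) matches the paper's parenthetical motivation.
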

\begin{proof}
We are going to apply the same argument as in proposition \ref{zarspace}.
$Spec_{Dom}(A)$ is generated by the category $(A/RedSurj^f)^o$ which is a poset of compact object, so it is a localic topos. \cite[II.3.]{stone} will say it is coherent and spatial as soon as the topology on $(A/Surj^f)^o$ is the jointly surjective topology.
$(A/Surj^f)^o$ is a distributive lattice: the intersection of $A/\sqrt{I}$ and $A/\sqrt{J}$ is $A/\sqrt{I+J}$ and the union is $A/\sqrt{I\cap J}$; the distributivity law is the lemma: for $I,J,K$ three finitely generated ideals of $A$, $K+(I\cap J)=K\cap I+K\cap J$.
As for the topology, a family $A\to A/\sqrt{I_i}$ is jointly surjective iff $\sqrt{\cap I_i}=\sqrt{0}$ but this is the characterisation of point covering families of lemma \ref{equivcov}. (This last equivalence is in fact the whole reason of considering the site $(A/RedSurj^f)^o$.)
\end{proof}

The poset of points of $Spec_{Dom}(A)$ is the opposite of that of $Spec_{Zar}(A)$, in particular the generic points of one are closed points of the other. In fact the two sites $A/RedSurj^f$ and $A/Loc^f$ are opposite categories and this duality between $Spec_{Dom}(A)$ and $Spec_{Zar}(A)$ is part of a general duality on compactly generated spaces exposed in \cite{stone} ($Spec_{Dom}(A)$ is the domain spectrum of \cite[V.3.11]{stone}).

\subsubsection{Remark}\label{variation2}

The same remark as in \S\ref{variation} is true: the class $Surj^o$ is not local for the Domain topology on $CRings^o$. Its saturation is the class $EtSurj^o$ opposite to that of integrally closed maps (\S\ref{finitetop}) that are locally trivial for the Domain topology (called {\em etale-surjective } maps). Again, we claim that $EtSurj$ is the left class of a unique factorisation system $(EtSurj,MIdem)$ on $CRings$ where $MIdem$ is the class of monomorphisms having the extra unique lifting property for idempotents, \ie $(EtSurj,MIdem)$ is left generated by $\ZZ[x]\to\ZZ$ and $\ZZ\to\ZZ\times\ZZ$. 
Replacing the factorisation system $(Loc,Cons)$ by $(EtSurj,MIdem)$ in the previous study would generate the same factorisation topology and the same spectra.

\subsection{Finite topology}\label{finitetop}

For a inclusion of rings $A\subset B$ an element $b\in B$ is said {\em integral over $A$} if there exists a monic polynomial $P$ with coefficients in $A$ such that $b$ is a root of $P$. In particular every element of $A$ is integral.
More generally for any map $A\to B$ of kernel $I$, an element of $B$ is said integral over $A$ if it is integral over $A/I$. As any monic polynomial of $(A/I)[X]$ can be lifted in a monic polynomial of $A[X]$, it is equivalent to say that $b\in B$ is integral over $A$ if it exists $P\in A[X]$ monic such that $P(b)=0$.
$A\subset B$ is said {\em integrally closed} if any element integral over $A$ is in $A$.
The set of integrally closed monomorphism of rings is noted $IntClo$.
The following proposition is proposition 5.1 and corollary 5.3 of \cite{atiyahmacdonald}.%\cite[thm. 9.1]{matsumura}.
\begin{prop}\label{amd}
For any monomorphism of rings $A\subset B$
\begin{enumerate}[a.]
\item an element $b\in B$ is integral over $A$ iff the sub-$A$-algebra of $B$ generated by $b$ is finitely generated as an $A$-module ;
\item the subset $C$ of elements integral over $A$ in $B$ is a ring, and $C\subset B$ is integrally closed.
\end{enumerate}
\end{prop}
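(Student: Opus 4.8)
\emph{Proof plan.} The statement is classical commutative algebra, and I would not reprove it in full but recall the skeleton, since it is short and the reference \cite{atiyahmacdonald} is explicit. The whole proposition rests on one non-formal device, the ``determinant trick'' (a Cayley--Hamilton argument), and I expect that to be the only real step; everything else is bookkeeping with finitely generated modules.

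First, part a. For the easy implication, if $b$ satisfies a monic relation $b^n+a_{n-1}b^{n-1}+\dots+a_0=0$ with $a_i\in A$, then $1,b,\dots,b^{n-1}$ generate $A[b]$ as an $A$-module, since the relation rewrites each higher power $b^{n+k}$ in terms of lower ones by descending induction. For the converse, suppose $M=A[b]$ is finitely generated as an $A$-module, say by $m_1,\dots,m_n$; writing $b\,m_i=\sum_j a_{ij}m_j$ with $a_{ij}\in A$, the matrix $bI-(a_{ij})$ annihilates the vector $(m_i)$, so multiplying by its adjugate shows $d:=\det(bI-(a_{ij}))$ kills every $m_i$, hence kills $M$. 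As $1\in M=A[b]$, the module $M$ is faithful over itself, so $d=0$ in $A[b]$, and expanding the determinant exhibits a monic polynomial over $A$ vanishing at $b$. This determinant trick is the heart of the argument; I expect the rest to be formal.

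Next, part b. I would first record the slightly more flexible form of a: if $b$ lies in a subring $B'\subseteq B$ that is finitely generated as an $A$-module, then $b$ is integral over $A$ --- the same adjugate computation applies with $B'$ in the role of $A[b]$, faithful because $1\in B'$. Given $b_1,b_2\in C$, the ring $A[b_1]$ is a finite $A$-module by a, the element $b_2$ is a fortiori integral over $A[b_1]$ so $A[b_1,b_2]=A[b_1][b_2]$ is a finite $A[b_1]$-module, hence (transitivity of finiteness of modules) a finite $A$-module; since $b_1\pm b_2$ and $b_1b_2$ all lie in this subring, they are integral over $A$, so $C$ is a subring of $B$. Finally, to see $C\subseteq B$ is integrally closed, take $b\in B$ integral over $C$, say $b^n+c_{n-1}b^{n-1}+\dots+c_0=0$ with $c_i\in C$; then $A':=A[c_0,\dots,c_{n-1}]$ is a finite $A$-module by iterating the previous paragraph, $A'[b]$ is a finite $A'$-module by the monic relation, hence a finite $A$-module, and so $b$ is integral over $A$ by the flexible form of a, i.e.\ $b\in C$. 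This finishes the proof.
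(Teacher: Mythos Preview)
Your proof is correct and follows exactly the standard Atiyah--Macdonald argument via the determinant trick. The paper itself gives no proof at all: it simply states the proposition and attributes it to \cite[Prop.~5.1 and Cor.~5.3]{atiyahmacdonald}, so there is nothing to compare beyond noting that your sketch is precisely the content of that reference.
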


This constructs a factorisation system on monomorphisms of rings, with the right class being $IntClo$.
To have a factorisation for every morphism, we use the $(Surj,Mono)$ factorisation.
A map $A\to B$ of is called {\em integral}, or an {\em integral extension} if every element of $B$ is integral over $A$.
The set of integral maps is noted $Int$.
The archetype of a integral map is a integral extension $A\to (A/I)[x]/P(x)$ for some ideal $I$ and some monic polynomial $P$.

\begin{prop}
$Int$ and $IntClo$ are the left and right classes of a unique factorisation system.
\end{prop}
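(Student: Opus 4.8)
The plan is to obtain the factorisation explicitly from Proposition \ref{amd}, and then to settle uniqueness by checking a unique lifting property between the two classes.

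First I would construct the factorisation. Given a map $u : A \to B$ with kernel $I$, I factor it through the $(Surj, Mono)$ system as $A \to A/I \to B$, so that $A/I \to B$ is a monomorphism. Proposition \ref{amd}(b) then supplies the subring $C \subseteq B$ of elements integral over $A/I$; by construction $A/I \hookrightarrow C$ is integral and $C \hookrightarrow B$ is an integrally closed monomorphism. The composite $A \to A/I \to C$ has kernel $I$, and every element of $C$ is by definition of $C$ integral over $A/I$, so $A \to C$ lies in $Int$; and $C \to B$ lies in $IntClo$. This is a factorisation of $u$ of the required shape. (Note in passing that, since any monic polynomial over $A/I$ lifts to a monic polynomial over $A$, surjections are integral, consistent with $Int$ absorbing the left factor of the $(Surj,Mono)$ system.)

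Next I would prove that a commuting square with a map of $Int$ on the left and a map of $IntClo$ on the right admits a unique diagonal; this simultaneously shows that $(Int, IntClo)$ is a lifting system and forces uniqueness of the factorisation. So let $u : A \to B$ be integral, $f : C \to D$ an integrally closed monomorphism, and $g : A \to C$, $h : B \to D$ with $fg = hu$. For $b \in B$ choose a monic $P \in (A/I)[X]$, $I = \ker u$, with $P(b) = 0$ in $B$, and lift it to a monic polynomial over $A$; applying $h$ shows that $h(b)$ is a root of a monic polynomial with coefficients in $f(g(A)) \subseteq f(C)$, hence $h(b)$ is integral over $f(C) \subseteq D$ and therefore lies in $f(C)$ because $f$ is integrally closed. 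Thus $h$ factors through $f$, and since $f$ is a monomorphism it factors uniquely, defining the diagonal $\ell := f^{-1}h : B \to C$; one checks $f\ell = h$ directly and $\ell u = g$ after applying the monomorphism $f$, and $\ell$ is the only such map. With this unique orthogonality in hand the standard argument applies: two factorisations $A \to C \to B$ and $A \to C' \to B$ of $u$ of the required shape produce, via the lifting property, comparison maps $C \to C'$ and $C' \to C$ over $A$ and under $B$, which are mutually inverse and the unique such isomorphism by uniqueness of lifts. (Alternatively, uniqueness follows from Lemma \ref{uniquelift}, since the codiagonal $B \otimes_A B \to B$ of a map $A \to B$ in $Int$ is surjective and hence integral.)

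I expect the only delicate point to be the bookkeeping in the lifting step: one must use ``integral over $A$'' in its defined sense --- integral over $A/\ker u$ --- when choosing and lifting the witnessing monic polynomial and transporting it into $D$ via $h$. Once that is handled, the integral closedness of $f$ does all the work, and the construction of the factorisation together with the formal wrap-up are immediate given Proposition \ref{amd} and Section \ref{facto}.
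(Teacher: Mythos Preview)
Your proof is correct and follows the same route as the paper: the paper states the proposition immediately after describing the construction via Proposition~\ref{amd} composed with the $(Surj,Mono)$ factorisation, and leaves it without a formal proof block, treating it as evident from that discussion. You have simply written out in full what the paper leaves implicit, including the explicit verification of the unique lifting between $Int$ and $IntClo$ (using that $f\in IntClo$ is a monomorphism) and the standard deduction of uniqueness of the factorisation; your alternative via Lemma~\ref{uniquelift} mirrors exactly the argument the paper uses for $(Loc,Cons)$.
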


As $IntClo\subset Mono$ and $Surj \subset Int$, the $(Int,IntClo)$ factorisation system compares to the $(Surj,Mono)$ as $(Loc,Cons)$ and $(indEt,Hens)$ compared in \S\ref{remzaret}: they define a triple factorisation system
$$\xymatrix{
A\ar[r]^{Surj} &C\ar[rr]^{Mono\ \&\ Int}&& D\ar[r]^{IntClo}&B
}$$
where $A\to C\to B$ is the $(Surj,Mono)$ factorisation and $A\to D\to B$ the $(Int,IntClo)$ factorisation.

\begin{prop}\label{intgen}
The $(Int,IntClo)$ factorisation system is left generated by the set of maps $A\to(A/I)[x]/P(x)$ where $A$ is of finite presentation, $I$ is some finitely generated ideal of $A$ and $P$ is a monic polynomial.
\end{prop}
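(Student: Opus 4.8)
The plan is to reduce the statement to the single equality $IntClo = G^\bot$, where $G$ denotes the set of maps appearing in the statement. Since the left and right classes of a unique factorisation system always form a unique lifting system, we already have $Int = {}^\bot IntClo$; hence once $IntClo = G^\bot$ is established, ${}^\bot(G^\bot) = {}^\bot IntClo = Int$, which is exactly the assertion that $(Int,IntClo)$ is left generated by $G$. I would first record two preliminary observations: $G$ is a genuine set (there is only a set of maps between finitely presented rings, and for each only a set of choices of $I$ and $P$), and $G \subseteq Int$, because every $A \to (A/I)[x]/P(x)$ is the surjection $A \to A/I$ followed by a monogenic integral extension.

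For the inclusion $IntClo \subseteq G^\bot$, I would take $u : B \to C$ in $IntClo$ --- so $u$ is a monomorphism and every element of $C$ integral over $u(B)$ lies in $u(B)$ --- and a commuting square whose left edge is a generator $g : A \to A' := (A/I)[x]/P(x)$, whose right edge is $u$, with top edge $\varphi : A \to B$ and bottom edge $\psi : A' \to C$. The first step is to note that $g$ annihilates $I$, so $u\varphi = \psi g$ does too, whence $\varphi$ annihilates $I$ (because $u$ is mono) and factors as $A \to A/I \xrightarrow{\bar\varphi} B$. The second step is the key one: writing $c := \psi(x) \in C$ and letting $P^{\bar\varphi} \in B[X]$ be the monic polynomial obtained by pushing the coefficients of $P$ forward along $\bar\varphi$, applying $\psi$ to the relation $P(x) = 0$ holding in $A'$ shows that $c$ is a root of the image of $P^{\bar\varphi}$ in $C[X]$, i.e.\ $c$ is integral over $u(B)$; since $u \in IntClo$ there is a unique $b \in B$ with $u(b) = c$, and then $u\bigl(P^{\bar\varphi}(b)\bigr) = 0$ forces $P^{\bar\varphi}(b) = 0$ in $B$ because $u$ is mono. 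This makes $x \mapsto b$, together with $\bar\varphi$ on $A/I$, a well-defined ring homomorphism $\ell : A' \to B$, which one checks fills the square, and does so uniquely since any lift must send $x$ to an element mapping to $c$ under $u$.

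For the reverse inclusion $G^\bot \subseteq IntClo$, I would use two specific members of $G$. Taking $A = \ZZ[x]$, $I = (x)$ and $P(X) = X$ gives $(A/I)[X]/(X) \cong \ZZ$ and recovers the generator $\ZZ[x] \to \ZZ$ of the class $Mono$ from lemma~\ref{leftgendom}, so any $u$ in $G^\bot$ is already a monomorphism. Then, given $c \in C$ integral over $u(B)$, say a root of some $X^n + u(b_{n-1})X^{n-1} + \cdots + u(b_0)$ with $b_i \in B$, I would apply the unique right lifting property of $u$ against the generator with $A = \ZZ[y_0,\dots,y_{n-1}]$, $I = 0$ and $P = X^n + y_{n-1}X^{n-1} + \cdots + y_0$, using the square with $\varphi(y_i) = b_i$ and $\psi(y_i) = u(b_i)$, $\psi(x) = c$; the diagonal lift evaluated at $x$ produces $b \in B$ with $u(b) = c$, so $c \in u(B)$. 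Hence $u$ is a monomorphism that is integrally closed, i.e.\ $u \in IntClo$, and the two inclusions together give $IntClo = G^\bot$.

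I expect the main obstacle to be the second step of the first inclusion: producing the lift $\ell$ requires the element $b$ supplied by integral-closedness to be a root of $P^{\bar\varphi}$ \emph{already in $B$}, not merely after applying $u$ --- this is precisely where monicity of $P$ and the fact that $u$ is a monomorphism both enter, and it is the arithmetic heart of the argument. A secondary, bookkeeping-level point is noticing that the top edge of every test square automatically annihilates $I$, which again rests on $u$ being a monomorphism; everything else is formal.
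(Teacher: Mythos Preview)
Your proof is correct and follows essentially the same route as the paper: both reduce to showing $G^\bot = IntClo$, and both establish $G^\bot \subseteq IntClo$ via the two specific generators $\ZZ[x]\to\ZZ$ (forcing mono) and $\ZZ[y_0,\dots,y_{n-1}]\to \ZZ[y_0,\dots,y_{n-1}][x]/P(x)$ (forcing integral closure). The only difference is that you verify $IntClo \subseteq G^\bot$ by constructing the lift against an arbitrary generator, whereas the paper gets this inclusion for free from your own preliminary observation $G\subseteq Int$ together with $IntClo = Int^\bot$; your direct argument is sound but redundant.
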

\begin{proof}
First, it is clear by definition that such a map $A\to (A/I)[x]/P(x)$ is in $Int$. 
Then, as a factorisation system is entirely determine by one of the left or right classes, it is sufficient to prove that the class of maps right orthogonal to $A\to (A/I)[x]/P(x)$ is $IntClo$.
For a map $B\to C$, a lifting for the square 
$$\xymatrix{
\ZZ[x]\ar[d]\ar[r]&B\ar[d]\\
\ZZ\ar[r]\ar@{-->}[ru]&C,
}$$
exists iff the kernel of $B$ is reduced to 0, \ie that $B\to C$ is a monomorphism.
Now for a square (with $P$ monic and $B\to C$ a monomorphism)
$$\xymatrix{
A\ar[d]\ar[r]&B\ar[d]^{mono}\\
A[x]/P(x)\ar[r]\ar@{-->}[ru]&C
}$$
the image of $x$ in $C$ is an element integral over $B$ and any such can be defined by such a square.
The existence of a lift states that any element integral over $B$ is image of an element in $B$, \ie that $B$ is integrally closed in $C$.
\end{proof}

\begin{lemma}\label{fingenlem}
A finitely presented map $A\to B$ is an integral map iff it is finite.
\end{lemma}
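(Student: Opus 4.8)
A finitely presented map $A \to B$ is an integral map iff it is finite.

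The plan is to prove the two implications separately, using the structural description of integral maps provided by Proposition \ref{amd} together with the fact that "finite presentation" for rings means "finitely generated algebra with a finitely generated ideal of relations."

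First, suppose $A \to B$ is finite, i.e. $B$ is finitely generated as an $A$-module, say by $b_1, \dots, b_n$. Then each $b_i$ satisfies a monic polynomial over $A$ by the standard determinant trick (Cayley--Hamilton applied to multiplication by $b_i$ on the module $B$); concretely, writing $b_i b_j = \sum_k a_{ijk} b_k$ with $a_{ijk} \in A$, the element $b_i$ is a root of $\det(x\,\mathrm{Id} - (a_{ijk})_k)$. Hence every element of $B$, being an $A$-linear combination of the $b_i$, lies in a finitely generated $A$-submodule closed under multiplication, so by part a. of Proposition \ref{amd} it is integral over $A$. Thus $A \to B$ is an integral map. (Note this direction does not even use finite presentation.)

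Conversely, suppose $A \to B$ is integral and finitely presented. Since $B$ is a finitely presented $A$-algebra, write $B = A[x_1, \dots, x_n]/J$ with $J$ finitely generated. Each generator $x_i$ is integral over $A$ (more precisely over the image of $A$ in $B$), so there is a monic polynomial $P_i \in A[x]$ with $P_i(x_i) = 0$ in $B$. By the standard argument (as in the proof that a finitely generated integral extension is module-finite), the sub-$A$-module of $B$ generated by the monomials $x_1^{e_1} \cdots x_n^{e_n}$ with $0 \le e_i < \deg P_i$ is all of $B$: one reduces any monomial using the relations $P_i(x_i) = 0$ to lower the exponents. Hence $B$ is a finitely generated $A$-module, i.e. $A \to B$ is finite.

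The main subtlety, and the only place finite presentation is genuinely needed, is the converse direction: an arbitrary integral extension need not be module-finite (e.g. the integral closure of $\ZZ$ in $\overline{\mathbb{Q}}$), so one must use that $B$ is generated as an $A$-algebra by finitely many elements, each of which is then individually integral and hence contributes only finitely many module generators. I would make sure to phrase the reduction of monomials carefully, noting that once $x_i^{\deg P_i}$ is rewritten in terms of lower powers of $x_i$ with coefficients in $A$, repeated substitution terminates because the total degree in $x_i$ strictly decreases; the finitely-generated ideal $J$ plays no essential role beyond guaranteeing $B$ is a quotient of a polynomial ring in finitely many variables, which already follows from $B$ being a finitely generated algebra.
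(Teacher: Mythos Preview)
Your proof is correct and follows the same classical route as the paper: both directions rest on the equivalence between integrality of an element and module-finiteness of the subalgebra it generates (Proposition~\ref{amd}\,a.), together with the standard fact that a finitely generated integral algebra is module-finite. The paper states these ingredients tersely; you spell them out.

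One point worth noting: for the direction ``finite $\Rightarrow$ integral'' the paper argues that the sub-$A$-algebra generated by any $b\in B$ is a finitely presented $A$-module because it is a submodule of one. That inference is only valid over Noetherian rings. Your Cayley--Hamilton argument avoids this issue entirely and, as you correctly observe, does not even require the finite-presentation hypothesis. So your version of that implication is slightly cleaner.
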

\begin{proof}
$A\to B$ is a finite map if $B$ is finitely presented as a $B$ module. If the map is moreover finitely presented in $CRing$, $B$ is finitely presented as an $A$-module.

An integral extension is finite iff it is finitely generated. Conversely, if $A\to B$ is a finitely presented finite map, we prove that every element $b\in B$ is integral over $A$: the sub-$A$-algebra $C$ generated by $b$ is finitely presented $A$-module as a sub-module of such, then we use proposition \ref{amd}.
\end{proof}

Let $Fin^f$ be the class of finitely presented finite maps in $CRings$ and $Fin^f_*$ be the class of finite maps between finitely presented rings. The following proposition justifies the name chosen for this topology.
\begin{prop}\label{fingen}
The $(Int,IntClo)$ factorisation system is left generated by $Fin^f$. For this choice of generator, the class $Int$ is $Ind\z Fin^f$. 
\end{prop}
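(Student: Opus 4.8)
The plan is to derive both assertions from Proposition~\ref{intgen}, Lemma~\ref{fingenlem} and Theorem~\ref{factolim}, the organising observation being that $Fin^f$ is precisely the class $Int\cap{\cal C}^f$ of finitely presented maps in the left class.

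First, for the left generation, I would note that the generating maps $A\to(A/I)[x]/P(x)$ of Proposition~\ref{intgen} all lie in $Fin^f$: their source and target are finitely presented rings, and the map is integral, hence finite by Lemma~\ref{fingenlem}; conversely every map in $Fin^f$ is integral (Lemma~\ref{fingenlem} again), so $Fin^f\subset Int={}^\bot IntClo$. Writing $G$ for the set of generators of Proposition~\ref{intgen}, we thus have $G\subset Fin^f\subset Int$, whence $Fin^f{}^\bot\subset G^\bot=IntClo$ while $IntClo=Int^\bot\subset Fin^f{}^\bot$; so $Fin^f{}^\bot=IntClo$ and, as with Lemma~\ref{setgen}, $Fin^f$ left generates $(Int,IntClo)$.

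For the identification $Int=Ind\z Fin^f$, I would apply Theorem~\ref{factolim} to the same set $G$ (a set of maps between finitely presented rings), getting $Int=Ind\z\overline{G}$, where $\overline{G}$ is the class of pushouts, along arbitrary maps of $CRings$, of the maps of $G'$, and $G'\subset({\cal C}^\omega)^{\underline 2}$ is the closure of $G$ under composition, finite colimits, pushouts along maps of ${\cal C}^\omega$ and right cancellation. Since $Int$, being the left class of a factorisation system, is stable under composition (Proposition~\ref{proplift}.1), under finite colimits in the arrow category and under pushouts and right cancellation (the dual statements in Proposition~\ref{proplift}.3--4), we get $G'\subset Int$; as $G'$ moreover consists of maps between finitely presented rings, $G'\subset Int\cap{\cal C}^f=Fin^f$ by Lemma~\ref{fingenlem}. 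A pushout of a finite finitely presented ring map is again finite and finitely presented, so $\overline{G}\subset Fin^f$; and since $Fin^f\subset Int$ and $Int$ is closed under $\omega$-filtered colimits in the arrow category (dual of Proposition~\ref{proplift}.4), one concludes $Int=Ind\z\overline{G}\subset Ind\z Fin^f\subset Ind\z Int=Int$, i.e. $Int=Ind\z Fin^f$.

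The only non-formal ingredients are Lemma~\ref{fingenlem}, used to pass between ``integral'' and ``finite'' for finitely presented maps, and the stability of finiteness and of finite presentation under base change. The step I expect to require the most care --- although it is not strictly needed, the inclusions $\overline{G}\subset Fin^f\subset Int$ already sufficing --- is the sharper claim $\overline{G}=Fin^f$: this would amount to a spreading-out argument showing that a finite finitely presented $A$-algebra $B=A[x_1,\dots,x_n]/(g_1,\dots,g_m)$ in which each $x_i$ satisfies a monic relation $p_i(x_i)=\sum_j h_{ij}g_j$ is already defined over the finitely presented subring of $A$ generated by the coefficients of the $g_j$, $p_i$ and $h_{ij}$, hence is a pushout of a finite map between finitely presented rings.
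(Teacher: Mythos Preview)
Your proposal is correct and follows essentially the same route as the paper: use Proposition~\ref{intgen} and Lemma~\ref{fingenlem} to sandwich $G\subset Fin^f\subset Int$ for the left-generation claim, then invoke Theorem~\ref{factolim} for the description of $Int$ as an ind-class. The paper's proof is extremely terse---it simply asserts $\overline{G}=Fin^f$---whereas you give the cleaner sandwich $Int=Ind\z\overline{G}\subset Ind\z Fin^f\subset Ind\z Int=Int$, which only needs the easy inclusion $\overline{G}\subset Fin^f$; your closing remark that the full equality $\overline{G}=Fin^f$ would need a spreading-out argument is exactly the point the paper glosses over.
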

\begin{proof}
$Fin^f\subset Int$ by lemma \ref{fingenlem} and it contains the generators of proposition \ref{intgen}.
Using notation of the proof of theorem \ref{factolim}, the second assertion comes from $\overline{G}=Fin^f$.
\end{proof}

\subsubsection{Points}\label{fcintsurj}

\begin{prop}
A ring corresponds to a point of the $(Int,IntClo)$ factorisation system iff it is an algebraically closed field.
\end{prop}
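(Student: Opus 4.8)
The plan is to describe the point condition concretely, then reduce the ``only if'' direction to the Domain topology and supply algebraic closedness by hand, and finally verify the ``if'' direction by a direct argument with finite algebras over a field.

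First I would unwind the definitions. On ${\cal C}=CRings^o$ the Finite system has left class $IntClo^o$ and right class $Int^o$, and by Lemma~\ref{fingenlem} the class ${\cal B}^f=Int^o\cap{\cal C}^f$ consists precisely of the opposites of finitely presented finite maps of rings. Hence (recalling that the strict initial object $\emptyset$ of ${\cal C}$ is the zero ring) a ring $A$ is a point of the $(Int,IntClo)$ system exactly when $A\neq 0$ and every finite, finitely presented $A$-algebra $B\neq 0$ admits a retraction $B\to A$ of $A$-algebras.

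For the ``only if'' direction, the inclusion $Surj\subseteq Int$ in $CRings$ gives ${\cal B}_{Dom}^f=Surj^o\cap{\cal C}^f\subseteq Int^o\cap{\cal C}^f={\cal B}_{Fin}^f$, so any point for the Finite system splits in particular every map of ${\cal B}_{Dom}^f$ into it, and is therefore a point for the $(Mono^o,Surj^o)$ system; by Proposition~\ref{pointsurjmono} it is a field $k$. It then remains to see that $k$ is algebraically closed. Given a finite field extension $k\to k'$, this is a finite, finitely presented map with $k'\neq 0$, so the point condition furnishes a $k$-algebra retraction $r\colon k'\to k$. Since $k'$ is a field, $r$ is injective, hence $\dim_k k'=1$ and $k'=k$; a field with no proper finite extension is algebraically closed.

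For the ``if'' direction, let $k$ be algebraically closed and $k\to B$ finite with $B\neq 0$ (over a field, ``finite'' already forces finite presentation as an algebra, so this covers every map of ${\cal B}^f$ with source $B$). Then $B$ is a nonzero finite-dimensional, hence Artinian, $k$-algebra, so it has a maximal ideal ${\mathfrak m}$; the quotient $B/{\mathfrak m}$ is a finite extension of $k$, hence equal to $k$, and the composite $k\to B\to B/{\mathfrak m}=k$ is the identity. Thus $B\to B/{\mathfrak m}$ is a retraction of $k\to B$, and $k$ is a point. The only mildly delicate point is the bookkeeping around finite presentation --- checking that finite algebras over a field really lie in ${\cal B}^f$, and the analogous identification of ${\cal B}_{Dom}^f$ inside ${\cal B}_{Fin}^f$ --- but this is routine; everything substantive follows from the explicit point condition together with the comparison with the Domain topology.
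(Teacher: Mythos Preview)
Your proof is correct and follows the same overall strategy as the paper: both reduce to the Domain system via $Surj\subset Int$ to conclude $A$ is a field (Proposition~\ref{pointsurjmono}), and then argue algebraic closedness. The one genuine difference is in organisation. The paper invokes the explicit generating set of Proposition~\ref{intgen} (quotients $A\to A/I$ and maps $A\to A[x]/P(x)$ with $P$ monic) and reads the point condition, once $A$ is a field, directly as ``every monic polynomial has a root''; the passage from these generators back to arbitrary finite algebras is left implicit. You instead work with general finite algebras throughout, using the Artinian structure and a maximal ideal to produce the retraction in the ``if'' direction, and finite field extensions for the ``only if''. Your route is slightly more self-contained; the paper's is marginally shorter but leans on the reader to see why checking generators suffices.
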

\begin{proof}
A ring $A$ corresponds to a point iff any finitely presented integral map $A\to B$ admits a section.
From proposition \ref{intgen}, it is necessary and sufficient to prove this only for maps $A\to B$ where $B=A/I$ for some finitely generated ideal $I$ or $B=A[x]/P(x)$ and some monic or zero polynomial $P$.
Proposition \ref{pointsurjmono} says that existence of retraction for quotients $A\to A/I$ implies that $A$ is a field.
A field $A$ is now a point iff every monic polynomial has a root in $A$. But with coefficients in a field every polynomial is proportional to a monic one and $A$ is a point iff every polynomial has a root in $A$.
\end{proof}

\begin{prop}
The set of points of a ring $A$ is in bijection with the set of prime ideals of $A$.
\end{prop}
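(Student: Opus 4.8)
The plan is to follow the classical argument already used for propositions~\ref{zarpt} and~\ref{etpoint}. By definition the set of points of $A$ is $\pi_0$ of the category of points ${\cal P}t(A)$, whose objects are ring maps $A\to K$ with $K$ a point, \ie (by the previous proposition) an algebraically closed field, and whose morphisms $(A\to K)\to(A\to K')$ are ring maps $K'\to K$ over $A$. I would define the comparison map by sending the class of $A\to K$ to its kernel $\ker(A\to K)$, which is a prime ideal: $K$ being a field is an integral domain, so the kernel is prime, and $1\mapsto 1\neq 0$ forces it to be proper. Any morphism $(A\to K)\to(A\to K')$ is realised by a (necessarily injective) ring map $K'\to K$ over $A$, so $\ker(A\to K)=\ker(A\to K')$; hence this assignment is constant on connected components and descends to a well-defined map from the set of points of $A$ to the set of prime ideals of $A$.

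For surjectivity, given a prime ideal $p\subset A$ I would take the composite $A\to A/p\hookrightarrow\kappa(p)\hookrightarrow\overline{\kappa(p)}$, where $\kappa(p)$ is the fraction field of the domain $A/p$ and $\overline{\kappa(p)}$ an algebraic closure; this is a ring map to an algebraically closed field with kernel exactly $p$, so $p$ is in the image.

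For injectivity I would show that two maps $A\to K$ and $A\to K'$ to algebraically closed fields with the same kernel $p$ lie in the same connected component. Both factor through $A\to\kappa(p)$, and since $K$ and $K'$ are algebraically closed field extensions of $\kappa(p)$, each contains a copy of an algebraic closure $\Omega:=\overline{\kappa(p)}$; this gives ring maps $\Omega\to K$ and $\Omega\to K'$ over $A$, \ie morphisms $(A\to K)\to(A\to\Omega)$ and $(A\to K')\to(A\to\Omega)$ in ${\cal P}t(A)$, a cospan connecting $[A\to K]$ and $[A\to K']$. I expect the only delicate step to be this last one, which rests on the isomorphism-extension property of algebraically closed fields used to produce the embeddings $\Omega\to K$ and $\Omega\to K'$; this is standard field theory and there is no real obstacle. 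As in proposition~\ref{zarpt}, the actual content is simply that the connected-component relation on maps to algebraically closed fields is precisely ``having the same kernel''.
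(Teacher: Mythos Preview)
Your proof is correct and complete; the argument via kernels, algebraic closures, and the cospan through $\overline{\kappa(p)}$ works exactly as you describe.

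The paper takes a shorter route by reducing to the previously established Domain case: since $Surj\subset Int$, every algebraically closed field is in particular a field, so the category ${\cal P}t_{Int^f}(A)$ sits inside ${\cal P}t_{Surj^f}(A)$, giving a map $pt_{Prop}(A)\to pt_{Dom}(A)$, and the latter is already known to be the set of prime ideals of $A$. The reverse inclusion then comes from the existence of an algebraic closure for every field. This parallels exactly the reduction used in proposition~\ref{etpoint} (Etale to Zariski via $Loc\subset indEt$ and separable closures). Your direct argument is more self-contained and makes explicit the step the paper leaves implicit --- namely, that two maps to algebraically closed fields with the same kernel are connected \emph{within} the subcategory of algebraically closed fields, not merely within all fields --- whereas the paper's approach is terser because it leverages the structure already built up for the coarser factorisation system.
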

\begin{proof}
As $Surj\subset Int$, $pt_{Prop}(A)\subset pt_{Dom}(A)$. The inverse inclusion is a consequence of the existence of an algebraic closure for every field.
\end{proof}

\subsubsection{Covering families and local objects}\label{strictintegrallycloseddomain}

A family $\{A\to A_i\}$ of finitely presented integral maps is a point covering family iff any map $A\to \overline{k}$ to a residual algebraically closed field factors through some $A\to A_i$.
This is equivalent to the fact that any map $A\to k$ to a residue field of $A$ lift through one of the $A\to A_i$ after an algebraic extension of $k$.

\begin{prop}
Pointed local objects are integrally closed domain which fraction field is algebraically closed.
\end{prop}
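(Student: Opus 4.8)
The plan is first to translate the statement out of the dual conventions in force in this section. On ${\cal C}=CRings^o$ the factorisation system attached to $(Int,IntClo)$ has left class $IntClo^o$ and right class $Int^o$, so by Definition~\ref{pointedlocalob} a pointed local object is, read inside $CRings$, a ring $L$ carrying a map $L\to P$ in $IntClo$ (that is, an integrally closed monomorphism $L\hookrightarrow P$) whose target $P$ is a point. Since the points of this system are exactly the algebraically closed fields, the assertion reduces to the following: a ring $L$ admits an integrally closed monomorphism $L\hookrightarrow\overline{k}$ into an algebraically closed field if and only if $L$ is an integrally closed domain whose fraction field is algebraically closed.

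The implication from ``$L$ is an integrally closed domain with algebraically closed fraction field'' to ``$L$ is pointed local'' is immediate: then $K=\mathrm{Frac}(L)$ is a point, and $L\hookrightarrow K$ is an integrally closed monomorphism by the very definition of ``integrally closed domain'', so $L$ is pointed local.

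For the reverse implication, assume $L\hookrightarrow\overline{k}$ is an integrally closed monomorphism with $\overline{k}$ algebraically closed. Being a subring of a field with $1\neq 0$, $L$ is a domain; set $K=\mathrm{Frac}(L)\subseteq\overline{k}$. Any element of $K$ that is integral over $L$ already lies in $\overline{k}$ and is integral over $L$, hence lies in $L$; so $L$ is integrally closed in $K$, \ie $L$ is an integrally closed domain. It remains to show $K$ is algebraically closed, and the key step is the classical observation that if $\alpha\in\overline{k}$ is algebraic over $K$ then $c\alpha$ is integral over $L$ for a suitable $c\in L\setminus\{0\}$: starting from a relation $a_n\alpha^n+\dots+a_0=0$ with $a_i\in L$ and $a_n\neq 0$, multiplying by $a_n^{n-1}$ exhibits $a_n\alpha$ as a root of a monic polynomial over $L$. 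Then $c\alpha\in\overline{k}$ is integral over $L$, hence $c\alpha\in L$, hence $\alpha=(c\alpha)/c\in K$; applying this to a root in $\overline{k}$ of an arbitrary non-constant polynomial over $K$ shows that polynomial has a root in $K$, so $K$ is algebraically closed.

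I expect the only real subtlety to be keeping the variance straight when pushing Definition~\ref{pointedlocalob} through the $(-)^o$ dictionary of this section and recording that a map in $IntClo$ is an injective ring map; once that is settled both implications are elementary and self-contained, the heart being the clearing-denominators trick that turns an algebraic element into an integral one.
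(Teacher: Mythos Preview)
Your proof is correct and follows the same overall shape as the paper's: unwind the duality to see that a pointed local object is a ring $L$ with an integrally closed monomorphism $L\hookrightarrow\overline{k}$ into an algebraically closed field, and extract from this that $L$ is an integrally closed domain with algebraically closed fraction field.

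The only real difference is in how the last point is argued. The paper invokes stability of integral closure under localisation: since $L\hookrightarrow\overline{k}$ is integrally closed, so is its localisation $K(L)\hookrightarrow\overline{k}$, and a subfield integrally closed in an algebraically closed field is itself algebraically closed. You instead prove this directly via the clearing-denominators trick, turning an element of $\overline{k}$ algebraic over $K$ into one integral over $L$. Your argument is more self-contained and also makes the converse direction explicit, which the paper leaves to the reader; the paper's version is terser and more structural. Either route is fine.
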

\begin{proof}
Let $K$ be an algebraically closed field, and $A\to K$ an integrally closed map. We need only to show that the fraction field $K(A)=A[(A^*)^{-1}]$ of $A$ is algebraically closed. But the stability by localisation of integral closure implies that $K(A)\to K[(A^*)^{-1}]\simeq K$ is again integrally closed.
\end{proof}

In analogy with strict henselian local rings, such rings will be called {\em strict integrally closed domains}.

\begin{prop}\label{localintsurj}
Local objects are integrally closed domain which fraction field is algebraically closed.
\end{prop}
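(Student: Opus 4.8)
The plan is to prove the two implications separately, following the pattern of propositions \ref{zarlocal} and \ref{etlocal}. For the easy direction — a strict integrally closed domain $A$ is a local object — I would observe that the inclusion $A\hookrightarrow K(A)$ into the fraction field is a monomorphism in which $A$ is integrally closed, hence a map in $IntClo$, and its target is an algebraically closed field, i.e. a point; so $A$ is a pointed local object (definition \ref{pointedlocalob}), and the lemma following \ref{localobjects}, together with the fact that points are local, gives that $A$ is a local object. This is essentially the converse half of the previous proposition.

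For the main implication, suppose $A$ is a local object. First I would note that, since $Surj\subset Int$, every point covering family of the Domain topology is also one for the Finite topology, so $A$ is a local object for the Domain topology as well; proposition \ref{proofdomain} then shows $A$ is an integral domain, and I write $K=K(A)$. Next I would show $K$ is algebraically closed: for a monic $P\in K[X]$ of positive degree, a substitution $X=Y/d$ with $d\in A\setminus\{0\}$ clearing denominators turns $P$ into a monic polynomial $Q\in A[Y]$; the map $A\to A[Y]/(Q)$ is finite and finitely presented (a free $A$-module of rank $\deg Q$), and $\{A\to A[Y]/(Q)\}$ is a point covering family because any point $A\to\overline{k}$ lifts, the algebraically closed field $\overline{k}$ containing a root of the image of $Q$. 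A section of this family yields a root of $Q$ in $A$, hence a root of $P$ in $K$, so $K$ is algebraically closed.

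Finally, for integral closedness: given $t\in K$ integral over $A$, let $m\in A[X]$ be a monic polynomial of least degree $n$ with $m(t)=0$; if $n=1$ then $t\in A$, so assume $n\ge 2$. Then $m$ has no root in $A$: a root $a\in A$ would split off a monic factor $X-a$ by Euclidean division, $m=(X-a)m_1$ with $m_1\in A[X]$ monic, and since $K$ is a field with $t\neq a$ we would get $m_1(t)=0$ with $\deg m_1<n$, contradicting minimality. But $\{A\to A[X]/(m)\}$ is again a point covering family of finite, finitely presented maps, so its section would produce a root of $m$ in $A$ — a contradiction. Hence $A$ is integrally closed in $K$, and $A$ is a strict integrally closed domain.

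The recurring device is that $A\to A[X]/(m)$ for a monic $m$ is automatically an admissible member of a point covering family (free of rank $\deg m$, hence finite and finitely presented), and that surjectivity on points for such a family amounts to each algebraically closed residue field containing a root of $m$. I expect the only subtle point to be the integral-closure step, namely the observation that the minimal-degree monic relation of an integral element lying outside $A$ has no root in $A$ at all — this is precisely what allows a single finitely presented covering map to detect the failure of integral closedness; everything else is routine.
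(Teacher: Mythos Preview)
Your argument is correct, and it differs from the paper's in a way worth noting.

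The paper follows the template of proposition~\ref{etlocal}: to show $A\hookrightarrow K(A)$ is integrally closed, it takes an arbitrary monic $P\in A[X]$ with a root $t\in K(A)$, regards $A\to A[x]/P(x)$ as a map ``lifting the fraction field'', completes it to a point covering family by adjoining maps $A\to (A/p)^{int}$ for $p\neq 0$, and argues that the resulting section must retract $A\to A[x]/P(x)$. For the algebraic closedness of $K(A)$ it similarly passes through the integral closure $A'$ of $A$ in a finite extension of $K(A)$. Two issues lurk here: the auxiliary maps $A\to (A/p)^{int}$ and $A\to A'$ are not obviously finitely presented, and a retraction of $A\to A[x]/P(x)$ only produces \emph{some} root $a\in A$ of $P$, not the specific element $t$ one is after; an extra step (essentially your minimal-polynomial reduction) is needed to close that gap.

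Your approach sidesteps both difficulties by isolating the single fact that for any monic $Q\in A[X]$ of positive degree the one-element family $\{A\to A[X]/(Q)\}$ is already a finitely presented point covering family, so a local $A$ contains a root of every such $Q$. Clearing denominators then gives algebraic closedness of $K(A)$ directly, and the minimal-polynomial trick converts ``$P$ has some root in $A$'' into ``$t\in A$''. This is more elementary and self-contained than the paper's argument; what the paper's version buys is a closer structural parallel with the \'etale case (covering the complement of the point under consideration by auxiliary maps), at the cost of the finiteness and identification issues above. Your last paragraph correctly identifies the one genuinely delicate point.
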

\begin{proof}
Let $A$ be a local object. As it must be a local object for the $(Surj,Loc)$ factorisation system, it must be an integral domain.
Now we have to prove that the map $A\to K(A)=A[(A^*)^{-1}]$ is integrally closed. As is it already a monomorphism it is sufficient to prove that it has the unique right lifting property with respect to maps $A\to A[x]/P(x)$ where $P$ is monic. We are going to use the same argument as for the local objects of etale topology.
Given such a map $A\to A[x]/P(x)$ lifting the fraction field of $A$, it can be completed in a $(Int,IntClo)$ point covering family by adjoining $A\to (A/p)^{int}$ for prime ideals different from 0. Then the hypothesis on $A$ gives a retraction of one the map of the cover which can only be $A\to A[x]/P(x)$.

To prove that the fraction field $K(A)$ is algebraically closed, we are going to prove that any algebraic extension $K(A)\to K(A)[x]/P(x)$ where $P$ is irreducible in $K(A)[X]$ has a retraction.
The composite $A\to K(A)\to K(A)[x]/P(x)$ factors as $A\to A'\to K$ where $A'$ is the integral closure of $A$ in $K$, this map $A\to A'$ is a $(Int,IntClo)$ point covering family (or can be completed as such in the same way as before) and thus admits a retraction, which gives the wanted retraction for $K(A)$.
\end{proof}

\subsubsection{Distinguished covering families}\label{fpcfintsurj}

A point covering family $\{B\to B_i,i\}$ of an $A$-algebra $B$ is said {\em distinguished} if all the $B\to B_i$ are maps of finite presentation of $A$-algebras and satisfy one of the following two conditions
\begin{enumerate}[a.]
\item it is a $(Mono^o,Surj^o)$ point covering family,% (possible via lemma \ref{locsep}),
\item or it consists of single integral extension (such map will be called an {\em integral covering map}).
\end{enumerate}

\begin{lemma}
Any finitely presented integral map $B\to C$ between finitely presented $A$-algebras can be factored into a finitely presented quotient followed by a finitely presented integral covering map.
\end{lemma}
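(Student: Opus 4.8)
The plan is to follow the proof of lemma \ref{etalecov}, with the rôle of the "open support" of an étale map played now by the "closed support" of a finite module. By lemma \ref{fingenlem} the finitely presented integral map $B\to C$ is finite; being moreover a finitely presented ring map, it exhibits $C$ as a finitely presented $B$-module. Note in passing that the kernel of $B\to C$ coincides with the annihilator $\mathrm{Ann}_B(C)$ (if $b$ maps to $0$ in $C$ then $b$ kills all of $C$, and conversely $\mathrm{Ann}_B(C)$ kills $1_C$), so the middle object of the $(Surj,Mono)$-factorisation of $B\to C$ is $B/\mathrm{Ann}_B(C)$, whose spectrum is the support of $C$; the whole content of the lemma is to replace this (possibly non finitely generated) ideal by a finitely generated one defining the same closed set.

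Concretely I would take $I:=\mathrm{Fitt}_0^B(C)$, the $0$-th Fitting ideal, generated by the maximal minors of a finite presentation matrix of the $B$-module $C$; it is finitely generated, so $B/I$ is a finitely presented $A$-algebra and $B\to B/I$ is a finitely presented quotient. Since $\mathrm{Fitt}_0^B(C)\subseteq\mathrm{Ann}_B(C)$, the ideal $I$ annihilates $C$, hence $B\to C$ factors as $B\to B/I\to C$, and moreover $C\otimes_B(B/I)\simeq C$ — the exact analogue of the isomorphism $C\simeq C\otimes_BD'$ used in \ref{etalecov}. The second map $B/I\to C$ is again module-finite between finitely presented $A$-algebras, so by lemma \ref{fingenlem} it is a finitely presented integral map.

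It remains to check that $B/I\to C$ is a covering map, i.e. surjective on points, which by \S\ref{fcintsurj} means surjective on prime spectra. From the standard containments $\mathrm{Fitt}_0^B(C)\subseteq\mathrm{Ann}_B(C)$ and $\mathrm{Ann}_B(C)^n\subseteq\mathrm{Fitt}_0^B(C)$ (where $C$ is generated by $n$ elements over $B$) one gets $\sqrt I=\sqrt{\mathrm{Ann}_B(C)}$, so $\mathrm{Spec}(B/I)=V(I)$ is exactly the support of $C$; and for every prime $p$ in this support $C_p\neq 0$, hence by Nakayama $C\otimes_B\kappa(p)\neq 0$ and the fibre of $\mathrm{Spec}(C)\to\mathrm{Spec}(B/I)$ over $p$ is non-empty. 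Thus $B/I\to C$ is a finitely presented integral covering map and $B\to B/I\to C$ is the wanted factorisation. The only genuinely non-formal ingredient is precisely this detour through Fitting ideals: the naive choice $B\to B/\ker(B\to C)\to C$ need not have a quotient part that is finitely presented over $A$ without a noetherian hypothesis. Everything else is the "support is closed" shadow of the semicontinuity argument of \ref{etalecov}, together with lemma \ref{fingenlem}.
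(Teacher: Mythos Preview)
Your proof is correct. In fact it is more careful than the paper's own argument. The paper simply takes the $(Surj,Mono)$ factorisation $B\to B/\ker\to C$ and asserts without justification that the kernel is finitely generated; this is precisely the ``naive choice'' you warn against, and in the absence of a noetherian hypothesis that claim is not obvious (the kernel equals $\mathrm{Ann}_B(C)$, and annihilators of finitely presented modules need not be finitely generated over non-coherent rings). Your detour through $\mathrm{Fitt}_0^B(C)$ replaces this step by something that is finitely generated by construction while keeping the same radical, hence the same closed support. So the two proofs have the same architecture --- quotient by an ideal cutting out $\mathrm{Supp}(C)$, then check the remaining finite map hits every prime --- but your choice of ideal is the one that actually makes the finite-presentation claim go through in full generality.

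For the covering step the paper argues directly that $C\otimes_{B/I}K\neq 0$ for $K$ algebraically closed, using that the generators of $C$ satisfy monic relations; your Nakayama argument from $C_p\neq 0$ is equivalent and equally clean. One small point you leave implicit: surjectivity on $\mathrm{Spec}$ does give the lifting of algebraically closed field points because the residue field extension at any prime is algebraic (the map being integral), hence embeds into any algebraically closed target --- but this is exactly what the bijection $pt_{Fin}(A)\simeq\mathrm{Spec}(A)$ in \S\ref{fcintsurj} encodes, so your reference there is adequate.
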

\begin{proof}
We use the $(Surj,Mono)$ factorisation on $B\to C$ to obtain a quotient $D/I$ of $B$ with $I$ the kernel of $B\to C$. $I$ is finitely generated so $D\to B$ is finitely presented and so is $D\to C$ by cancellation.

We have to prove that $D\to C$ is an integral covering map. $C$ is generated by some finite set $\{c_i;i\}$ where $c_i$ is a root of a monic polynomial in $B[c_1,\dots, c_{i-1}][x]$. 
If $K$ is algebraically closed and $D\to K$ is a point, $K\to C\otimes_DK$ is an algebraic extension generated by the image of the $c_i$ (because the relations are monic, $C\otimes_DK$ is not zero). As $K$ is algebraically closed there exists a retraction, proving that any point of $D$ lift though $D\to C$.
\end{proof}

\begin{prop}\label{fpintsurj}
An $A$-algebra $B$ is a strictly integrally closed domain iff it lifts through any distinguished covering families.
\end{prop}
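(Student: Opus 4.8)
The plan is to mimic the structure of Proposition~\ref{fpetale}. The direction $(\Leftarrow)$ is immediate: a strictly integrally closed domain is, by Proposition~\ref{localintsurj}, a (pointed) local object, hence it lifts through \emph{every} point covering family, in particular through every distinguished one. For the converse the idea is \emph{not} to check square by square the unique right lifting property of $B\to K(B)$ against the generators $A_0\to(A_0/I)[x]/P(x)$ of Proposition~\ref{intgen} --- the retraction produced by ``lifting through a covering family'' need not be compatible with the map to $K(B)$, so that route loops --- but to extract from the distinguished families only two concrete facts, ``$B$ is a domain'' and ``every monic polynomial over $B$ of positive degree has a root in $B$'', and then to finish with a purely algebraic lemma.

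So assume $B$ lifts through all distinguished covering families. For $x,y\in B$ with $xy=0$, the family $\{B\to B/(x),\ B\to B/(y)\}$ is the pushout along the structure map $A[x,y]/(xy)\to B$ of the point covering family $\{A[x,y]/(xy)\to A[x],\ A[x,y]/(xy)\to A[y]\}$ of quotients, hence a distinguished family of the first kind; lifting through it forces $x=0$ or $y=0$, so by Lemma~\ref{proofintegral} $B$ is an integral domain. Next, for any monic $P\in B[X]$ of positive degree the singleton $\{B\to B[X]/(P)\}$ is a distinguished family of the second kind: it is of finite presentation over $A$ (the coefficients of $P$ lie in some finitely presented $A$-subalgebra $B_0\subset B$ carrying a monic lift $P_0$, and $B\to B[X]/(P)$ is the pushout of $B_0\to B_0[X]/(P_0)$), and it is an integral covering map because for any point $B\to\overline k$ with $\overline k$ algebraically closed one has $B[X]/(P)\otimes_B\overline k=\overline k[X]/(\overline P)$ which has a $\overline k$-point, $\overline P$ being monic of positive degree. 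Hence $B$ lifts through it, \ie there is a ring retraction $B[X]/(P)\to B$, and the image of $X$ under it is a root of $P$ in $B$.

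It then remains to invoke the following elementary fact: if $B$ is a domain in which every monic polynomial of positive degree has a root, then every monic polynomial over $B$ splits into monic linear factors (immediate induction on the degree, peeling off one root at a time), $B$ is integrally closed in $K(B)$ (if $\gamma\in K(B)$ is a root of a monic $P\in B[X]$, write $P=\prod_j(X-b_j)$ with $b_j\in B$; then $\prod_j(\gamma-b_j)=0$ in the field $K(B)$, so $\gamma=b_j\in B$ for some $j$), and $K(B)$ is algebraically closed (given a monic $Q\in K(B)[X]$ of degree $m$, clear denominators by some $s\in B$ and substitute $X=Y/s$ to obtain a monic $\widetilde Q(Y)=s^mQ(Y/s)\in B[Y]$, which splits over $B$, whence $Q$ splits over $K(B)$). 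Combining the three steps, $B$ is a strictly integrally closed domain. The only genuine subtlety in the argument is the one already flagged --- choosing the right finitely presented distinguished families to probe $B$ with, and noticing that their purely combinatorial output suffices --- everything else being routine checks (that the singleton families are point covering families and are of finite presentation over $A$).
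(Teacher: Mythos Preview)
Your proof is correct and takes a genuinely different route from the paper's. The paper argues, in the style of Proposition~\ref{fpetale}, by showing directly that the map $B\to K(B)^{alg}$ is integrally closed: given a lifting square against a finitely presented integral map $C\to D$ between finitely presented $A$-algebras, it factors $C\to D$ as a quotient $C\to C/I$ followed by an integral covering map $C/I\to D$, observes that $I$ already dies in $B$ (since $B$ is a domain mapping into $K(B)^{alg}$, so $B/IB\simeq B$), and then lifts $B$ through the single-map distinguished family obtained by pushing $C/I\to D$ out to $B$, via a cube diagram.

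Your approach instead probes $B$ only with the singleton families $\{B\to B[X]/(P)\}$ for monic $P$, extracting the single algebraic fact that every monic polynomial over $B$ of positive degree has a root, and then closes out with a self-contained commutative-algebra lemma (monic polynomials split into linear factors, hence $B$ is integrally closed in $K(B)$, hence $K(B)$ is algebraically closed). This buys you a proof that sidesteps the compatibility issue you correctly flag: the retraction produced by the paper's cube diagram is not manifestly compatible with the given map $D\to K(B)^{alg}$ either, and one implicitly has to fall back on the same algebraic consequence (``$P$ has a root in $B$'') to conclude. Your argument makes that passage explicit and elementary. One cosmetic point: your $B_0$ need not literally be a \emph{sub}algebra of $B$, only a finitely presented $A$-algebra equipped with a map $B_0\to B$ whose image contains the coefficients of $P$; the pushout argument goes through unchanged.
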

\begin{proof}
The necessary condition is clear by characterisation of local objects as strict integrally closed rings.
Conversely, the lifting condition with respect to finitely presented $(Mono^o,Surj^o)$ point covering families says that $B$ is a integral domain (lemma \ref{proofdomain}). 

If $K(B)^{alg}$ is an algebraic closure of the fraction field of $B$, we are going to prove that $B\to K(B)^{alg}$ is integrally closed.
It needs to have the left lifting property with respect to finitely presented integral map $C\to D$ between finitely presented $A$-algebras, we can transform this problem into a lifting through an integral covering map.
$$\xymatrix{
&C/I\ar'[d]^{\textrm{int.cov.map}}[dd]\ar[rr]&&B/IB\ar@{-->}[ld]_u^\simeq\\
C\ar[rr]\ar[dd]\ar[ru]&&B\ar[dd]\\
&D/ID\ar[rd]\ar@{-->}[ru]^\ell\\
D\ar[rr]\ar[ru]&&K(B)^{alg}
}$$
where $I$ is the kernel of $C\to D$. The map $u$ exists and is an isomorphism as $B/IB$ is a quotient of $B$ still containing the generic point.
And the lift $\ell$ exists by property of $B$.
\end{proof}

\subsubsection{Spectra and moduli interpretation}

\begin{prop}
$SPEC_{Prop}(A)$ classifies $A$-algebras that are strict integrally closed domains
and $Spec_{Prop}(A)$ classifies integral $A$-algebras that are strict integrally closed domains. 
In particular those algebras can have automorphisms and neither of the two spectra is spatial.
\end{prop}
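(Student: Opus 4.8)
The plan is to derive both assertions from the moduli interpretation theorem \ref{pointspec}, exactly as was done for the Zariski, Etale and Domain settings. The only thing to verify is that the Nisnevich context $Prop=(CRings^o=(Int,IntClo),\emptyset)$ is good in the sense of definition \ref{good}. Condition a. is immediate since $CRings$ is locally finitely presentable. Condition b. is the content of proposition \ref{fingen}: the system $(Int,IntClo)$ is left generated by the class $Fin^f$ of finitely presented finite maps, and since $Fin^f\subset Int\cap CRing^f$, dualising shows that the factorisation system on $CRings^o$ is right generated by a set of maps lying in ${\cal B}^f$. Condition c., compatibility, is what \S\ref{fpcfintsurj} provides: by proposition \ref{fpintsurj} an $A$-algebra is a strict integrally closed domain if and only if it lifts through the distinguished covering families defined there, so these form a distinguished class of Nisnevich covering families.

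Once $Prop$ is known to be good, theorem \ref{pointspec} identifies the category of points of $SPEC_{Prop}(A)$ with $\overline{\cal L}{/A}$, the local objects over $A$, and the category of points of $Spec_{Prop}(A)$ with $\overline{\cal L}(A)$, the local forms of $A$ --- the maps $L\to A$ lying in the right class $Int^o$ with $L$ a local object. By proposition \ref{localintsurj} the local objects are exactly the strict integrally closed domains, so unwinding the opposite conventions back into $CRings$ gives: for $SPEC_{Prop}(A)$, the $A$-algebras that are strict integrally closed domains; and for $Spec_{Prop}(A)$, those such algebras $L$ for which moreover $A\to L$ is integral, i.e. the integral $A$-algebras that are strict integrally closed domains.

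Finally, to see that neither spectrum is spatial it suffices to exhibit one point with a non-trivial automorphism, since the category of points of a spatial topos is at most a poset (\S\ref{moduliinterpretation}). The extension $\mathbb{Q}\to\overline{\mathbb{Q}}$ is integral --- each element of $\overline{\mathbb{Q}}$, being algebraic over $\mathbb{Q}$, is a root of a monic polynomial with rational coefficients --- and $\overline{\mathbb{Q}}$ is a strict integrally closed domain, so it is a point both of $Spec_{Prop}(\mathbb{Q})$ and of $SPEC_{Prop}(\mathbb{Q})$, with automorphism group $\mathrm{Gal}(\overline{\mathbb{Q}}/\mathbb{Q})\neq 1$. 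The main obstacle in all of this is not conceptual but a matter of careful bookkeeping: one must keep track of the interchange of left and right classes in passing between $CRings$ and $CRings^o$ when checking condition b. of definition \ref{good} and when reading off $\overline{\cal L}(A)$ in ring-theoretic terms. Granted propositions \ref{fingen}, \ref{localintsurj} and \ref{fpintsurj}, there is nothing new to prove.
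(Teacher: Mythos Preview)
Your proof is correct and follows essentially the same route as the paper's: verify that the Nisnevich context is good (citing the left-generation result and the distinguished-families proposition for conditions b.\ and c.), then invoke theorem \ref{pointspec}. Your version is more explicit in two respects --- you spell out condition a.\ and the unpacking of $\overline{\cal L}/A$ and $\overline{\cal L}(A)$ via proposition \ref{localintsurj}, and you give a concrete witness ($\overline{\mathbb{Q}}$ over $\mathbb{Q}$) for the non-spatiality claim that the paper leaves as a bare assertion. One cosmetic slip: when you write the context as $Prop=(CRings^o=(Int,IntClo),\emptyset)$ you should write $(IntClo^o,Int^o)$, since on $CRings^o$ the left and right classes swap; you clearly understand this from the rest of the argument, but the notation should match.
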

\begin{proof}
The Nisnevich context $Int=(CRings^o=(IntClo^o,Int^o),\emptyset)$ is good: b. is true by proposition \ref{intgen} and distinguished families were defined in lemma \ref{fpintsurj}. We apply theorem \ref{pointspec}.
\end{proof}

The two notions of points agree.
\begin{prop}\label{sameptintsurj}
For $A\in CRings$, the set of points of $Spec_{Prop}(A)$ is in bijection with $pt_{Prop}(A)$.
\end{prop}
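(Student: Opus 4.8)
The plan is to follow the proof of Proposition \ref{sameptet} almost verbatim, making the substitutions dictated by \S\ref{remzaret} and the triple factorisation of \S\ref{finitetop}: replace the localisation $A_p$ by the quotient $A/p$, the system $(Loc,Cons)$ by $(Surj,Mono)$, and the strict henselisation by the strict integral closure. By the previous proposition the category of points of $Spec_{Prop}(A)$ is the category of integral $A$-algebras that are strict integrally closed domains, so it suffices to produce a bijection between the set of isomorphism classes of such $A$-algebras and the set of prime ideals of $A$, which is $pt_{Prop}(A)$ by \S\ref{fcintsurj}.

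First I would construct the map from prime ideals to points. Given a prime $p\subset A$, form $A\to A/p\to K(A/p)\to K(A/p)^{alg}$, where $K(A/p)$ is the fraction field (the residue field $\kappa(p)$) and $K(A/p)^{alg}$ an algebraic closure, and let $A\to(A/p)^{sic}\to K(A/p)^{alg}$ be the $(Int,IntClo)$ factorisation; here $(A/p)^{sic}$ is the integral closure of $A/p$ in $K(A/p)^{alg}$, which is a strict integrally closed domain, hence by Proposition \ref{localintsurj} a local object, hence a point of $Spec_{Prop}(A)$. To recover $p$ from $A\to(A/p)^{sic}$, observe that $A/p\to(A/p)^{sic}$ is injective, so $A\to A/p\to(A/p)^{sic}$ is the $(Surj,Mono)$ factorisation of $A\to(A/p)^{sic}$; therefore $A/p$, and with it $p=\ker(A\to A/p)$, is uniquely determined. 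This gives an injection from prime ideals to isomorphism classes of points.

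For surjectivity, let $B$ be a strict integrally closed domain equipped with an integral structure map $A\to B$. Using $(Surj,Mono)$ write $A\to A/p\to B$; since $B$ is a domain and $A/p\hookrightarrow B$, the ideal $p$ is prime. As $A/p\to B$ is integral, $B$ is integral over $A/p$, so its fraction field $K(B)$ is algebraic over $K(A/p)$; since $B$ is a strict integrally closed domain, $K(B)$ is algebraically closed and $B\to K(B)$ lies in $IntClo$. Thus $K(B)$ is an algebraic closure of $K(A/p)$ and $A\to B\to K(B)$ is an $(Int,IntClo)$ factorisation of $A\to K(B)$; but $A\to(A/p)^{sic}\to K(B)$, with $(A/p)^{sic}$ the integral closure of $A/p$ in $K(B)$, is another one. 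By uniqueness of the factorisation $B\simeq(A/p)^{sic}$ over $A$, so $B$ is the point attached to $p$. Combined with the identification $pt_{Prop}(A)=\{\text{prime ideals of }A\}$ this yields the asserted bijection.

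The main obstacle, exactly as in the étale case, is the surjectivity step: one must guarantee that $B$ is the \emph{full} integral closure of $A/p$ in its fraction field rather than a proper integral subextension. This is supplied precisely by the property that a strict integrally closed domain is integrally closed in its fraction field (Proposition \ref{localintsurj}), together with the uniqueness clause of the $(Int,IntClo)$ factorisation; the rest of the argument is a routine transcription of the proof of Proposition \ref{sameptet}.
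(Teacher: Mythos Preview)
Your proof is correct and follows essentially the same approach as the paper's own proof: both construct the bijection by sending a prime $p$ to the middle object of the $(Int,IntClo)$ factorisation of $A\to K(A/p)^{alg}$, recover $p$ as the kernel (equivalently, via the $(Surj,Mono)$ factorisation), and prove surjectivity by showing that any point $A\to B$ yields a second $(Int,IntClo)$ factorisation of $A\to K(B)$, forcing $B\simeq (A/p)^{sint}$ by uniqueness. Your write-up is slightly more explicit about why $K(B)$ is an algebraic closure of $K(A/p)$, but the argument is the same.
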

\begin{proof}
We need to prove that the set of points of $Spec_{Prop}(A)$ is in bijection with that of prime ideals of $A$.
We proceed as in proposition \ref{sameptet}.
Given a prime ideal and the associated integral domain quotient $A\to A/p$, we consider $A/p\to K(A/p)\to K(A/p)^{alg}$ where $K(A/p)^{alg}$ is an algebraic closure of the fraction field $K(A/p)$. The $(Int,IntClo)$ factorisation of this maps defines an object $(A/p)^{sint}$ which is a point of $Spec_{Prop}(A)$. $(A/p)^{sint}$ is called the {\em strict integral closure} of $A$ at $p$.
The map $A/p\to K(A/p)^{alg}$ is injective and so is $A/p\to (A/p)^{sint}$ which implies that $p$ is the kernel of $A\to (A/p)^{sint}$. We have constructed an injective map from prime ideals to points of $Spec_{Prop}(A)$. We prove now the surjectivity : for $A\to B$ a point of $Spec_{Prop}(A)$, $B$ being an integral domain, the kernel of $A\to B$ is a prime ideal ; with the notation of before, we have a diagram
$$\xymatrix{
A\ar[r]^-{Int}\ar[d]_{Surj}&B\ar[r]^-{IntClo}&K(B)\\
A/p\ar[r]_-{Int}\ar[ru]_{Mono}&(A/p)^{sint}\ar[r]_-{IntClo}&K(A/p)^{alg}\ar[u]_{IntClo}
}$$
presenting $A\to (A/p)'\to K(B)$ as another factorisation of $A\to B\to K(B)$, so $B\simeq (A/p)^{sint}$.
\end{proof}

\subsection{Nisnevich finite topology}\label{nisfinite}

Integral domains, integrally closed domains and strict integrally closed domains behave like local rings, henselian local rings and strictly henselian local rings, so it is tempting to define a Nisnevich localisation of the $(Int,IntClo)$ setting so that local object are non strict integrally closed domains.

We consider the class $\cal F$ of fields and the associated Nisnevich forcing of the previous setting.
A $(Int,IntClo)$ point covering family $\{A\to A_i,i\}$ of $A$ is $\cal F$-localising iff for any map $A\to K$ to a field, there exists an $i$ and a factorisation of $A\to K$ through $A\to A_i$. 
In particular, $(Mono^o,Surj^o)$ point covering families are $\cal F$-localising.
The Nisnevich context $NFin:=(CRings^o=(IntClo^o,Int^o),{\cal F})$ will be called the {\em Nisnevich finite context}.

\begin{prop}
A ring is in the saturation of $\cal F$ iff it is an integrally closed domain.
\end{prop}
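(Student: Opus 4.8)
The plan is to prove the two inclusions separately, following the template of Proposition \ref{hensel} (the analogous statement in the Nisnevich/Etale setting) and using Proposition \ref{localintsurj}, Proposition \ref{proofdomain}, and the observation recorded just above that $(Mono^o,Surj^o)$ point covering families are $\cal F$-localising. Since $\overline{\cal F}$ always contains ${\cal L}oc$, the strict integrally closed domains of Proposition \ref{localintsurj} are automatically Nisnevich-finite local; the content of the proposition is that no strictness on the fraction field survives the Nisnevich forcing.

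First I would take a Nisnevich-finite local object $A$ and show it is an integrally closed domain. As $(Mono^o,Surj^o)$ point covering families are $\cal F$-localising they are Nisnevich-finite covering families, so $A$ is a local object for the Domain topology, hence an integral domain by Proposition \ref{proofdomain}; let $K=K(A)$ be its fraction field. To see that $A$ is integrally closed in $K$, I would argue by contradiction: given $b\in K\setminus A$ integral over $A$, choose a monic $P\in A[X]$ of least degree with $P(b)=0$. A division argument shows $P$ has no root in $A$ at all: a root $c\in A$ would split off a monic factor $X-c$ over $A$, and since $b-c$ is a unit of $K$, $b$ would satisfy the complementary monic factor, a relation of smaller degree. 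Now the finite map $A\to A[X]/(P)$ together with the finitely presented surjections $A\to A/(a)$, with $a$ ranging over nonzero non-units of $A$, form an $\cal F$-localising point covering family: the inclusion $A\hookrightarrow K$ lifts through $A\to A[X]/(P)$ by $X\mapsto b$, and any field-valued point of $A$ with nonzero kernel $\mathfrak p$ lifts through $A\to A/(a)$ for any $0\neq a\in\mathfrak p$. Locality of $A$ yields a retraction of one member; a surjection $A\to A/(a)$ admits none (a retraction would make it an isomorphism, impossible in a domain with $a\neq 0$), so $A\to A[X]/(P)$ is retracted and exhibits a root of $P$ in $A$ --- a contradiction.

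For the reverse inclusion I would take an integrally closed domain $A$ with fraction field $K$ and an $\cal F$-localising finite point covering family $\{A\to A_i\}$ (these generate the Nisnevich-finite topology, so it suffices to lift through them). Being $\cal F$-localising, it admits a factorisation $A\to A_{i_0}\xrightarrow{\phi}K$ of the field-valued point $A\hookrightarrow K$. Since $A\to A_{i_0}$ is finite, every element of $A_{i_0}$ is integral over $A$, hence $\phi(A_{i_0})$ consists of elements of $K$ integral over $A$ and therefore lies in $A$; so $\phi$ factors as $A_{i_0}\xrightarrow{\psi}A\hookrightarrow K$, and since $A\hookrightarrow K$ is a monomorphism, $\psi$ is a retraction of $A\to A_{i_0}$. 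Thus every Nisnevich-finite covering family of $A$ has a section, i.e. $A\in\overline{\cal F}$.

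The delicate point is the first inclusion, and specifically the choice of $P$ of minimal degree: the local-object hypothesis only produces a retraction of $A\to A[X]/(P)$, that is, \emph{some} root of $P$ in $A$, and without minimality this need not be the element $b$ one started with; taking $P$ of least degree makes it have no $A$-rational root whatsoever, which is what delivers the contradiction. The remaining ingredients --- that the $\cal F$-localising point covering families form a pullback-stable basis, that proper quotients of a domain have no retraction, and the integral-closure computation in the converse --- are routine.
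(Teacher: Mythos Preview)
Your proof is correct and follows the same strategy as the paper. The direction ``integrally closed domain $\Rightarrow$ Nisnevich-local'' is essentially identical to the paper's argument. For the converse the paper merely refers back to the proof of Proposition~\ref{localintsurj}; you carry this out explicitly, and in doing so you correctly replace the maps $A\to(A/p)^{int}$ used there by the principal quotients $A\to A/(a)$ --- a necessary adjustment, since the former family need not be ${\cal F}$-localising (arbitrary fields, not only algebraically closed ones, must lift through it) --- and you make explicit the minimal-degree argument that converts ``some root of $P$ lies in $A$'' into a genuine contradiction with $b\notin A$.
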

\begin{proof}
Let $A$ be an integrally closed domain, \ie a integral domain such that the map $A\to K(A)$ to the fraction field is integrally closed, and $A\to A_i$ a $\cal F$-localising point covering family. By definition of such a family there exists an $i$ and a factorisation $A\to A_i\to K(A)$ of $A\to K(A)$. This forces $A\to A_i$ to be an integral extension and, as $A$ is integrally closed, there exists a retraction.
The reciprocal part has already been proven in the proof of proposition \ref{localintsurj}.
\end{proof}

The following lemma is a consequence of $Surj\subset Intsurj$ and of the definition of Nisnevich covering families.
\begin{lemma}
$(Mono^o,Surj^o)$ point covering families are Nisnevich finite covering families.
\end{lemma}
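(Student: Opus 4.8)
The plan is to unwind the definitions: a $(Mono^o,Surj^o)$ point covering family already has, for essentially tautological reasons, the two features that make up a Nisnevich finite covering family — it is an $(Int,IntClo)$ point covering family, and it is $\cal F$-localising.

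By proposition~\ref{pointsurjmono} the points of the $(Mono^o,Surj^o)$ system are precisely the fields, so, through definition~\ref{factotopo}, a $(Mono^o,Surj^o)$ point covering family of $A$ is a family $\{A\to A/I_i\}$ of finitely presented quotients such that every ring map $A\to K$ into a field factors through one of the $A\to A/I_i$. This last property is exactly the $\cal F$-localising condition of \S\ref{nisfinite} (as already observed there), which settles one of the two requirements. It remains to check that such a family is an $(Int,IntClo)$ point covering family. Each $A\to A/I_i$ is a finitely presented integral map, since $Surj\subset Int$ (\S\ref{finitetop}); being finitely presented, it is a finite map by lemma~\ref{fingenlem}, hence one of the etale maps of the finite topology. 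Moreover the family is surjective on points of the $(Int,IntClo)$ system: by \S\ref{fcintsurj} these points are the algebraically closed fields, and any map $A\to\overline{k}$ with $\overline{k}$ algebraically closed is in particular a map into a field, hence factors through some $A\to A/I_i$; so every point of $A$ lifts through the family, which is the defining condition of definition~\ref{factotopo}. Combining these observations, the family is an $\cal F$-localising $(Int,IntClo)$ point covering family, i.e. a Nisnevich finite covering family.

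There is no real obstacle here; the only thing that needs attention is the bookkeeping between the two opposite factorisation systems, together with the observation that being able to lift all maps out of fields is simultaneously strong enough to give surjectivity on the (smaller) set of points of the finite topology and to give the $\cal F$-localising condition.
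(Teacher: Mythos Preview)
Your proof is correct and follows essentially the same approach as the paper, which simply notes that the lemma ``is a consequence of $Surj\subset Int$ and of the definition of Nisnevich covering families.'' You have unwound this one-line justification into its two constituent verifications (that the family is an $(Int,IntClo)$ point covering family and that it is $\cal F$-localising), which is exactly the intended argument.
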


\subsubsection{Distinguished covering families}\label{fpcfnisdual}

A Nisnevich finite point covering family $\{B\to B_i,i\}$ of an $A$-algebra $B$ is said {\em distinguished} if it is of finite presentation over $A$, \ie there exist $A\to B'\to B$ where $A\to B'$ is of finite presentation and all $B\to B_i$ are pushout of some maps $B'\to B'_i$ between algebra of finite presentation, and satisfies one of the following two conditions
\begin{enumerate}[a.]
\item it is a $(Mono^o,Surj^o)$ point covering family, 
\item or the family is reduced to two elements $B'\to B'_0$ and $B'\to B'_1$ where $B'_0=B/b$ for some $b\in B$ and $B'_1$ is an integrally extension of $B'$ such that $B'[b^{-1}]\to B'_1[b^{-1}]$ admit a retraction.
\end{enumerate}
Geometrically (for the Zariski topology), this last condition says that the covering family is distinguished if it contains a finitely presented Zariski closed set $Z$ and cover its complement by some integral extension that has a section over the complement of $Z$. 

\begin{prop}\label{distnisfin}
A $A$-algebra $B$ is an integrally closed domain iff it lifts through any distinguished Nisnevich finite covering families.
\end{prop}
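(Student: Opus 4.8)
The plan is to follow the pattern of propositions \ref{fpetale} and \ref{fpintsurj}, and of the Nisnevich argument of \S\ref{fpcfnis}, now applied to the $(Int,IntClo)$ system with forcing class $\cal F$. The necessary condition is immediate: $\overline{\cal F}$ is exactly the class of integrally closed domains, so such a $B$ is a Nisnevich finite local object and therefore lifts through every Nisnevich finite covering family, a fortiori through the distinguished ones.

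For the converse, assume $B$ lifts through all distinguished Nisnevich finite covering families. The type a.\ distinguished families are precisely the finitely presented $(Mono^o,Surj^o)$ point covering families; among them are the families $\{A[x,y]/(xy)\to A[x],\ A[x,y]/(xy)\to A[y]\}$ of lemma \ref{proofintegral}, so lifting through all of them forces, by that lemma, $B$ to be an integral domain. Write $K=K(B)$. It remains to prove that $B$ is integrally closed in $K$, \ie that every $\alpha\in K$ which is integral over $B$ already lies in $B$.

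Fix such an $\alpha$ and write $\alpha=p/q$ with $p,q\in B$, $q\neq 0$. Then $B\to B[\alpha]$ is an integral (indeed finite) extension, $B[\alpha]$ is a domain with fraction field $K$, and $B[q^{-1}]\to B[\alpha][q^{-1}]$ is an isomorphism since $\alpha\in B[q^{-1}]$; in particular it has a retraction. Hence the pair $\{B\to B/qB,\ B\to B[\alpha]\}$ satisfies condition b.\ in the definition of a distinguished family, with closed part $B/qB$ and integral covering part $B[\alpha]$; and since $B$ is a filtered colimit of finitely presented $A$-algebras and $B[\alpha]$ is finite over $B$ and an isomorphism after inverting $q$, this pair descends to a finitely presented level over $A$, so it is indeed a distinguished Nisnevich finite covering family of $B$. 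By hypothesis $B$ lifts through it; since $B$ is a domain and $q\neq 0$ the map $B\to B/qB$ has no section, so $B\to B[\alpha]$ admits a section $\sigma\colon B[\alpha]\to B$. Now for any $x\in B[\alpha]$, write $x=a/b$ with $a,b\in B$, $b\neq 0$ (possible as $x\in K=K(B)$); then $bx=a$ in $B[\alpha]$, hence $b\,\sigma(x)=a$ in $B$, hence $\sigma(x)=a/b=x$ in $K$. In particular $\alpha=\sigma(\alpha)\in B$, and $B$ is an integrally closed domain.

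The one delicate point is the descent just invoked. The family $\{B\to B/qB,\ B\to B[\alpha]\}$ is defined over $B$, whereas a distinguished family is required to be of finite presentation over $A$, and $B[\alpha]$ need not be finitely presented over $B$ in general. One must therefore produce a finitely presented $A$-algebra $B_\lambda\to B$ with an element $q_\lambda\mapsto q$ and a finite $B_\lambda$-algebra restricting along $B_\lambda\to B$ to $B[\alpha]$ and still an isomorphism after inverting $q_\lambda$ (so that the retraction persists at the finite level and the pushout to $B$ is the family above), and check that the pushforward of a distinguished family along $B_\lambda\to B$ is again distinguished. This is a routine spreading-out argument, but it is where the bookkeeping has to be done with some care.
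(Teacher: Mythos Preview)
Your strategy is the paper's: use the type a.\ families to get that $B$ is a domain, then test integral closure with a type b.\ family built from a monic $P$ and the denominator $q$ of the element in question. The paper, however, works directly at the level of a finitely presented $A$-algebra $C$ (the source of a generic lifting square for $B\to K(B)$), builds the distinguished pair $\{C\to C/b,\ C\to C[x]/P(x)\}$ there, and invokes the hypothesis for the map $C\to B$. This is exactly what your ``delicate point'' is meant to produce, so the descent you sketch is not so much routine bookkeeping as it is a reconstruction of the paper's argument.

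There is a genuine gap, separate from the descent. What your spreading-out actually yields at the finite level is $B_\lambda\to B_\lambda[x]/P_\lambda(x)$, whose pushforward to $B$ is $B\to B[x]/P(x)$, \emph{not} $B\to B[\alpha]$. (You acknowledge $B[\alpha]$ need not be finitely presented over $B$; but then no finitely presented $B_\lambda$-algebra can base-change to it, so the pair $\{B\to B/qB,\ B\to B[\alpha]\}$ is simply not a distinguished family in the sense defined.) A section $\sigma\colon B[x]/P(x)\to B$ only gives \emph{some} root $\beta\in B$ of $P$; your pleasant computation showing $\sigma$ is the identity in $K$ uses crucially that $B[\alpha]\subset K$, which fails for $B[x]/P(x)$ (it need not even be a domain). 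So from the section alone you cannot conclude $\alpha\in B$.

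The repair is short but must be made explicit: either choose $P$ of minimal degree among monic polynomials in $B[x]$ annihilating $\alpha$ (then a root $\beta\in B$ forces $\alpha=\beta$, since otherwise $P(x)/(x-\beta)$ is a monic annihilator of $\alpha$ in $B[x]$ of smaller degree), or argue by induction on $\deg P$, factoring out $(x-\beta)$ and repeating the distinguished-family step with the quotient polynomial (the retraction over $B[q^{-1}]$ persists since $\alpha$ is still a root). With either fix your argument goes through and is then essentially the paper's, transported from $C$ to $B$.
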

\begin{proof}
We need to prove only the sufficient part. 
Lifting through finitely presented $(Mono^o,Surj^o)$ point covering families says that $B$ is an integral domain (lemma \ref{proofintegral}), we need then to show that, if $K(B)$ is it fraction field of $B$, the map $B\to K(B)$ is integrally closed, \ie has the left lifting property with respect integral map between finitely presented $A$-algebras of the type $C\to (C/I)[x]/P(x)$ for some finitely presented $I$ and some monic polynomial $P$, we can use the same trick as in proposition \ref{fpintsurj} and suppose $I=0$. We are going to complete $C\to D$ into a distinguished Nisnevich finite covering family. In a diagram
$$\xymatrix{
C\ar[r]\ar[d]&B\ar[d]\\
C[x]/P(x)\ar[r]\ar[ru]^\ell&K(B)
}$$
we can always assume $C$ to be an integral domain by quotienting by the kernel of $C\to K(B)$, so $x$ can be describe in $K(C)$ as some fraction $a/b$ so $C[b^{-1}]\to C[b^{-1}][x]/P(x)$ has a section. This will be the distinguished localisation of the covering family, we complete it in a cover with $C\to C/b$. Now by hypothesis $C\to B$ will factor one of the two maps of the cover, and it cannot be $C\to C/b$ as the map $K(C)\to K(B)$ send $b$ to an invertible element.
\end{proof}

\subsubsection{Spectra and moduli interpretation}\label{dualnispec}

\begin{prop}
For a ring $A$, points of $SPEC_{NFin}(A)$ are $A$-algebras that are integrally closed domains
and points of $Spec_{NFin}(A)$ are integral extension of quotients of $A$ that are integrally closed domains.
\end{prop}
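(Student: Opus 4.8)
The plan is to verify that the Nisnevich finite context $NFin=(CRings^o=(IntClo^o,Int^o),{\cal F})$ is \emph{good} in the sense of Definition \ref{good}, and then to deduce both statements from Theorem \ref{pointspec}, together with the preceding proposition identifying the saturation $\overline{\cal F}$ with the category of integrally closed domains.

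For goodness I would check the three conditions of Definition \ref{good} in turn. Condition a. holds because $CRings$ is locally finitely presentable. Condition b., that $(IntClo^o,Int^o)$ be right generated by maps in ${\cal B}^f$ --- equivalently, that $(Int,IntClo)$ be left generated in $CRings$ by maps in $Int\cap({\cal C}^f)^o=Fin^f$ --- is exactly Proposition \ref{fingen}. Condition c., compatibility, is the content of \S\ref{fpcfnisdual}: by Proposition \ref{distnisfin} an $A$-algebra $B$ is an integrally closed domain if and only if it lifts through every distinguished Nisnevich finite covering family, so the distinguished families of \S\ref{fpcfnisdual} form a distinguished class of Nisnevich covering families on $CRings^f/A$ for each $A$, and by the lemma reformulating compatibility in terms of such a class this says precisely that $\overline{\cal F}/A$ is the category of points of $\widetilde{{\cal C}^f/A}$.

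Granting goodness, Theorem \ref{pointspec} applies. Its first clause identifies the category of points of $SPEC_{NFin}(A)$ with $\overline{\cal F}/A$; unwinding the opposite conventions, this is the category of ring maps $A\to L$ with $L\in\overline{\cal F}$, i.e. of $A$-algebras that are integrally closed domains. Its second clause identifies the category of points of $Spec_{NFin}(A)$ with the category $\overline{\cal F}(A)$ of ${\cal F}$-local forms of $A$, i.e. of objects $L\to A$ of ${\cal C}/A$ whose structural map lies in ${\cal B}$; translated to $CRings$ this is a map $A\to L$ in $Int$ with $L$ an integrally closed domain, and factoring it through its $(Surj,Mono)$ decomposition $A\twoheadrightarrow A/I\hookrightarrow L$ (where the second map is again in $Int$ by right cancellation of the left class, Proposition \ref{proplift}) exhibits $L$ exactly as an integral extension of a quotient of $A$ that is an integrally closed domain. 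There is no genuine obstacle at this stage: the real work --- the construction of distinguished covering families and the finite generation of the factorisation system --- was already carried out in \S\ref{fpcfnisdual} and Proposition \ref{fingen}; what remains is only the bookkeeping with the duality conventions.
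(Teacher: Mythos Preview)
Your proof is correct and follows essentially the same approach as the paper's: verify that $NFin$ is good and invoke Theorem \ref{pointspec}. The paper cites Proposition \ref{intgen} rather than \ref{fingen} for condition b., but your choice is if anything more apt since it names $Fin^f$ directly; your additional unpacking of the second clause via the $(Surj,Mono)$ factorisation is a helpful gloss the paper omits.
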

\begin{proof}
The Nisnevich context $NFin$ is good: b. is true by proposition \ref{intgen} and distinguished families were defined in lemma \ref{distnisfin}. We apply theorem \ref{pointspec}.
\end{proof}

As in \S\ref{nispec}, the small Nisnevich finite spectrum of $A$ have in general more points than the set of prime ideals of $A$. 
Also, a prime ideal $p$ of $A$ still define two points of $Spec_{NFin}(A)$, the first one is the point of $Spec_{Prop}(A)$ associated to $p$ and the second on is the integrally closed domain obtained by the $(Int,IntClo)$ factorisation of the residue map $A\to \kappa(p)$.

\subsection{Remarks on the previous settings}\label{duality}

\paragraph{Etale-Finite comparison}

We would like to sketch here a parallel between the six previous studied contexts. Recall that $\cal F$ is the subcategory of $CRings$ generated by fields, and that $f{\cal F}$ that generated by fat fields (\S\ref{zarpoints}). 

\begin{center}
\begin{tabular}{c|c|c}
& Etale context & Finite context\\
\hline
Primary factorisation system &$(indEt,Hens)$ & $(Int,IntClo)$\\
Secondary factorisation system &$(Loc,Cons)$ & $(Surj,Mono)$\\
Nisnevich context &$((Loc,Hens),f{\cal F})$ & $((Surj,Mono),{\cal F})$
\end{tabular}
\end{center}
The 'secondary factorisation system' is obtained from the primary one by looking only at those maps in the left class that are epimorphisms in $CRings$: localisations are those ind-etale maps that are epimorphisms and surjections are those integral maps that are epimorphisms. The secondary factorisation system can be thought as a way to extract open embbedings from etale maps.
Also both Nisnevich localising classes are exactly the points of the secondary factorisation system.
We are not sure how much these remarks are meaningful, but they do sketch a general structure. Thinking as $\cal C$ as $CRings^o$, one can define canonically from a given factorisation system ${\cal C=(A,B)}$ a secondary factorisation system as $(^\bot({\cal B}\cap Mono),{\cal B}\cap Mono)$ and a Nisnevich context $({\cal C=(A,B)},{\cal P}t_{({\cal B}\cap Mono)^f}({\cal C}))$.

\paragraph{Points and local objects}
We recall the comparison between the points and local objects for the different contexts.
\begin{center}
{\scriptsize
\begin{tabular}{c|c|c}
& Etale context & Finite context\\
\hline
Secondary points & fat fields & fields\\
Primary points & fat separably closed fields & algebraically closed fields\\
Secondary local objects & local rings & integral domain\\
Primary local objects & strict Henselian local rings & strict integrally closed domains\\
Nisnevich local objects & Henselian local rings & integrally closed domains
\end{tabular}
}
\end{center}

It is remarkable that for the four factorisation systems the set of points of a ring $A$ is always the set of prime ideals of $A$ and that it always coincide with the set of points of the associated spectra (but this is no longer the case for the associated Nisnevich contexts).
Also for every prime ideal $p\subset A$ there exists always a (essentially unique) distinguished map $A\to \kappa(p)^!$ where $\kappa(p)^!$ is the residue field or some extension of it at $p$, such that the local object at $p$ can be constructed by factorising $A\to \kappa(p)^!$ for the underlying factorisation system.

\paragraph{Duality}
More than a formal analogy, there seems to be a kind of duality relating the Etale and Finite contexts. We do not know how to formalize this duality, but it is intuitively related to the complementarity between localisation and quotients.
The following table points out other dual notions. 

\begin{center}
{\scriptsize
\begin{tabular}{c|c|c}
& Etale context & Finite context\\
\hline
secondary generators & $\ZZ[x]\to\ZZ[x,x^{-1}]$ & $\ZZ[x]\to\ZZ$ \\
& ($\GG_m\to\AA^1$)& ($\{0\}\to \AA^1$)\\
locality condition & $x+y$ invertible $\Rightarrow$ $x$ or $y$ invertible & $xy=0 \Rightarrow (x=0 \textrm{or }y=0)$\\
& ($x+y\in \GG_m \Rightarrow x$ or $y\in \GG_m$) & \\
completion &henselisation &normalisation\\
%&residue field & fraction field\\
%& devaluation rings & valuation rings\\
\end{tabular}
}
\end{center}

Normalisation of a noetherian ring $A$: if $p_i$ are the minimal prime of $A$ and $\kappa(p_i)$ the associated residue fields, $NA$ is the middle object of the $(Int,Intclo)$ factorisation of $A\to \prod_i \kappa(p_i)$. It is always a product of the normalisation $NA_i$ of the $A/p_i$, indeed the idempotents associated with $\prod_i \kappa(p_i)$ are elements integral over $\ZZ$ so they belong to $NA$.

Henselisation of a semilocal ring $A$: if $m_i$ are the maximal prime of $A$ and $\kappa(m_i)$ the associated residue fields, $HA$ is the middle object of the $(indEt,Hens)$ factorisation of $A\to \prod_i \kappa(m_i)$. As $A$ is the product of its localisations $A_{m_i}$, $HA$ is the product of the henselisation $HA_i$ of the $A_{m_i}$.

\paragraph{Dual lifting properties} This would-be-duality between the Etale and Finite contexts can be also thought as follow.
Having in mind that points of a local ring are all generisation of the closed point, and that points of a integral domain are all specialisation of the generic point, the lifting properties for etale and finite maps are dual in the same sense than a category and its opposite.
Another illustration of this is the fact that the poset of points of Zariski and Domain spectra are opposite categories.
All this recall Grothendieck's smooth and proper functors \cite{georges1} for which a functor $F:C\to D$ is smooth iff its opposite $F:C^o\to D^o$ is proper. It is stated in \cite{georges1} that this property of functors has no analog in algebraic geometry, but these dual topologies could be a hint toward a more precise analogy. However the classes of smooth and proper functors are not know (yet?) to be part of factorisation systems so a link with our theory is not obvious.

\paragraph{Toward a formalisation}
The example to follow in \S\ref{leftright} of left and right fibrations of category also has a flavour of the same kind of duality, but the situation is clearer in this setting as the opposition of categories exchange the two dual factorisation systems. Is there an operation of the same kind exchanging the Etale and Finite factorisation systems ?
Also, Pisani in \cite{pisani} proposes a formalization of dual factorisation systems through the property that he call the {\em reciprocal stability law} (see our \S\ref{leftright}); is there such a structure between Etale and Finite systems ? do they define a Pisani duality ?

\subsection{Other examples}\label{other}

In this section, we would like to work with our general idea of topologizing factorisation systems in categories more general than opposite of locally finitely presentable ones. The reason for such a generalization is that the definition of points and of spectra give back known objects. The main difference between the examples to follow and our general setting of \S\ref{topo} is the absence of the class of finitely presented maps, so we have generalized our definitions by using all maps instead of finitely presented ones.

We sketch only the results of the study, proofs are left to the reader.

\subsubsection{$(Loc,Cons)$ topology in the category of monoids with zeros}

Let $({\cal S}_*,\wedge)$ be the monoidal category of pointed sets with the smash product, monoids in this category are called {\em monoids with zero} and their category is noted $Mon_*$. If $Mon$ is the category of monoids in $({\cal S},\times)$, there is an obvious functor $Mon\to Mon_*$ sending a monoid $M$ to $M_*$ which is $M$ with an extra absorbing element.

In the same way as $CRings$, the category $Mon_*$ can be equiped with a $(Loc,Cons)$ factorisation system. 
Localisations are the natural notion and conservative maps are defined as having the right lifting property with respect to the natural inclusion $\NN_*\to\ZZ_*$ (which is a left generator for the whole system).

This system is studied in details in \cite{anelvaquie}, where the factorisation topology is proved to coincide with the Zariski topology of \cite{fun}. As a consequence, our factorisation spectrum do coincide with the one defined in \cite{fun}.

\subsubsection{$(Epi,Mono)$ topology in a topos}

We investigate the (Epi,Mono) factorisation system of maps of a topos $\cal T$.

An object $P\not=\emptyset\in\cal T$ is a point iff any monomorphism $U\to P$ ($U\not=\emptyset$) admits a section. 
This forces $U\to P$ to be an isomorphism : points are objects without any proper subobject. These objects are called {\em atoms} of the topos \cite[C.3.5.7]{elephant}.
Maps between atoms are always epimorphisms and all quotients of atoms are atoms.
Points of an object $X$ are called atomic subobjects of $X$, any two atomic subobjects are either equal or disjoint in $X$.
Any morphism $A\to X$ with $A$ an atom factors through a unique atomic subobject of $X$, so the set of points of $X$ is that of its atomic subobjects.
The family of all atomic subobjects of $X$ is the finest point covering of $X$, so local objects coincide with points and $Spec_{atom}(X)$ is the topos of presheaves over the set of atomic subobjets of $X$.

We are going to illustrate this in the topos $BG=G\z Sets$ classifying $G$-torsors for some discrete group $G$. Objects of $BG$ are sets with a right action of $G$ and can be thought as particular groupoids, a map is a monomorphism if, viewed as a map of groupoids, it is fully faithful.
Points of $(Epi,Mono)$ system of $BG$ are sets with a transitive action of $G$. The category of all points is then the orbit category of $G$ and the set of points of $X\in BG$ is simply the set of orbits of the action of $G$. A point covering family is a family of monomorphisms surjective on orbits, or view through the associated groupoids, a family of fully faithfull maps globally essentially surjective. 
The family of all orbits of a given $X$ is the finest point covering family of $X$, and $Spec_{atom}(X)$ is equivalent to the topos of presheaves on the set of orbits of $X$.

\subsubsection{$(Epi,Mono)$ topology in an abelian category and discrete projective spaces}\label{abel}

Any abelian category $\cal C$ has an $(Epi,Mono)$ unique factorisation system, its initial object $0$ is also final and so not strict but this is not important.

Points are non zero objects without any proper subobject, \ie simple objects. 
Any map to $M$ from a simple object is either 0 or a monomorphism, the set of points of $M$ is then the set of simple or null subobjects of $M$.
The family of all simple subobject of $M$ is the finest point covering family of $M$, so all local objects are points and the small spectrum $Spec_{Epi}(M)$ is the topos of presheaves on the poset of simple or null subobjects of $M$. All simple subobjects correspond to closed points and 0 to a generic point.

If ${\cal C}$ is the category of vector spaces over some field $k$, $Spec_{Mono}(M)$ is a sort of discrete projective space for $M$, with an extra generic point. Forgetting about this generic point, a map $M\to N$ can be though as inducing a partially defined transformation (it is not defined on the kernel of $M\to N$) between the associated projective spaces.

The big spectrum $SPEC_{Mono}(0)$ is the category of presheaves over the category of simple objects of $\cal C$.
And the structure sheaf map $Spec(M)\to SPEC(0)$ send a simple subobject of $M$ to its underlying simple object.

\medskip
The dual system $({\cal C}^o=(Epi,Mono)=(Mono^o,Epi^o)$ is also interesting.
Points of ${\cal C}^o$ are objects without any proper quotient, which are again simple objects; the set of points of an object $M$ is that of simple or null quotients of $M$ and $Spec_{Epi}(M)$ is a the "dual" projective space of $Spec_{Mono}(M)$ still with an extra point, which is this time the only to be closed.

\subsubsection{Discrete fibrations of categories}\label{leftright}

We are going to study two unique factorisation systems on the category $CAT$ of small categories, the references for all the results are \cite{joyal,pisani}.

Let $[n]$ be the ordinal with $n+1$ elements $0<\dots<n$ viewed as a category. $[0]$ is the punctual category. The two functors $[0]\to [1]$ will be called $\underline{0}$ and $\underline{1}$.
In $CAT$, the unique factorisation system $(Fin,DRFib)$ is defined as left generated by $\underline{1}$, $Fin$ is called the class of {\em final functors}, and $DRFib$ the class of {\em discrete right fibrations}.
There is a dual system $(Ini,DLFib)$ left generated by $\underline{0}$, $Ini$ is called the class of {\em initial functors}, and $DLFib$ the class of {\em discrete left fibrations}. It is easy to see that $C\to D\in LFib$ iff $C^o\to D^o\in RFib$.

We are only going to detail the factorisations in a special case: if $c:[0]\to C$ is an object of a category $C$, the $(Fin,DRFib)$ factorisation of $c$ is $[0]\to C{/c}\to C$ and the $(Ini,DLFib)$ factorisation of $c$ is $[0]\to c/C\to C$. 
We want say much of the left classes only that in the previous factorisation $[0]\to C{/c}$ points the final object of $C{/c}$ and $[0]\to c/C$ the initial object of $c/C$.
As for the right classes, it can be shown that any $D\to C\in DRFib$ is associated a presheaf $F:C^o\to {\cal S}$ such that $D$ is isomorphic to $C{/F}$ and that any $D\to C\in DRFib$ is associated a functor $F:C\to {\cal S}$ such that $D$ is isomorphic to $F/C$.
From this we can deduced that the categories $DRFib{/C}$ and $DLFib{/C}$ are respectfully equivalent to the category $\widehat{C}$ of contravariant functors $C\to {\cal S}$ and to that $\check{C}$ of covariant functors $C\to{\cal S}$.

We are now going to study the $(Fin,DRFib)$ system, the associated factorisation topology will be called the {\em right topology}. 
A point is a non empty category $P$ such that any any discrete right fibration $C{/F}\to C$ has a section. Using the Yoneda embedding in $\widehat{C}$, this condition says every presheaf on $C$ has a global section. Such categories can be highly non trivial ($\Delta$ is an example) and the set of points of category is difficult to described, but fortunately the point covering families are simple to understand.
Certainly $[0]$ is a point, and so a point covering family of $C$ has to be globally surjective on the objects of $C$. This condition is also sufficient: indeed if $P\to C$ is a point of $C$, it will lift through a covering family $U_i\to C$ iff one of the fiber product $U_i\times_CP$ is not empty, but if $U_i\to C$ is assumed surjective on the points, it cannot happen that all fiber products are empty.

A local object is a category such that any epimorphic family of presheaves contains a presheaf with a global section.
In particular any category with a terminal object is a local object (as proven already by the factorisation $c:[0]\to C{/c}\to C$).
We don't know if all local object are of this type, neither if they are all pointed.

A discrete right fibration $C{/F}\to C$ is surjective on the objects iff $F(c)\not=\emptyset$ for all $c\in C$ iff $F\to \in\widehat{C}$ is an epimorphism. In the same way a family $U_i\to C$ of discrete right fibrations is globally surjective on objects iff it is globally epimorphic in $\widehat{C}$. The small site of $C$ is $\widehat{C}$ and the previous remark show that the topology is the canonical one, so $Spec_{Right}(C)$ is the topos $\widehat{C}$. Its category of points is that of pro-objects of $C$. 

$SPEC_{Right}(C)$ is the topos of presheaves over $CAT{/C}$. Every object $c\in C$ define a point of $\widehat{C}$, the associated local object is $C{/c}$ and the structural map is $C{/c}\to C$. Using a topological vocabulary, one can say that $C{/c}$ is the {\em right localisation} of $C$ at $c$.

For the $(Ini,DLFib)$ system the same reasonning leads a topology called the {\em left topology} and to $Spec_{Left}(C)$ being the topos $\check{C}$.

\paragraph{Analogy with the Etale-Finite duality}
The pair of $(Fin,LFib)$ and $(Ini,RFib)$ looks dual in the same sense that $(Loc,Cons)$ and $(Surj,Mono)$ or $(IndEt,Hens)$ and $(Int,IntClo)$ are in $CRings$.
$(Fin,LFib)$ is left generated by $1:[0]\to [1]$ and $(Ini,RFib)$ is left generated by $0:[0]\to [1]$, thinking of $[1]=0\to 1$ as a specialisation morphism, $0$ is then generic point and $1$ the closed point. With this vocabulary a discrete right fibration lift any generisation of any object that is lifted and so behave as an open map, and a discrete left fibration lift any specialisation of any object that is lifted and so behave like a closed map.
This situation is to compare with the facts that Zariski open embeddings lift any generisation of any point that is lifted and that closed embeddings lift any specialisation of any point that is lifted.

Also, the generators $\GG_m\to \AA^1$ and $\{0\}\to \AA^1$ of the $(Loc,Cons)$ and $(Surj,Mono)$ systems on $CRings$, which also are a generic point and a closed point. However, seen geometrically in $CRings^o$ the generators are this time in the right class. 

\medskip
Moreover in this case, $\widehat{C}$ and $\check{C}$ have a duality pairing given by the coend:
\begin{eqnarray*}
\widehat{C}\times\check{C}&\tto &{\cal S}\\
(F,G)&\longmapsto &\int^{C}F\times G
\end{eqnarray*}
This pairing is moreover "exact" in the sense that the natural map $\widehat{C}\to CAT(\check{C},{\cal S})$ is an equivalence on the subcategory of functors commuting with all limits and $\check{C}\to CAT(\widehat{C},{\cal S})$ is an equivalence on the subcategory of functors commuting with all colimits.

Is this a feature of the same duality ? Does a similar pairing exist for spectra of rings or Pisani dual systems \cite{pisani} ?

\paragraph{Locality properties between the two systems}
Those two system have also some compatibility conditions together, call a reciprocal stability law between two factorisation systems by C. Pisani in \cite{pisani}. The left class of a factorisation system is not in general stable by base change but $Fin$ and $Ini$ are stable by base change along $DLFib$ and $DRFib$ respectively.
This has an interesting consequence as a map $C\to D$ can be characterized to be final iff its pull-back along every $d/D\to D$ for some $d\in D$ is final %(in fact the classical criteria for a final map is for all $d/C$ to be connected categories)
$$\xymatrix{
d/C\ar[r]\ar[d]&C\ar[d]\\
d/D\ar[r]& D.
}$$
Now this can be read using a topological langage: $d/C$ is the localisation of $C$ at $d$ in $\widehat{D}$ and being a final maps is a local property for the Right topology. Dually of course, being initial is a local property for the Left topology.
Also, these topologies can be used to interpret Quillen's theorem A and many definitions of \cite{georges1} as proving locality properties of some classes of functors with respect to the left or right topology.

%Are these general properties of dual factorisation systems ?

\paragraph{Groupoids}
Restricted to the category of groupoids, $DRFib$ and $DLFib$ coincide and define the class of {\em coverings functors} (discrete fibrations)
and $In$ and $Fin$ coincide too and define the class of {\em connected functors}. In fact both factorisation systems restrict to the categrory of groupoids and define a factorisation system compatible with weak equivalence such that, when groupoids are taken as models for homotopy 1-types, it induces the 0-th Postnikov system of \S\ref{highercase}.

\subsubsection{A dual topological realisation for simplicial sets}

This is a funny application of our notion of Nisnevich context.

Let $\Delta$ be the category of finite (non empty) ordinals and order preserving maps.
Writing $[n]:=(0<1<\dots <n)$ for the $(n+1)^{th}$ ordinal, a map $u:[n]\to [m]\in\Delta$ decomposes into $[n]\to [p]\to [m]$ where $[n]\to [p]$ is a {\em surjection} and $[p]\to [m]$ a {\em monomorphism}. This factorisation system is left generated by the single map $[1]\to[0]$.

The category $SSets=\widehat\Delta$ of presheaves on $\Delta$ is the category of simplicial sets, objects of $\Delta$ view in $SSet$ will be noted $\Delta[n]$ and called simplices. 

\begin{lemma}\label{extensionfacto}
If ${\cal C}$ is a full subcategory of a cocomplete category ${\cal D}$, any unique factorisation system ${\cal C}=({\cal A},{\cal B})$ left generated by compact objects extend to a unique unique factorisation system $\cal D=({\cal A}',{\cal B}')$ such that $\cal A=A'\cap C$ and $\cal A=A'\cap C$.
\end{lemma}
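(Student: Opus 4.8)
The plan is to realise $(\mathcal{A}',\mathcal{B}')$ as the unique lifting system on $\mathcal{D}$ left generated by the same generators. Let $G$ be a set of maps between objects compact in $\mathcal{D}$ that left generates $(\mathcal{A},\mathcal{B})$, so that $\mathcal{B}=G^\bot$ and $\mathcal{A}={}^\bot\mathcal{B}$ computed in $\mathcal{C}$. Viewing $G\subset\mathcal{D}^{\underline{2}}$, set $\mathcal{B}':=G^\bot$ and $\mathcal{A}':={}^\bot\mathcal{B}'$ computed in $\mathcal{D}$; by Lemma~\ref{setgen} this is a unique lifting system on $\mathcal{D}$. That it underlies a unique factorisation system is Theorem~\ref{factolim}: its hypothesis is precisely that the generators have presentable (compact) source and target, and the uniqueness of the resulting factorisation is automatic since the left class of any lifting system contains the codiagonals of its morphisms (Proposition~\ref{proplift}, Lemma~\ref{uniquelift}). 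If one only assumes $\mathcal{D}$ cocomplete one would replace this step by the small object argument, which still applies because the generators are small.

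Next I would identify the restricted classes, the only input being that $\mathcal{C}$ is full in $\mathcal{D}$ and that the sources and targets of the maps of $G$ lie in $\mathcal{C}$. For a morphism $f$ of $\mathcal{C}$, every lifting square of $f$ against a map of $G$ has its four corners in $\mathcal{C}$, hence is a lifting square in $\mathcal{C}$, and its solution, when it exists and is unique, is automatically a morphism of $\mathcal{C}$. Therefore $f\in\mathcal{B}'$ iff $f$ is right orthogonal to $G$ inside $\mathcal{C}$, i.e. iff $f\in\mathcal{B}$; this gives $\mathcal{B}'\cap\mathcal{C}=\mathcal{B}$. The same remark applied to lifting squares whose left edge is a morphism of $\mathcal{C}$ shows that a morphism of $\mathcal{C}$ lying in $\mathcal{A}'$ is left orthogonal, in $\mathcal{C}$, to $\mathcal{B}'\cap\mathcal{C}=\mathcal{B}$, whence $\mathcal{A}'\cap\mathcal{C}\subseteq\mathcal{A}$.

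The remaining, and genuinely delicate, inclusion is $\mathcal{A}\subseteq\mathcal{A}'$. The idea is that a morphism of $\mathcal{A}$ is assembled out of the maps of $G$ by the colimit operations of Theorem~\ref{factolim} (pushouts, $\lambda$-small colimits, composites, right cancellation, $\lambda$-filtered colimits), and that, because all these maps have compact source and target sitting inside $\mathcal{C}$, the building operations are computed in the same way whether one works in $\mathcal{C}$ or in $\mathcal{D}$, so the same assembly exhibits the map as an element of $\mathcal{A}'$. Concretely, I would compare the $(\mathcal{A},\mathcal{B})$-factorisation $X\to Z\to Y$ in $\mathcal{C}$ of a map $u\colon X\to Y$ of $\mathcal{C}$ with its $(\mathcal{A}',\mathcal{B}')$-factorisation $X\to\colim_{G_u}U\to Y$ in $\mathcal{D}$ provided by Theorem~\ref{factolim}; compactness lets one identify the indexing category $G_u$ with the analogous category built inside $\mathcal{C}$, forcing $\colim_{G_u}U\simeq Z\in\mathcal{C}$, so that the $\mathcal{D}$-factorisation of a $\mathcal{C}$-map has its middle object in $\mathcal{C}$ and agrees with the $\mathcal{C}$-factorisation. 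Granting this, if $u\in\mathcal{A}$ then its $\mathcal{B}$-part is an isomorphism (by $\mathcal{A}\cap\mathcal{B}=\mathrm{Iso}$, Proposition~\ref{proplift}), hence so is its $\mathcal{B}'$-part, and $u\in\mathcal{A}'$; together with the previous paragraph this yields $\mathcal{A}'\cap\mathcal{C}=\mathcal{A}$. I expect this matching of the two factorisations, i.e. the control of $\colim_{G_u}U$ for $u$ a map of $\mathcal{C}$, to be the main obstacle. In the case of interest, $\mathcal{C}=\Delta\subset SSet$ with $G=\{[1]\to[0]\}$, it is transparent: the left class $Surj$ of $(Surj,Mono)$ on $\Delta$ consists of finite composites of the elementary degeneracies $s_i$, and each $s_i$ is literally a pushout of $[1]\to[0]$ computed identically in $\Delta$ and in $SSet$, so $Surj\subset\mathcal{A}'$ at once.
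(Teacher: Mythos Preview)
Your approach is essentially the paper's: define $\mathcal{B}'=G^\bot$ and $\mathcal{A}'={}^\bot\mathcal{B}'$ in $\mathcal{D}$, invoke Theorem~\ref{factolim} to get the factorisation system on $\mathcal{D}$, and then compare the two factorisations of a map in $\mathcal{C}$. The paper's proof is in fact terser than yours: after observing $\mathcal{B}'\cap\mathcal{C}=\mathcal{B}$, it simply asserts that $\mathcal{A}'\cap\mathcal{C}=\mathcal{A}$ follows, and that the coincidence of the two factorisations is ``obvious by unicity of the factorisation''.

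You are right to flag $\mathcal{A}\subseteq\mathcal{A}'$ as the substantive step. The paper's ``unicity'' remark hides exactly the point you isolate: one needs to know that the $(\mathcal{A}',\mathcal{B}')$-factorisation in $\mathcal{D}$ of a map of $\mathcal{C}$ has its middle object in $\mathcal{C}$, and fullness alone does not give this. Your proposal to compare the explicit construction $\colim_{G_u}U$ in $\mathcal{C}$ and in $\mathcal{D}$ is the natural way to justify it, and your observation that this needs some compatibility of the colimits used (pushouts of generators, etc.) between $\mathcal{C}$ and $\mathcal{D}$ is correct; in full generality the lemma as stated is somewhat optimistic. Your direct verification in the intended case $\Delta\subset SSet$, by writing each degeneracy $s_i$ as a pushout of $[1]\to[0]$ computed identically in both categories, is exactly what is needed there and is cleaner than going through the general machinery.
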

\begin{proof}
Let $G$ be a set of left generators, so ${\cal B}=G^\bot$ and ${\cal A}=^\bot{\cal B}$ in $\cal C$.
We define now ${\cal B}':=G^\bot$ and ${\cal A}':=^\bot{\cal B}'$ in $\cal D$.
It is clear that ${\cal C}\cap {\cal B}'={\cal B}$ and so we have also ${\cal C}\cap {\cal A}'={\cal A}$.
Now the set of generators $G$ can always be completed to satisfies assumptions of proposition \ref{factolim} so we only have to prove that the factorisation in $\cal D$ of a map in $\cal C$ coincide with the factorisation in $\cal C$, but this is obvious by unicity of the factorisation.
\end{proof}

\begin{cor}
The unique factorisation system $(Surj,Mono)$ on $\Delta$ can be extended to $SSet$ in a system noted $(Deg,NDeg)$.
\end{cor}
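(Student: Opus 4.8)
The plan is to read this corollary off directly from Lemma~\ref{extensionfacto}. I would take ${\cal D}=SSet=\widehat\Delta$ and ${\cal C}=\Delta$, the latter regarded as a full subcategory of $SSet$ through the Yoneda embedding, which is fully faithful so this identification is legitimate. Since $SSet$ is a presheaf category over a small category it is cocomplete — indeed locally finitely presentable — so the ambient hypotheses of the lemma (and, inside its proof, of Theorem~\ref{factolim}) are met. By the assumption recorded just above the corollary, the system $(Surj,Mono)$ on $\Delta$ is left generated by the single map $[1]\to[0]$, so one may run the lemma with generating set $G=\{\,[1]\to[0]\,\}$, enlarging it as in the proof if needed.

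The only point that requires a word is that this generator is a map between compact objects of $SSet$, as the lemma demands. Under Yoneda, $[1]\to[0]$ becomes the morphism $\Delta[1]\to\Delta[0]$ of representable presheaves, and representable presheaves on a small category are finitely presentable — evaluation at an object preserves all colimits, so $\Hom(\Delta[n],-)\simeq(-)_n$ commutes with filtered colimits — hence compact. Thus $G$ consists of a single map between compact objects, Lemma~\ref{extensionfacto} applies, and $(Surj,Mono)$ extends to a unique factorisation system on $SSet$, which we denote $(Deg,NDeg)$; by the lemma $NDeg=\{\Delta[1]\to\Delta[0]\}^\bot$ and $Deg={}^\bot NDeg$ (computed in $SSet$), with $Deg\cap\Delta=Surj$ and $NDeg\cap\Delta=Mono$.

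If one wants explicit descriptions, Theorem~\ref{factolim} identifies $Deg$ with $Ind_\omega\z\overline{G}$, the $\omega$-filtered colimits of iterated pushouts of $\Delta[1]\to\Delta[0]$ — the ``degeneracy-type'' maps — while $NDeg$ is the class of maps $f$ of simplicial sets such that every $1$-simplex whose image under $f$ is degenerate is itself (uniquely) degenerate. I do not expect any genuine obstacle: the whole content is packaged in Lemma~\ref{extensionfacto}, the single nontrivial check being the compactness of the representables $\Delta[0]$ and $\Delta[1]$, together with the observation — already built into the proof of that lemma via uniqueness of factorisations — that factoring a representable inside $SSet$ recovers its factorisation inside $\Delta$.
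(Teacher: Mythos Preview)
Your proposal is correct and is exactly the intended argument: the corollary is an immediate application of Lemma~\ref{extensionfacto} with ${\cal D}=SSet$, ${\cal C}=\Delta$ (via Yoneda), and $G=\{[1]\to[0]\}$, and the paper gives no further proof. Your verification that representables are compact is the one point worth making explicit; the explicit description of $NDeg$ you give (in terms of $1$-simplices) is the raw unpacking of $G^\bot$, while the paper separately upgrades this to the statement that \emph{all} non-degenerate simplices are preserved.
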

A map in $Deg$ will be called {\em degenerated} and a map in $NDeg$ {\em non degenerated}.

\begin{prop}
$NDeg$ is the class of maps of simplicial sets $u:Y\to X$ sending non degenerate simplices of $Y$ to non degenerate simplices of $X$.
In particular, a map $\Delta[n]\to X$ is in $NDeg$ iff it is a non degenerate simplex of $X$.
\end{prop}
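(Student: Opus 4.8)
The plan is to unwind ``$u\in NDeg$'' via Lemma~\ref{extensionfacto}. Since $(Surj,Mono)$ on $\Delta$ is left generated by the single codegeneracy $[1]\to[0]$, the extended system $(Deg,NDeg)$ on $SSet$ has $NDeg=\{\sigma\}^\bot$ for $\sigma\colon\Delta[1]\to\Delta[0]$; so $u\colon Y\to X$ lies in $NDeg$ exactly when every square
$$\xymatrix{
\Delta[1]\ar[d]_{\sigma}\ar[r]&Y\ar[d]^u\\
\Delta[0]\ar[r]&X
}$$
admits a unique diagonal filler. I will moreover use that such a $u$ then has the unique right lifting property against \emph{every} map $\Delta[n]\to\Delta[m]$ induced by a surjection $[n]\twoheadrightarrow[m]$ of $\Delta$, because all such maps lie in the left class $Deg$ (again by Lemma~\ref{extensionfacto}, which identifies $Deg\cap\Delta$ with $Surj$), together with the elementary fact that each degeneracy operator $s_i\colon X_{n-1}\to X_n$ of a simplicial set is injective, being split by a face operator.

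First I would prove that if $u$ sends non-degenerate simplices to non-degenerate simplices, then $u\in NDeg$. A commuting square as above is the datum of a $1$-simplex $y\in Y_1$ and a vertex $b\in X_0$ with $u_1(y)=s_0 b$; since $u_1(y)$ is degenerate and $u$ preserves non-degeneracy, $y$ itself is degenerate, say $y=s_0 c$ with $c\in Y_0$. Then $s_0\,u_0(c)=u_1(y)=s_0 b$ forces $u_0(c)=b$ by injectivity of $s_0$ on $X_0$, so $c$ is a lift, and it is the only one because $s_0$ is injective on $Y_0$.

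Conversely I would prove that $u\in NDeg$ sends non-degenerate simplices to non-degenerate ones: if $y\in Y_n$ is non-degenerate but $u_n(y)=s_i w$ for some $w\in X_{n-1}$, then the codegeneracy $\Delta[n]\to\Delta[n-1]$ dual to $s^i\colon[n]\to[n-1]$ fits into a commuting square with $\bar y$ on top, $u$ on the right and $\bar w$ on the bottom (commutativity says precisely $u_n(y)=s_i w$); its lift $\Delta[n-1]\to Y$ is an $(n-1)$-simplex $z$ with $s_i z=y$, contradicting non-degeneracy of $y$. Together with the previous step this gives the main assertion.

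The ``in particular'' then follows by taking $Y=\Delta[n]$: a map $\Delta[n]\to X$ is an $n$-simplex $x$ of $X$, the non-degenerate simplices of $\Delta[n]$ are exactly the injective operators $[k]\hookrightarrow[n]$, and their images under this map are the faces of $x$ (the identity operator giving $x$ itself); hence the map lies in $NDeg$ iff $x$ and all of its faces are non-degenerate, which is the stated form when $X$ has the property that faces of non-degenerate simplices stay non-degenerate. The whole argument is essentially bookkeeping; the only step requiring a little care is the converse for $n\ge 2$, where one must invoke the lifting property against the higher codegeneracies $\Delta[n]\to\Delta[n-1]$ and not merely against the generator $\sigma$ — which is legitimate precisely because those maps belong to $Deg$.
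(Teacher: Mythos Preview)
Your argument for the main characterisation of $NDeg$ is correct, and it is more elementary than the paper's route: you test lifting directly against the generator $\sigma:\Delta[1]\to\Delta[0]$ for one implication and against the individual codegeneracies $\Delta[n]\to\Delta[n-1]$ (which lie in $Deg$ by Lemma~\ref{extensionfacto}) for the other. The paper instead invokes the Eilenberg--Zilber factorisation of the top and bottom arrows of an arbitrary lifting square to reduce everything to a surjection $\Delta[n']\to\Delta[m']$ between simplices. Your approach avoids EZ entirely; the paper's makes the intended link between $(Deg,NDeg)$ and the EZ decomposition explicit.

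Your caveat on the ``in particular'' is not merely cautious but necessary: the literal claim that $\Delta[n]\to X$ lies in $NDeg$ iff it is a non-degenerate simplex fails in general. Take for $X$ the minimal simplicial model of $S^2$ with a single vertex $v$, no non-degenerate $1$-simplices, and one non-degenerate $2$-simplex $\tau$. Then $\bar\tau:\Delta[2]\to X$ represents a non-degenerate simplex, yet each non-degenerate edge of $\Delta[2]$ is sent to $s_0v$, which is degenerate; by the main characterisation $\bar\tau\notin NDeg$. Consequently the paper's opening claim that the Eilenberg--Zilber factorisation \emph{is} the $(Deg,NDeg)$ factorisation of a map $\Delta[n]\to X$ is too strong as stated. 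Your corrected formulation --- that $\Delta[n]\to X$ lies in $NDeg$ iff the simplex together with all its iterated faces is non-degenerate --- is the right one, and it collapses to the paper's version exactly under the extra hypothesis you identify (that faces of non-degenerate simplices of $X$ remain non-degenerate).
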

\begin{proof}
First we claim that a particular case of the factorisation is the one of the Eilenberg-Zilber lemma saying that a map $\Delta[n]\to X\in SSet$ factors through a unique $\Delta[n']$ where $n'\leq n$ so that the map $\Delta[n]\to \Delta[n']$ is a surjection and $\Delta[n']\to X$ is a non degenerate simplex. So the simplex $\Delta[n]\to X$ is degenerated iff $n'<n$. Using this factorisation on the top and bottom arrows, we can develop any lifting square in
$$\xymatrix{
\Delta[n]\ar[r]^-{\textrm{surj.}}\ar[d]_{\textrm{surj.}}&\Delta[n']\ar[r]^-{\textrm{non deg.}}\ar[d]&Y\ar[d]\\
\Delta[m]\ar[r]_-{\textrm{surj.}}&\Delta[m']\ar[r]_-{\textrm{non deg.}}&X
}$$
where $\Delta[n']\to \Delta[m']$ is a surjection by cancellation.
The map $Y\to X$ is orthogonal to surjection of simplices iff the map $\Delta[n']\to \Delta[m']$ is an isomorphism. But this condition says exactly that a non degenerated simplex of $Y$ is send to a non degenerated simplex of $X$.
\end{proof}

\paragraph{Raw spectrum}

\begin{prop}
The only point is $\Delta[0]$.
\end{prop}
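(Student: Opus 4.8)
The plan is to establish two things: first that $\Delta[0]$ does satisfy the defining property of a point, and then that, conversely, every point is isomorphic to $\Delta[0]$. Both halves will hinge on the elementary fact that every vertex of a simplicial set is a non-degenerate simplex (there being no degeneracy operators landing in dimension $0$), so that by the characterisation of $NDeg$ proved just above every map $\Delta[0]\to X$ lies in $NDeg$. Recall also that here, as throughout \S\ref{other}, the notion of point is tested against all of the right class $NDeg$, not merely its finitely presented part.

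For the first half I would note that $\Delta[0]$ is non-empty (the initial object $\emptyset$ of $SSet$ is strict: a map $X\to\emptyset$ forces $X_n=\emptyset$ for every $n$). Given a non-empty $U$ and any $f\colon U\to\Delta[0]$ in $NDeg$, a section of $f$ is exactly a vertex of $U$, which exists since $U$ is non-empty (take any simplex of $U$ and apply iterated $0$-th face operators); and the composite $\Delta[0]\to U\to\Delta[0]$ is automatically $\mathrm{id}_{\Delta[0]}$ because $\Delta[0]$ is terminal. Hence $\Delta[0]$ is a point.

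For the second half, let $P$ be a point. Being non-empty it has a vertex $v\colon\Delta[0]\to P$, and by the remark above $v\in NDeg$; since $\Delta[0]$ is non-empty, the defining property of a point produces a section $g\colon P\to\Delta[0]$ with $v\circ g=\mathrm{id}_P$. But $g$ is the unique morphism from $P$ to the terminal object $\Delta[0]$, and consequently $g\circ v$ is the unique endomorphism of $\Delta[0]$, namely $\mathrm{id}_{\Delta[0]}$. Thus $v$ is an isomorphism with inverse $g$, so $P\cong\Delta[0]$. There is no real obstacle in this argument; the only point demanding a moment's care is the verification that the vertex $v$ lies in $NDeg$, which is precisely the degree-$0$ instance of the preceding proposition.
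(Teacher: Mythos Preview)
Your proof is correct and follows essentially the same route as the paper's: show $\Delta[0]$ is a point, then for an arbitrary point $P$ pick a vertex $v\colon\Delta[0]\to P$, note it lies in $NDeg$, and use the defining section to conclude $P\cong\Delta[0]$. The paper compresses this into two lines, while you spell out the details (strictness of $\emptyset$, terminality of $\Delta[0]$ giving $g\circ v=\mathrm{id}$), but the argument is the same.
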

\begin{proof}
It is easy to see that $\Delta[0]$ is a point.
Conversely, a simplicial set $X$ is a point if $Y\to X\in NDeg$ every it admit a section. Applied to $\Delta[0]\to X$ this forces $X$ to be $\Delta[0]$.
\end{proof}

The set of points of an object $X$ is exactly the set of vertices $X$.
A family of maps $U_i\to X\in NDeg$ is a point covering family iff it is surjective on vertices. 
For any simplicial set $X$, the family of maps $\Delta[0]\to X$ is the finest cover of $X$.
As a consequence, the only local simplex is $\Delta[0]$ (and of course every local object is pointed local).

\begin{prop}
$Spec_{NDeg}(X)\simeq {\cal S}^{X_0}$.
\end{prop}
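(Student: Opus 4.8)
The plan is to exhibit the discrete set of vertices $X_0$, suitably embedded in the small site $NDeg{/X}$, as a dense subsite carrying the trivial topology, and then to invoke the Comparison Lemma for dense subsites.

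First I would set up the comparison. For a vertex $x\in X_0$ the map $\Delta[0]\overset{x}{\to}X$ is a non-degenerate simplex (degree $0$ admits no degeneracies), hence lies in $NDeg$ by the preceding proposition, so $(\Delta[0]\overset{x}{\to}X)$ is an object of the small site $NDeg{/X}$. Since the only endomorphism of $\Delta[0]$ is the identity, there is a morphism $(\Delta[0]\overset{x}{\to}X)\to(\Delta[0]\overset{y}{\to}X)$ in $SSet{/X}$ iff $x=y$; thus the full subcategory ${\cal C}_0\subset NDeg{/X}$ on the objects $\{\Delta[0]\overset{x}{\to}X\}_{x\in X_0}$ is (isomorphic to) the discrete category $X_0$.

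Next I would check density, i.e. that every object of $NDeg{/X}$ is covered by objects of ${\cal C}_0$. Given $u\colon U\to X$ in $NDeg$, each $v\in U_0$ gives a non-degenerate map $\Delta[0]\overset{v}{\to}U$, hence a morphism $(\Delta[0]\overset{uv}{\to}X)\to(U\overset{u}{\to}X)$ in $NDeg{/X}$ with source in ${\cal C}_0$ (here $NDeg$ is stable by composition as the right class of a factorisation system, proposition \ref{proplift}). The family $\{\Delta[0]\overset{v}{\to}U\}_{v\in U_0}$ is surjective on vertices of $U$, hence a point covering family, hence a covering family of $(U\to X)$ in the small site. This is the one genuinely substantive step, and it is immediate from the earlier identification of point covering families with the vertex-surjective families.

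Then I would identify the induced topology on ${\cal C}_0\cong X_0$: on a discrete category the only sieves on $x$ are the empty one and the maximal one; the maximal sieve always covers, while the empty sieve covers iff the empty family is a point covering family of $\Delta[0]\overset{x}{\to}X$, i.e. iff $\Delta[0]$ has no points -- which is false, $\Delta[0]$ being a point. So $X_0$ carries the trivial topology and $\widetilde{X_0}=\widehat{X_0}={\cal S}^{X_0}$. Finally the Comparison Lemma (\cite[III.4]{SGA4-1}, \cite[C2.2.3]{elephant}) applied to the dense subsite ${\cal C}_0$ yields $Spec_{NDeg}(X)=\widetilde{NDeg{/X}}\simeq\widetilde{X_0}={\cal S}^{X_0}$; the fact that $NDeg{/X}$ need not be essentially small (already $NDeg{/\Delta[0]}\simeq {\cal S}$) is harmless, as ${\cal C}_0$ is a small dense subsite. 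Alternatively one can check the sheaf condition by hand: for $F\in\widetilde{NDeg{/X}}$ and any $U\to X$, the cover $\{\Delta[0]\overset{v}{\to}U\}_{v}$, together with $\Delta[0]\times_U\Delta[0]=\emptyset$ for distinct vertices and $F(\emptyset)=\ast$, forces $F(U)=\prod_{v\in U_0}F(\Delta[0]_v)$, which is exactly the data of an object of ${\cal S}^{X_0}$.
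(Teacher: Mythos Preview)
Your proof is correct, and in fact your ``alternative'' computation at the end \emph{is} the paper's proof: the paper simply observes that the nerve of the vertex cover of any $U\to X$ is trivial (distinct vertices have empty intersection and $F(\emptyset)=*$), so a sheaf satisfies $F(U\to X)=\prod_{x\in U_0}F(u(x))$. Your main argument via the Comparison Lemma is a cleaner and more structured packaging of the same idea --- identifying $X_0$ as a dense discrete subsite with trivial induced topology --- and it has the added merit of explicitly handling the size issue ($NDeg{/X}$ is genuinely large), which the paper's one-line proof leaves implicit.
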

\begin{proof}
For any $U\to X$, the nerve of the covering by simplices of $U$ is constant si a presheaf $F:NDeg{/X}^o\tto {\cal S}$ is a sheaf for the factorisation topology iff $F(u:U\to X) = \prod_{x\in U_0} F(u(x))$.
\end{proof}

\paragraph{Simplectic Nisnevich Spectrum}
To make this setting a bit more interesting, we are going to make a Nisnevich localisation along the category $\Delta$ of simplices.
Covering families of the Nisnevich context $\Delta Nis:=(SSets=((Deg,NDeg),SSets,\Delta)$ are families of maps $U_i\to X\in NDeg$ lifting not only vertices but any simplex of $X$.

\begin{lemma}\label{deltaniscover}
The family of all maps $\Delta[n]\to X\in NDeg$ for all $n$, is a Nisnevich covering family of $X$.
\end{lemma}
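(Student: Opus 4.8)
The plan is to verify directly that the family $\{\Delta[n]\to X\mid n\ge 0,\ \Delta[n]\to X\in NDeg\}$ is a $\Delta$-localising point covering family in the sense of the Nisnevich context $\Delta Nis$. There are two things to check: first that it is a point covering family for the factorisation topology attached to $(Deg,NDeg)$, and second that it is $\Delta$-localising, i.e.\ that every map $\Delta[m]\to X$ from a simplex lifts through one of its members.

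First I would observe that each $\Delta[n']\to X$ appearing in the family is indeed in $NDeg$: by the preceding proposition, a map $\Delta[n']\to X$ is in $NDeg$ precisely when it is a non-degenerate simplex, which is exactly what we are selecting. So the family is a family in $NDeg$, as required of covering families. Next, that it is a point covering family: the points are the vertices (the only point object is $\Delta[0]$), so I need the family to be surjective on vertices. But every vertex $x\in X_0$ is itself a non-degenerate $0$-simplex $\Delta[0]\to X$, hence a member of the family, and this member obviously hits $x$. Equivalently, the finest cover $\{\Delta[0]\to X\}$ is already a point covering family and our family contains it, so it is one too.

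It remains to show the family is $\Delta$-localising. Take any simplex $\sigma\colon\Delta[m]\to X$. By the Eilenberg--Zilber lemma — which is exactly the $(Deg,NDeg)$ factorisation, as used in the proof of the proposition characterising $NDeg$ — $\sigma$ factors uniquely as $\Delta[m]\twoheadrightarrow\Delta[m']\to X$ with $\Delta[m]\twoheadrightarrow\Delta[m']$ a surjection (in $Deg$) and $\tau\colon\Delta[m']\to X$ a non-degenerate simplex (in $NDeg$), $m'\le m$. Then $\tau$ is a member of our family, and the surjection $\Delta[m]\to\Delta[m']$ is precisely a lift of $\sigma$ through $\tau$:
$$\xymatrix{
&\Delta[m']\ar[d]^{\tau}\\
\Delta[m]\ar[r]_-{\sigma}\ar@{-->}[ur]&X.
}$$
Hence every simplex of $X$ lifts through some member of the family, so it is $\Delta$-localising; combined with the previous paragraph this makes it a Nisnevich covering family of $X$ for $\Delta Nis$, which is the claim.

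The only mild subtlety — the "hard part", such as it is — is making sure the lift genuinely commutes, i.e.\ that the Eilenberg--Zilber factorisation is the honest $(Deg,NDeg)$-factorisation so that the surjective part $\Delta[m]\to\Delta[m']$ composes with $\tau$ to give back $\sigma$; this is guaranteed by the uniqueness of the factorisation (Lemma \ref{extensionfacto} and the preceding discussion), so no real computation is needed.
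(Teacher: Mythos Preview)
Your proof is correct and follows the same approach as the paper: the core step is the Eilenberg--Zilber factorisation, which provides the required lift of any $\Delta[m]\to X$ through a non-degenerate simplex. The paper's proof is simply the one-liner that this factorisation exists; your additional verification that the family is point covering (surjective on vertices) is fine but redundant, since $\Delta[0]\in\Delta$ means the $\Delta$-localising condition already subsumes it.
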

\begin{proof}
We need to prove that any $\Delta[m]\to X$ factors through one of the $\Delta[n]\to X\in NDeg$, but this Eilenberg-Zilber lemma.
\end{proof}

\begin{cor}
Local objects of the Nisnevich context $\Delta Nis$ are simplices.
\end{cor}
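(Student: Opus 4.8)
The plan is to establish the two inclusions separately: first that every simplex $\Delta[n]$ is a local object for $\Delta Nis$, and then that, conversely, every such local object is isomorphic to some $\Delta[m]$.

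For the first inclusion I would argue straight from the description of the Nisnevich covering families attached to the forcing class $\Delta$: by definition such a family $\{U_i\to\Delta[n]\}$ lies in $NDeg$ and every simplex of $\Delta[n]$ lifts through it. Applying this to the tautological $n$-simplex $id:\Delta[n]\to\Delta[n]$ yields a factorisation $\Delta[n]\to U_i\to\Delta[n]$, that is, a section of $U_i\to\Delta[n]$. Hence every Nisnevich covering family of $\Delta[n]$ admits a section, which is exactly the condition for $\Delta[n]$ to be a local object.

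For the converse, let $L$ be a local object for $\Delta Nis$. The first step is to invoke lemma \ref{deltaniscover}: the family of all non degenerate simplices $\Delta[n]\to L$ is a Nisnevich covering family of $L$. Since $L$ is local this family admits a section, so there are a non degenerate simplex $\sigma:\Delta[n]\to L$ and a map $r:L\to\Delta[n]$ with $\sigma r=id_L$; in other words $L$ is a retract of the representable presheaf $\Delta[n]$. The remaining content is then purely categorical: a retract of a representable presheaf on $\Delta$ is again representable. To see this I would look at the idempotent $e:=r\sigma:\Delta[n]\to\Delta[n]$, which by the Yoneda lemma comes from an idempotent order preserving map $f:[n]\to[n]$. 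The image of $f$ is an ordered subset of $[n]$ isomorphic to some $[m]$ with $m\leq n$, and $f$ restricts to the identity on it, so $f$ splits in $\Delta$ through $[m]$; hence $e$ splits through $\Delta[m]$ in $SSet$. As $(\sigma,r)$ is a second splitting of the same idempotent, uniqueness of idempotent splittings forces $L\simeq\Delta[m]$.

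I do not anticipate a real obstacle. The one thing to be careful with is using the correct notion of Nisnevich covering family here — it must lift \emph{all} simplices of the base, not merely the vertices, which is precisely what makes simplices local and is the substance of lemma \ref{deltaniscover} — together with the elementary observation that order preserving idempotents of $\Delta$ split (an order preserving idempotent is the identity on its image). Everything else is a direct unwinding of the definitions.
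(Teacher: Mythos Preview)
Your proof is correct. Both you and the paper start the converse direction from lemma \ref{deltaniscover} to obtain a retraction $\sigma r = id_L$ with $\sigma:\Delta[n]\to L$ non-degenerate, but then finish differently. The paper uses the left cancellation property of the right class $NDeg$ (proposition \ref{proplift}): since $\sigma$ and $\sigma r = id_L$ are in $NDeg$, so is $r$, hence so is the composite $r\sigma:\Delta[n]\to\Delta[n]$; as the only non-degenerate endomorphism of $\Delta[n]$ is the identity, $\sigma$ is already an isomorphism and $L\simeq\Delta[n]$. Your idempotent-splitting argument is slightly more roundabout (it only yields $L\simeq\Delta[m]$ for some $m\le n$, which is of course enough) but has the virtue of not invoking any further property of the factorisation system once the retraction is in hand; it would work verbatim in any presheaf category over an idempotent-complete base. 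The paper's route, by contrast, exploits the structure of $NDeg$ and delivers the sharper conclusion $m=n$ for free.
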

\begin{proof}
By definition of the context, simplices are local.
Conversely by lemma \ref{deltaniscover} it is enough to use the family of all $\Delta[n]\to X\in NDeg$. Let $d:\Delta[n]\to X$ be a map of the family having a section $s$, $s$ is in $NDeg$ and so is $sd$. But the only non degenerate endomorphism of $\delta[n]$ is the identity, so $d$ is an isomorphism.
\end{proof}

As a consequence, the set of points of the Nisnevich spectrum $Spec_{\Delta,NDeg}(X)$ is the set of maps $\Delta[n]\to X\in NDeg$, \ie the set of non degenerate simplices of $X$.

\begin{prop}
Let $P(n)$ be the poset of faces of $\Delta[n]$.
$Spec_{\Delta Nis}(\Delta[n])$ is the topos of presheaves over $P(n)$.
In particular this is a spatial topos whose poset of points is $P(n)$.
\end{prop}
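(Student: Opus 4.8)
The plan is to present $\widehat{P(n)}$ as the topos of sheaves on a full, topologically dense subcategory of the small site $NDeg/\Delta[n]$, and then to invoke the comparison lemma \cite[III.4.1]{SGA4-1}.

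First I would identify $P(n)$ with the full subcategory ${\cal F}(n)$ of $NDeg/\Delta[n]$ spanned by the non-degenerate simplices of $\Delta[n]$. A non-degenerate simplex $\Delta[m]\to\Delta[n]$ is a monomorphism, so it corresponds to an injection $[m]\hookrightarrow[n]$, \ie to a non-empty subset $S\subseteq[n]$; and, since every face $\Delta[S']\to\Delta[n]$ is a monomorphism, a morphism in $NDeg/\Delta[n]$ from $\Delta[S]$ to $\Delta[S']$ is forced to be the inclusion $\Delta[S]\hookrightarrow\Delta[S']$, which exists, and is then unique, exactly when $S\subseteq S'$. Hence ${\cal F}(n)$ is the poset $P(n)$ of faces of $\Delta[n]$. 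Note also that ${\cal F}(n)$ is exactly the full subcategory of local objects over $\Delta[n]$ (a face is a simplex, hence a local object), in accordance with the earlier description of the points of $Spec_{\Delta Nis}(X)$ as the non-degenerate simplices of $X$.

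Next I would check that ${\cal F}(n)$ is topologically dense in $NDeg/\Delta[n]$: for $Y\to\Delta[n]$ in $NDeg$, any non-degenerate simplex $\Delta[m]\to Y$ composed with $Y\to\Delta[n]$ is again non-degenerate (stability of $NDeg$ under composition), hence defines a morphism from an object of ${\cal F}(n)$ to $Y\to\Delta[n]$, and by the Eilenberg--Zilber lemma every simplex of $Y$ factors through one of these; so they generate a Nisnevich covering sieve, as in lemma~\ref{deltaniscover}. It then remains to identify the induced Nisnevich topology on $P(n)$. Using the reformulation of the Nisnevich forcing --- a sieve $R$ on $Y\to\Delta[n]$ is covering iff every simplex $\Delta[m]\to Y$ factors through an element of $R$, which follows from the relative form of lemma~\ref{deltaniscover} --- applied to a face $\Delta[S]$ and to $\mathrm{id}_{\Delta[S]}$, one sees that a covering sieve $R$ of $\Delta[S]$ must contain a face inclusion $\Delta[S']\hookrightarrow\Delta[S]$ that admits a section, forcing $S'=S$; hence $R$ is the maximal sieve. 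So the induced topology on $P(n)$ is chaotic, every presheaf on $P(n)$ is a sheaf, and the comparison lemma gives
$$Spec_{\Delta Nis}(\Delta[n])\ \simeq\ \widetilde{{\cal F}(n)}\ =\ \widehat{P(n)}.$$
Finally, $\widehat{P(n)}$ is a presheaf topos on a poset, hence the topos of sheaves on the Alexandrov space of $P(n)$ (sober, as $P(n)$ is finite), so it is spatial with poset of points $P(n)$.

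I expect the main obstacle to be the verification that the Nisnevich topology restricts to the chaotic topology on the poset of faces; this relies on the rigidity of simplices (among face inclusions $\Delta[S']\hookrightarrow\Delta[S]$, only the identity splits), together with a careful application of the comparison lemma, for which one only needs ${\cal F}(n)$ to be full and topologically dense in $NDeg/\Delta[n]$, with no assumption on fibre products.
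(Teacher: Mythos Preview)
Your argument is correct. For the first assertion your route coincides with the paper's --- the paper also reduces to showing that the induced topology on the poset of faces is trivial (``any cover of $\Delta[m]$ admits a copy of $\Delta[m]$''), though it leaves the comparison-lemma step implicit, whereas you spell it out. For the spatiality and identification of points, however, you and the paper take genuinely different routes: you invoke the standard fact that presheaves on a finite poset are sheaves on its (sober) Alexandrov space, while the paper instead computes the category of points directly as $Pro(P(n))$ and argues combinatorially that every filtered diagram in $P(n)$ factors through a subposet with a terminal object, so that each pro-object is representable and $Pro(P(n))\simeq P(n)$. Your Alexandrov argument is slicker and more conceptual; the paper's hands-on computation of $Pro(P(n))$ has the minor advantage of not relying on the equivalence between finite posets and finite sober spaces, and makes the r\^ole of finiteness of $P(n)$ explicit.
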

\begin{proof}
For the first assertion, we just need to prove that the topology is trivial, but any cover of $\Delta[m]$ admits a copy of $\Delta[m]$ so the identity is the finest cover.
The category of points is $Pro(P(n))$ which turns out to be equivalent to $P(n)$. This is a consequence of the fact that any functor $f:I\to P(n)$ where $I$ is a filtered category factors through a category $J$ with a terminal object (hence every pro-object will be representable). To see this it is enough to consider $I$ to be a poset, and a poset is filtered iff for any two objects $i$ and $j$, there exists an object $k$ and two arrows $k\to i$ and $k\to j$. If $f:I\to P(n)$ is a filtered diagram, $f(i),f(j)$ and $f(k)$ are faces of $\Delta[n]$ and if $f(i)$ is a vertex then necessarily $f(k)=f(i)$ and $f(i)$ is a vertex of $f(j)$. This implies that there can be at most one vertex of $\Delta[n]$ in the image of $f$ and this vertex is a terminal element for the image poset of $f$, proving our assertion. If no vertices are in the image of $f$, there can be at most a single edge in the image of $f$ which is then the terminal element of the image poset. If no edges are in the image of $f$, one has to continue the same argument with higher dimensional faces.
\end{proof}

\begin{cor}
$Spec_{\Delta Nis}(X)$ is topological space such that any non-degenerate $\Delta[n]\to X$ is an open embedding.
\end{cor}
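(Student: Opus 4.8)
The plan is to reduce the small Nisnevich site of $X$ to the poset of faces of $X$, recognise the resulting topos as sheaves on an Alexandrov space, and read off the open embeddings from the down-sets of faces.

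First I would show that $Spec_{\Delta Nis}(X)\simeq\widehat{{\cal S}_X}$, where ${\cal S}_X\subset NDeg{/X}$ is the full subcategory spanned by the non-degenerate simplices $\Delta[n]\to X$, equipped with the trivial topology. The inclusion ${\cal S}_X\hookrightarrow NDeg{/X}$ is dense: if $U\to X$ lies in $NDeg{/X}$ then, by the very definition of $NDeg$, each non-degenerate simplex of $U$ maps non-degenerately to $X$, and by the Eilenberg--Zilber lemma (as in lemma \ref{deltaniscover}, applied over $X$) the family of non-degenerate simplices of $U$ is a Nisnevich cover of $U$. Conversely, a Nisnevich cover of an object $\sigma\colon\Delta[n]\to X$ of ${\cal S}_X$ must lift the top simplex $\mathrm{id}_{\Delta[n]}$, so it contains a member $f\colon\Delta[m]\to\Delta[n]$ that is split epi; since $f\in NDeg$ by left cancellation (proposition \ref{proplift}), $f$ is forced to be an isomorphism, so the maximal sieve is the only covering sieve and the induced topology on ${\cal S}_X$ is trivial. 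The comparison lemma for sites \cite{SGA4-1} then gives $Spec_{\Delta Nis}(X)\simeq\widehat{{\cal S}_X}$.

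Next I would identify ${\cal S}_X$ with the poset of faces of $X$. A morphism in ${\cal S}_X$ lies in $NDeg$ (left cancellation again), hence is an injective order map $[m]\hookrightarrow[n]$ compatible with the structural maps to $X$; thus ${\cal S}_X$ is the category of non-degenerate simplices of $X$ and face inclusions, which for $X=\Delta[n]$ recovers the poset $P(n)$ of the preceding proposition. As ${\cal S}_X$ is a poset, $\widehat{{\cal S}_X}\simeq\mathrm{Sh}\bigl(\mathrm{Alex}({\cal S}_X)\bigr)$ — sheaves on the Alexandrov space whose opens are the down-sets of ${\cal S}_X$ — so $Spec_{\Delta Nis}(X)$ is a topological space. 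For the second assertion, a non-degenerate simplex $\sigma\colon\Delta[n]\to X$ identifies ${\cal S}_{\Delta[n]}=P(n)$ with the set of faces of $\sigma$ inside ${\cal S}_X$, which is down-closed, hence an open subspace of $\mathrm{Alex}({\cal S}_X)$. Unwinding the functoriality of $Spec_{\Delta Nis}$ (via the base-change functor $\sigma^*$, as in the proof of theorem \ref{petitgros}) one checks that $Spec_{\Delta Nis}(\sigma)\colon Spec_{\Delta Nis}(\Delta[n])=\widehat{P(n)}\to\widehat{{\cal S}_X}=Spec_{\Delta Nis}(X)$ is exactly the inclusion of this open subspace — equivalently, $\sigma$ viewed as an object of the small site is subterminal — so it is an open embedding.

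The main obstacle is the step that presents ${\cal S}_X$ as an honest poset: in general a non-degenerate simplex can be a face of another in several distinct ways (for instance the two endpoints of the non-degenerate edge of $\Delta[1]/\partial\Delta[1]$), so ${\cal S}_X$ is only a category and $\widehat{{\cal S}_X}$ need not be localic (for the simplicial circle one gets the non-localic topos of graphs). To make the statement hold as phrased one must therefore restrict to simplicial sets whose non-degenerate simplices are monomorphisms — nerves of posets, simplicial complexes, and more generally regular simplicial sets — or first replace $X$ by a subdivision; once ${\cal S}_X$ is a poset, the remainder of the argument is purely formal.
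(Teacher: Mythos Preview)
The paper gives no proof of this corollary, presumably intending it as immediate from the preceding computation of $Spec_{\Delta Nis}(\Delta[n])\simeq\widehat{P(n)}$ together with the fact (theorem \ref{petitgros}) that maps in ${\cal B}^f$ induce \'etale maps of toposes. Your reduction of the small site to $\widehat{{\cal S}_X}$ via the comparison lemma is the right way to make this precise, and that part of the argument is sound.

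Your final paragraph, however, is not merely an obstacle to your approach: it is a genuine counterexample to the corollary as the paper states it. For $X=\Delta[1]/\partial\Delta[1]$ the category ${\cal S}_X$ is $\bullet\rightrightarrows\bullet$ and $Spec_{\Delta Nis}(X)$ is the presheaf topos of directed graphs, which is not localic --- the representable at the edge has two elements over the vertex and is therefore not a subterminal object, so neither conclusion of the corollary holds for this $X$. The statement becomes correct exactly under the additional hypothesis you isolate (every non-degenerate simplex of $X$ is a monomorphism), and in that regime your proof is complete. You have located a gap in the paper rather than in your own argument; the informal treatment of this ``other examples'' section simply overlooks the issue.
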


The small Nisnevich spectra of a simplicial set $X$ can be thought as a geometric realisation of $X$ as it is a spatial object that does not see the degenerate part of $X$. This geometric realisation is such that any vertex of $X$ is open in $Spec_{\Delta Nis}(X)$ and as show the computation of $Spec_{\Delta Nis}(\Delta[n])$, it can be thought as a cellular complex dual of the usual geometric realisation (use for example in the theory of Poincar\'e duality).

This "duality" raises the question of the existence of another factorisation system on $\widehat{\Delta}$ for which the small spectra of a simplicial set would be (a combinatorial form of) the usual geometric realisation. Unfortunately, for this realisation, the only open of a $n$-simplex would be the cell of dimension $n$ but such a cell without its boundary is not a simplicial object. In fact, ordinary geometric realisation being constructed by glueing along closed subsets, they are not local for the topology of the realisation and topossic techniques do not seem relevant here.

\appendix

\section{Comparison with other works}

Since \cite{SGA4-1}, a lot of works have been done on the problem of building a general theory of spectra, for example \cite{hakim,johnstone,dubuc,lurie}. 

In \cite{johnstone}, Johnstone uses only a partial factorisation system (in the example of rings, this would be a factorisation only for maps from a ring to a local ring). His axiomatization produces a more general theory than ours but at the price of being more complex, I think the consideration of a full factorisation system is more natural.

In \cite{dubuc}, Dubuc considers a topos $\cal T$ and its category of points $P$, he uses a class $\cal C$ of maps in $\cal P$ to define his {\em etal class} $\cal E$ of maps of $\cal T$ by a property of right orthogonality then construct the spectrum of a sheaf $F$ as ${\cal E}/F$.
In the case of a presheaf topos $T=\widehat{D}$ endowed with a factorisation system $\cal (A,B)$ on its category of points $Pro(D)$ (as in our examples in algebraic geometry), we claim that the Etal class $\cal E$ orthogonal to $\cal A$ is such that $\cal B=E\cap D$ and that his and our notion of spectra agree (although his is defined for any sheaf, not only for objects of the site, but our definition can be extended easily).

As it uses any topos and not only presheaves ones, this setting is more general than our, particularly factorisation systems are not considered although lifting systems are implicit.

In \cite{lurie}, Lurie defines what he call a {\em geometry} which is essentially a small category $D$ with finite limits, a subcategory $D^{ad}$ satisfying some axioms and a Grothendieck topology on $D^{ad}$. This compares well enough to our setting: with his axioms, ${\cal C}=Pro(D)$ is the opposite of a lcoally finitely presentable category and $D^{ad}$ generates on the left a factorisation $\cal (A,B)$ on $\cal C$ such that ${\cal B}\cap D=D^{ad}$. Then he considers arbitrary topologies that can been defined by families of maps in $\cal B$ and this is were our works add something: having at our disposal the notion of point of a factorisation system, we can define a distinguished topology.

Also Lurie works in the more general setting of higher categories, but we claim our constructions work exactly the same in this setting, provided unique factorisation systems are replaced with homotopically unique factorisation system.

\bigskip
In all those three settings, factorisation systems play a side role whereas they are our main object of consideration. 
Each setting has its own advantages and that of our is in the definition of the notion of points of a factorisation system and in their use to define Grothendieck topologies such as Zariski or Etale topologies and others.


\begin{thebibliography}{}
\bibitem{anel1} M.~Anel, {\em Geometric factorisation systems and spectra}, in preparation.
\bibitem{anelvaquie} M.~Anel, M.~Vaqui\'e, {\em De la g\'eom\'etrie des mono\"\i des}, in preparation.
\bibitem{atiyahmacdonald} M.F.~Atiyah, I.G.~Macdonald, {\em Introduction to commutative algebra}. Addison-Wesley Publishing Co., Reading, Mass.-London-Don Mills, Ont. 1969 ix+128 pp.
\bibitem{bousfield} A.~K.~Bousfield, {\em Constructions of factorization systems in categories}. J. Pure Appl. Algebra 9 (1976/77), no. 2, 207--220.
\bibitem{dubuc} E.J.~Dubuc, {\em Axiomatic etal maps and a theory of spectrum.} J. Pure Appl. Algebra 149 (2000), no. 1, 15--45. 
\bibitem{EGA2} A. Grothendieck, {\em \'El\'ements de g\'eom\'etrie alg\'ebrique. II. \'Etude globale \'el\'ementaire de quelques classes de morphismes.} Inst. Hautes \'Etudes Sci. Publ. Math. No. 8 1961.
\bibitem{EGA4-4} A. Grothendieck, {\em \'El\'ements de g\'eom\'etrie alg\'ebrique. IV. \'Etude locale des sch\'emas et des morphismes de sch\'emas IV.} Inst. Hautes \'Etudes Sci. Publ. Math. No. 32 1967.
%\bibitem{hag1} B.~To\"en, G.~Vezzosi, {\em Homotopical algebraic geometry. I. Topos theory.} Adv. Math. 193 (2005), no. 2, 257--372.
\bibitem{hakim} M.~Hakim, {\em Topos annelés et schémas relatifs.} Ergebnisse der Mathematik und ihrer Grenzgebiete, Band 64. Springer-Verlag, Berlin-New York, 1972. vi+160 pp. 
\bibitem{stone} P.T.~Johnstone, {\em Stone spaces}, Cambridge Studies in Advanced Mathematics, 3. Cambridge University Press, Cambridge, 1982.
\bibitem{johnstone} P.T.~Johnstone, {\em Topos Theory}, Academic Press, New York, 1977.
\bibitem{elephant} P.T.~Johnstone, {\em Sketches of an Elephant - A Topos Theory Compedium. vol. 2}, Oxford Logic Guides, 44. The Clarendon Press, Oxford University Press, Oxford, 2002.
\bibitem{joyal} A.~Joyal, {\em The theory of Quasi-categories}, book in progress.
\bibitem{lurie} J.~Lurie, {\em DAG V - Structured Spaces}, preprint available on his website.
\bibitem{georges1} Georges Maltsiniotis, {\em La th\'eorie de l'homotopie de Grothendieck}. Ast\'erisque No. 301 (2005).
\bibitem{milne} J.S.~Milne, {\em Etale cohomology}, Princeton Mathematical Series, 33. Princeton University Press, Princeton, N.J., 1980.
\bibitem{MM} S.~Mac Lane, I.~Moerdijk, {\em Sheaves in geometry in logic}, Universitext. Springer-Verlag, New York, 1994.
\bibitem{pisani} C.~Pisani, {\em Balanced category theory}, Theory Appl. Categ. 20 (2008), No. 6, 85--115.
%\bibitem{rezk} C.~Rezk, {\em Toposes and homotopy toposes}, preprint available on his web page.
\bibitem{SGA4-1} {\em Theorie des topos et cohomologie etale des schemas. Tome 1: Theorie des topos. Seminaire de Geometrie Algebrique du Bois-Marie 1963--1964 (SGA 4)}. Dirig\'e par M. Artin, A. Grothendieck, et J. L. Verdier. Avec la collaboration de N. Bourbaki, P. Deligne et B. Saint-Donat. Lecture Notes in Mathematics, Vol. 269. Springer-Verlag, Berlin-New York, 1972.
\bibitem{fun} B.~To\"en, M.~Vaqui\'e, {\em Au-dessous de $Spec(\ZZ)$}, preprint form the Arxiv, to appear in K-Theory.
\bibitem{voevodsky} V.~Voevodsky, {\em Unstable motivic homotopy categories in Nisnevich topology and cdh-topologies}, preprint available on the arXiv.
\end{thebibliography}
\end{document}